\newcommand{\bbR}{\mathbb{R}}
\def\ll{\lVert}
\def\rl{\rVert}
\def\e{\epsilon}
\def\hs{{\hat s}}
\newcommand{\R}{\mathbb{R}}
\def\S{\mathbb{S}}
\def\hci{\hat{C}_i}
\newcommand{\mL}{\mathcal{L}_{\drift,\Diff}}
\newcommand{\mLb}{\mathcal{L}_{\drift}}
\newcommand{\bbE}{\mathbb{E}}
\newcommand{\calS}{\mathcal{S}}
\newcommand{\A}{\mathcal{A}}
\newtheorem{theorem}{Theorem}[section]
\newtheorem{lemma}[theorem]{Lemma}
\newtheorem{proposition}[theorem]{Proposition}
\newtheorem{definition}[theorem]{Definition}
\newtheorem{remark}[theorem]{Remark}
\newtheorem{assumption}{Assumption}
\newcommand{\lb}{\label}
\newcommand{\beq}{\begin{equation}}
\newcommand{\eeq}{\end{equation}}
\DeclareMathOperator\tr{Tr}
\newcommand{\eps}{\varepsilon}
\def\l{\left}
\def\r{\right}
\def\h{\hat}
\def\nb{\nabla}
\def\pt{\partial}
\def\t{\tilde}
\def\drift{b}
\def\Diff{\Sigma}
\def\pistar{\pi^*}
\def\vstar{V^*}
\def\E{\mathbb{E}}
\def\tcr{\textcolor{red}}
\def\tcb{\textcolor{blue}}
\def\argmax{\text{argmax}}
\def\dt{{\Delta t}}
\def\lam{{\lambda}}
\def\Sig{\Sigma}
\def\nb{\nabla}
\def\o{\omega}
\def\ll{\left\lVert}
\def\rl{\right\rVert}
\def\lv{\left\lvert}
\def\rv{\right\rvert}
\newcommand{\coef}[1]{a^{(#1)}}
\newcommand{\ea}[1]{{\e^A_{#1}}}
\newcommand{\eb}[1]{{\e^B_{#1}}}
\newcommand{\ha}[1]{{\h{A}_{#1}}}
\newcommand{\hb}[1]{{\h{B}_{#1}}}
\newcommand{\hk}[1]{{\h{K}_{#1}}}
\newcommand{\tk}{\t{K}}
\def\phibev{\hat{V}^*}
\def\phibepi{\hat{\pi}^*}
\def\vstar{{V}^*}
\def\hatb{\hat{\drift}}
\def\hatsig{\hat{\Diff}}
\def\pihatstar{\hat{\pi}^*}
\def\th{\theta}
\def\hm{\hat{M}}
\def\mN{\mathcal{N}}
\def\hs{\hat{\Sigma}}
\title{Optimal-PhiBE: A PDE-based Model-free framework for Continuous-time Reinforcement Learning}
\date{}
\author[Y. Zhu]{Yuhua Zhu\textsuperscript{1}}
\thanks{\textsuperscript{1} (Corresponding author) Department of Statistics and Data Science, University of California, Los Angeles, USA. ({yuhuazhu@ucla.edu}).}
\author[Y. P. Zhang]{Yuming Paul Zhang\textsuperscript{2}}
\thanks{\textsuperscript{2} 
Department of Mathematics \& Statistics, Auburn University, USA. ({yzhangpaul@auburn.edu}).}
\author[H. Zhang]{Haoyu Zhang\textsuperscript{3}}
\thanks{\textsuperscript{3} Department of Mathematics, University of California, San Diego, USA ({haz053@ucsd.edu}).}
\begin{document}

\begin{abstract}
    This paper addresses continuous-time reinforcement learning (CTRL) where the system dynamics are governed by an unknown stochastic differential equation, and only discrete-time observations are available. Existing approaches face limitations: model-based PDE methods suffer from non-identifiability, while model-free methods based on the discrete-time optimal Bellman equation (Optimal-BE) suffer from large discretization errors that are highly sensitive to both the system dynamics and the reward structure.    
    To overcome these challenges, we introduce Optimal-PhiBE, a formulation that integrates discrete-time information  into a continuous-time PDE, combining the strength of both existing frameworks while mitigating their limitations. 
    Optimal-PhiBE exhibits smaller discretization errors when the uncontrolled system evolves slowly, and demonstrates reduced sensitivity to oscillatory reward structures, and enables model-free algorithms that bypass explicit dynamics estimation.
    In the linear-quadratic regulator (LQR) setting, sharp error bounds are established for both Optimal-PhiBE and Optimal-BE. The results show that Optimal-PhiBE exactly recovers the optimal policy in the undiscounted case and substantially outperforms Optimal-BE when the problem is weakly discounted or control-dominant. Furthermore, we extend Optimal-PhiBE to higher orders, providing increasingly accurate approximations. A model-free policy iteration algorithm are proposed to solve the Optimal-PhiBE directly from trajectory data. Numerical experiments are conducted to verify the theoretical findings.
\end{abstract}
\maketitle
\section{Introduction}

Reinforcement learning (RL) has achieved remarkable success in  {\it digital environments}, with applications such as AlphaGo \cite{Silver2016}, strategic gameplay \cite{Mnih2015}, and fine-tuning large language models \cite{Ziegler2019}. In these applications, the system state changes only after an action is taken. 
In contrast, many systems in the {\it physical world}, the state evolves continuously in time, regardless of whether actions are taken in continuous or discrete time. Although these systems are fundamentally continuous in nature, the available data are typically collected at discrete time points. In addition, 
the time difference between data points can be irregular, sparse, and outside our control.  Examples arise in the following applications: 
\begin{itemize}
    \item Health care: for instance, in diabetes treatment, a patient’s glucose level changes continuously, but measurements are recorded only when tests are performed \cite{guo2023general,amos2018differentiable,romero2024actor,emerson2023offline,cobelli2009diabetes, battelino2019clinical};
    \item Robotic control \cite{karimi2023dynamic,Kober2013,siciliano1999robot} and autonomous driving \cite{sciarretta2004optimal}: where the environment evolve continuously but sensor data is collected at discrete intervals;
    \item Financial markets \cite{merton1975optimum,Moody2001, wang2020continuous, whittle1981risk}: Asset prices move continuously but trades and quotes occur at discrete times; 
    \item  Plasma fusion reactors \cite{degrave2022magnetic}:  The plasma evolves in continuous time, while all experimental data are recorded as discrete-time samples via sensors. 
\end{itemize} 
One of the key challenges in the RL problem in the physical world is addressing the mismatch between the continuous-time dynamics and the discrete-time data. In this paper, we consider the continuous-time reinforcement learning (CTRL) problem \cite{wang2019exploration} where the underlying dynamics follow unknown stochastic differential equations but only discrete-time observations are available. The objective in CTRL is to learn an optimal policy $\pi^*:\S\to\A$ that maximizes the expected continuous-time cumulative reward. 

Currently, there are two main approaches to addressing this challenge. One approach is to learn the continuous-time dynamics from discrete-time data and formulate the problem as an optimal control problem with known dynamics \cite{Jia2021, yildiz2021continuous,modares2014integral, basei2022logarithmic, wang2020reinforcement}. 
Once the dynamics are estimated, various optimal control algorithms can be applied to solve the CTRL problem. 
The key advantage of this approach is that it preserves the continuous-time nature of the problem, enhancing the  stability and interpretability of the resulting algorithms. 
However, identifying continuous-time dynamics from discrete-time data is often challenging and, in most cases, even ill-posed. 
We will discuss in Section~\ref{sec: model-based pde} that infinitely many continuous-time dynamics can yield the same discrete-time transitions even with strong assuptions. 
Consequently, a misspecified continuous-time model may introduce significant errors, which can propagate to the learned optimal policy and lead to suboptimal decision-making.

Another approach is to discretize continuous time and reformulate the CTRL problem as a classical discrete-time RL problem, i.e., a Markov decision process (MDP) \cite{doya2000reinforcement, de2024idiosyncrasy, 1994Baird}. This transformation enables the use of standard RL algorithms directly on discrete-time data. 
This approach has several advantages. 
First, it relies only on the discrete-time transition dynamics, thereby avoiding the non-identifiability issues inherent in the first approach. 
Second, many RL algorithms are model-free, eliminating the need to explicitly learn the transition dynamics. 
These plug-and-play algorithms are convenient to implement in practice.
However, it has been observed empirically that the RL algorithms can be sensitive to time discretization \cite{tallec2019making, munos2006policy, park2021time, baird1994reinforcement}.
In additional to the time discretization, as shown in Figure \ref{fig:lqr-deter}, we found that the error introduced by the MDP framework can be substantial and highly sensitive to all components of the system, which we will demonstrate in more detais in Section \ref{sec: lqr}.
Fundamentally, the MDP framework is designed for discrete-time decision-making processes, and only the discrete-time transition dynamics are used. However, the continuous-time underlying dynamics considered in this paper is driven by a standard stochastic differential equation, and this structure is not utilized in the MDP framework. Therefore, a natural question is, whether one can combine the differential equation structure with the discrete-time transition dynamics and develop a framework that can still derive model-free algorithms?


In this paper, 
we propose a new Optimal-Bellman equation, termed {\it Optimal-PhiBE}, that integrates discrete-time information into a continuous-time partial differential equation (PDE). 
Our approach combines the advantages of both existing frameworks while mitigating their limitations.
First, our formulation is a PDE that preserves the continuous-time differential equation structure throughout the learning process.
At the same time, Optimal-PhiBE relies only on discrete-time transition distributions, which is similar to the Optimal-BE, making it directly compatible with discrete-time data and enabling model-free algorithm design for solving CTRL problems.
Moreover, our approach overcomes key drawbacks of existing methods. 
Unlike the continuous-time PDE approach, which requires estimating the continuous-time dynamics and may suffer from identifiability issues, Optimal-PhiBE depends solely on discrete-time transitions and does not require explicit dynamic modeling. 
Unlike the MDP framework, where discretization errors can be highly sensitive to both the system dynamics and the reward function, Optimal-PhiBE exhibits greater stability with respect to variations in the environment and reward structure. As demonstrated in Figure~\ref{fig:lqr-deter}, for the standard linear quadratic regulater (LQR) problem, given the same discrete-time information, cases where the Optimal-BE suffers from large discretization errors, the first-order Optimal-PhiBE accurately recovers the exact optimal policy. 

\begin{figure}
\centering
{\includegraphics[width=0.2\textwidth]{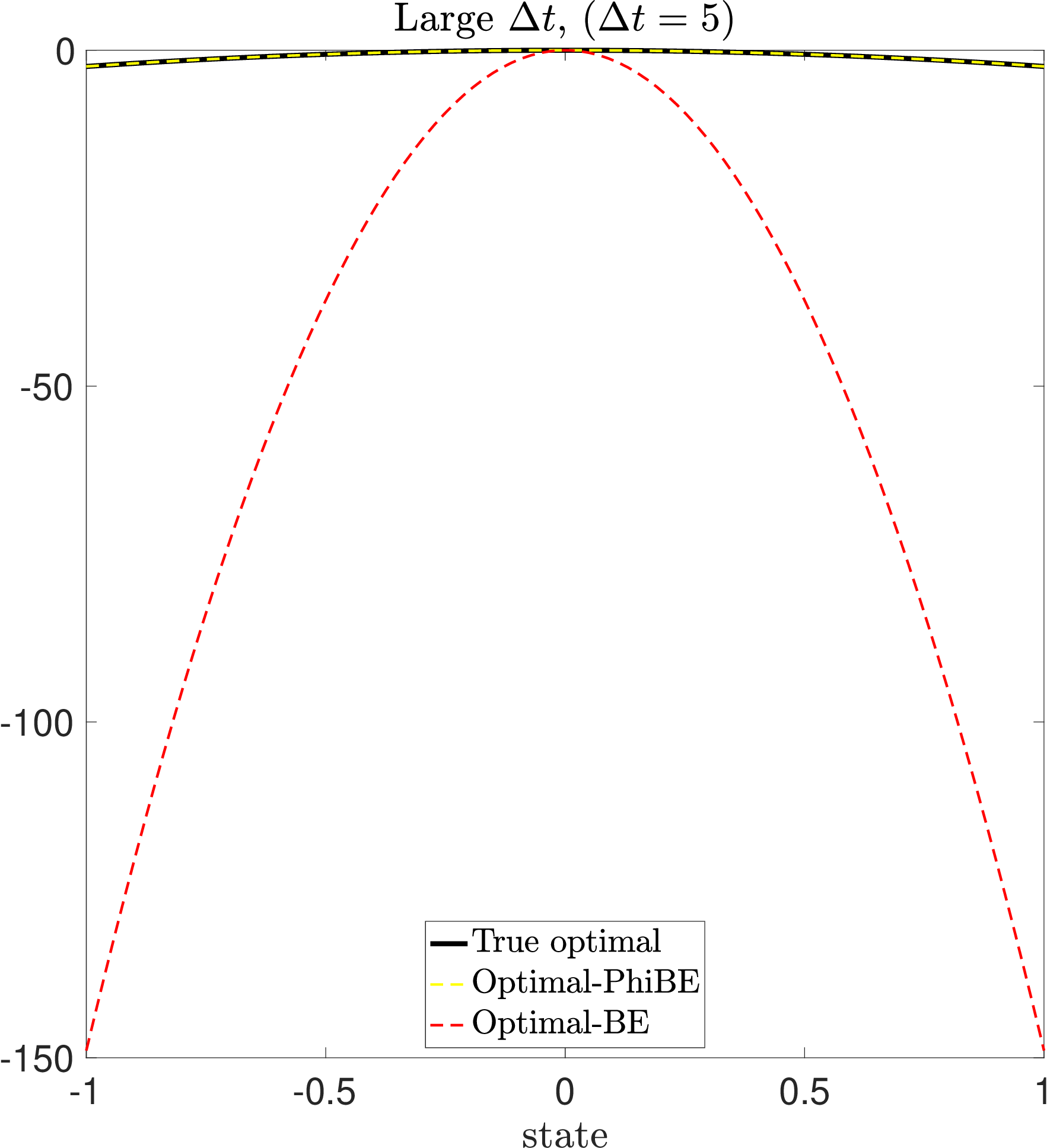}}\hfill
{\includegraphics[width=0.2\textwidth]{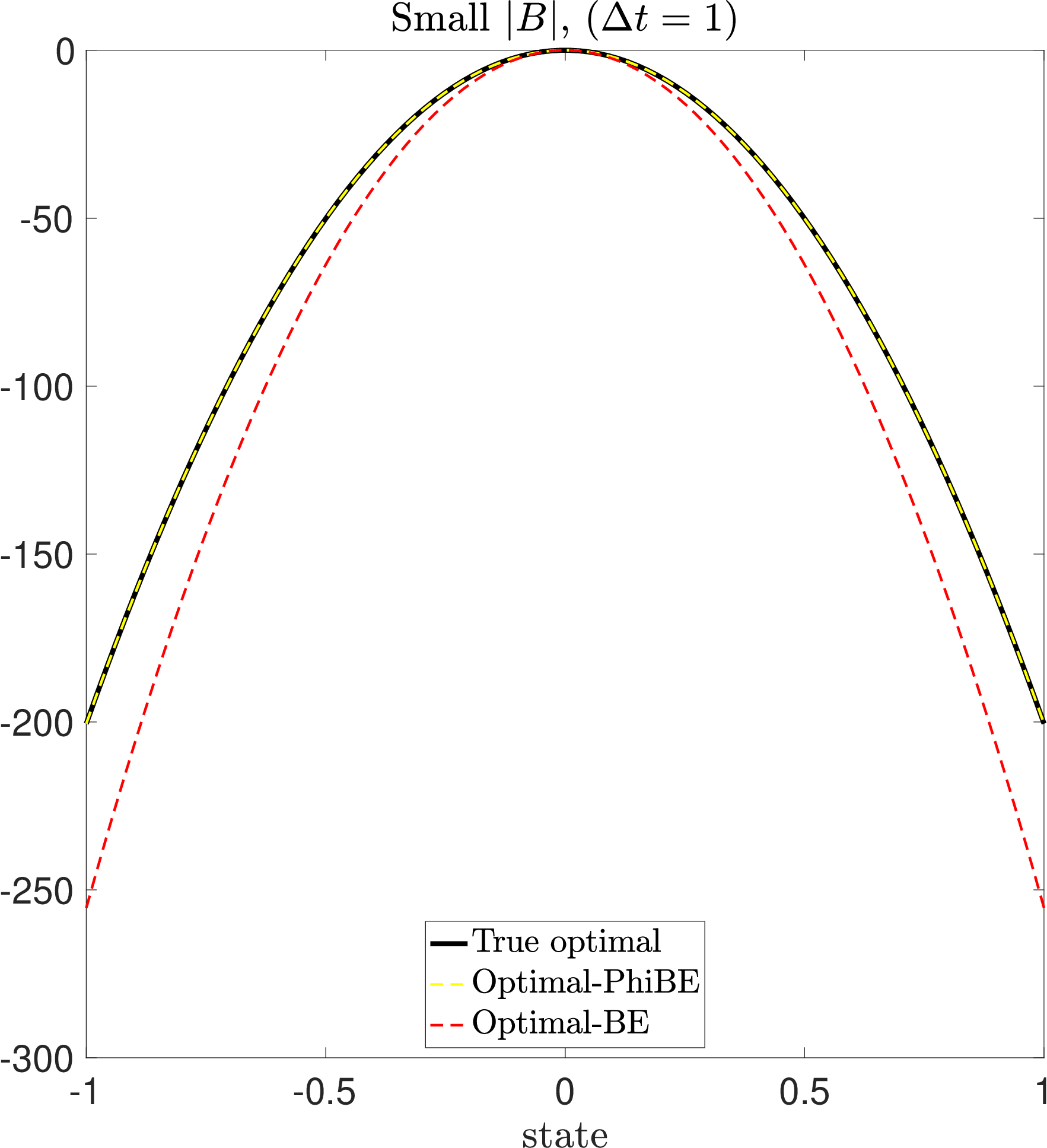}}\hfill
{\includegraphics[width=0.19\textwidth]{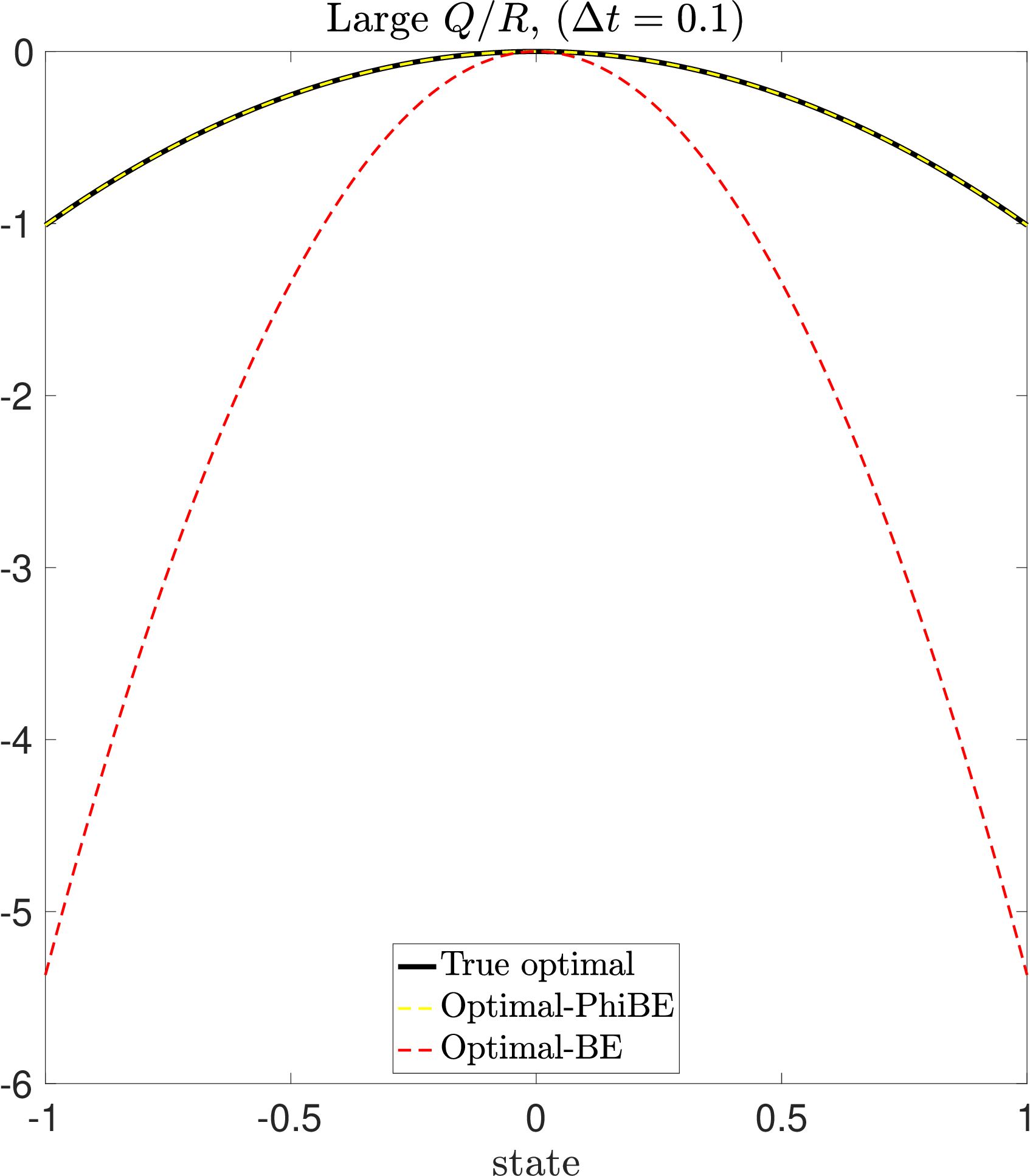}}\hfill
{\includegraphics[width=0.2\textwidth]{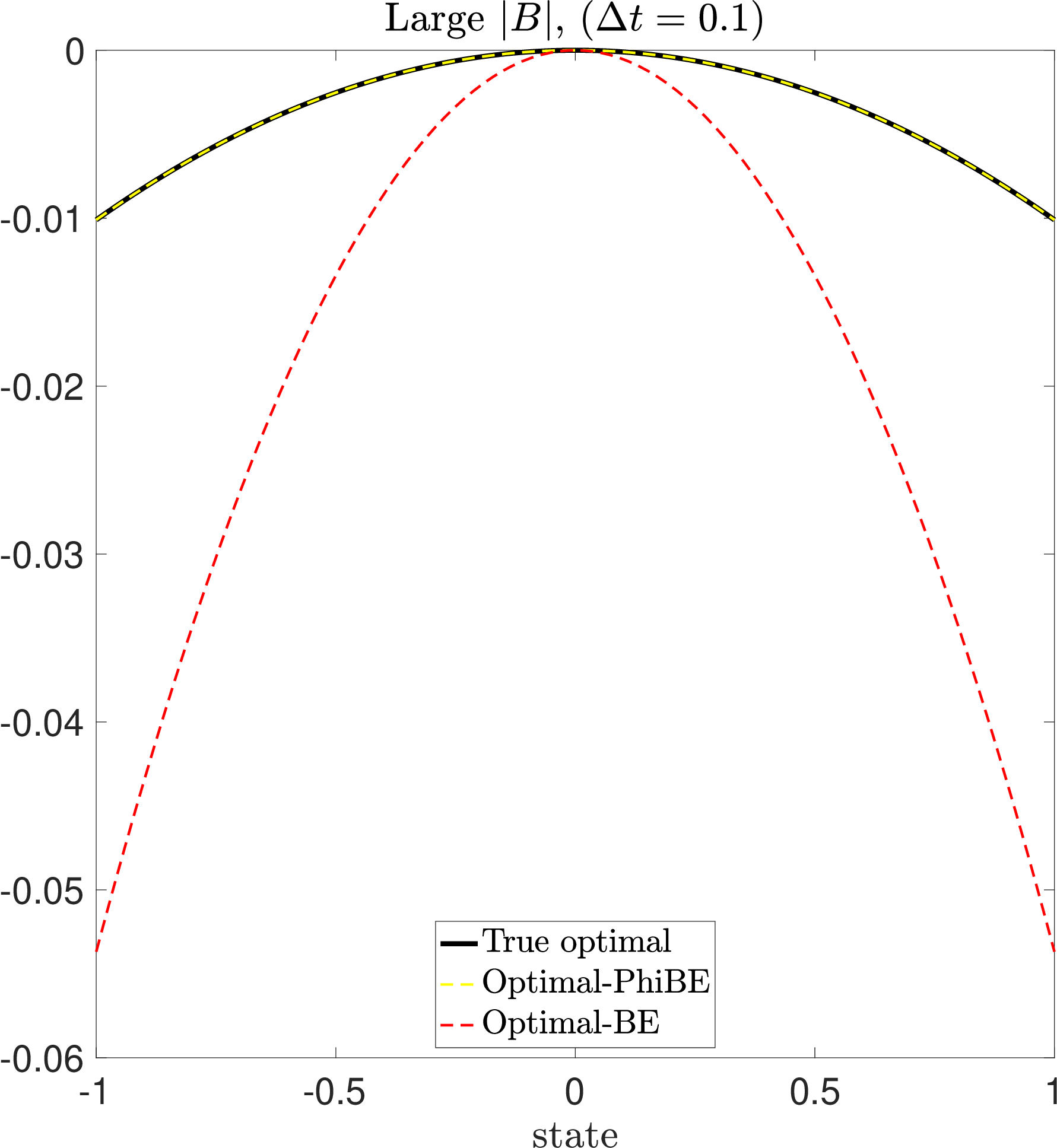}}\hfill
{\includegraphics[width=0.2\textwidth]{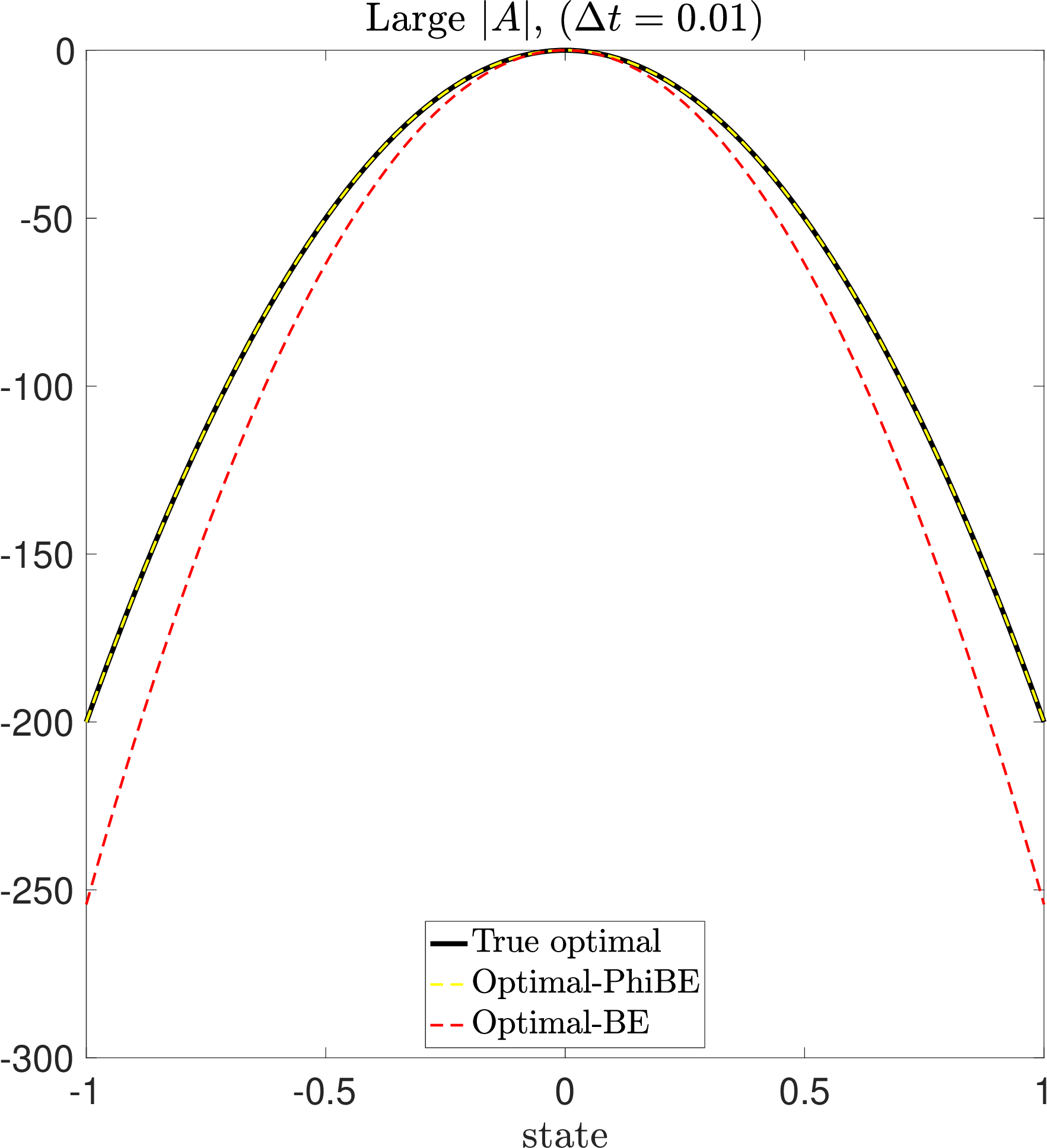}}
\caption{Value functions under the optimal policies derived from the MDP framework and our framework. Given the identical discrete-time transition dynamics, our framework recovers the optimal policy, while the MDP framework yields a significantly worse one.}
\label{fig:lqr-deter} 
\end{figure}

\subsection{Related Work} 
\subsubsection{Discrete-time RL framework (MDP)}
Classical literature has shown that when the integral of the cumulative reward over small time intervals—i.e., $\int_0^\dt r_t dt$—is available, CTRL problems can effectively be cast as standard Markov decision processes (MDPs), and discretization errors are avoided \cite{aastrom2013computer,bryson2018applied, bertsekas2012dynamic}. In this paper, however, we consider a different setting: only discrete-time observations of the state and instantaneous reward are available. Consequently, the MDP formulation inevitably introduces discretization error, even with an infinite amount of data.

\subsubsection{Continuous-time RL (CTRL)}
A growing body of work has investigated reinforcement learning in continuous time dynamics driven by SDE \cite{WZZ20,jia2021policy}. Notably, \cite{WZZ20} proposed a regularized CTRL framework that naturally promotes exploration, with algorithmic extensions developed in subsequent work \cite{jia2022policy, szpruch2024optimal}, etc. \cite{baird1994reinforcement}, \cite{tallec2019making}, and \cite{jia2021policy} found that the state-action Q-function does not generally exist in the continuous-time setting, and therefore \cite{tallec2019making, jia2021policy} introduce continuous-time analogues of the advantage function tailored to SDE-driven systems. Algorithms based on these continuous-time advantage formulations have since been proposed \cite{zhao2023policy, sethi2024entropy, kerimkulov2023fisher, zhao2025score}, etc.


In the most of the continuous-time RL literature, one typically first develops an algorithm for the continuous-time optimal control problem and then adapts it to discrete-time data. This strategy leaves two critical gaps. First, it overlooks the discretization error inevitably introduced during adaptation. Second, although these works assume dynamics governed by a standard SDE, the resulting algorithms are not tailored to this structure and remain broadly applicable to other dynamics, such as jump diffusions or Lévy processes. By contrast, the central question of this paper is: {\it if the dynamics are known to follow a standard SDE but only discrete-time data are available, can we design algorithms that exploit this structure to decisively outperform generic RL methods?}

Our work complements this line of research by bridging the gap between continuous-time algorithms and discrete-time data. 
We treat the discrete-time data before the algorithm design. 
The Optimal-PhiBE formulation offers a systematic way to approximate the continuous-time problem from discrete-time data.
First, from a numerical perspective, our framework enables a principled approximation of the continuous-time algorithms directly from discrete-time observations, while preserving the underlying PDE structure. The model-free algorithm developed in Section~\ref{sec: algo} illustrates this, as it directly adapts the continuous-time policy iteration methods from \cite{bertsekas2011approximate,howard1960dynamic,puterman1979,santos2004} to operate on discretely sampled data. Other continuous-time methods can be similarly extended within our framework. Second, from a theoretical standpoint, because our learning algorithms retain the continuous-time PDE form, existing convergence analyses developed in the CTRL literature \cite{guo2025policy,tang2023policy} can be directly applied or adapted to analyze the proposed methods.

Finally, we emphasize that our attention in this paper are restricted to single-agent CTRL with standard diffusion processes. Extensions to more complex dynamics, such as jump diffusions \cite{gao2024reinforcement, guo2023reinforcement} and to multi-agent or mean-field control settings \cite{frikha2023actor, guo2023general, gu2025mean, gu2023dynamic, Guo22, gu2021mean}, are beyond the scope of this work.

\subsubsection{Error Analysis}
If one decomposes the CTRL error into two components, the first corresponds to the {\it discretization error}, which arises from the mismatch between the continuous-time dynamics and the discrete-time data. 
The second is the {\it finite-sample error}, stemming from the limited number of observations and characterized by sample complexity. We denote the approximation obtained from finite data $n$ as $V^{n}_{\Delta t}$, see Figure~\ref{fig:error-decomposition}. 
As the number of samples tends to infinity, the algorithm converges to $V_{\Delta t}$, which still differs from the true objective $V$ due to discretization error.
In currently RL literature, most work has focused on analyzing the finite-sample error $V^{n}_{\Delta t} - V_{\Delta t}$ \cite{munos2008finite,yang2019sample,jin2020provably,cai2020provably, zhang2021convergence}. 
In contrast, comparatively little work has focused on analyzing the discretization error. 
In the MDP framework, the discretization error refers to the difference between the optimal feedback policy derived from the Optimal-Bellman equation (Optimal-BE) and the true optimal policy, measured by the value function evaluated under the true dynamics. Since many RL algorithms, such as Q-learning~\cite{watkins1992q}, actor-critic~\cite{konda1999actor}, TRPO~\cite{schulman2015trust}, and PPO~\cite{schulman2017proximal}, are derived from the Optimal-BE, their performance is fundamentally limited by this discretization error. 
In other words, the discretization error associated with the Optimal-BE represents the best approximation that any classical RL algorithm can achieve.
As demonstrated in Figure~\ref{fig:lqr-deter}, even for the deterministic LQR problem, the discretization error from Optimal-BE is sensitive to all system parameters.

In this paper, we will focus on the discretization error of the MDP framework and our proposed framework. 
Prior work, such as \cite{FlemingSoner2006, pradhan2025discrete}, establishes that as $\Delta t \to 0$, the MDP formulation converges to the original CTRL problem, but without quantifying the rate or the structure of the discretization error.  Only recently have some studies begun to investigate the discretization error $V_{\Delta t} - V$. 
For the policy evaluation problem, \cite{zhu2024phibe, mou2024bellman} shows the MDP framework provides  a first-order approximation with smoothness assumption in the dynamics. 
Moreover, \cite{zhu2024phibe} highlights that when the reward function exhibits large oscillations, the discretization error becomes particularly pronounced. 
In RL, reward functions often tend to have large oscillations to effectively differentiate between rewards and punishments, which are essential for learning the optimal policy. 
This characteristic suggests that the MDP framework may not always be ideal for solving CTRL problems, a limitation that has also been observed empirically \cite{de2024idiosyncrasy}. In the context of continuous-time optimal control,  \cite{jakobsen2019improved} and \cite{bayraktar2023approximate} show that the optimal policy derived from the RL framework achieves only a $1/4$-order approximation. In this paper, we show that with stronger assumption in dynamics, the optimal policy from RL framework can achieve first-order accuracy, but it is sensitive to all the components of the problem. 




\begin{figure}
\centering
{\includegraphics[width=0.5\textwidth]{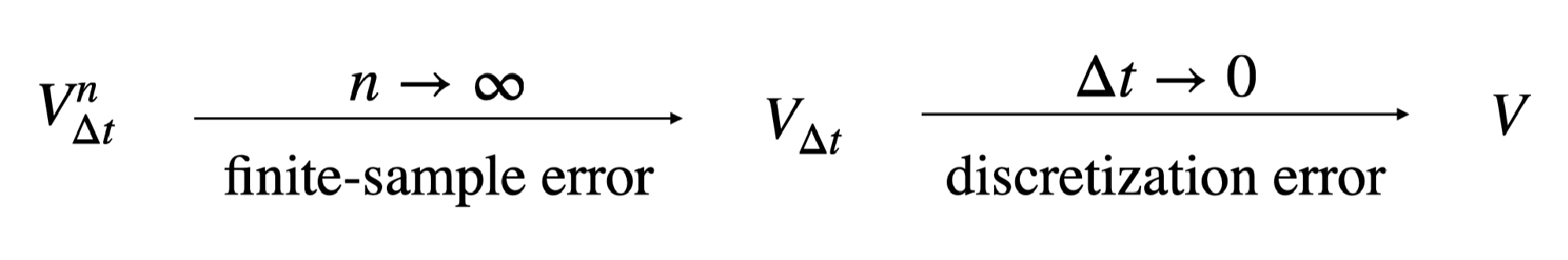}}
\caption{Error decomposition for continuous-time RL}
\label{fig:error-decomposition} 
\end{figure}

\subsubsection{Algorithms that Reduce Discretization Error}
We review several existing works that aim to reduce discretization errors in continuous-time reinforcement learning (CTRL). 
In \cite{de2024idiosyncrasy}, the authors highlight that using the right-Riemann sum provides a more accurate approximation than the left-Riemann sum, particularly due to the effects of discounting. New Bellman equations has been proposed in \cite{mou2024bellman} to achieve high-order accuracy for policy evaluation problem. Both works reduces the discretization error compared to the standard RL framewrok, but did not utilize the differential structure of the underlying dynamics. 
In \cite{zhu2024phibe}, the authors introduce a new Bellman equations that achieve smaller discretization errors for continuous-time policy evaluation, and also exploits the structure of SDEs to obtain better accuracy.
This paper builds upon \cite{zhu2024phibe} by extending the PhiBE framework from continuous-time policy evaluation to the Optimal-PhiBE formulation for CTRL. 
However, moving from policy evaluation to optimal control introduces several nontrivial challenges. 
From a PDE perspective, policy evaluation corresponds to solving a linear elliptic equation, whereas optimal control leads to a nonlinear elliptic equation, making the mathematical extension significantly more complex. 
From a reinforcement learning perspective, while the Bellman equation can be interpreted as a numerical discretization of a continuous-time integral, the presence of the maximum operator in optimal control complicates the analysis of discretization errors. Last but not least, this paper considers the setting where trajectory data are generated under piecewise-constant actions—that is, actions can only be updated at discrete times. This restriction, which was not addressed in the policy evaluation paper \cite{zhu2024phibe}, eliminates the flexibility of continuous-time control and amplifies discretization effects, thereby adding another significant layer of difficulty.


\subsubsection{Continuous-time LQR}
Continuous-time LQR can be found in portfolio optimization \cite{wang2020continuous}, algorithmic trading \cite{cartea2018algorithmic}, production management of exhaustible resources \cite{graber2016linear}, and biological movement systems \cite{bailo2018optimal}.
For the case where the underlying dynamics are unknown, recent references include \cite{szpruch2021exploration, szpruch2024optimal, basei2022logarithmic}.
\cite{szpruch2021exploration} requires access to continuous-time observations.  
\cite{szpruch2024optimal} and \cite{basei2022logarithmic} consider discrete-time observations but assume prior knowledge that the system is linear-quadratic; moreover, under piecewise-constant controls and discrete-time state observations, the optimal policy they derive still incurs an $O(\Delta t)$ discretization error.  

In contrast, our framework does not require prior knowledge of linear dynamics and achieves zero discretization error even when only discrete-time trajectory data are available.

\subsection{Contributions}
We summarize the contributions of this paper as follows.

\begin{itemize}
    \item We propose a {PDE-based Optimal Bellman equation}, termed {Optimal-PhiBE}, to approximate both the optimal policy and value function for CTRL, along with its high-order extension for improved discretization accuracy.

    \item We show that the $i$-th order Optimal-PhiBE yields an $i$-th order approximation to the CTRL problem. We characterize how the discretization error depends on the dynamics, reward, and discount coefficient, and show it remains small when the uncontrolled system evolves slowly, and less sensitive to the reward oscillation.
    
    \item For the LQR problem, we derive sharp error estimates for  Optimal-PhiBE and Optimal-BE in one dimension, and extend the Optimal-PhiBE error to higher dimensions.
    
    \item  For the undiscounted LQR problem, Optimal-PhiBE exactly recovers the optimal policy using only discrete-time information. In the discounted setting, it outperforms Optimal-BE under any of the following conditions: (1) the problem is weakly discounted, (2) the reward exhibits significant variation in the state space, (3) the reward exhibits small variation in the action space, or (4) the system is control-dominant.

    \item  We propose a model-free algorithm based on Optimal-PhiBE that solves CTRL problems directly from trajectory data, without explicitly estimating the system dynamics.
\end{itemize}

\subsection{Organization} 
The problem setting is introduced in Section~\ref{sec:setting} - \ref{setting: RL}. We provide a detailed discussion of the two existing approaches in Sections~\ref{sec: model-based pde} and \ref{sec: Optimal-Bellman equation}. 
Our proposed {Optimal-PhiBE} is presented in Section~\ref{sec:Optimal-PhiBE}, along with an analysis of its discretization error in terms of optimal value function and optimal policy. In Section~\ref{sec: lqr}, we examine the LQR problem and compare the discretization errors from {Optimal-PhiBE} and {Optimal-BE}. 
In Section~\ref{sec: algo}, we introduce a model-free algorithm based on Optimal-PhiBE, and { Section~\ref{sec: numerics} conducts numerical experiments validating our approach.} The complete proof of the theorems are given in Appendix~\ref{sec: proof}.

\section{The Problem Setting and two existing approaches}\label{sec:settings}

\subsection{Classical Optimal Control Setting}\label{sec:setting}
For completeness, we first review the classical stochastic optimal control setting. Let $d$ be the dimension of the state space and $B = \{B_t\}_{t \geq 0}$ denote a standard Brownian motion in $\mathbb{R}^n$ on a filtered probability space $(\Omega, \mathcal{F}, \mathbb{P}^B; \{\mathcal{F}^B_t\}_{t \geq 0})$. The drift $b: \mathbb{R}^d \times \mathcal{A} \to \mathbb{R}^d$ and diffusion $\sigma: \mathbb{R}^d \times \mathcal{A} \to \mathbb{R}^{d \times n}$ are time-homogeneous, with $\mathcal{A} \subseteq \mathbb{R}^m$ as the action space. Under a \textit{feedback policy} $\pi: \mathbb{R}^d \to \mathcal{A}$, the \textit{state} $s_t$ evolves via the following stochastic differential equation (SDE),
\begin{equation}\label{def of dynamics}
ds_t = b(s_t, a_t) \, dt + \sigma(s_t, a_t) \, dB_t, \quad a_t = \pi(s_t).
\end{equation}

The goal is to find an optimal policy $\pi^*: \mathbb{R}^d \to \mathcal{A}$ that maximizes the expected discounted reward
\begin{equation}\label{def of pistar}
\pi^*(s) = \arg \sup_{\pi} V^\pi(s),
\end{equation}
where the \textit{value function} $V^\pi(s)$ under a policy $\pi$ is defined as,
\begin{equation}\label{continuous-value}
    V^\pi(s) = \mathbb{E}\left[\int_0^\infty e^{-\beta t} r(s_t, a_t) \, dt \middle| s_0 = s\right]\ \text{with $a_t=\pi(s_t)$}.
\end{equation}
Here, $r: \mathbb{R}^d \times \mathcal{A} \to \mathbb{R}$ is the instantaneous reward function, and $\beta > 0$ is the discount coefficient. The optimal value function satisfies,
\begin{equation}\label{obj}
V^*(s) = \max_{\pi} V^\pi(s).
\end{equation}



We assume the following to ensure the well-posedness of the above stochastic control problem (\ref{def of dynamics})–(\ref{obj}).
\begin{assumption}\label{ass-1}
(i) { $\mathcal{A}$ is compact;} $b$, $\sigma$ and $r$ are continuous in $a$ and are locally uniformly Lipschitz continuous in $s$. \\
(ii) $b$ and $\sigma$ are uniformly bounded; $r$ has polynomial growth in $s$, i.e., there exist a constant $C > 0$ and $\mu \geq 1$ such that $|r(s, a)| \leq C(1 + |s|^\mu)$
holds for all $(s, a) \in \mathbb{R}^d \times \mathcal{A}$.
\end{assumption}

In the classical control setting, where $b$, $\sigma$ and $r$ are known, the above stochastic control problem (\ref{def of dynamics})–(\ref{obj}) has been well studied. Under Assumption \ref{ass-1}, the optimal value function $V^*(s)$ is the unique viscosity solution of the following Hamilton-Jacobi-Bellman (HJB) equation 
\begin{equation}\label{def of true hjb}
\beta V^*(s) = \sup_{a \in \mathcal{A}} \left(r(s,a)+(\mathcal{L}_{b,\Sig} V^*)(s, a)\right),\quad \text{where}\quad \mathcal{L}_{b,\Sig} = \drift(s, a) \cdot \nabla_s + \frac12 \Sigma(s, a) : \nabla_s^2
\end{equation}
with $\Sigma(s,a)= \sigma(s,a) \sigma^{\top}(s,a) \in \R^{d\times d}$ and $\Sigma(s,a) : \nabla_s^2 = \sum_{i,j}\Sigma(s,a)_{ij}\pt_{s_i}\pt_{s_j} $.  {We refer readers to \cite[Chapter 4, Theorem 6.1]{yongzhoubook} and the proof of \cite[Proposition 4.1]{TWZ}.} Here, and throughout the paper, the gradient and Hessian operators $\nabla$ and $\nabla^2$ are in the state space $s$ unless specified. Moreover, the optimal feedback policy $\pi^*$ is given by the following formula if $V^*(s)$ is known:
\begin{equation*}
\pi^*(s) = \arg \sup_{a \in \mathcal{A}} (r(s,a) + (\mathcal{L}_{b,\Sigma} V^*) (s, a)).
\end{equation*}

\subsection{Continuous-time Reinforcement Learning (CTRL) Setting}\label{setting: RL}

In this section, we describe the CTRL setting in this paper. In contrast to the classical stochastic control setting discussed previously, we assume that the dynamics of the system are unknown, meaning that the functions $b(s,a)$ and $\sigma(s,a)$ are not accessible to us. Instead, one only has access to the discrete-time trajectory data, 
\begin{equation}\label{traj-data}
    \{s^l_{j\dt  }, a^l_{j\dt}, r^l_{j\dt}\}_{j=0, l = 1}^{j=I, l=L}
\end{equation}
Here, $s^{l}_{j\Delta t}$ denotes the state of the $l$-th trajectory at the time $j\Delta t$, and $a^l_{j\Delta t}$ is the action taken over the time interval $\tau \in [j\Delta t, (j+1)\Delta t)$ for the $l$-th trajectory. The reward of the $l$-th trajectory at time $j \Delta t$ is observed as $r^l_{j\Delta t } = r(s^l_{j\Delta t}, a^l_{j\Delta t})$. After the agent interacts with the environment, the next state $s^l_{(j+1)\Delta t}$ of the $l$-th trajectory is observed at time $(j+1)\Delta t$. We assume that the actions generating the trajectory data are piecewise constant in time. Nevertheless, the goal remains to learn an optimal policy that varies continuously in time, i.e., the optimal policy defined in \eqref{def of pistar}; only the data collection process assumes piecewise-constant actions.
The data may originate from a single trajectory or from multiple independent trajectories. The actions may be generated according to a policy or may come from off-policy data. In our setting, we do not assume access to an explicit form of the reward function $r(s, a)$; instead, we rely on the observed reward values associated with sampled state-action pairs.




As shown in Figure~\ref{fig:error-decomposition}, we break the problem into two parts. In the first part of the paper, from Section~\ref{sec: model-based pde} to Section~\ref{sec: lqr}, we focus on {\it discretization error}. That is, given the following discrete-time transition distribution,
\begin{equation}\label{discrete dynamics}
\rho_{\Delta t}(s'| s, a): \text{the distribution of $s_{(j+1)\dt}$ given $s_{j\Delta t} = s$, $a_{\tau} = a, \forall\tau \in [j \Delta t, (j+1)\Delta t)$}.
\end{equation}
we investigate how to approximate the solution to the optimal control problem in~(\ref{def of dynamics})--(\ref{obj}), or equivalently, the viscosity solution to the HJB equation~(\ref{def of true hjb}). When the system is governed by a standard SDE, this discrete-time transition depends on the unknown drift $b(s, a)$, diffusion $\sigma(s, a)$, and the time step $\Delta t$. The conditional distribution $p_\dt(s'| s, a)$ can also be viewed as the solution to the following Fokker-Planck equation,
\[
\partial_t \rho_{t}(s'|s, a) = \nabla_{s'} \cdot \left[ -b(s', a) \rho_{t}(s'|s, a) + \nabla_{s'} \cdot \left[\frac{1}{2} \Sigma(s', a) \rho_{t}(s'|s, a)\right] \right],
\]
at $t = \dt$ with initial density $\rho_0(s'|s, a) = \delta_{s}(s')$ for $\delta_{s} $ being the Dirac measure at $s$.

In the remainder of this section, we review two existing approaches: the PDE framework and the MDP framework. In Section~\ref{sec: model-based pde}, we give an example of the identifiability issue for the PDE framework. In Section~\ref{sec: Optimal-Bellman equation}, we state the Optimal-Bellman equation for the MDP framework, and we defer to Section~\ref{sec: lqr} to show why and when it is not the best approximation in the continuous-time LQR. 

In Section~\ref{sec:Optimal-PhiBE}, we introduce a novel approach to addressing the CTRL problem. This alternative, referred to as the \textit{Optimal-PhiBE}, is based on the PDE framework but only requires the discrete-time transition dynamics in~\eqref{discrete dynamics}. The Optimal-PhiBE provides a solution that closely approximates the optimal value function $V^{*}$ (Theorem~\ref{thm:Optimal-PhiBE}). Moreover, the policy derived from this solution serves as a good approximation of the optimal policy $\pi^{*}$ (Theorem~\ref{thm:Optimal-PhiBE policy}). We make an explicit comparison to the MDP framework for the LQR problem in Section~\ref{sec: lqr}.

In the second part of the paper, Section~\ref{sec: algo}, we present a model-free algorithm that uses discrete data to directly solve the Optimal-PhiBE, thereby approximating both the optimal value function and the optimal policy. In Section~\ref{sec: numerics}, we evaluate the performance of the proposed algorithm on LQR problems and Merton’s Portfolio Optimization Problems, demonstrating its effectiveness and practical applicability.


\subsection{Model-based optimal control}\label{sec: model-based pde}
A natural idea for solving equation~(\ref{def of true hjb}) is to estimate the dynamics, i.e., $b$ and $\sigma$, and then solve the PDE using the estimated dynamics. However, the key challenge lies in the mismatch between discrete-time data and continuous-time dynamics. In particular, there is often an \textit{non-identifiability} issue: given the discrete-time transition dynamics, there exist infinitely many continuous-time dynamics that yield the same discrete-time behavior.

Here we give an explicit example. Assume that the underlying true dynamics are deterministic and linear, 
\begin{equation}\label{eq:linear}
    ds_t = (As_t + Ba_t ) dt, \quad A = \begin{bmatrix} -1 & 0 \\ 0 & -1 \end{bmatrix}, \quad 
B = \begin{bmatrix} 1 & 0 \\ 0 & 1 \end{bmatrix}.\quad 
\end{equation}
In addition, assume that we know {a priori} that the underlying system is linear, and that we are given the discrete-time transition induced by the linear system,
\begin{equation}\label{forging lqr}
p_\dt(s,a) = e^{A\dt }s + A^{-1}(e^{A\dt}-I)B a\dt
\end{equation}
where $p_\dt(s, a)$ represents the state at time $t + \Delta t$ after taking action $a$ during the time interval $[t, t + \Delta t)$ when the state at time $t$ is $s$. Note that, given the discrete-time transition dynamics, it is equivalent to say that we are given an infinite set of trajectory data and that there is no model error in estimating the transition dynamics. However, even with such accurate information, one can still find infinitely many pairs of $(\hat{A}, \hat{B})$ that produce the same discrete dynamics $p_\dt(s, a)$ as given in \eqref{forging lqr}. For example, the following pair is one of them,
\[
\hat{A} = \begin{bmatrix} -1 & \frac{2\pi}{\dt} \\ -\frac{2\pi}{\dt} & -1 \end{bmatrix} , \quad 
\hat{B} = C^{-1}\hat{A} A^{-1}CB, \quad C =  \begin{bmatrix} e^{\dt}-1 & 0 \\ 0 & e^{\dt}-1 \end{bmatrix}.
\]
As shown in Figure~\ref{fig:model-based}, the continuous-time trajectory driven by the estimated dynamics $(\hat{A}, \hat{B})$ and a constant action $a = [1,1]^\top$ differs from the true dynamics $(A, B)$, although they coincide at $i\Delta t$. Since the final optimal policy is related to the entire continuous-time dynamics, the optimal policy derived from the incorrect model $(\hat{A}, \hat{B})$ is much worse than the true optimal policy, as shown in the right plot of Figure~\ref{fig:model-based}.

\subsection{Optimal-Bellman equation}\label{sec: Optimal-Bellman equation}
Reinforcement learning treats the continuous-time optimal control problem using the MDP framework, which is a framework for discrete-time decision process.  Therefore, to apply standard reinforcement learning algorithms, we first need to discretize the original problem in time. Fixing a time discretization scale $\Delta t$, and with a slight abuse of notation by omitting the explicit dependence on $\Delta t$ here, one can then approximate the original continuous-time problem (\ref{obj}) using the following Markov decision process (MDP) framework,
\begin{equation}\label{MDP}
\begin{aligned}
    \tilde{V}^*(s) = \max_{a_j = \pi(s_j)} \tilde{V}^\pi(s) = & \mathbb{E} \left[\sum_{j=0}^\infty\gamma^j \tilde{r}(s_j, a_j) \,\middle|\, s_0 = s\right], \\
    \text{s.t.} \quad & s_{j+1} \sim \rho_{\Delta t}(s' | s_j, a_j),
\end{aligned}
\end{equation}
where the transition probability $\rho_{\Delta t}(s'|s, a)$ is defined in \eqref{discrete dynamics}, the corresponding discounted factor is $\gamma = e^{-\beta \Delta t}$, and the discrete-time reward function is $\tilde{r}(s, a) = r(s, a) \Delta t$, where $r(s, a)$ is the original reward function from the continuous-time problem (\ref{obj}).
One can view the definition of $\tilde{V}^\pi(s)$ as a numerical integral approximation to the continuous-time value function defined in \eqref{continuous-value}.

The optimal value function defined in \eqref{MDP} can be equivalently written as the solution to the following Optimal-BE \cite{sutton2018reinforcement}, 
\begin{equation}\label{Optimal-BE}
\begin{aligned}
    \text{Optimal-BE:}\quad \tilde{V}^*(s) = \max_{a} \l\{ \tilde{r}(s, a) + \gamma\E\left[\tilde{V}^*(s_1) \middle|\, s_0 = s, a\right] \r\}.
\end{aligned}
\end{equation}
where the conditional expectation can be equivalently written as
\[
\E\left[\tilde{V}^*(s_1) \middle|\, s_0 = s, a\right] = \int \tilde{V}^*(s')  p_\dt(s' | s, a) ds'.
\]
Then the corresponding optimal policy $\tilde{\pi}^*(s)$ from MDP/Optimal-BE is 
\begin{equation}\label{def of rlpi}
    \tilde{\pi}^*(s) = \arg\max_{\pi(s)} \tilde{V}^\pi(s).
\end{equation}

Most of the popular model-free RL algorithms (e.g., \cite{1994Baird, lee2021, schulman2015trust,konda1999actor,watkins1992q}) are built upon the \textit{Optimal-Bellman equation (Optimal-BE)} mentioned above. This approach is conceptually straightforward and leads to efficient model-free RL algorithms, where a model-free algorithm means that one can obtain the optimal policy $\tilde{\pi}(s)$ without explicitly identifying the transition dynamics $p_\dt(s'|s,a)$. However, the discretization error is sensitive to all the elements in the system. We will characterize the discretization error from the Optimal-BE for the LQR problem in Section~\ref{sec: lqr} and compare it with our proposed framework. 

Fundamentally, this issue arises because the MDP framework is designed for discrete-time decision-making processes, and only the discrete-time transition dynamics are used in Optimal-BE~\eqref{Optimal-BE} or the MDP framework \eqref{MDP}. Continuous-time information is ignored here. To resolve this issue, one needs to embed this continuous-time information into the equation while still only using the discrete-time transition dynamics. The Optimal-PhiBE framework is introduced for this purpose.


\begin{remark}
The choice of the discount factor $\gamma$ and the rescaled reward $\tilde{r}(s,a)$ is not unique. For example, in \cite{1994Baird}, they approximate $\int_0^\Delta t e^{-\beta t} r(s_t) dt \approx r(s_0)\int_0^\Delta t e^{-\beta t} dt$; in \cite{de2024idiosyncrasy}, they use $e^{-\beta \Delta t} r(s_\dt)\Delta t$ instead; in \cite{doya2000reinforcement, mou2024bellman}, they use a higher-order approximation. However, only adjusting $\gamma$ and $\tilde{r}$ without using the continuous-time structure of the problem will not solve the large discretization error introduced by the MDP framework.

We show in Section~\ref{sec: lqr} that there is an optimal choice of $\gamma$ and $\tilde{r}$ in the LQR problem for the MDP framework, and our error analysis is based on this optimal choice. However, the error is still sensitive to all the elements in the system.
 
\end{remark}

\begin{figure}
	\centering
	{\includegraphics[width=0.23\textwidth]{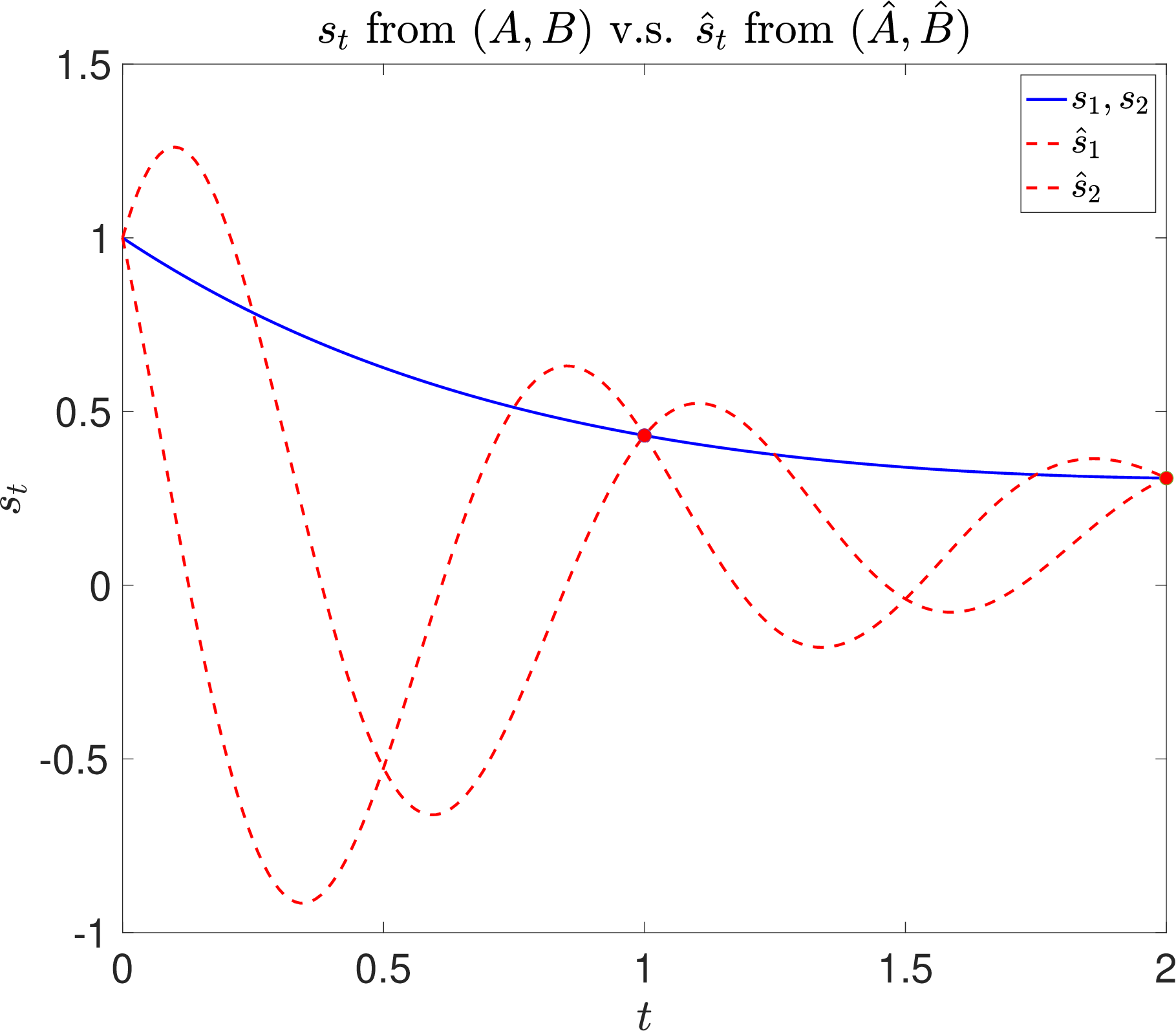}}
    {\includegraphics[width=0.23\textwidth]{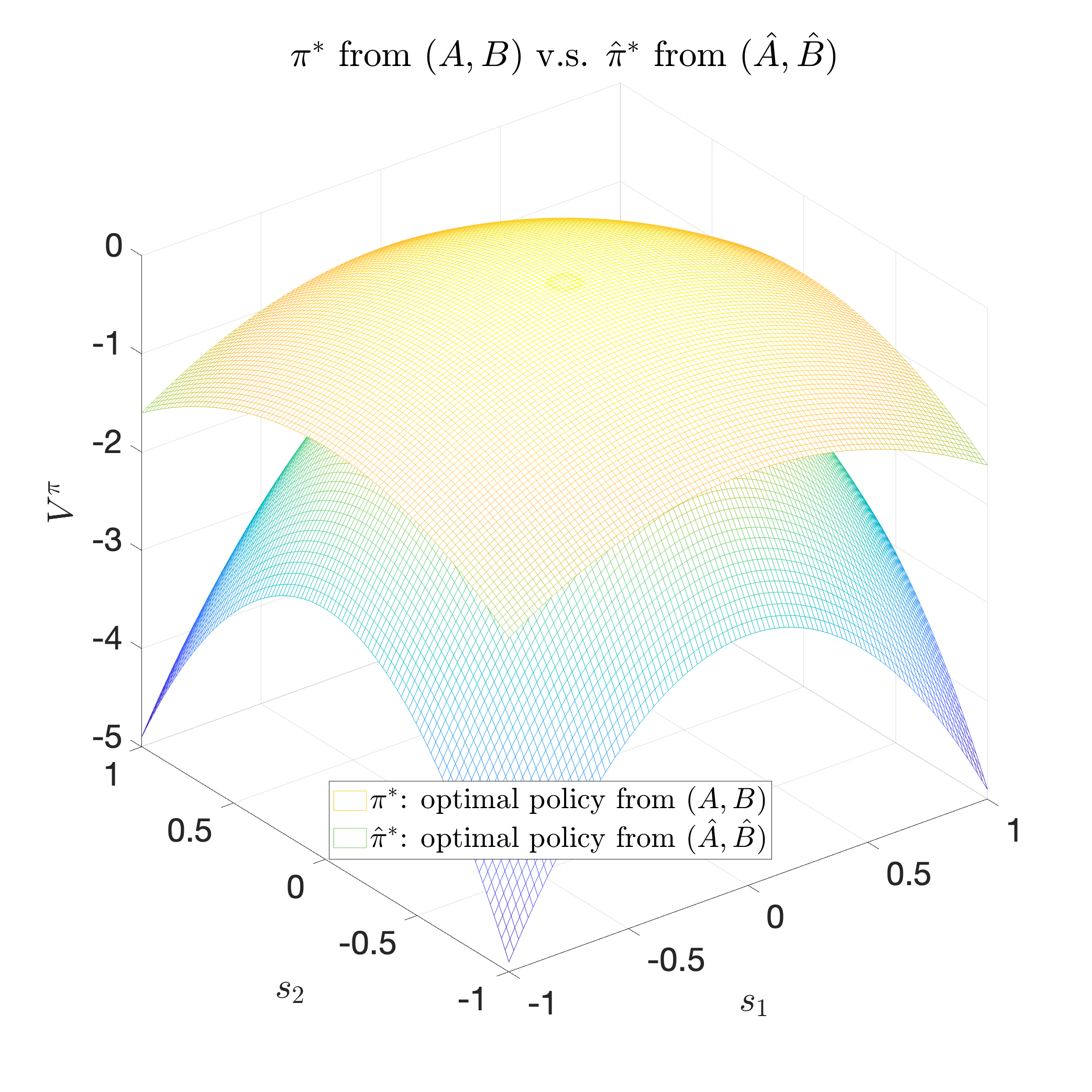}}
 \caption{Unidentifiability issue for model-based optimal control given discrete-time information. The left plot show the trajectory $s_t, \hat{s}_t$ driven by the true $(A,B)$ and the estimated $(\hat{A},\hat{B})$. The right figure compares the optimal policy obtained from the estimated dynamics with the true optimal policy, and they are measured in terms of the value function under the true dynamics.}
\label{fig:model-based} 
\end{figure}

\section{Optimal-PhiBE}\label{sec:Optimal-PhiBE}

In this section, we introduce a new equation called \textit{Optimal-PhiBE}, which uses the intrinsic structure of the SDE dynamics (\ref{def of dynamics}), and combines it with the discrete-time data to approximate the continuous-time optimal value function (\ref{obj}). 
Optimal-PhiBE builds upon the \textit{PhiBE} (Physics-informed Bellman Equation) framework proposed in \cite{zhu2024phibe}, which was originally developed as a method for policy evaluation. 
To ensure a clear understanding, we first review the PhiBE framework before introducing Optimal-PhiBE.
\subsection{Review of PhiBE}\label{subsec:phibe review}

In \cite{zhu2024phibe}, a PDE-based Bellman equation is introduced to address the continuous-time policy evaluation problem using discrete-time data. The goal is to approximate the value function under a given policy $\pi(s)$
\[
\begin{aligned}
    &V^{\pi}(s) = \mathbb{E}\left[\int_0^\infty e^{-\beta t} r^{\pi}(s_t) \, dt \, \middle| \, s_0 = s \right],\quad \text{where}\quad ds_t = b^{\pi}(s_t) \, dt + \sigma^{\pi}(s_t) \, dB_t.
\end{aligned}
\]
Here $r^{\pi}(s)=r(s,\pi(s)), b^{\pi}(s)=b(s, \pi(s))$ and $ \sigma^{\pi}(s)=\sigma(s,\pi(s))$ represent the reward, the drift and diffusion under the policy $\pi$. The value function $V^\pi(s)$ can be written equivalently as the solution to the following PDE \cite{FlemingRishel1975}, 
\begin{equation}\label{phibe_val_sto}
\beta V^{\pi}(s) = r^{\pi}(s) + b^{\pi}(s) \cdot \nabla V^\pi(s) + \frac{1}{2} \Sigma^\pi (s): \nabla^2 V^\pi(s),
\end{equation}
where $\Sigma^\pi(s) = \sigma^{\pi}(s) \sigma(s)^{\pi\top}$. 

Assume that one only has access to the discrete-time transition dynamics {$\rho^\pi_{\Delta t}(s'|s)$}, which represents the distribution of the state after $\Delta t$ given that the current state is $s$ and policy $\pi(s_t)$ is applied in $t \in [0, \Delta t)$. \footnote{Note that \cite{zhu2024phibe} assumes the policy is continuously applied, which is different from the setting in this paper, where we assume that the discrete-time trajectory data are obtained by piecewise constant action.} \cite{zhu2024phibe} proposes a new PDE-based Bellman equation that simultaneously utilizes both the continuous-time PDE structure of the problem and the discrete-time information. The new equation replaces the continuous-time drift and diffusion terms in \eqref{phibe_val_sto} with an approximation using the discrete-time information. Specifically, the $i$-th order approximations for $b^\pi(s)$ and $\Sigma^\pi(s)$ are defined as:

\begin{equation}\label{phibe def of dyn}
\hat{b}^{\pi}_i(s) = \mathbb{E}\left[\frac{1}{\Delta t} \sum_{j=1}^i \coef{i}_j (s_{j \Delta t} - s_0) \, \middle| \, s_0 = s\right],\quad \hat{\Sigma}^{\pi}_i(s) = \mathbb{E}\left[\frac{1}{\Delta t} \sum_{j=1}^i \coef{i}_j (s_{j \Delta t} - s_0)(s_{j \Delta t} - s_0)^\top \, \middle| \, s_0 = s\right],
\end{equation}
where the conditional expectation is taken over the discrete-time transition dynamics $\rho^\pi_\dt(s'|s) $. The coefficients $a_j^i$ are obtained by the Taylor expansion, and can be determined by solving,
\begin{equation}\label{def of A b}
(\coef{i}_1, \cdots, \coef{i}_i)^\top = (A^{(i)})^{-1} b^{(i)},\quad \text{with }\quad A^{(i)}_{kj} = j^k, \quad b^{(i)}_k =
\begin{cases}
1, & k = 1, \\
0, & k \neq 1,
\end{cases}, \quad \text{for }1 \leq j, k \leq i.
\end{equation}
Notably, the approximations $\hat{b}^{\pi}_i(s)$ and $\hat{\Sigma}^{\pi}_i(s)$ only rely on the discrete transition dynamics. Using these approximations, \cite{zhu2024phibe} defines the $i$-th order PhiBE as follows. 
\begin{definition}[PhiBE, \cite{zhu2024phibe}]\label{def of phibe}
The $i$-th order PhiBE is defined as:
\begin{equation}\label{phibe}
\begin{aligned}
\beta \hat{V}^\pi_i(s) = r^{\pi}(s) + \hat{b}^{\pi}_i(s) \cdot \nabla \hat{V}^\pi_i(s) + \frac{1}{2} \hat{\Sigma}^{\pi}_i(s) : \nabla^2 \hat{V}^\pi_i(s),
\end{aligned}
\end{equation}
where $\hat{b}_i^{\pi}(s)$ is given by (\ref{phibe def of dyn}), and $\hat{\Sigma}_i^{\pi}(s)$ is either zero (if $\Sigma^\pi = 0$) or given by (\ref{phibe def of dyn}) (if $\Sigma^\pi \neq 0$).
\end{definition}
It is proven in \cite{zhu2024phibe} that the solutions $\hat{V}^\pi_{i}(s)$ to PhiBE provide more accurate approximations of the solution $V^\pi(s)$ to (\ref{phibe_val_sto}) than the MDP framework, particularly when the underlying dynamics change slowly and the reward function changes quickly.

\subsection{Optimal-PhiBE}

In this section, we introduce \textit{Optimal-PhiBE}, an extension of the PhiBE framework. While PhiBE focuses on evaluating value functions for a given policy $\pi$, Optimal-PhiBE uses discrete-time transition dynamics \eqref{discrete dynamics} to approximate the optimal value function $V^{*}(s)$, as defined in (\ref{obj}), which is also the solution to the HJB equation (\ref{def of true hjb}). In addition to the value function, Optimal-PhiBE also approximates the optimal policy $\pi^*$, as defined in (\ref{def of pistar}).


Given the discrete-time transition dynamics \eqref{discrete dynamics}, one can derive approximations similar to those in (\ref{phibe def of dyn}). For instance, the first-order approximation of $b(s, a)$ is given by,
\begin{equation*}
\hat{b}_1(s, a) = \mathbb{E} \left[ \left. \frac{1}{\Delta t}(s_{\Delta t} - s_0) \right| s_0 = s, a_\tau = a \text{ for } \tau \in [0, \Delta t) \right] = \frac{1}{\Delta t}\int (s' - s)p_\dt(s'|s,a)ds',
\end{equation*}
and the first-order approximation of $\Sigma(s, a)$ is,
\begin{equation*}
\hat{\Sigma}_1(s, a) = \mathbb{E} \left[ \left. \frac{1}{\Delta t}(s_{\Delta t} - s_0)(s_{\Delta t} - s_0)^{\top} \right| s_0 = s, a_\tau = a \text{ for } \tau \in [0, \Delta t) \right]= \frac{1}{\Delta t}\int (s' - s)(s' - s)^\top p_\dt(s'|s,a)ds'.
\end{equation*}
Note that these approximations rely solely on discrete-time transition dynamics as described in (\ref{discrete dynamics}). Based on the above approximations, we can approximate the HJB equation (\ref{def of true hjb}) as,
\begin{equation}\label{standard optimal-phibe}
\beta \hat{V}_1^*(s) = \sup_{a\in\mathcal{A}} \left\{ r(s,a) + \hat{\drift}_1(s,a) \cdot \nabla \hat{V}_1^*(s) + \frac{1}{2} \hat{\Diff}_1(s,a) : \nabla^2 \hat{V}_1^*(s) \right\}.
\end{equation}
We will demonstrate in this section that the solution to this equation provides a good approximation to the optimal value function $V^*$.

Furthermore, this approach can be extended to higher-order approximations of $b(s, a)$ and $\Sigma(s, a)$. By substituting these higher-order approximations into the HJB equation, we obtain more accurate approximations to the optimal value function when $\dt$ is small. This leads to the formulation of $i$-th order Optimal-PhiBE, which provides precise approximations of the solutions to the HJB equation. The formal definition is as follows.

\begin{definition}[Optimal-PhiBE]\label{def:Optimal-PhiBE}
The $i$-th order Optimal-PhiBE is defined as follows:
\begin{equation}\label{Optimal-PhiBE}
\begin{aligned}
\beta \hat{V}_i^*(s) = \sup_{a\in\mathcal{A}} \left\{ r(s,a) + \hat{\drift}_i(s,a) \cdot \nabla \hat{V}_i^*(s) + \frac{1}{2} \hat{\Diff}_i(s,a) : \nabla^2 \hat{V}_i^*(s) \right\},
\end{aligned}
\end{equation}
where
\begin{equation}\label{def of bsig}
\hat{\drift}_i(s,a) = \mathbb{E} \left[ \left. \frac{1}{\Delta t} \sum_{j=1}^i \coef{i}_j(s_{j\Delta t} - s_0) \right| s_0 = s, a_\tau = a \text{ for } \tau \in [0,i\Delta t) \right],
\end{equation}
and
\begin{equation}\label{def of hatsig}
\begin{aligned}
\hat{\Diff}_i(s,a) =
\begin{cases}
0, & \text{if } \Sigma \equiv 0, \\
\mathbb{E} \left[ \left. \frac{1}{\Delta t} \sum_{j=1}^i \coef{i}_j(s_{j\Delta t} - s_0)(s_{j\Delta t} - s_0)^\top \right| s_0 = s, a_\tau = a \text{ for } \tau \in [0,i\Delta t) \right], & \text{if } \Sigma \neq 0.
\end{cases}
\end{aligned}
\end{equation}
Here the coefficient $\coef{i}$ are given by \eqref{def of A b}.

When $i=1$, the Optimal-PhiBE reduces to the first-order case, which we refer \eqref{standard optimal-phibe} as the \textit{standard Optimal-PhiBE}.
\end{definition}

\begin{remark}
An alternative equivalent definition of the coefficients $\coef{i}$ is given by the following system of equations:
\begin{equation}\label{def of a}
   \sum_{j=1}^i \coef{i}_j j^k = \left\{
\begin{aligned}
&0, \quad k \neq 1, \\
&1, \quad k = 1,
\end{aligned} \right. \quad \text{for } 1 \leq j, k \leq i.
\end{equation}

For higher-order Optimal-PhiBE ($i \geq 2$), for any $j=1,...,i$, we require the distribution of $s_{j\Delta t}$ given $s_0 = s$ and $a_\tau = a$ for all $\tau \in [0, i\Delta t)$. This distribution can be obtained from one step transition density $p_\dt(s'|s,a)$ iteratively using the following relation,
\[
p_{j\dt}(s'|s,a) = \int p_\dt(s'|\h{s},a)p_{(j-1)\dt}(\h{s}|s,a) d\h{s}, \quad \text{for }j \geq 2.
\]

The Optimal-PhiBE formulation is fundamentally different from numerical schemes for the HJB equation \cite{kushner1990numerical}, a distinction discussed in detail in \cite{zhu2024phibe}.
\end{remark}

If one knows that the underlying dynamics are deterministic, it is best to set $\hat{\Sigma} = 0$. If the nature of the dynamics, whether deterministic or stochastic, is unclear, one can always use $\hat{\Sigma}$ as defined in the second case of \eqref{def of hatsig}. 
In some specific cases, such as linear-quadratic regulator problems, using the deterministic version of the Optimal-PhiBE might outperform the stochastic version, even if the dynamics are stochastic. We will elaborate on this in Section \ref{sec: lqr}.


Furthermore, if we expand the Optimal-BE to second-order terms around $s$, we obtain,
\[
\hat{\beta}\t{V}^*(s)= \max_a\{ r(s,a) + \frac{\gamma}{\dt} \E[ (s_1 - s_0)\cdot \nb\t{V}^*(s) +  (s_1 - s_0)^\top \nb^2\t{V}^*(s)(s_1 - s_0) | s_0 = s, a]\}
\]
where $\hat{\beta} = \frac{1-\gamma}{\dt}$, and $\gamma = e^{-\beta \dt}$. This equation is very similar to Optimal-PhiBE \eqref{standard optimal-phibe}, with the differences in the coefficients $|\hat{\beta} - \beta| \sim O(\beta^2 \dt)$ and $|\gamma - 1| \sim O(\beta \dt)$ being small when $\beta$ or $\dt$ are small. Although the two formulations are similar, Optimal-PhiBE should not be viewed as an approximation to the Optimal-BE for the CTRL problem, since Optimal-PhiBE is directly derived from the HJB equation (Lemma~\ref{lemma:hjb-equivalent-soc} in Appendix~\ref{sec: proof}). In contrast, for discrete-time RL problems, Optimal-PhiBE can be viewed as an approximation to the Optimal-BE.

\subsection{Error Analysis of Optimal-PhiBE}


The solution of the Optimal-PhiBE (\ref{Optimal-PhiBE}), denoted by $\phibev_{i}$, provides an approximation to the optimal value function $V^{*}$, the solution to the HJB equation (\ref{def of true hjb}). To evaluate the accuracy of the Optimal-PhiBE framework, we quantify the error between $\phibev$ and $\vstar$, as described in Theorem \ref{thm:Optimal-PhiBE}.

In addition to the value function, the Optimal-PhiBE framework also produces an approximate optimal feedback policy, $\phibepi_{i}$, which is defined as:
\begin{equation}\label{def of pihatstar}
\phibepi_{i}(s) = \argmax_{a\in\mathcal{A}} \left[ r(s, a) + \hat{\drift}_i(s, a) \cdot \nabla_s \phibev_{i}(s) + \frac{1}{2} \hat{\Diff}_i(s, a) : \nabla_s^2 \phibev_{i}(s) \right],
\end{equation}
where $\phibev_{i}$ is the solution to (\ref{Optimal-PhiBE}), $\hat{b}_{i}$ and $\hat{\Sigma}_{i}$ are defined in Definition \ref{def:Optimal-PhiBE}. For the original problem (\ref{obj}), the true optimal feedback policy is denoted by $\pistar$ and defined in (\ref{def of pistar}).
Since the optimal control $\pistar$ is not necessarily unique, directly comparing $\pistar$ and $\phibepi$ may not be feasible. Instead, we assess the difference by comparing their respective induced value functions under the true dynamics (\ref{def of dynamics}). Specifically, we consider $V^{\pistar}(s)$ and $V^{\phibepi}(s)$, where, as usual, the value function under a policy $\pi$ is defined as,
\begin{equation*}
V^\pi(s) = \mathbb{E}\left[\int_0^\infty e^{-\beta t} r(s_t, \pi(s_t)) \, dt \middle| s_0 = s\right],
\end{equation*}
The above $s_t$ satisfies (\ref{def of dynamics}) with $a_t=\pi(s_t)$.


The main assumption for the $i$-th order Optimal-PhiBE to serve as a good $i$-th order approximation of the original HJB \eqref{def of true hjb} is as follows.
\begin{assumption}\label{main ass}
\begin{itemize}
    \item [a)] $r, b, \sigma, \nb_sr, \nb_sb, \nb_s\sigma$ { are uniformly bounded}.
    \item [b)] $\mL^i\drift, \mL^i \Diff, \nb_s(\mL^i\drift), \nb_s(\mL^i\Diff)$ { are uniformly bounded}, where $\mL$ is defined in \eqref{def of true hjb}.
    (We also write $\mL$ as $\mLb$ when $\Sigma \equiv 0 $ in the paper.)
    \item [c)] $h_i(s) = \mL^i(\drift s^\top) - (\mL^i b) s^\top$, $ \ll h_i(s)\rl_\infty, \ll \nb_s h_i(s)\rl_\infty$ are uniformly bounded.
\end{itemize}
\end{assumption}
Here, Assumption (a) is used to prove the Lipschitz continuity of the solution. The boundedness conditions on $\mL^i \drift$ and $\mL^i \Diff$ in Assumption (b) are crucial for estimating the distance between the true dynamics, $\drift(s, a)$ and $\Diff(s, a)$, and the corresponding PhiBE dynamics, $\hat{b}(s, a)$ and $\hat{\Sigma}(s, a)$. Additionally, the terms $\nabla_s(\mL^i \drift)$ and $\nabla_s(\mL^i \Diff)$ are used to prove the boundedness of $\nabla_s \hat{b}(s, a)$ and $\nabla_s \hat{\Sigma}(s, a)$. Assumption (c) is a technical assumption introduced to bound the distance between $\Diff$ and $\hat{\Sigma}$. A sufficient condition for both Assumptions (b) and (c) to hold is the uniform boundedness of $\nabla_s^j \drift$ and $\nabla_s^j \Diff$ for $0 \leq j \leq 2i + 1$.

Compared to the original PhiBE work in \cite{zhu2024phibe}, which focused on the non-degenerate case of $\Diff$, this work extends the $i$-th order approximation results to handle degenerate $\Diff$. Additionally, while the original work provided an error estimate in the weighted $L^2$ norm, we offer an estimate in the stronger $L^\infty$ norm with fewer assumptions.

Now we are ready to carry out the main results from the two perspectives mentioned above.

\subsubsection{Perspective 1: Error analysis in terms of optimal value function}
Theorem \ref{thm:Optimal-PhiBE} provides an estimate of the distance between the true optimal value function, $\vstar$, and the solution to the Optimal-PhiBE, $\phibev_i$.
\begin{theorem}\label{thm:Optimal-PhiBE}
Under Assumption \ref{main ass}, $\beta$ is large enough such that $L_\beta$ is positive and $i\dt\leq 3$, one has
\[
\ll \phibev_i - \vstar \rl_\infty \leq \frac{C_i\ll \nb_s r \rl_\infty}{\beta L_\beta}\l[\ll \mL^i b \rl_\infty +  6\sqrt{dL_\beta+d\|\nabla_xb\|_\infty+d^2\|\nabla_x\sigma\|_\infty^2}\l(\ll\mL^i \Sigma \rl_\infty + \ll h_i\rl_\infty + 3\ll\drift\rl_\infty\r) \r]\dt^i
\]
where $\mL, h_i$ are defined in \eqref{def of true hjb} and Assumption \ref{main ass}/(c).
\begin{equation}\label{def of Lbeta}
L_\beta =\l\{\begin{aligned}
    &\beta - \ll \nb_s b \rl_\infty, \quad \text{for constant }\sigma(s,a)\\
    &\beta - \ln2/\tau,\text{ with $\tau$ defined in~\eqref{3.4} in Lemma~\ref{lemma: lipcts} in Appendix~\ref{proof3_4}, \quad for non-constant }\sigma(s,a).
\end{aligned} \r.
\end{equation}
Specially, when $\Sigma \equiv 0$ (deterministic dynamics) with Assumption \ref{main ass}/(a), (b), and $\ll \nb_s b \rl_\infty < \beta$, one has 
\[
\ll \phibev_i - \vstar \rl_\infty \leq \frac{C_i\ll \nb_s r\rl_\infty\ll \mLb^i \drift \rl_\infty }{\beta(\beta - \ll \nb_s b \rl_\infty)}\dt^i
\]
where $\mLb$ is defined in \eqref{def of true hjb}, $C_i = 4\hci$ is a constant only depending on the order $i$ with $\hci$ defined in \eqref{def of hci} in Lemma~\ref{proof of lemma: diff} in Appendix~\ref{sec: proof}.

\end{theorem}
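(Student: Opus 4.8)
The plan is to regard the Optimal-PhiBE \eqref{Optimal-PhiBE} and the HJB equation \eqref{def of true hjb} as two second-order Bellman equations that differ only in their coefficients, $(\hat{\drift}_i,\hat{\Diff}_i)$ versus $(\drift,\Diff)$, and to propagate the coefficient error into an error on the solutions. \emph{Coefficient estimates.} First I would bound $\ll\hat{\drift}_i-\drift\rl_\infty$ and $\ll\hat{\Diff}_i-\Diff\rl_\infty$. Applying Dynkin's formula iteratively along \eqref{def of dynamics} with the action frozen at $a$, to the maps $x\mapsto x$ and $x\mapsto(x-s)(x-s)^\top$, gives finite Taylor-type expansions in powers of $\dt$; since the weights $\coef{i}$ were chosen in \eqref{def of a} to annihilate $j^k$ for $2\le k\le i$, all intermediate terms cancel and one is left with $\hat{\drift}_i=\drift+O(\dt^i)$ and $\hat{\Diff}_i=\Diff+O(\dt^i)$, the $\drift$-remainder governed by $\mL^i\drift$ and the $\Diff$-remainder by $\mL^i\Diff$, by the cross term $h_i$ of Assumption~\ref{main ass}(c) (which records the failure of $\mL$ to obey a product rule on $(x-s)(x-s)^\top$), and by $\drift$ itself; this yields $\ll\hat{\drift}_i-\drift\rl_\infty\le\hci\ll\mL^i\drift\rl_\infty\dt^i$ and $\ll\hat{\Diff}_i-\Diff\rl_\infty\le\hci(\ll\mL^i\Diff\rl_\infty+\ll h_i\rl_\infty+3\ll\drift\rl_\infty)\dt^i$ under Assumption~\ref{main ass}(b)--(c). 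Differentiating the same expansions in $s$ and using the $\nb_s(\mL^i\cdot)$ bounds also shows $\nb_s\hat{\drift}_i,\nb_s\hat{\Diff}_i$ are bounded, with constants close to those of $\drift,\diff$ when $\dt$ is small.

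\emph{Lipschitz regularity and the deterministic case.} Next I would show $\vstar$ and $\phibev_i$ are globally Lipschitz with constant $\lesssim\ll\nb_s r\rl_\infty/L_\beta$: couple two copies of the relevant SDE from nearby initial points under a common Brownian motion and a common control and apply Gronwall, the spreading rate of the coupled pair being exactly what enters $L_\beta$ (namely $\beta-\ll\nb_s\drift\rl_\infty$ for constant $\diff$, and $\beta-\ln2/\tau$ with $\tau$ as in \eqref{3.4} in general), which forces $L_\beta>0$. Subtracting \eqref{Optimal-PhiBE} from \eqref{def of true hjb} and using $\sup_a f_a-\sup_a g_a\le\sup_a(f_a-g_a)$, the function $w=\phibev_i-\vstar$ satisfies, in the viscosity sense, a linear second-order inequality with inhomogeneity $\sup_a\big[(\hat{\drift}_i-\drift)\cdot\nb\vstar+\tfrac12(\hat{\Diff}_i-\Diff):\nb^2\vstar\big]$. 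When $\Diff\equiv0$ the second-order inhomogeneity is absent, so evaluating $w$ at its maximum ($\nb w=0$ there) gives $\beta\ll w^+\rl_\infty\le\ll\hat{\drift}_i-\drift\rl_\infty\ll\nb\vstar\rl_\infty$; combining with the previous step and the symmetric bound for $w^-$ (using $\ll\nb\phibev_i\rl_\infty$ and the Lipschitz constant of $\hat{\drift}_i$, which is $\le\ll\nb_s\drift\rl_\infty$ up to $O(\dt^i)$) yields exactly $\ll\phibev_i-\vstar\rl_\infty\le\frac{C_i\ll\nb_s r\rl_\infty\ll\mLb^i\drift\rl_\infty}{\beta(\beta-\ll\nb_s\drift\rl_\infty)}\dt^i$.

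\emph{The stochastic case.} The first-order term is handled as above, but the second-order inhomogeneity $\tfrac12(\hat{\Diff}_i-\Diff):\nb^2\vstar$ cannot be bounded pointwise because $\vstar$ is only Lipschitz. To bypass the missing Hessian bound I would pass to the stochastic representation: $\phibev_i$ is the value function of the control problem driven by $(\hat{\drift}_i,\hat{\Diff}_i)$, so $\ll\phibev_i-\vstar\rl_\infty\le\sup_{\alpha}\big|\E\int_0^\infty e^{-\beta t}\big(r(\hat X_t,\alpha_t)-r(X_t,\alpha_t)\big)\,dt\big|$, where $\hat X$ and $X$ solve the PhiBE SDE and the true SDE \eqref{def of dynamics} under the \emph{same} control process $\alpha$ and the same Brownian motion; then $|r(\hat X_t,\cdot)-r(X_t,\cdot)|\le\ll\nb_s r\rl_\infty|\hat X_t-X_t|$, and an It\^o/Gronwall estimate for $\E|\hat X_t-X_t|^2$ — the drift mismatch entering through $\ll\hat{\drift}_i-\drift\rl_\infty$ and the diffusion mismatch through $\ll\hat{\Diff}_i-\Diff\rl_\infty$ once one passes from the square roots to the trace distance — weighted against $e^{-\beta t}$ produces the stated bound, the second-moment growth rate appearing as the quantity under the square root, $dL_\beta+d\ll\nb_s\drift\rl_\infty+d^2\ll\nb_s\diff\rl_\infty^2$.

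\emph{Main obstacle.} The delicate point is exactly the stochastic part of the last step: recovering the correct order $\dt^i$ (rather than $\dt^{i/2}$) from the diffusion mismatch while $\vstar$ is merely Lipschitz. This is what forces the structural corrections $\mL^i\Diff$ and $h_i$ and the dimension-dependent factor $\sqrt{dL_\beta+d\ll\nb_s\drift\rl_\infty+d^2\ll\nb_s\diff\rl_\infty^2}$ into the estimate, and it also requires verifying that $\hat{\Diff}_i$ is admissible as a diffusion coefficient (positive semidefinite — automatic for $i=1$, being a conditional second moment) so that both the representation and the comparison are legitimate; making the constants clean and explicit in $d$, $\beta$, and the Lipschitz data is the bulk of the work.
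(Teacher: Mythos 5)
Your first two steps and the deterministic case track the paper closely: the coefficient bounds are the paper's Lemma \ref{lemma: diff} (a Dynkin/Taylor expansion in which the weights $\coef{i}_j$ cancel the intermediate powers of $j$, leaving remainders governed by $\mL^i b$, $\mL^i\Sigma$, $h_i$ and $b$), the Lipschitz bound via synchronous coupling, Gronwall and Burkholder--Davis--Gundy is Lemma \ref{lemma: lipcts}, and your ``evaluate $w$ at its maximum'' argument is the informal version of the comparison/stability estimate that the paper carries out rigorously with a localization function and viscosity touching (Lemma \ref{lemma: stability}). Up to the constant $C_i=4\hci$, the deterministic bound comes out exactly as you describe.

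The gap is in the stochastic case, which is the substance of the theorem, and it sits precisely where you flag it. Your proposed route --- represent $\hat V^*_i$ as the value of the control problem driven by $(\hat b_i,\hat\Sigma_i)$, couple $\hat X$ and $X$ through the same Brownian motion and control, and run an It\^o/Gronwall estimate on $\E|\hat X_t-X_t|^2$ --- makes the diffusion mismatch enter through $\|\hat\sigma_i-\sigma\|_\infty$, and a bound $\|\hat\Sigma_i-\Sigma\|_\infty=O(\Delta t^i)$ only yields $\|\hat\sigma_i-\sigma\|_\infty=O(\Delta t^{i/2})$ in general (the matrix square root is not Lipschitz near degenerate matrices), so the coupling gives at best order $\Delta t^{i/2}$. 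Worse, for $i\ge 2$ the weights $\coef{i}_j$ have mixed signs, so $\hat\Sigma_i$ is a signed combination of second moments and need not be positive semidefinite; the representing SDE you want may simply not exist. The paper sidesteps both problems by staying entirely on the PDE side: in Lemma \ref{lemma: stability} it doubles variables, invokes the Crandall--Ishii lemma to produce matrices $X_1\le X_2$ with $|X_2|\le 6\rho$, tests the resulting matrix inequality against the \emph{true} $\sigma$ only, and absorbs the replacement of $\Sigma$ by $\hat\Sigma$ through the single term $\tfrac{d}{2}\|\Sigma-\hat\Sigma\|_\infty|X_2|\le 3\rho d\,\eps_\Sigma$; the dimension-dependent square-root factor in the theorem then emerges from optimizing the doubling parameter $\rho$, not from any moment estimate, and no square root of $\hat\Sigma_i$ is ever taken. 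Without this device (or an equivalent one, such as a genuine second-derivative bound on $V^*$, which Assumption \ref{main ass} does not provide), your outline does not close: the sentence ``making the constants clean and explicit \dots is the bulk of the work'' is concealing the one step that actually requires a new idea.
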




The error estimate in the above theorem is similar to that of the policy evaluation problem in \cite{zhu2024phibe} in two key aspects. First, although the error still depends on the oscillation of the reward function in the state space, $\ll \nabla_s r \rl_\infty$, the impact of reward variation can be mitigated if the dynamics change slowly in the state space. This effect is particularly evident in the case of deterministic dynamics. Specifically, in the deterministic case, $\mathcal{L}_b^i b = \frac{d^{i+1}}{dt^{i+1}}s_t$, 
which measures the rate of change of the dynamics over time. Second, for the $i$-th order Optimal-PhiBE, the error is $O(\dt^i)$. 

On the other hand, the proof technique used to prove the above theorem differs from the one employed in \cite{zhu2024phibe}. This new framework introduces several improvements. First, it does not require smoothness in the action space or the optimal policy, making the theorem applicable to more general settings, including both continuous and discrete action spaces. Second, the theorem extends to degenerate diffusion terms, unifying deterministic and stochastic dynamics. As a result, there is no need to differentiate between deterministic and stochastic dynamics, as the stochastic Optimal-PhiBE framework can be used for all types of environments.


The complete proof is presented in Appendix~\ref{proof3_4}.

\subsubsection{Perspective 2: Approximation of the optimal policy}

The following theorem quantifies the difference between the optimal policy $\pistar$ and the optimal policy $\phibepi_i$ from Optimal-PhiBE, $\phibepi_i$, where $\pistar$ is defined in \eqref{def of pistar} and $\phibepi_i$ is defined in (\ref{def of pihatstar}).


The difference between the two feedback policies is measured by the value function under the true dynamics. That is the true optimal value function $V^*(s)$ and the value function $V^{\phibepi_i}$ under $\phibepi_i$, which is also the solution to the following PDE,
\beq\lb{4.1}
\beta V(s) = r(s, \phibepi_i(s)) + b(s, \phibepi_i(s)) \cdot \nabla V(s) + \frac{1}{2} \Sigma(s, \phibepi_i(s)) : \nabla^2 V(s).
\eeq
\begin{theorem}\label{thm:Optimal-PhiBE policy}
Let Assumption \ref{main ass} hold, and further assume that the optimal policy $\pihatstar(s)$ is measurable, then
\[
\begin{aligned}
    \sup_{s\in \R^d}\ll \vstar(s)-V^{\phibepi_i}(s) \rl_\infty\leq  &\frac{2C_i\ll \nb_s r \rl_\infty}{\beta L_\beta}\l[\ll \mL^i b \rl_\infty \r.\\
    &\l.+  6\sqrt{dL_\beta+d\|\nabla_sb\|_\infty+d^2\|\nabla_s\sigma\|_\infty^2}\l(\ll\mL^i \Sigma \rl_\infty + \ll h_i\rl_\infty + 3\ll\drift\rl_\infty\r) \r]\dt^i
\end{aligned}
\]
Specially, when $\Sigma \equiv 0$ (deterministic dynamics) with Assumption \ref{main ass}/(a), (b), and $\ll \nb_s b \rl_\infty < \beta$, one has 
\[
\ll \phibev_i - \vstar \rl_\infty \leq \frac{2C_i\ll \nb_s r\rl_\infty\ll \mLb^i \drift \rl_\infty }{\beta(\beta - \ll \nb_s b \rl_\infty)}\dt^i
\]
with $C_i, L_\beta, h_i, \mLb$ being the same as Theorem \ref{thm:Optimal-PhiBE}.
\end{theorem}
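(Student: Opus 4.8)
The plan is to reduce the policy-approximation bound to the value-function bound of Theorem \ref{thm:Optimal-PhiBE} via a sandwiching (comparison) argument, exploiting the fact that $\phibepi_i$ is, by construction, the optimizer in the Optimal-PhiBE equation \eqref{Optimal-PhiBE}. The key chain of inequalities is $V^{\phibepi_i} \leq V^* \leq \phibev_i + \|\phibev_i - V^*\|_\infty$ on one side, and $V^{\phibepi_i} \geq \phibev_i - (\text{something of order }\dt^i)$ on the other, so that $\|V^* - V^{\phibepi_i}\|_\infty$ is controlled by twice the right-hand side of Theorem \ref{thm:Optimal-PhiBE}. The first inequality $V^{\phibepi_i}\le V^*$ is immediate since $V^*$ is the supremum over all policies. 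The substance is the reverse bound.

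First I would write down the PDE \eqref{4.1} satisfied by $W := V^{\phibepi_i}$, namely $\beta W = r(s,\phibepi_i) + b(s,\phibepi_i)\cdot\nabla W + \tfrac12 \Sigma(s,\phibepi_i):\nabla^2 W$, and compare it to the equation obtained by plugging the \emph{same} policy $\phibepi_i$ into the Optimal-PhiBE operator. By the definition \eqref{def of pihatstar} of $\phibepi_i$ as the argmax, $\phibev_i$ solves $\beta \phibev_i = r(s,\phibepi_i) + \hatb_i(s,\phibepi_i)\cdot\nabla\phibev_i + \tfrac12\hatsig_i(s,\phibepi_i):\nabla^2\phibev_i$ — i.e. the supremum is attained at $\phibepi_i$, so this is an equality, not just an inequality. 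Now both $W$ and $\phibev_i$ solve \emph{linear} (in the unknown) elliptic equations with the fixed coefficient functions $b(\cdot,\phibepi_i(\cdot)), \Sigma(\cdot,\phibepi_i(\cdot))$ versus $\hatb_i(\cdot,\phibepi_i(\cdot)), \hatsig_i(\cdot,\phibepi_i(\cdot))$ and the \emph{same} reward $r(\cdot,\phibepi_i(\cdot))$. Here I would invoke a stability estimate for linear elliptic equations of the same flavor as Lemma \ref{lemma: stability} — in fact a special case of it, with $\eps_r = 0$, $\eps_b = \|\hatb_i - b\|_\infty$, $\eps_\Sig = \|\hatsig_i - \sigma\|_\infty$ — together with the coefficient bounds from Lemma \ref{lemma: diff} ($\eps_b \leq \hci\|\mL^i b\|_\infty\dt^i$ and the corresponding $\eps_\Sig$ bound), to conclude $\|W - \phibev_i\|_\infty \leq \tfrac{2}{\beta}(\tilde C\eps_\Sig + 2L\eps_b)$. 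One subtlety: Lemma \ref{lemma: stability} is stated for the nonlinear HJB equation with a $\sup_a$, but the fixed-policy equation is the degenerate case where $\A$ is a singleton; the same proof applies, and I would need the Lipschitz constant $L$ of $W$, which follows from Lemma \ref{lemma: lipcts} applied to the fixed-policy problem (the drift $b(\cdot,\phibepi_i(\cdot))$ and diffusion $\sigma(\cdot,\phibepi_i(\cdot))$ inherit the uniform Lipschitz bounds in Assumption \ref{main ass}/(a) provided $\phibepi_i$ is measurable, which is the extra hypothesis in the theorem).

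Combining, $\|V^* - V^{\phibepi_i}\|_\infty \leq \|V^* - \phibev_i\|_\infty + \|\phibev_i - V^{\phibepi_i}\|_\infty$, and both terms are bounded by the right-hand side of Theorem \ref{thm:Optimal-PhiBE} (with the constants $C_i$, $L_\beta$ accounting for the two cases of constant vs.\ non-constant $\sigma$), yielding the stated factor of $2$. The deterministic case $\Sigma\equiv 0$ follows identically, using the second part of Lemma \ref{lemma: stability}/\ref{lemma: lipcts} with $\eps_\Sig = 0$. The main obstacle I anticipate is the measurability/regularity bookkeeping for the fixed-policy coefficients: $\phibepi_i$ is only assumed measurable, not continuous, so the PDE \eqref{4.1} must be interpreted in the viscosity (or Sobolev) sense and one must verify that the comparison principle and the Lipschitz estimate of Lemma \ref{lemma: lipcts} still go through with merely measurable — but uniformly bounded and, crucially, \emph{Lipschitz in $s$ uniformly in the frozen action} — coefficients. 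Assumption \ref{main ass}/(a) is exactly what makes $s \mapsto b(s,\phibepi_i(s))$ Lipschitz regardless of how irregular $\phibepi_i$ is, so this should go through, but it is the step that requires care rather than routine calculation.
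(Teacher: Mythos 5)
Your overall architecture is the same as the paper's: the triangle inequality through $\phibev_i$, the first piece handled by Theorem \ref{thm:Optimal-PhiBE} (i.e.\ Lemma \ref{lemma: stability} with the coefficient errors from Lemma \ref{lemma: diff}), the second piece being a stability estimate between $\phibev_i$ and $V^{\phibepi_i}$, and the resulting factor of $2$. The gap is in how you execute the second piece. You claim that Assumption \ref{main ass}/(a) makes $s \mapsto b(s,\phibepi_i(s))$ Lipschitz ``regardless of how irregular $\phibepi_i$ is.'' This is false: $b$ is Lipschitz in $s$ \emph{uniformly in $a$}, but composing with a merely measurable selector destroys continuity (take $b(s,a)=a$ and $\phibepi_i(s)=\mathrm{sign}(s)$). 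Consequently (i) you cannot apply Lemma \ref{lemma: lipcts} to the frozen-policy problem to extract a Lipschitz constant for $W=V^{\phibepi_i}$, and (ii) the ``singleton-$\A$'' reading of Lemma \ref{lemma: stability} does not apply: the singleton $\{\phibepi_i(s)\}$ is state-dependent, and the doubling-of-variables proof of that lemma crucially evaluates both Hamiltonians at the two points $s^1,s^2$ with the \emph{same} action $a_1$, so that only $|r(s^1,a_1)-r(s^2,a_1)|\leq \ll\nb_s r\rl_\infty|s^1-s^2|$ is needed. With a frozen measurable policy you would instead face $|r(s^1,\phibepi_i(s^1))-r(s^2,\phibepi_i(s^2))|$, which is not controlled by $|s^1-s^2|$.

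The paper's proof (Section \ref{proof of optimal policy}) repairs exactly these two points, which is why it reruns the Crandall--Ishii argument rather than citing Lemma \ref{lemma: stability} off the shelf. First, it keeps the $\sup_a$ structure on the $\hat V^*_i$ side and uses the frozen policy only on the $U=V^{\phibepi_i}$ side, so that a single action $\hat a$ appears in both Hamiltonians being subtracted and only the $s$-regularity of $r,b,\sigma$ at a fixed action enters. Second, it notes explicitly that the doubling argument only requires \emph{one} of the two functions to be Lipschitz, and it uses the Lipschitz constant of $\hat V^*_i$ — available from Lemma \ref{lemma: lipcts} applied to the Optimal-PhiBE equation, whose coefficients $\hatb_i,\hatsig_i$ are Lipschitz in $s$ uniformly in $a$ by Lemma \ref{lemma: diff} — never that of $V^{\phibepi_i}$. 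Your proposal is repairable along these lines (replace $L(W)$ by $L(\phibev_i)$ and abandon the singleton-reduction in favor of the one-sided sup-form comparison), but as written the central step rests on a claim that is not true.
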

The proof of the above theorem is provided in Appendix~\ref{proof of optimal policy}. The error bound for the optimal policy is similar to that of the value function. We believe this bound can be sharpened in specific cases.

In the following section, we derive a sharper error estimate specific to the LQR system. Compared to the above estimate for general dynamics, Section \ref{sec: lqr} provides improved precision with respect to the discount coefficient $\beta$ and the oscillation in the action space.


\section{linear quadratic regulator (LQR)}\label{sec: lqr}

\subsection{The problem setting}
Consider the following LQR problem,
\begin{equation}\label{true-lqr} 
\begin{aligned}
    V^*(s) = &\max_{a_t = \pi(s_t)} \E\l[\int_0^\infty e^{-\beta t} \l(s_t^\top Q s_t + a_t^\top R a_t \r) dt  | s_0 = s \r]\\
    s.t. \quad &ds_t = (As_t + Ba_t )dt + \sigma dB_t,\ \text{with $B_t$ a scalar Wiener process.}
\end{aligned}
\end{equation}
Here, $s_t \in \mathbb{R}^d$ and $a_t \in \mathbb{R}^m$ denote the state and action vectors, respectively, while $A, Q \in \mathbb{R}^{d \times d}$, $B \in \mathbb{R}^{d \times m}$, and $R \in \mathbb{R}^{m \times m}$ are matrices. The constant $\sigma \in \mathbb{R}$ represents the noise level. Although the deterministic LQR problem corresponds to the case $\sigma \equiv 0$, where there is no stochasticity, we retain the expectation operator $\mathbb{E}$ in the value function definition to maintain a consistent notation throughout the paper. Moreover, unlike previous sections of the paper where we assume $\beta > 0$, we also consider the case $\beta = 0$ for the deterministic LQR setting. In fact, the standard LQR problem corresponds to the case where both $\beta = 0$ and $\sigma = 0$.

In this section, we focus on the following problem: given that the continuous-time dynamics are unknown, and only the discrete-time transition distribution $p_\dt(s'| s, a)$, the reward function $r(s, a)$, and the discount coefficient $\beta$ are available, what is the discretization error associated with the Optimal-BE and Optimal-PhiBE? Here, $p_\dt(s'| s, a)$ denotes the distribution of the state at time $\Delta t$ given that the initial state is $s$ and a constant action $a$ is applied over the interval $[0, \Delta t)$. Importantly, in this section, we not only lack access to the exact continuous-time dynamics, but we also make no assumption that the underlying system is linear.

By solving the linear SDE for the dynamics, the discrete-time transition distribution $p_{\dt}(s'|s,a)$ driven by the continuous-time dynamics \eqref{true-lqr} is given by
\begin{equation}\label{def of rho lqr-stoch}
    p_{\dt}(s'|s,a) \sim \mN\l((\ha{1}\dt+I)s + \hb{1}a\dt, \sigma^2C_A\dt\r),\quad \text{with}\quad C_A = \frac{1}{\dt}\int_{0}^{\Delta t}e^{A(\Delta t-s)}e^{A^{\top}(\Delta t-s)}\,ds,
\end{equation}
which represents the distribution of the state at time $t + \dt$, given that action $a$ is applied constantly over the interval $[t, t + \dt)$ from the state $s$ at time $t$. Here
\begin{equation}\label{def of ha1 hb1}
\ha{1}  = \frac1\dt(e^{A\dt} - I),\quad \hb{1} =  A^{-1}\ha{1}B.
\end{equation}
Note that $C_A$ can be simplified to $C_A = \tfrac{1}{2\dt}A^{-1}(e^{2A\dt} - I)$ when $A$ is symmetric and invertible. Note that for the deterministic LQR (when $\sigma = 0$), $p_{\dt}(s'|s,a)$ becomes a deterministic mapping $p_{\dt}(s,a)$, which can be written as
\begin{equation}\label{def of lqr transition}
    s_{\dt} = p_{\dt}(s,a) =(\ha{1}\dt + I)s+ \hb{1}a\dt.
\end{equation}

We will first review the optimal value function $V^*(s)$ and optimal policy $\pi^*(s)$ for the above LQR problem in Section \ref{sec: lqr-known-dyn}. In Section \ref{sec: approx-lqr}, we will discuss, given the discrete-time transition density (\ref{def of rho lqr-stoch}), what the optimal policies induced from Optimal-BE and Optimal-PhiBE look like. Finally, in Section \ref{sec:error}, we will analyze the discretization error and discuss why, and under what conditions, Optimal-PhiBE provides a better approximation to the LQR problem.

\subsection{LQR with known dynamics}\label{sec: lqr-known-dyn}
When the dynamics are known, by the classical LQR theorem \cite{anderson2007optimal, pham2009continuous}, under the following assumptions, the LQR problem \eqref{true-lqr} admits a unique optimal control. 
\begin{assumption}[Wellposedness for LQR]\label{ass: lqr}
For $Q,R,A,B\in\R^{d\times d}, \beta\geq0$, we assume that 
    \begin{itemize}
        \item [a)] $(A - \beta/2, B)$ are stabilizable, $(A-\beta/2, Q)$ are detectable\footnote{We do not reiterate the formal definitions of stabilizability and detectability for conciseness, as our theoretical development does not depend explicitly on them. However, we assume these conditions hold for all LQR systems considered in this paper.}.
        \item [b)] Both $Q, R$ are negative definite.
        \item [c)] The optimal value function $V^*$ is quadratic growth.
        \item [d)] The optimal policy $\pi^*(s)$ induces a unique solution to the SDE in \eqref{true-lqr}.
    \end{itemize} 
\end{assumption}
Under the above assumptions, the solution to \eqref{true-lqr} can be equivalently written as the unique solution to the following HJB equation as follows,
\begin{equation}\label{eq:HJB-lqr}
    \beta V^*(s) = \max_{a} [s^\top Q s + a^\top R a + (As+Ba)\cdot\nb_s V^*(s)] + \frac{\sigma^2}2 \Delta V^*(s),
\end{equation} 
and the optimal policy is given by,
\[
\pi^*(s) = \argmax_a [s^\top Q s + a^\top R a + (As+Ba)\cdot\nb_s V^*(s)].
\]
It is well known that the solution to the standard LQR problem (i.e., $\beta = 0$) is given by the algebraic Riccati equation \cite{anderson2007optimal}. For completeness, we provide the Riccati equation for the case $\beta > 0$, along with the explicit solution in the one-dimensional setting in Proposition \ref{prop:true}. 
\begin{proposition}\label{prop:true}
Under Assumption \ref{ass: lqr}, the optimal value function $V^*(s) = s^\top P s + \frac{\sigma^2}{\beta}\text{Tr}(P)$, and the optimal policy $\pi^*(s) = Ks$ with $K = -R^{-1}B^\top P $, where $P$ is the unique negative definite matrix that satisfies
\begin{equation}\label{lqr-true-solu}
    \beta  P  =  Q - P B R^{-1}B^\top P   + A^\top P + PA
\end{equation}
When $d = 1$, the optimal policy are $\pi^*(s) = Ks$ with
\begin{equation}\label{lqr-true-solu-1d}
    K = \frac{(\beta/2 - A) - \sqrt{(\beta/2 - A)^2 + \frac{QB^2}{R}}}{B}   .    
    \end{equation}
\end{proposition}
The proof of the Proposition is given in Appendix~\ref{proof of prop: true}.
\begin{remark}
Note that for the case when $\sigma = \beta = 0$, the solution $V^*(s)$ is unique up to a constant, so we add an additional condition that $V^*(0) = 0$ for $\beta = 0$ to ensure the wellposedness of the HJB equation \eqref{eq:HJB-lqr} when $\beta = 0$. However, even without this additional condition, the optimal linear policy $\pi^*(s)$ is still unique because it only depends on $\nb_s V^*(s)$. 

Assumption \ref{ass: lqr} guarantees the existence and uniqueness of the negative definite matrix solution to the Ricatti equation.
\end{remark}

\subsection{Approximations given discrete-time transition}\label{sec: approx-lqr}
In order to unify the symbols, we define 
\begin{equation}\label{def of ha hb}
\ha{i}  = \frac1\dt\sum_{j=1}^i \coef{i}_j (e^{Aj\dt} - I),\quad \hb{i} =  A^{-1}\ha{i}B,
\end{equation}
with $\coef{i}$ defined in \eqref{def of a}.

We begin by formulating the Optimal-PhiBE and Optimal-BE for the LQR problem. Directly inserting the transition distribution (\ref{def of rho lqr-stoch}) into the Optimal-BE \eqref{Optimal-BE} and Optimal-PhiBE \eqref{Optimal-PhiBE} results in the following two equations,
\begin{equation}\label{phibe-lqr}
    \begin{aligned}
      \text{{Optimal-PhiBE}:} \quad \beta \hat{V}^*_i(s) = \max_a[s^\top Q s + a^\top R a + \hat{b}_i(s,a)\cdot \nb \hat{V}^*_i(s)].
\end{aligned}
\end{equation}
\begin{equation}\label{rl-lqr} 
    \text{{Optimal-BE}:} \quad \t{V}^*(s) = \max_{a} \l[(s^\top \t{Q} s  + a^\top \t{R} a) + \gamma \int_\S \t{V}^*(s') p_\dt(s'|s,a)\,ds'\r],
\end{equation}

For the Optimal-PhiBE, it is worth noting that we use the deterministic formulation to approximate both stochastic and deterministic LQR problems. This choice is justified by Proposition \ref{prop:true}, which shows that the optimal policy $\pi^*(s)$ in the LQR setting is independent of the noise level $\sigma$. Therefore, we treat both stochastic and deterministic LQR as deterministic systems in our analysis. However, it is important to note that while the underlying dynamics are treated as deterministic, the discrete-time transition distribution available to us remains stochastic.


For the Optimal-BE, the default and natural choices of $\t{Q}, \t{R}, \gamma$ are
\begin{equation}\label{default-q-r-gamma}
    \t{Q} = Q\dt, \quad \t{R} = R\dt, \quad \gamma = e^{-\beta \dt}.
\end{equation}
However they can be different depending on different approximation methods. For example, if 
one approximates $\int_0^\dt e^{-\beta t}r(s_t,a_t) dt $ using $r(s_0,a_0)\int_0^\dt e^{-\beta t} dt$, then 
\[
\t{Q} = \frac1\beta(1-\gamma-\beta \dt)Q,\quad \t{R} = \frac1\beta(1-\gamma-\beta\dt)R.
\]
In the general setting, it is unclear which provides a better approximation. However, there is an optimal $\t{Q}, \t{R}, \gamma$ for the LQR problem, and we will discuss it right after Theorem \ref{thm: approx-lqr} in this section. For now, we use $\tilde{Q}$, $\tilde{R}$, and $\gamma$ to denote arbitrary choices.

Note that both formulations, Optimal-PhiBE and Optimal-BE, operate without assuming that the underlying control problem is an LQR. They rely only on discrete-time information, namely (\ref{def of rho lqr-stoch}). As shown in Theorem \ref{thm: approx-lqr}, when the discrete-time transition dynamics $p_\dt(s'|s,a)$ are given by \eqref{def of rho lqr-stoch}, Optimal-PhiBE preserves the structure of an LQR problem, corresponding to a modified LQR system with slightly different dynamics. In contrast, Optimal-BE corresponds to a different control problem that involves more sophisticated noise.

Furthermore, the optimal policies can be explicitly derived from the Optimal-PhiBE \eqref{phibe-lqr} and the Optimal-BE \eqref{rl-lqr}. Based on these policies, we provide a precise estimate of the discretization error in the optimal policy. This error analysis is presented in Section \ref{sec:error}.

The following theorem states that the optimal policy $\t{\pi}^*(s), \h{\pi}^*_i(s)$ obtained from the Optimal-BE \eqref{rl-lqr} and the Optimal-PhiBE \eqref{phibe-lqr} can be viewed as the optimal policy obtained from two different stochastic optimal control problems.

\begin{theorem}\label{thm: approx-lqr}
The optimal policy $\h{\pi}_i^*(s)$ obtained from the Optimal-PhiBE \eqref{phibe-lqr} is the same as the one obtained from the following stochastic LQR problem,
\begin{equation}\label{eq:lqr-phibe-soc}
    \begin{aligned}
    V^*(s) = &\max_{a_t = \pi(s_t)} \E\l[\int_0^\infty e^{\beta t} \l(s_t^\top Q s_t + a_t^\top R a_t\r) dt  | s_0 = s \r]\\
    s.t. \quad &ds_t = (\ha{i}s_t + \hb{i}a_t )dt + \sigma dB_t,
    \end{aligned}
\end{equation}
with $\ha{i}, \hb{i}$ defined in \eqref{def of ha hb}.
\end{theorem}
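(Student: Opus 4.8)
The plan is to reduce the claim to an explicit evaluation of the Optimal-PhiBE drift $\hat b_i(s,a)$ for the LQR transition, after which Proposition~\ref{prop:true} closes the argument. First I would solve the linear SDE in \eqref{true-lqr} with a constant action $a$ held over $[0,i\dt)$: by variation of constants,
\[
s_{j\dt} = e^{Aj\dt}s_0 + \left(\int_0^{j\dt}e^{Au}\,du\right)Ba + \sigma\int_0^{j\dt}e^{A(j\dt-\tau)}\,dB_\tau ,
\]
so, since the stochastic integral has zero mean and $\int_0^{j\dt}e^{Au}\,du = A^{-1}(e^{Aj\dt}-I)$,
\[
\E\left[\,s_{j\dt}-s_0\,\middle|\,s_0=s,\ a\,\right] = (e^{Aj\dt}-I)\,s + A^{-1}(e^{Aj\dt}-I)B\,a .
\]
Plugging this into the definition \eqref{def of bsig} of $\hat b_i$ and collecting the coefficients of $s$ and $a$ gives
\[
\hat b_i(s,a) = \left(\tfrac1\dt\sum_{j=1}^i\coef{i}_j(e^{Aj\dt}-I)\right)s + A^{-1}\left(\tfrac1\dt\sum_{j=1}^i\coef{i}_j(e^{Aj\dt}-I)\right)Ba = \ha{i}\,s + \hb{i}\,a ,
\]
which is precisely the drift of \eqref{eq:lqr-phibe-soc} by the definition \eqref{def of ha hb}.

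With this identity, the LQR Optimal-PhiBE \eqref{phibe-lqr} reads $\beta\hat V_i^*(s) = \max_a\bigl[s^\top Q s + a^\top R a + (\ha{i}s+\hb{i}a)\cdot\nabla\hat V_i^*(s)\bigr]$, which is exactly the HJB equation of the \emph{deterministic} LQR with system matrices $(\ha{i},\hb{i})$, running cost $(Q,R)$, and discount $\beta$ --- i.e.\ \eqref{eq:lqr-phibe-soc} with $\sigma=0$. I would then invoke Proposition~\ref{prop:true} with $A\leftarrow\ha{i}$, $B\leftarrow\hb{i}$ to obtain $\hat V_i^*(s)=s^\top\hat P_i s$ and $\hat\pi_i^*(s)=\hat K_i s$, where $\hat K_i=-R^{-1}\hb{i}^\top\hat P_i$ and $\hat P_i$ is the unique negative-definite solution of $\beta\hat P_i = Q - \hat P_i\hb{i}R^{-1}\hb{i}^\top\hat P_i + \ha{i}^\top\hat P_i + \hat P_i\ha{i}$. (Equivalently one may read off $\hat\pi_i^*$ directly: the inner maximization is strictly concave since $R$ is negative definite, giving $a=-\tfrac12 R^{-1}\hb{i}^\top\nabla\hat V_i^*(s)$, and the quadratic ansatz recovers the same gain.)

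Finally I would apply Proposition~\ref{prop:true} once more, now to the stochastic problem \eqref{eq:lqr-phibe-soc} itself, with the same matrices $\ha{i},\hb{i},Q,R$, the same discount $\beta$, and noise level $\sigma$: it yields the optimal value $s^\top\hat P_i s + \tfrac{\sigma^2}{\beta}\,\mathrm{diag}(\hat P_i)$ but the \emph{same} feedback gain $\hat K_i=-R^{-1}\hb{i}^\top\hat P_i$, because the Riccati equation --- hence $\hat P_i$ and the induced linear policy --- does not involve $\sigma$; equivalently, the two value functions differ by an additive constant, so they share the gradient that determines the policy. Therefore the optimal policy of \eqref{eq:lqr-phibe-soc} coincides with $\hat\pi_i^*$, which is the claim. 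The argument is essentially a bookkeeping exercise rather than a conceptual one; the only point needing attention is that Assumption~\ref{ass: lqr} (in particular invertibility of $A$, so that $\hb{i}=A^{-1}\ha{i}B$ is well defined, and stabilizability/detectability) transfers from $(A,B)$ to $(\ha{i},\hb{i})$. Since $\ha{i}$ is an analytic function of $A$ commuting with $A$ and satisfies $\ha{i}=A+O(\dt^i)$, this holds for $\dt$ small and is in any case subsumed by the standing convention (footnote to Assumption~\ref{ass: lqr}) that well-posedness holds for all LQR systems considered; I would dispatch it as a preliminary remark.
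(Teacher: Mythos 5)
Your proposal is correct and follows essentially the same route as the paper: compute $\E[s_{j\dt}-s_0]$ for the linear SDE under a constant action, conclude $\hat b_i(s,a)=\ha{i}s+\hb{i}a$, identify \eqref{phibe-lqr} with the HJB equation of the deterministic LQR with matrices $(\ha{i},\hb{i})$, and observe that additive noise leaves the Riccati equation and hence the feedback gain unchanged. The only cosmetic difference is that you invoke Proposition~\ref{prop:true} twice where the paper cites Lemma~\ref{lemma:hjb-equivalent-soc} with $C=D=0$; these encode the same fact.
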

The optimal policy $\t{\pi}^*(s)$ obtained from the Optimal-BE \eqref{rl-lqr} is the same as the one obtained from the following stochastic control problem,
\begin{equation}\label{eq:lqr-rl-soc}
    \begin{aligned}
    V^*(s) = &\max_{a_t = \pi(s_t)} \E\l[\int_0^\infty e^{-\h{\beta} t} \l(s_t^\top \h{Q} s_t + a_t^\top \h{R} a_t\r) dt  | s_0 = s \r]\\
    s.t. \quad &ds_t = (\ha{1}s_t + \hb{1}a_t )dt + \l(\ha{1}s_t + \hb{1}a_t \r)\sqrt{\dt} \, dB_t,
\end{aligned}
\end{equation}
with 
\[
\h{\beta} = \frac{1}{\dt\gamma} - \frac1\dt, \quad \h{Q} = \frac1{\gamma\dt}\t{Q}, \quad  \h{R} = \frac1{\gamma\dt}\t{R}.
\]
Here $B_t$ is a scalar Wiener process. 
\begin{proof}
    The proof of the above theorem is provided in Appendix~\ref{proof of thm: approx-lqr}.
\end{proof}
One first notes even when the reward function is known, due to the discretized nature of the MDP framework, the equivalent optimal control problem still has an $O(\dt)$ error for the discount coefficient and reward function. Second, for a specific choice of $\gamma$, $\tilde{Q}$, and $\tilde{R}$, one can eliminate the bias in the objective function. Specifically, setting
\begin{equation}\label{optimal-q-r-gamma}
\gamma = \frac{1}{\beta \dt + 1}, \quad \tilde{Q} = \dt \gamma Q,\quad  \tilde{R} = \dt \gamma R,
\end{equation}
ensures that $\hat{\beta} = \beta$, $\hat{Q} = Q$, and $\hat{R} = R$.

For the remainder of this section, we focus on the discretization error under the optimal choice of $\tilde{Q}$, $\tilde{R}$, and $\gamma$. 
It is important to note that the optimal discretization in \eqref{optimal-q-r-gamma} is specific to the LQR setting and may not be optimal for general control problems. 
Moreover, the purpose of this optimal choice is to ensure that the equivalent continuous-time optimal control problem does not introduce bias into the objective function. However, this does not necessarily imply that the optimal choice always leads to a smaller discretization error compared to the default choice.
In particular, since the error introduced by the default parameter choice \eqref{default-q-r-gamma} is of order $O(\beta^2 \Delta t)$, adjusting $\gamma$, $\tilde{Q}$, and $\tilde{R}$ does not affect the leading-order error when $\beta$ is small. 
The dominant source of error remains the transition dynamics.

The MDP framework only utilizes discretized transition dynamics and does not leverage the underlying continuous-time structure. As a result, it effectively treats a deterministic LQR problem as a stochastic LQR problem with $O(\dt)$ noise. Furthermore, this artificial noise depends on both the state $s$ and action $a$, making the discretization error sensitive to both the reward function and the dynamics.
We will elaborate on these issues in Section \ref{sec:error}. Fundamentally, MDP is designed for discrete-time decision-making processes. Even if the continuous-time dynamics are known, there is no mechanism within the Optimal-BE framework to incorporate this structure. In contrast, the Optimal-PhiBE framework not only provides an accurate objective function but also preserves the correct structure of the LQR problem, with only a slight modification to the dynamics matrices $A$ and $B$.

\subsection{Error analysis}\label{sec:error}
Since the optimal policies derived from both Optimal-PhiBE and Optimal-BE are linear, we can directly compare their linear coefficients. The following theorem characterizes the difference between the approximated optimal policy and the true optimal policy in the one-dimensional case for both Optimal-PhiBE and Optimal-BE.
\begin{theorem}\label{thm:lqr-error-1d}
When $d=1$, for the Optimal-BE, set \[
\gamma = \frac1{\beta \dt + 1}, \quad \t{Q} = \dt \gamma Q,\quad  \t{R} = \dt \gamma R,
\]
then the difference between the optimal control $\t{\pi}^*(s) = \t{K}s$ obtained from \eqref{rl-lqr} and the true optimal control $\pi^*(s) = Ks$ can be bounded by 
\[
    \lv \tk - K\rv  \lesssim \l[|B|\l(\sqrt{\frac{Q}{R}} + \frac{|A|}{|B|}\r)^2+ |A - \beta/2| \frac{|A|}{|B|}\r]\dt + O(\dt^2).
\]
The difference between the optimal control $\h{\pi}_i^*(s) = \h{K}_is$ from the Optimal-PhiBE \eqref{phibe-lqr} and the true optimal control $\pi^*(s) = Ks$ can be bounded by 
\[
 \lv \hk{i} - K\rv\leq  \beta\frac{|A|^i}{|B|}\dt^i+ O(\dt^{2i}), \quad 
\]
Especially, when $\beta = 0$, the Optimal-PhiBE approximation gives the exact optimal control, i.e., 
    \begin{equation}\label{phibe-beta-0}
        \hk{i} = K;
    \end{equation}
\end{theorem}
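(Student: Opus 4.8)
The plan is to use Theorem~\ref{thm: approx-lqr} to turn both gains $\hk i$ and $\tk$ into optimal gains of explicitly solvable LQR-type problems, and then carry out first-order ($\dt$) asymptotics on those closed forms. For the Optimal-PhiBE, Theorem~\ref{thm: approx-lqr} identifies $\hk i$ with the optimal gain of the deterministic LQR with dynamics matrices $(\ha i,\hb i)$ and unchanged $Q,R,\beta$, so Proposition~\ref{prop:true} (equation~\eqref{lqr-true-solu-1d}) gives it in closed form with $A,B$ replaced by $\ha i,\hb i$. I would Taylor-expand $\ha i$ via \eqref{def of a}: $\ha i=A+c_iA^{i+1}\dt^i+O(\dt^{i+1})$ with $c_i=\tfrac{1}{(i+1)!}\sum_{j=1}^i\coef i_j j^{i+1}$, and $|c_i|\le 1$ (indeed $|c_i|=\tfrac1{i+1}$, the leading forward-difference error constant). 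The decisive structural fact is the exact identity $\hb i=A^{-1}\ha i B$, i.e.\ $\hb i/\ha i=B/A$: plugging this into \eqref{lqr-true-solu-1d} shows $\hk i$ depends on the perturbed dynamics only through $\tfrac{\beta/2-\ha i}{\ha i}$; concretely $\hk i=\phi(v)$ with $\phi(v):=v-\mathrm{sgn}(B)\sqrt{v^2+Q/R}$ and $v:=\tfrac AB\bigl(\tfrac{\beta/2}{\ha i}-1\bigr)$, while $K=\phi(v_K)$ with $v_K=\tfrac{\beta/2-A}{B}$. When $\beta=0$ one gets $v=v_K=-A/B$ irrespective of $\ha i$, hence $\hk i=K$, which is \eqref{phibe-beta-0}. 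For $\beta>0$, $v-v_K=\tfrac{\beta(A-\ha i)}{2B\,\ha i}=-\tfrac{\beta c_iA^i}{2B}\dt^i+O(\dt^{i+1})$ and $|\phi'|\le 2$, so by the mean value theorem $|\hk i-K|\le 2|v-v_K|\le \beta\tfrac{|A|^i}{|B|}\dt^i$ plus higher-order corrections.

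For the Optimal-BE, with the stated $\gamma,\t Q,\t R$, Theorem~\ref{thm: approx-lqr} identifies $\tk$ with the optimal gain of the $1$D stochastic LQR~\eqref{eq:lqr-rl-soc} with $\hat\beta=\beta$, $\hat Q=Q$, $\hat R=R$, and \emph{multiplicative} noise $(\ha1 s+\hb1 a)\sqrt{\dt}\,dB$. I would solve its HJB directly: since the diffusion coefficient vanishes at $s=a=0$, the value function is purely quadratic, $V=\tilde P s^2$, and maximizing over $a$ gives $\tk=-\hb1\tilde P(1+\dt\,\ha1)/(R+\dt\,\hb1^2\tilde P)$ together with a modified scalar Riccati equation for $\tilde P$. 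I would then expand $\ha1=A+\tfrac{A^2}2\dt+O(\dt^2)$, $\hb1=B+\tfrac{AB}2\dt+O(\dt^2)$ and perturb $\tilde P=P+P^{(1)}\dt+O(\dt^2)$ about the true Riccati root $P$ (valid for small $\dt$ by wellposedness of \eqref{eq:lqr-rl-soc}); linearizing gives $(\beta-F'(P))P^{(1)}=G(P)$ with $F(P)=Q+2AP-B^2P^2/R$, and — writing $K=-BP/R$, $\lambda:=A+BK=\beta/2-D$, $D:=\sqrt{(\beta/2-A)^2+QB^2/R}$ — one computes $\beta-F'(P)=2D$, $G(P)=P\lambda(A+\lambda)$, and finally
\[
\tk-K=\frac{K\dt}{2D}\Bigl(\tfrac{3\beta A}{2}-A^2-\tfrac{QB^2}{R}\Bigr)+O(\dt^2).
\]
Since the closed form for $K$ yields $|K|\le 2D/|B|$ (using $D>|\beta/2-A|$), this gives $|\tk-K|\le\tfrac{\dt}{|B|}\bigl(\tfrac{3\beta|A|}2+A^2+\tfrac{QB^2}R\bigr)+O(\dt^2)$, and each summand is dominated by the claimed bound: $A^2/|B|$ and $QB^2/(R|B|)=|B|Q/R$ are among the terms of $|B|(\sqrt{Q/R}+|A|/|B|)^2$, while $\beta|A|/|B|\lesssim|A-\beta/2||A|/|B|$ (by a case split on $\beta\le 4|A|$ vs.\ $\beta>4|A|$). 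In particular $|B|(\sqrt{Q/R}+|A|/|B|)^2$ survives at $\beta=0$: this is the irreducible cost of the artificial $O(\sqrt{\dt})$ multiplicative noise, which Optimal-PhiBE does not incur.

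The step I expect to be the main obstacle is the Optimal-BE computation: correctly deriving the multiplicative-noise Riccati equation, carrying the first-order perturbation cleanly through to the displayed closed form, and then the careful termwise comparison that fits everything under the structured bound with a universal constant. By contrast, the Optimal-PhiBE side — and in particular the exact recovery $\hk i=K$ at $\beta=0$ — is comparatively routine once the ratio identity $\hb i=A^{-1}\ha i B$ is exploited; the only auxiliary input there is the elementary bound $|c_i|\le 1$ on the leading finite-difference coefficient.
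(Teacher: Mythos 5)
Your proposal is correct, and on the Optimal-PhiBE side it is essentially the paper's argument: both exploit the exact ratio identity $\hb{i}/\ha{i}=B/A$ so that the closed-form gain \eqref{lqr-true-solu-1d} depends on the perturbation only through $(\beta/2-\ha{i})/\hb{i}$, which immediately gives $\hk{i}=K$ at $\beta=0$ and an $O(\beta\,|\eb{i}|/B^2)$ deviation otherwise; your $\phi$/mean-value-theorem packaging is a mild (and clean) variant of the paper's Taylor expansion in $\eb{i}$. On the Optimal-BE side you take a genuinely different computational route: the paper eliminates $\t{P}$ to obtain a perturbed quadratic in $\tk$ itself and expands the quadratic formula in the perturbations $\e_0,\e_1,\e_2$, whereas you perturb the modified Riccati root, $\t{P}=P+P^{(1)}\dt+O(\dt^2)$, and push the correction through the gain formula. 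Your intermediate identities check out ($\beta-F'(P)=2D$, $G(P)=P\lambda(A+\lambda)$, and the closed form $\tk-K=\tfrac{K\dt}{2D}(\tfrac{3\beta A}{2}-A^2-\tfrac{QB^2}{R})+O(\dt^2)$ all follow from $BK=\lambda-A$ and $\lambda(\beta-\lambda)=\beta A-A^2-QB^2/R$), and the termwise domination by the stated bound, including the case split for the $\beta|A|/|B|$ term, is valid. What your route buys is an \emph{exact} leading-order coefficient for $\tk-K$ rather than only an upper bound, at the cost of having to justify that the negative Riccati root depends smoothly on $\dt$ (the nondegeneracy $\beta-F'(P)=2D>0$, which holds since $QB^2/R>0$, is exactly what makes the implicit-function/perturbation step legitimate — worth stating explicitly, just as the paper notes the sign pattern of its quadratic to select the correct root). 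Two cosmetic remarks: your remainder for the PhiBE bound is $O(\dt^{i+1})$ rather than the stated $O(\dt^{2i})$, but the paper's own proof has the same feature, and the two agree for $i=1$; and the sign in front of the square root should carry a $\mathrm{sgn}(\hb{i})=\mathrm{sgn}(B)$ factor as you wrote it, which is consistent with \eqref{lqr-true-solu-1d}.
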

\begin{proof}
    The proof of the above theorem is given in Appendix~\ref{proof of thm:lqr-error-1d}.
\end{proof}

We highlight several insights from Theorem \ref{thm:lqr-error-1d}. First, when $\beta = 0$, the first-order Optimal-PhiBE exactly recovers the continuous-time optimal policy, while Optimal-BE only achieves a first-order approximation. This result is somewhat counterintuitive, as it shows that Optimal-PhiBE can recover the exact continuous-time solution using only discrete-time transition data. Interestingly, this exact recovery occurs in the case $\beta = 0$, which is typically considered more challenging than settings with larger $\beta$.

Second, when $\beta > 0$, the $i$-th order Optimal-PhiBE achieves $i$-th order accuracy, as illustrated in the right plot of Figure~\ref{fig:lqr-deter-1}. We focus primarily on comparing the errors between first-order Optimal-PhiBE and Optimal-BE. Although both exhibit $O(\Delta t)$ discretization error, a key distinction is that the error in Optimal-PhiBE is independent of the reward function, whereas the error in Optimal-BE is significantly influenced by it. In particular, when the reward function varies rapidly with respect to the state but changes slowly with the action, the resulting Optimal-BE error becomes large. This observation is consistent with Theorem~\ref{thm:Optimal-PhiBE policy} and supports the conclusions drawn in \cite{zhu2024phibe}. 
Such reward structures are common in reinforcement learning tasks involving sparse rewards or goal-reaching objectives, where the reward function must vary sharply across the state space. As a result, significant discretization errors can arise, helping to explain why sparse rewards often lead to slow convergence and training instability from another perspective.

When the system is state-dominant—that is, with large $|A|$ and small $|B|$—both methods experience increased error. However, Optimal-PhiBE benefits from an additional discount coefficient $\beta$, which leads to smaller errors when $\beta$ is small. This is particularly relevant for typical reinforcement learning settings, where small $\beta$ values are often used. Again, this is counterintuitive, as smaller $\beta$ (i.e., less discounting) is generally associated with larger errors.

In contrast, when the system is control-dominant—with small $|A|$ and large $|B|$—Optimal-PhiBE continues to produce small errors, whereas Optimal-BE remains inaccurate due to its error increasing with $|B|$. As shown in the second plot of Figure~\ref{fig:lqr-deter-1}, Optimal-BE maintains low error only when $|A|$ and $Q/R$ are small, and $|B|$ is within a moderate range. If any of these conditions are violated, the error becomes substantial.

Finally, we examine the influence of $\beta$ on discretization error. It is important to note that the error bound for Optimal-PhiBE is not tight when $\beta$ is large. As illustrated in the third plot of Figure~\ref{fig:lqr-deter-1}, the error of Optimal-PhiBE initially increases linearly with $\beta$ when $\beta$ is small, but then begins to decrease as $\beta$ becomes larger. In contrast, the error of Optimal-BE remains relatively insensitive to changes in $\beta$. Interestingly, numerical results suggest that the Optimal-BE error attains a minimum at some positive value of $\beta$. Moreover, there exists a narrow regime in which the error of Optimal-BE is smaller than that of Optimal-PhiBE.

To summarize, the first-order Optimal-PhiBE outperforms Optimal-BE in several practically relevant scenarios. These include cases where the discount coefficient $\beta$ is small, the reward function varies rapidly in the state space but slowly in the action space, or the natural system evolves slowly without control. All of these conditions are common in real world applications.

\begin{figure}
	\centering
	{\includegraphics[width=0.23\textwidth]{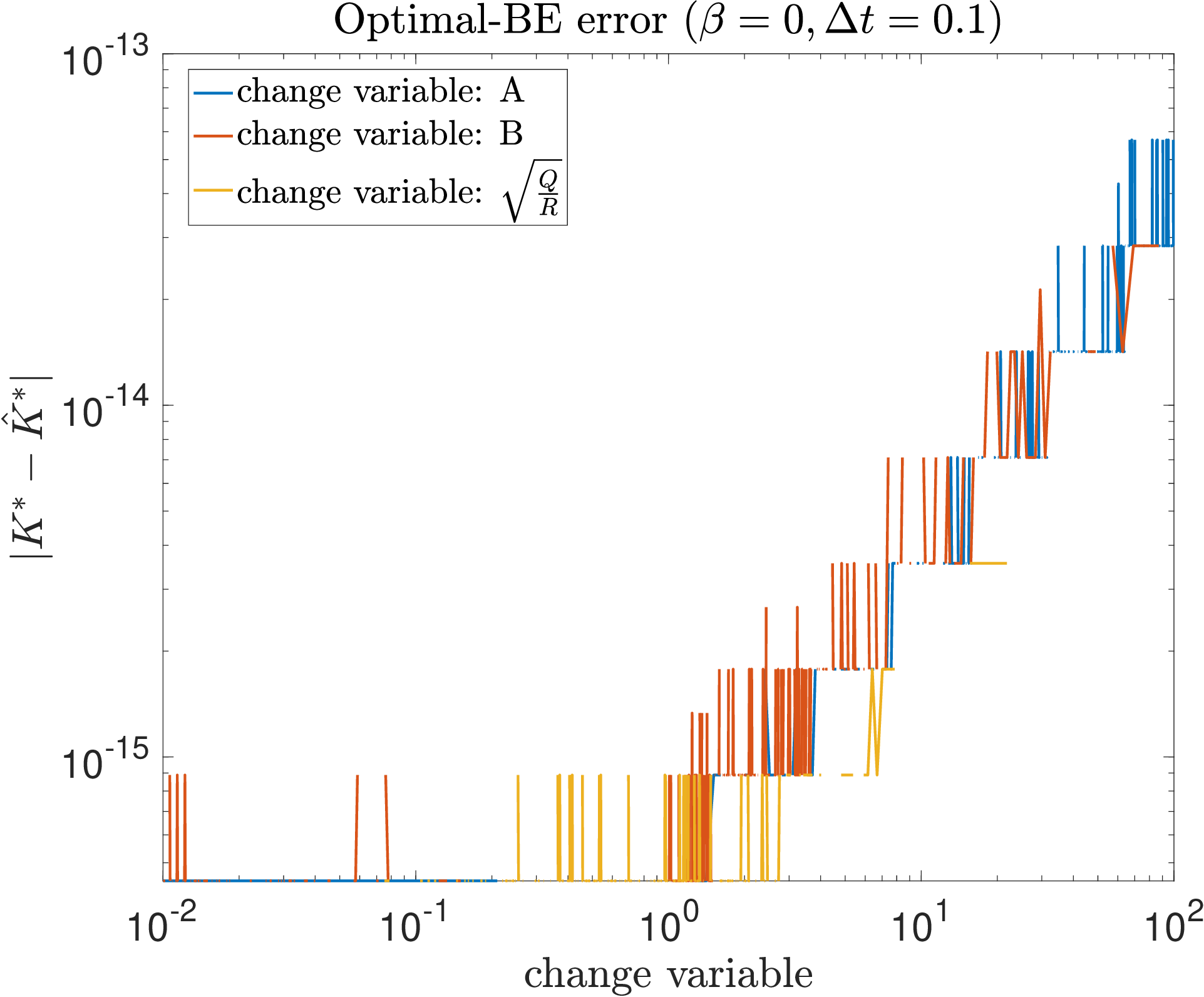}}\hfill
    {\includegraphics[width=0.23\textwidth]{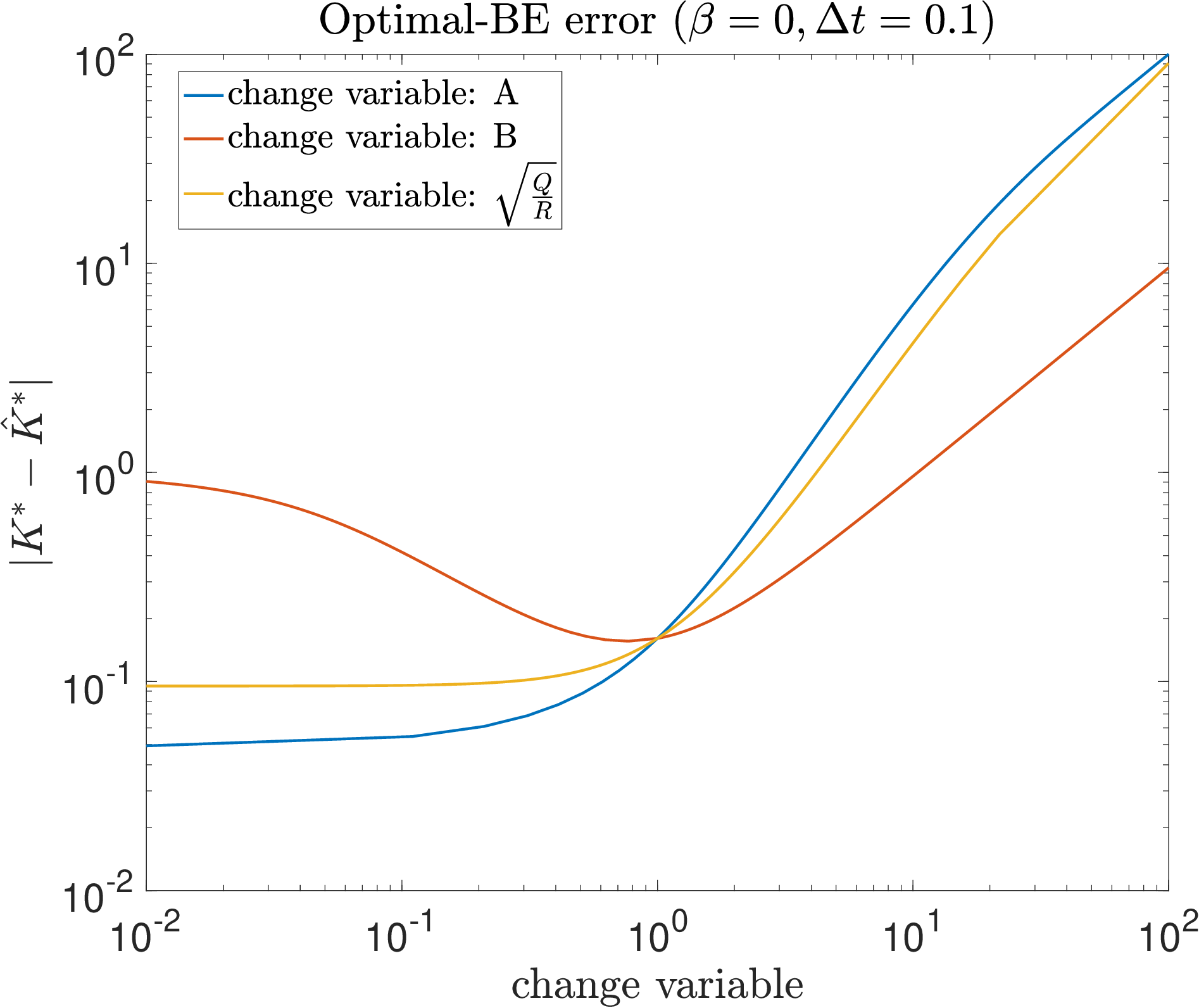}}\hfill
    {\includegraphics[width=0.23\textwidth]{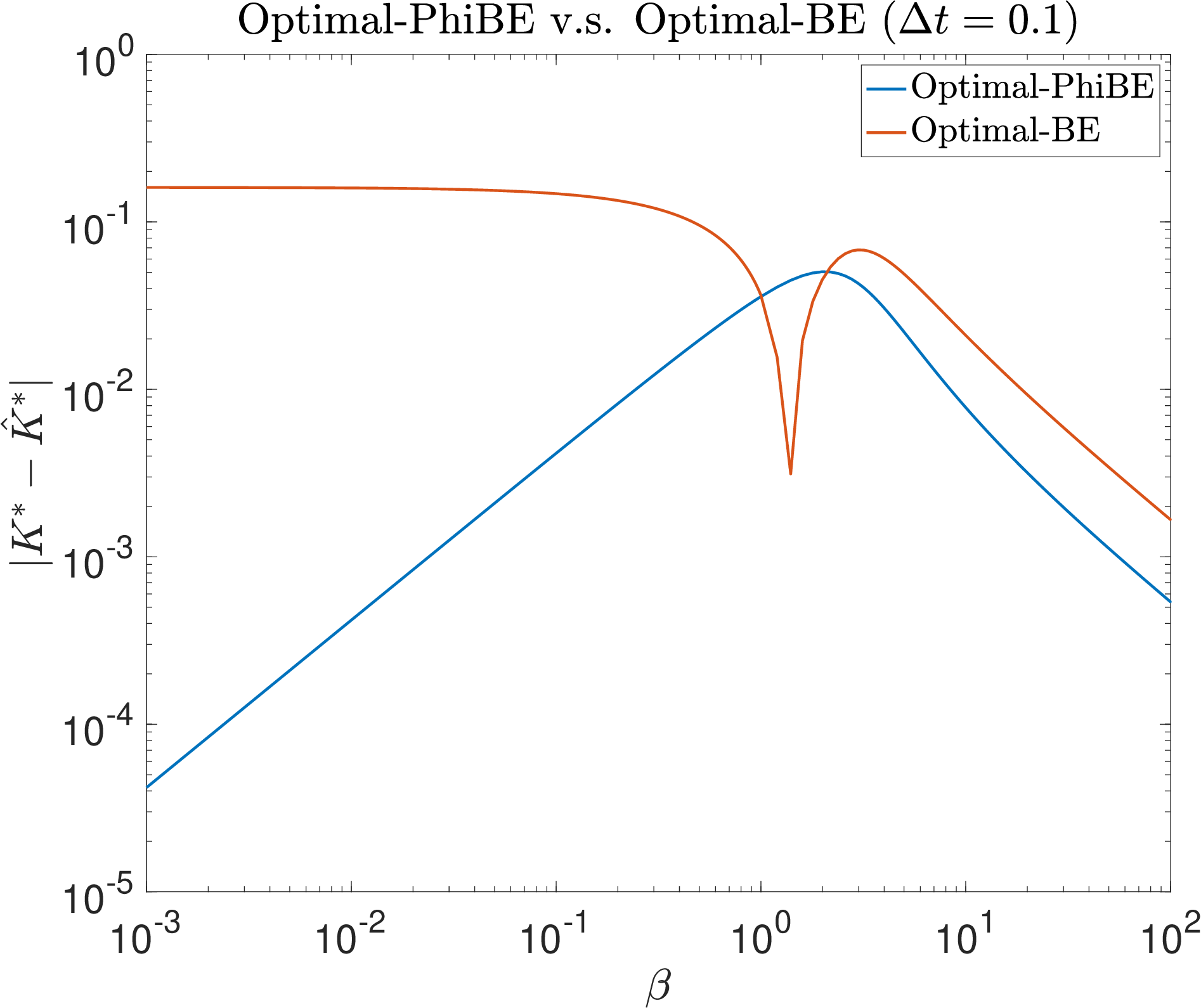}}\hfill
    {\includegraphics[width=0.23\textwidth]{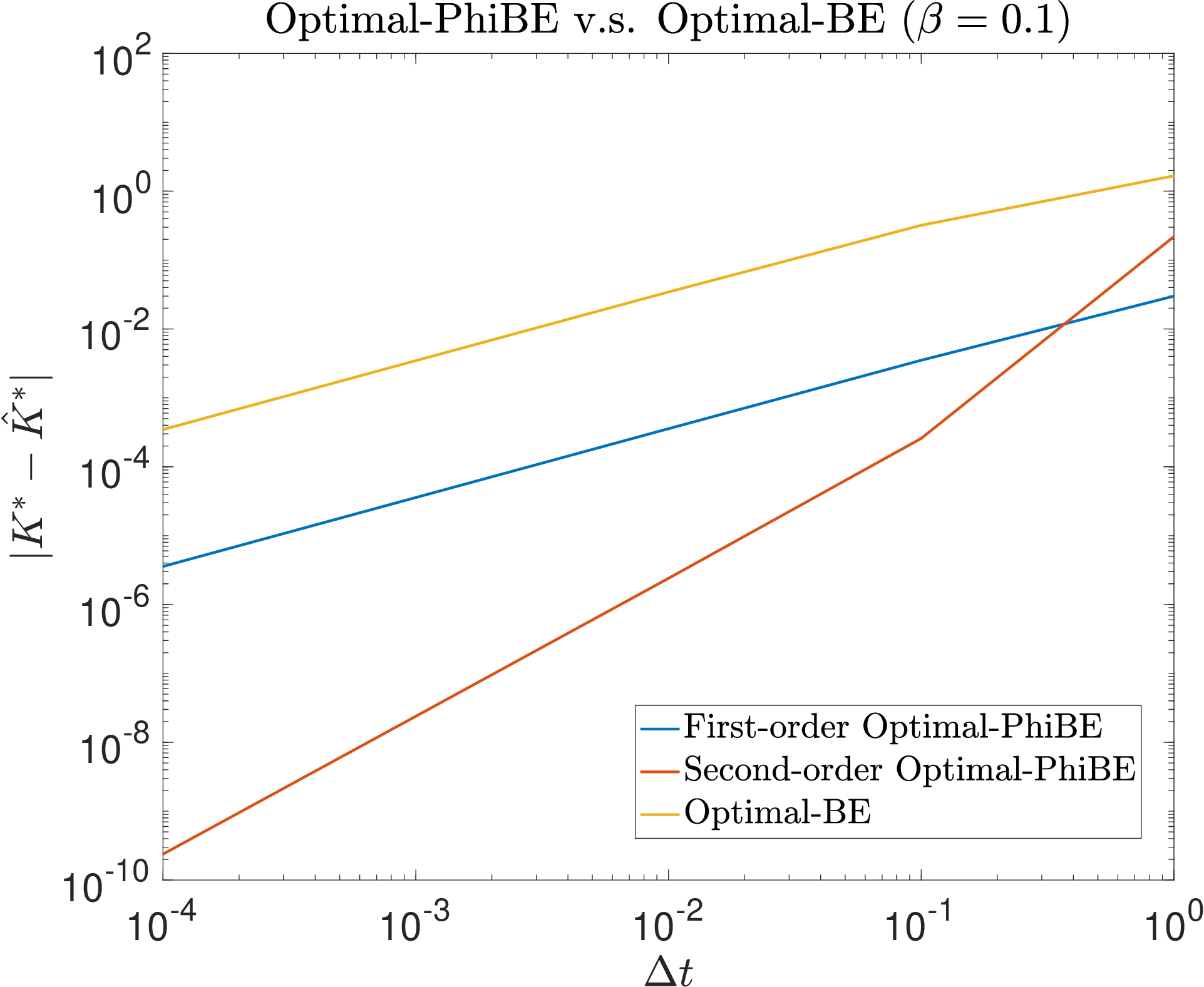}}
 \caption{Comparison of the optimal policy error from Optimal-PhiBE and Optimal-BE. The left plot shows that Optimal-PhiBE exactly recovers the optimal policy when $\beta = 0$. The second plot illustrates how the reward and dynamics influence the error of Optimal-BE. The third plot shows how the discount coefficient $\beta$ affects the errors.   The right plot demonstrates that when $\beta > 0$, both Optimal-PhiBE and Optimal-BE achieve first-order approximation with respect to $\Delta t$, while the second-order PhiBE achieves second-order approximation. }
 \label{fig:lqr-deter-1} 
\end{figure}


Next, we consider the multi-dimensional case where $d > 1$. Extending the results from the one-dimensional case is not straightforward. First, in contrast to the one-dimensional setting where well-posedness always holds, the multi-dimensional case requires Assumption \ref{ass: lqr} to ensure well-posedness. To rigorously establish the results, we need to verify this property within the Optimal-PhiBE framework, which is presented in Lemma \ref{phibe-wellposedness}. Second, extending the discretization error analysis from one to multiple dimensions is also nontrivial. In the one-dimensional case, the explicit form of the optimal policy allows for direct error computation. However, in higher dimensions, such an explicit form is generally unavailable. For the Optimal-PhiBE formulation, since it remains a continuous LQR problem, the discretization error can be analyzed using perturbation theory for the continuous algebraic Riccati equation (ARE), as developed in the existing literature \cite{anderson2007optimal}. On the other hand, the solution under the Optimal-BE formulation satisfies a discrete ARE, which differs in structure from the continuous case. While there are perturbation results available for the discrete ARE \cite{bertsekas2012dynamic}, to our knowledge, no existing work directly addresses the error incurred when interpreting a continuous LQR system through a discrete lens. Since this is not the main focus of our work, we leave a rigorous treatment of the discretization error for Optimal-BE to future study. Nonetheless, as demonstrated in the numerical experiments in Section \ref{sec: numerics}, the discretization error is influenced in a similar manner as in the one-dimensional case.

We first prove the wellposedness of the PhiBE approximation. 
\begin{lemma}\label{phibe-wellposedness}
    For $\dt$ sufficiently small, if $(A-\beta/2, B)$ is stablizable and $(A-\beta/2, Q)$ is detectable, then 
    $(\ha{i}-\beta/2, \hb{i})$ is also stablizable and $(\ha{i} - \beta/2 , Q)$ is also detectable where $\ha{i}, \hb{i}$ are defined in \eqref{def of ha hb}.
\end{lemma}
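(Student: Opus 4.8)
The plan is to establish the two properties separately by exploiting the fact that $\ha{i} \to A$ as $\dt \to 0$, so that $(\ha{i} - \beta/2, \hb{i})$ is a small perturbation of $(A - \beta/2, B)$, and stabilizability/detectability are open conditions. First I would record the convergence rates. From the definition $\ha{i} = \frac1\dt \sum_{j=1}^i \coef{i}_j(e^{Aj\dt} - I)$ together with the defining relations \eqref{def of a} for the coefficients $\coef{i}$ (namely $\sum_j \coef{i}_j j = 1$ and $\sum_j \coef{i}_j j^k = 0$ for $2 \le k \le i$), a Taylor expansion of $e^{Aj\dt}$ gives $\ha{i} = A + O(\dt^i)$ in any matrix norm; similarly $\hb{i} = A^{-1}\ha{i}B = B + O(\dt^i)$, using $A^{-1}(e^{A j\dt}-I) = j\dt\, I + O(\dt^2)$ so that the leading term reconstructs $B$. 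Thus $\|\ha{i} - A\| + \|\hb{i} - B\| \to 0$ as $\dt \to 0$.

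Next I would invoke the robustness of stabilizability under perturbations. The pair $(M, N)$ is stabilizable iff $\mathrm{rank}[\,M - \lambda I \,\mid\, N\,] = d$ for every $\lambda \in \C$ with $\mathrm{Re}(\lambda) \ge 0$ (the Hautus/PBH test), and equivalently iff a certain controllability-type Gramian or the solution of an associated Lyapunov/Riccati inequality is positive definite — a condition that persists under sufficiently small perturbations of $(M,N)$. Applying this with $M = A - \beta/2$, $N = B$: since $(A - \beta/2, B)$ is stabilizable by hypothesis, there is $\eps > 0$ such that every pair within distance $\eps$ (in operator norm) is stabilizable; for $\dt$ small enough $\|(\ha{i} - \beta/2) - (A - \beta/2)\| + \|\hb{i} - B\| = \|\ha{i}-A\| + \|\hb{i}-B\| < \eps$, hence $(\ha{i} - \beta/2, \hb{i})$ is stabilizable. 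The argument for detectability is dual: $(\ha{i} - \beta/2, Q)$ is detectable iff $((\ha{i} - \beta/2)^\top, Q^\top)$ is stabilizable (using that $Q$ is symmetric), and since only the first matrix is perturbed — $Q$ is held fixed — the same small-perturbation robustness of the PBH rank condition applies, giving detectability of $(\ha{i} - \beta/2, Q)$ for $\dt$ small.

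The main obstacle, and the only place requiring care, is making the ``openness'' of stabilizability/detectability quantitative and uniform enough to conclude — i.e., producing a single threshold $\dt_0$ below which both properties hold. This is where one should either cite a standard perturbation result for stabilizable pairs (e.g.\ from \cite{anderson2007optimal}) or give a short self-contained argument: the PBH matrix $[\,A - \beta/2 - \lambda I \mid B\,]$ has full row rank $d$ for all $\lambda$ in the closed right half-plane, and by compactness (after noting $\lambda$ may be restricted to a bounded region since for $|\lambda|$ large the $-\lambda I$ block alone has full rank) its smallest singular value is bounded below by some $\delta > 0$; a perturbation of norm $< \delta$ cannot drop the rank, which yields the desired $\eps = \delta$. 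The detectability half is identical after transposing. I would present the singular-value/compactness argument briefly and then conclude by choosing $\dt_0$ so that the $O(\dt^i)$ bounds from the first step are below the corresponding thresholds for both pairs.
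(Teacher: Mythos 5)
Your proof is correct, but it follows a genuinely different route from the paper's. You treat $(\ha{i}-\beta/2,\hb{i})$ purely as a small perturbation of $(A-\beta/2,B)$: you establish $\|\ha{i}-A\|+\|\hb{i}-B\|=O(\dt^i)$ (which matches the paper's Lemma \ref{lemma: bd for ea eb}) and then invoke openness of stabilizability/detectability, made quantitative via a uniform lower bound on the smallest singular value of the PBH matrix over the compact part of the closed right half-plane. The paper instead exploits the special algebraic structure of the approximants: since $\ha{i}$ is a power series in $A$ and $\hb{i}=A^{-1}\ha{i}B$, the matrices $A$ and $\ha{i}$ share eigenvectors, each eigenvalue $\lam$ of $A$ maps to an explicit eigenvalue $\h{\lam}=\lam+\sum_{k\geq 2}a_k\lam^k$ of $\ha{i}$ with $|\h{\lam}-\lam|=O(\dt^i)$, and the rank condition is transferred exactly by the explicit identity $\hb{i}\bigl[\tfrac{\lam}{\h{\lam}}c\bigr]=v$ whenever $Bc=v$ lies in the relevant eigenspace. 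Your argument is more robust and general — it would apply to any $O(\dt^i)$-perturbation of the pair, not just one commuting with $A$, and it handles non-diagonalizable $A$ cleanly through the PBH test without discussing eigenspaces at all; its only cost is that it is non-constructive (the threshold $\dt_0$ comes from a compactness argument rather than an explicit eigenvalue bound). The paper's argument buys structural insight (the exact correspondence of eigenvectors and the rescaling factor $\lam/\h{\lam}$) at the price of leaning on the functional-calculus relationship between $\ha{i}$, $\hb{i}$, $A$ and $B$. Both are valid proofs of the lemma.
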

The proof of the above lemma is given in Appendix~\ref{proof of lemma phibe-wellposedness}. 
Next, we present the error analysis for the Optimal-PhiBE in multi-dimensional case.

\begin{theorem}\label{thm:lqr-error-multid}
    When $d>1$, for $\dt$ sufficiently small, such that $\ha{i}, \hb{i}$ still satisfies the Assumption \ref{ass: lqr}/a), then when $\beta = 0$
    \[
        \hk{i} = K;
    \]
    when $\beta>0$,
    \[
    \ll \hk{i} - K\rl \leq  \beta  p \hci\kappa(A)\kappa(B)\ll A \rl^{i} \ll B^{-1} \rl   \dt^i + O(\dt^{i+1})
    \]
    where $p$ defined in \eqref{def of p} in Appendix~\ref{proof of thm:lqr-error-multid} is a constant depending on $B^{-1}A, B, R, Q, \beta$, and $\hci$ defined in \eqref{def of hci} in Appendix~\ref{proof of lemma: diff} is a constant only depending on the order.
\end{theorem}
\begin{proof}
    The proof of the above theorem is given in Appendix~\ref{proof of thm:lqr-error-multid}. 
\end{proof}

We summarize the key insights from Theorem \ref{thm:lqr-error-multid} as follows. First, the discretization error associated with Optimal-PhiBE is mainly influenced by the system dynamics, represented by the matrices $A$ and $B$, and the discount coefficient $\beta$. When $A$ and $B$ are well-conditioned, the behavior of the discretization error is similar to that of the one-dimensional case. In particular, Optimal-PhiBE recovers the exact optimal policy when $\beta = 0$. For positive $\beta$, the error remains small if the natural dynamics evolve slowly in the state space and respond strongly to control inputs. This scenario is typically considered easier to control when the system dynamics are known. In contrast, if the matrices $A$ and $B$ are poorly conditioned or the system is dominated by state dynamics, then the underlying system is inherently more difficult to control, and the discretization error from Optimal-PhiBE becomes larger.

\section{Model-free Policy Iteration algorithm}\label{sec: algo}
In Section \ref{sec:settings} - \ref{sec: lqr}, we assumed access to the discrete transition dynamics (\ref{discrete dynamics}) and introduced Optimal-PhiBE as a theoretical framework. We established that the solution to Optimal-PhiBE (\ref{Optimal-PhiBE}) provides a close approximation to the optimal value function of the original problem (\ref{obj}) and induces a policy that closely approximates the optimal policy. Consequently, finding the solution to Optimal-PhiBE (\ref{Optimal-PhiBE}) is of significant importance.

However, directly solving (\ref{Optimal-PhiBE}) is challenging, even when the discrete transition dynamics (\ref{discrete dynamics}) are accessible. In Section \ref{sec: PI-known-dyn}, we outline the Policy Iteration (PI) algorithm for solving a general HJB equation (\ref{def of true hjb}) in a linear space, assuming access to the dynamics. 
In Section \ref{model_free}, we propose a model-free algorithm based on the framework introduced in Section \ref{sec: PI-known-dyn}, which only uses collected discrete data points.

\subsection{Policy Iteration (PI) for solving HJB equation in linear function space}\label{sec: PI-known-dyn}
In this section, we review the policy iteration algorithm to solving the HJB equation (\ref{def of true hjb}) or the Optimal-PhiBE equation \eqref{Optimal-PhiBE} when $(b,\Sigma)$ or $(\h{b}_{i}, \hs_{i})$ are known \cite{bertsekas2011approximate,guo2025policy,howard1960dynamic,puterman1979,santos2004,tang2023policy}.  
In a policy iteration framework, the optimal value function and policy are updated iteratively. Starting with the policy $\pi^k$ from the previous iteration,  the value function $V^k$ under policy $\pi^k$ defined as follows
\begin{equation}\label{ground truth V}
    \beta V^{k}(s) = r^{\pi_k}(s) + b^{\pi_k}(s) \cdot \nabla V^{k}(s) + \frac{1}{2} \Sigma^{\pi_k}(s) : \nabla^2 V^{k}(s).
\end{equation}
is evaluated. Then the rescaled advantage function $q^{k}(s, a)$, as introduced in \cite{jia2023q, tallec2019making}, is estimated under this policy, which is defined as follows,
\begin{equation}\label{ground truth Q}
q^k(s,a) =r(s,a) + b(s,a) \cdot \nabla V^k(s) + \frac{1}{2}\Sigma(s,a) : \nabla^2 V^k(s).
\end{equation}
Intuitively, the advantage function evaluates how advantageous it is to take action $a$ in state $s$ based on the expected cumulative reward, which is derived as the limit of the rescaled advantage function. The policy is then updated by setting $\pi^{k+1}(s) = \arg\max_{a \in \mathcal{A}} q^k(s, a)$, based on the current estimate $q^k(s, a)$. { It is known that the value functions $V^k$ converges exponentially fast as $k\to\infty$ \cite{guo2025policy,kerimkulov2020exponential,tang2023policy, TZZ}. }


Numerically, one can approximate $V^k, q^k$ in a linear function space $\{\theta^\top \Phi(s) \mid \theta \in \mathbb{R}^n\}$ and $\{\omega^\top \Psi(s, a) \mid \omega \in \mathbb{R}^m\}$, where $\Phi(s) = (\phi_i(s))_{i=1}^n$ and $ \Psi(s, a) = (\psi_i(s, a))_{i=1}^m$ represent the corresponding column vectors of basis functions. Applying the Galerkin method to \eqref{ground truth V} and \eqref{ground truth Q}, one has an estimate of the coefficient $\th, \omega$. 
 \begin{equation}\label{galerkin theta}
 \begin{aligned}
    &\left[\int\Phi(s)\left(\beta \Phi(s)-\left(b^{\pi_k}(s) \cdot \nabla \Phi(s) + \frac{1}{2} \Sigma^{\pi_k}(s) : \nabla^2 \Phi(s)\right)\right)^{\top}\mu(s)\,ds\right]\theta=\left[\int r^{\pi_k}(s)\Phi(s)\mu(s)\, ds\right],    \\
    &\left[\int\Psi(s,a)\Psi(s,a)^{\top}\mu(s,a)\,ds\,da\right]\omega=\left[\int \left(r(s,a)+b(s,a) \cdot \nabla V^k(s) + \frac{1}{2} \Sigma(s,a) : \nabla^2 V^k(s)\right)\Psi(s,a)\mu(s,a)\,ds\,da\right],
 \end{aligned}
\end{equation}
where the operator $b \cdot \nabla + \frac{1}{2} \Sigma : \nabla^2$ is applied entry-wise to the basis vector $\Phi(s), \Psi(s,a)$. 
The Galerkin method approximates the solution using an ansatz and projects the PDE onto the chosen linear space by multiplying both sides with $\Phi(s)$ and integrating with respect to a measure. This yields two independent linear systems for $\th$ and $\o$. Note that $V^{k}(s)$ in the second equation is represented by $V^{k}(s) = \theta^\top \Phi(s)$, where $\theta \in \mathbb{R}^n$ is obtained by solving the first linear system.


After obtaining $q(s,a) = \o^\top\Psi(s,a)$, the updated policy is defined as,
\[
\pi^{k+1} = \argmax_a q(s,a).
\]

\subsection{Model-free algorithm}\label{model_free}
In the previous section, we presented a policy iteration framework for solving general HJB equations within a linear function space, assuming access to the drift and diffusion terms $(b, \Sigma)$ or to the discrete-time transition dynamics. 
Building on this framework, the model-free algorithm developed here closely parallels the model-free approach for solving the Optimal-BE in a linear function space.
When only trajectory data \eqref{traj-data} are accessible, integrals are approximated by empirical sums over the collected data points, and expectations are estimated via unbiased sampling from the data.

We now present the details of the model-free PI algorithm based on Optimal-PhiBE. We follow the same policy iteration pipeline: for a given policy $\pi_k$, we first approximate the corresponding value function $V^k$ within a linear function space. Using this approximation, we then estimate the state-action value function $q^k$ in another linear space, and finally update the policy based on $q^k$.

To approximate $V^k$, \cite{zhu2024phibe} proposed a data-dependent method for solving the linear system (\ref{galerkin theta}) derived from the PDE formulation of PhiBE (\ref{phibe}). {The pseudocode of the method is summarized in Algorithm \ref{algo:policy_eval} in Appendix \ref{appen:algo}}, and we omit the technical details of its derivation here.

Once the approximation of $V^k$ is obtained, we proceed to estimate $q^k$ from data. Specifically, we aim to find an approximation of the form $q^{k}_\omega(s, a)=\omega^\top \Psi(s,a)$ by solving the following linear system,
\begin{equation}\label{galerkin_w_approximated}
    \left[\int\Psi(s,a)\Psi(s,a)^{\top}\,ds\,da\right]\omega=\left[\int \left(r(s,a)+\hat{b}_i(s,a) \cdot \nabla V^k(s) + \frac{1}{2} \hat{\Sigma}_i(s,a) : \nabla^2 V^k(s)\right)\Psi(s,a)\,ds\,da\right].
\end{equation}
where $\hat{b}_i$ and $\hat{\Sigma}_i$ are defined in (\ref{def of bsig}) and (\ref{def of hatsig}) respectively. Since the integrals and expectations are not directly computable, we approximate them using collected empirical data $\{s^l_{j\Delta t}, a^l_{j\Delta t}, r^l_{j\Delta t}\}_{j=0, l=1}^{j=I, l=L}$. 
For example, an unbiased estimate of 
$$\hat{b}_1(s^l_{j\Delta t}, a^l_{j\Delta t})=\mathbb{E} \left[ \left. \dfrac{1}{\Delta t}(s_{\Delta t} - s_{0}) \right| s_{0} = s^l_{j\Delta t}, a_\tau = a^l_{j\Delta t} \text{ for } \tau \in [0, \Delta t) \right]$$ is given by $$\hat{b}_1(s^l_{j\Delta t }, a^l_{j\Delta t})\approx \frac{1}{\Delta t}(s^l_{(j+1)\Delta t} - s^l_{j\dt}).$$ Similarly, $$\hat{\Sigma}_1(s^l_{j\Delta t }, a^l_{j\Delta t})\approx\frac{1}{\Delta t}(s^l_{(j+1)\Delta t} - s^l_{j\dt})(s^l_{(j+1)\Delta t} - s^l_{j\dt})^{\top} .$$ 
Thus, when $i = 1$, the integrand on the right-hand side of (\ref{galerkin_w_approximated}) evaluated at a single data point $(s^{l}_{j\Delta t}, a^l_{j\Delta t})$ admits the following unbiased estimator,
\begin{align*}
    &\bigg(r(s^{l}_{j\Delta t},a^{l}_{j\Delta t})+\left(\frac{1}{\Delta t}(s^l_{(j+1)\Delta t} - s^l_{j\dt})\right) \cdot \nabla V^k(s^l_{j\Delta t})\\&\quad
    + \frac{1}{2} \left(\frac{1}{\Delta t}(s^l_{(j+1)\Delta t} - s^l_{j\dt})(s^l_{(j+1)\Delta t} - s^l_{j\dt})^{\top}\bigg) : \nabla^2 V^k(s^l_{j\Delta t})\right)\Psi(s^{l}_{j\Delta t}, a^l_{j\Delta t})
\end{align*}
which is denoted by $f_{j,l}(\omega)$.
For the integrand on the left-hand side of (\ref{galerkin_w_approximated}), its evaluation at the data point $(s^{l}_{j\Delta t}, a^l_{j\Delta t})$ is given by $\Psi(s^{l}_{j\Delta t}, a^l_{j\Delta t})\Psi(s^{l}_{j\Delta t}, a^l_{j\Delta t})^{\top}$, which is denoted by $g_{j,l}$. Then, to obtain $w$, it suffices to solve the following linear system,
\begin{equation*}
  \left[\sum_{l=1}^{I} \sum_{j=0}^{m-1} g_{j,l} \right]\omega= \left[\sum_{l=1}^{I} \sum_{j=0}^{m-1} f_{j,l} \right].
\end{equation*}

An alternative approach to finding $\omega$ is gradient descent ({Algorithm \ref{algo:GD} in Appendix \ref{appen:algo}}), where the function approximation is achieved by minimizing a loss function. This method can be generalized to nonlinear function approximation. While we do not provide a detailed explanation of this method here, it serves as another effective tool for solving such problems.

The practical framework of our algorithm is summarized as follows, while the complete pseudocode is provided in Algorithm \ref{algo:main}.\\
\textbf{Preparation:} Collect trajectory data $\{s^l_{j\dt  }, a^l_{j\dt}, r_{j\dt}\}_{j=0, l = 1}^{j=I, l=L}$ with $a_{j_{\Delta t}} \sim $Unif$(\A)$ and $s^l_0$ randomly sampled from the state space. This data consists of state-action pairs along with the corresponding rewards, where $I$ denotes the number of samples per trajectory and $L$ represents the number of trajectories. This batch of collected data will be used to evaluate the state-action value function $q^k$ in each iteration.

\textbf{At the $k$-th iteration:}

\textbf{Step 1:} For the policy $\pi^k(s)$ from the previous iteration, collect data $\{\tilde{s}^l_{j\Delta t}, \tilde{a}^l_{j\Delta t } = \pi^k(\tilde{s}^l_{j\Delta t}), \tilde{r}_{j\Delta t}\}_{j=0, l=1}^{j=I, l=L}$ according to policy $\pi^k$.  Once the data is collected, compute the approximated value function $V^k(s) = \theta^{\top} \Phi(s)$, using the Galerkin method outlined in Algorithm \ref{algo:policy_eval} in Appendix~\ref{appen:algo}, which is proposed in \cite{zhu2024phibe}.

\textbf{Step 2:} Use the value function $V^k$ and trajectory data $\{s^l_{j\Delta t}, a^l_{j\Delta t}, r^l_{j\Delta t}\}_{j=0, l=1}^{j=I, l=L}$ to approximate the state-action value function $q^k$, either by applying the Galerkin method (Algorithm \ref{algo:Galerkin}) or Gradient Descent (Algorithm \ref{algo:GD} in Appendix~\ref{appen:algo}).

\textbf{Step 3:} Update the policy using $\pi^{k+1}(s) = \arg\max_a \hat{q}^{\pi^k}(s, a)$.

\begin{algorithm}
    \caption{\textsc{Optimal\_Phibe}($\Delta t,\beta, \Phi, \Psi, \pi_0, \alpha, i$) - i-th order Optimal-PhiBE algorithm for finding the optimal policy}
    \label{algo:main}
    \begin{algorithmic}[1]

        \State \textbf{Input:} discrete time step $\Delta t$, discount coefficient $\beta$, finite bases for policy evaluation $\Phi(s)=(\phi_1(s),...,\phi_{n'}(s))^{\top}$, finite bases for q-approximation $\Psi(s, a)=(\psi_1(s, a),...,\psi_n(s, a))^{\top}$, initial policy $\pi_0$, step size of the gradient descent $\alpha$, and the order of the method $i$.
        \State \textbf{Output:} Optimal policy $\pi^{*}(s)$, optimal value function $V^{\pi^{*}}(s).$

        \State Initialize $\pi(s) = \pi_0(s).$
        \State Generate $B^{q}=\{(s^l_{j\Delta t}, a^l_{j\Delta t}, r^{l}_{j\Delta t})_{j=0}^{m}\}_{l=1}^{I}$ for continuous q-function approximation.

        \While{\textbf{not} \textsc{$\mathit{Stopping\ Criterion\ Satisfied}$}}  

            \State Generate data $B^{\pi} = \{(s^l_{j\Delta t}, r^{l}_{j\Delta t})_{j=0}^{m'}\}_{l=1}^{I'}$ by applying policy $\pi$.
            
            \State Call Algorithm \ref{algo:policy_eval} in Appendix~\ref{appen:algo} to obtain
            \[
            \hat{V}^{\pi}(s) = \textsc{Phibe\_Policy\_Evaluation}(\Delta t, \beta, B^{\pi}, \Phi, i).
            \]

            \State 
            Call Algorithm \ref{algo:Galerkin} to obtain
            \[
            \hat{q}^{\pi}(s, a) = \text{\textsc{q\_Galerkin\_phibe}}(\Delta t,\beta, B^q, \Psi, \hat{V}^{\pi},i),
            \]
            or inherit $\omega_0$ from the last iteration and call Algorithm \ref{algo:GD} in Appendix~\ref{appen:algo} to obtain
            \[
            \hat{q}^{\pi}(s, a) = \text{\textsc{q\_Gradient\_Descent\_phibe}}(\Delta t, \beta, B^{q}, \Psi, \hat{V}^{\pi}, \omega_0, \alpha, i).
            \]

            \State Update the optimal policy:
            \[
            \pi(s) \leftarrow \argmax_{a} \, \hat{q}^{\pi}(s, a).
            \]
        \EndWhile\\
        \Return $\pi^{*}(s)=\pi(s)$, $V^{\pi^{*}}(s)=\hat{V}^{\pi}(s).$
    
    \end{algorithmic}
\end{algorithm}
\begin{algorithm}
\caption{\textsc{q\_Galerkin\_phibe}($\Delta t,\beta, B, \Psi$, $\hat{V}^{\pi}, i$) - i-th order Galerkin method for $\hat{q}^{\pi}$}
\label{algo:Galerkin}
    \begin{algorithmic}[1]

        \State \textbf{Input:} discrete time step $\Delta t$, discount coefficient $\beta$, discrete-time trajectory data $B=\{(s^l_{j\Delta t}, a^l_{j\Delta t}, r^{l}_{j\Delta t})_{j=0}^{m}\}_{l=1}^{I}$, finite bases $\Psi(s, a)=(\psi_1(s, a),...,\psi_n(s, a))^{\top}$, approximated value function $\hat{V}^{\pi}$, and the order of the method $i$.
        \State \textbf{Output:} Continuous q-function for policy $\pi$.

        \State Compute
        $$A^{q}_i = \sum_{l=1}^{I}\sum_{j=0}^{m-i}\Psi(s^l_{j\Delta t}, a^l_{j\Delta t})\Psi(s^l_{j\Delta t}, a^l_{j\Delta t})^{\top}$$
        \State Compute
         $$b^{q}_i=\sum_{l=1}^{I} \sum_{j=0}^{m-i} \Big[r^{l}_{j\Delta t} + \hat{b}_i(s^{l}_{j\Delta t}, a^{l}_{j\Delta t}) \cdot \nabla \hat{V}^{\pi}(s^{l}_{j\Delta t}) + \frac{1}{2}\hat{\Sigma}_i(s^{l}_{j\Delta t}, a^{l}_{j\Delta t}) : \nabla^2 \hat{V}^{\pi}(s^{l}_{j\Delta t})\Big] \Psi(s^{l}_{j\Delta t}, a^{l}_{j\Delta t}),$$
         where
        $$\hat{b}_i(s^l_{j\Delta t}, a^l_{j\Delta t})=\sum_{k=1}^i \coef{i}_k (s^{l}_{(j+k) \Delta t} - s_{j \Delta t})\ \text{and\ } \hat{\Sigma}_{i}(s^l_{j\Delta t}, a^l_{j\Delta t})=\frac{1}{\Delta t} \sum_{k=1}^i \coef{i}_k (s^l_{(j+k) \Delta t} - s_{j \Delta t})(s^l_{(j+k) \Delta t} - s_{j \Delta t})^\top$$
        with $a_k^i$ defined in (\ref{def of A b}).
        \State Compute 
        $$\omega = (A_i^{q})^{-1}b^{q}_i.$$
        \Return $\hat{q}^{\pi}(s, a)=\omega^{\top}\Psi(s, a)$.
    
    \end{algorithmic}
\end{algorithm}

\section{Numerical Experiments}\label{sec: numerics}
	In this section, we present numerical experiments\footnote{The code for this section is available at \href{https://github.com/haz053ucsd/Optimal_PhiBE}{GitHub}.} for both the LQR problem and Merton’s portfolio optimization problem.
\subsection{Linear-Quadratic Regulator (LQR) Problem}
In this subsection, we consider the infinite-horizon LQR problem. The state evolves according to the (stochastic) differential equation:
\begin{equation*}
d s_t = (A s_t + B a_t)\, dt + \sigma\, d B_t, \quad s_0 = s,
\end{equation*}
where $A, B \in \mathbb{R}^{d \times d}$ and $\sigma \geq 0$. The objective is to maximize the value function
\begin{equation*}
\pi^*(s) = \argmax_{\pi}V^\pi(s),\quad \text{where}\quad V^\pi(s) = \mathbb{E}\left[\int_0^\infty e^{-\beta t} (s_t^\top Q s_t +a_t^\top R a_t)\, dt \middle| s_0 = s\right] \quad \text{with} \quad a_t = \pi(s_t),
\end{equation*} 
where $Q$ and $R$ are negative definite matrices in $\mathbb{R}^{d \times d}$. In all the experiments, we assume that observations can only be made at discrete intervals of $\Delta t$, and actions can only be updated at discrete intervals of $\Delta t$.
\subsubsection{Comparison of Policy Iteration Algorithms in Deterministic and Stochastic LQR Settings}\label{lqr_exp:main}
In this section, we compare our proposed PI based on Optimal-PhiBE (Algorithm~\ref{algo:main}) (first-order and second-order) with PI based on Optimal-BE (Algorithm~\ref{algo:RL} in Appendix~\ref{appen:algo}) under both the deterministic and stochastic settings, considering $d=1$ and $d=2$ systems. The identical configurations  for both algorithms, detailed parameter settings for each problem, the ground-truth optimal policies and value functions are provided in Appendix \ref{appendix:exp_detail}. Each algorithm is run for 15 iterations per example.

{The one-dimensional results for the deterministic and stochastic settings are shown in Figure~\ref{fig: lqr_1d_d} and Figure~\ref{fig: lqr_1d_s}, respectively, where the $x$-axis indicates the number of iterations and the $y$-axis reports the $L^2([-3,3])$ distance. The corresponding two-dimensional results are presented in Figure~\ref{fig: lqr_2d_d} and Figure~\ref{fig: lqr_2d_s}, where the $y$-axis shows the $L^2([-3,3]^2)$ distance. For each case, we test four different settings: (A) Case 1, where $\Delta t$ is large; (B) Case 2, where $|A/B|$ (or $\|A\|\cdot \|B^{-1}\|$ in two dimensions) is large; (C) Case 3, where $|Q/R|$ (or $\|Q\|\cdot \|R^{-1}\|$ in two dimensions) is large; and (D) Case 4, where $|A|$ (or $\|A\|$ in two dimensions) is large.}

{Across all settings, policy iteration (PI) based on Optimal-PhiBE consistently outperforms PI based on Optimal-BE. In deterministic 1D and 2D systems, Optimal-PhiBE closely recovers the exact solution, even in moderately ill-conditioned cases. In stochastic settings, it again achieves much lower $L^2$ errors than Optimal-BE. Notably, in ill-conditioned scenarios such as Case 4, Optimal-PhiBE still maintains small (relative) errors, demonstrating robustness to problem conditioning.}
\begin{figure}
	\centering
	\subfloat[]{\includegraphics[width=0.23\textwidth]{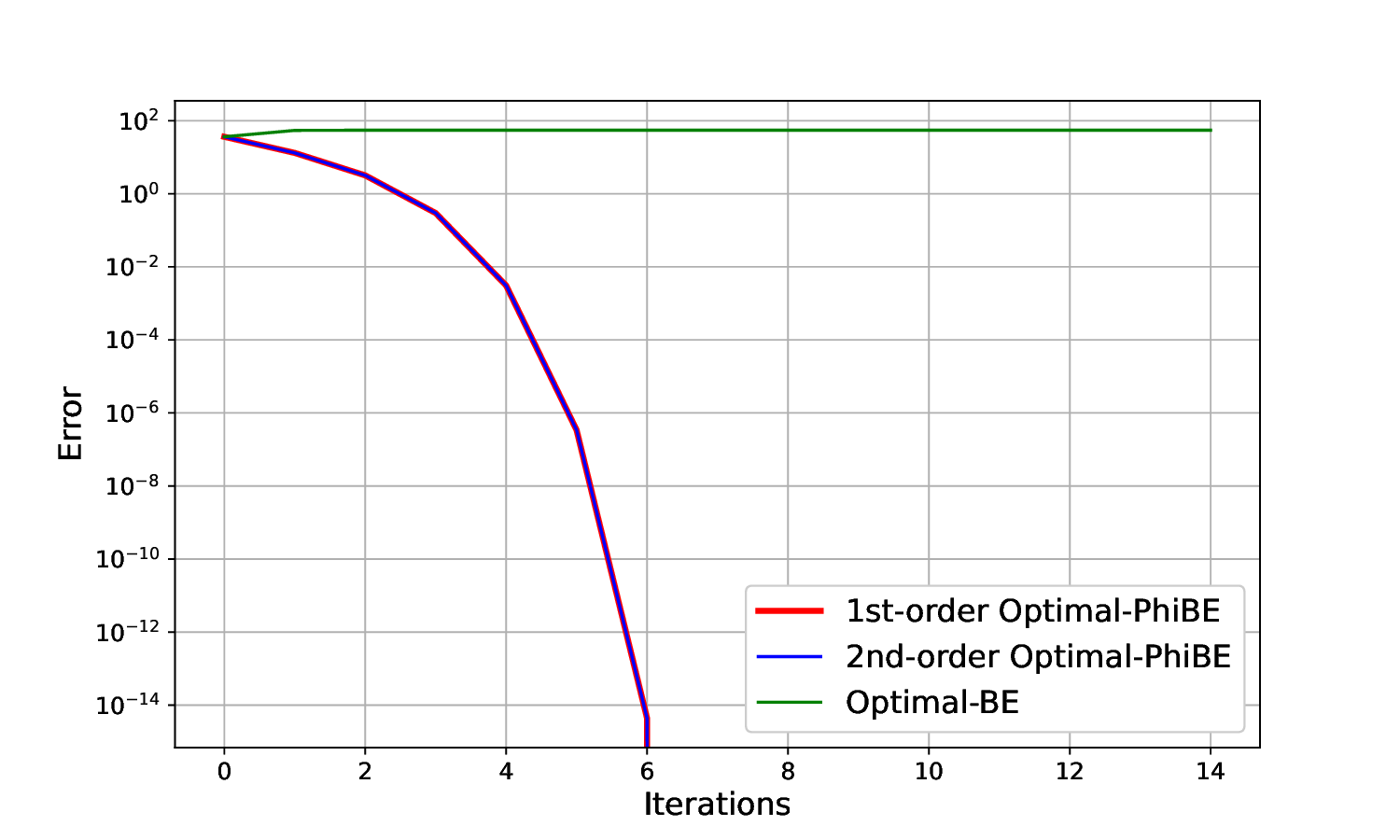}} 
	\subfloat[]{\includegraphics[width=0.23\textwidth]{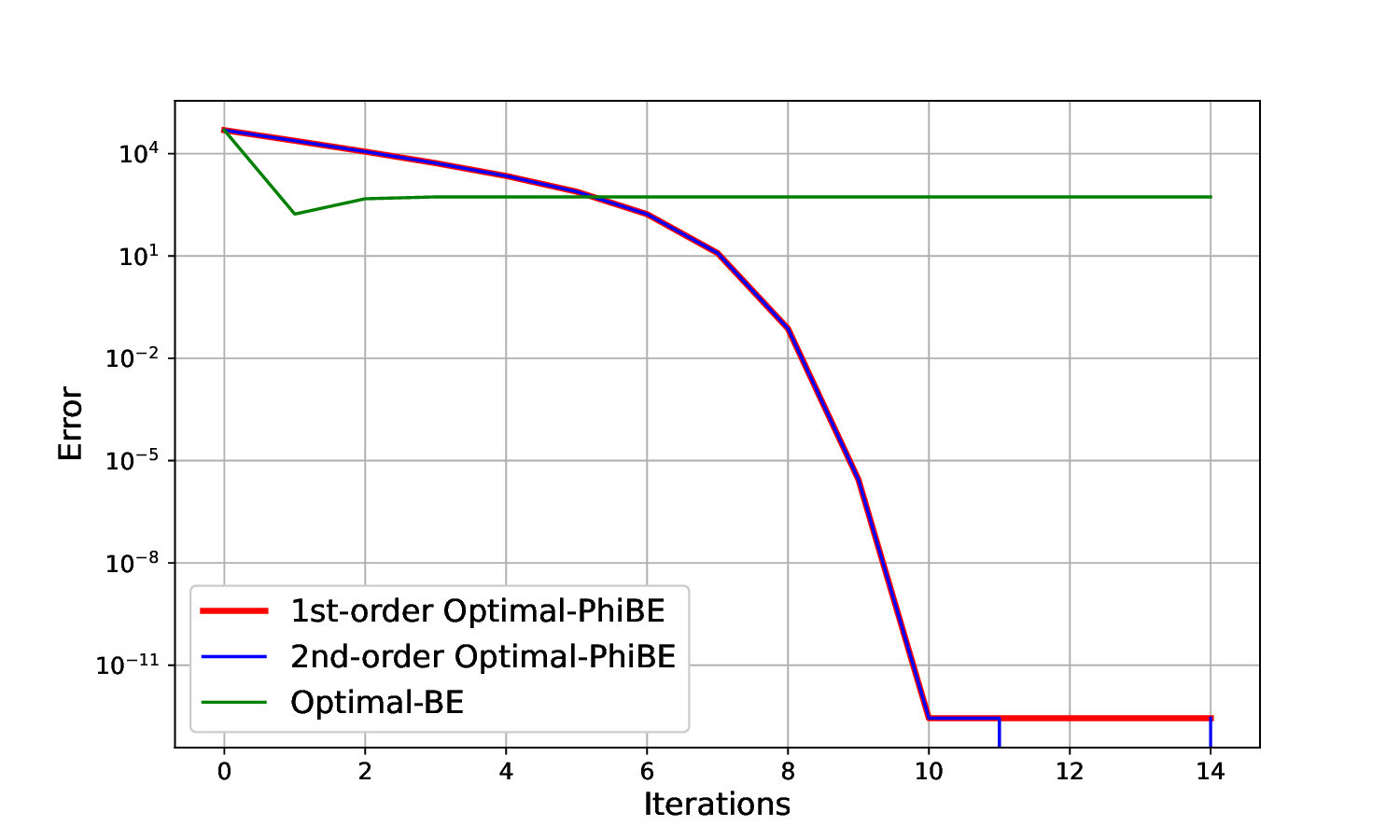}}
	\subfloat[]{\includegraphics[width=0.23\textwidth]{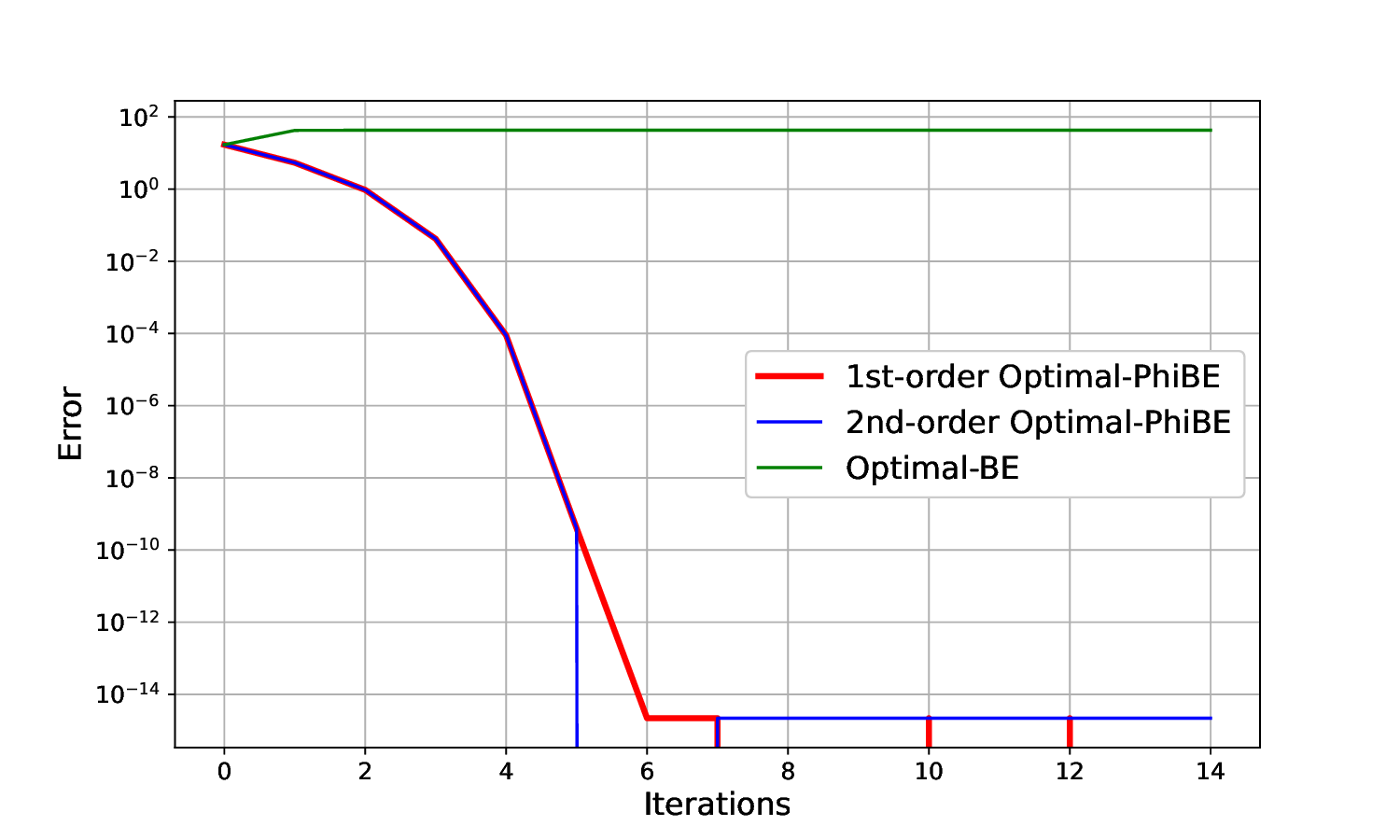}} 
	\subfloat[]{\includegraphics[width=0.23\textwidth]{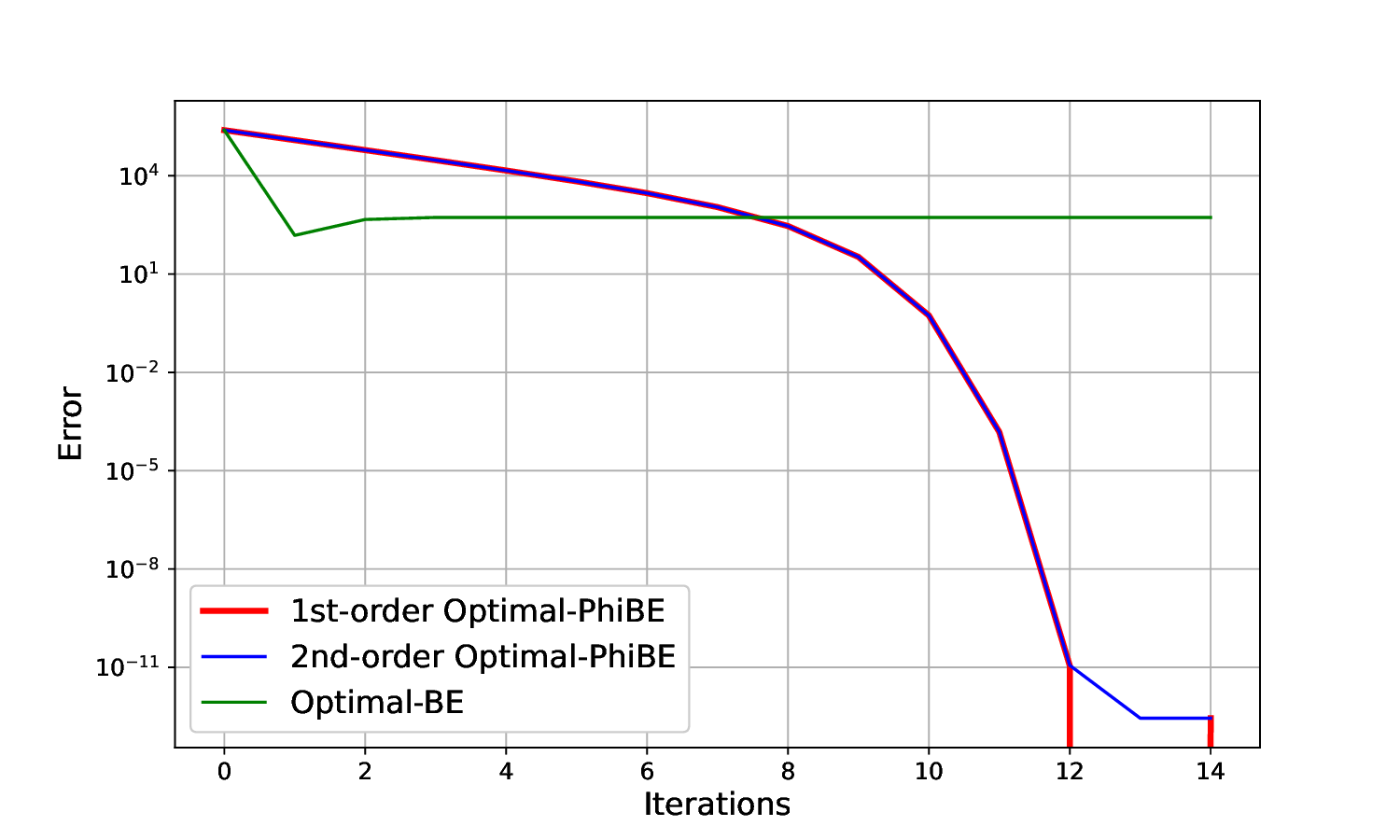}} 
\caption{Comparison in the one-dimensional deterministic case. (A) Case 1, where $\Delta t$ is large. (B) Case 2, where $|A/B|$ is large. (C) Case 3, where $|Q/R|$ is large. (D) Case 4, where $|A|$ is large.}
	\label{fig: lqr_1d_d}
\end{figure}
\begin{figure}
	\centering
	\subfloat[]{\includegraphics[width=0.23\textwidth]{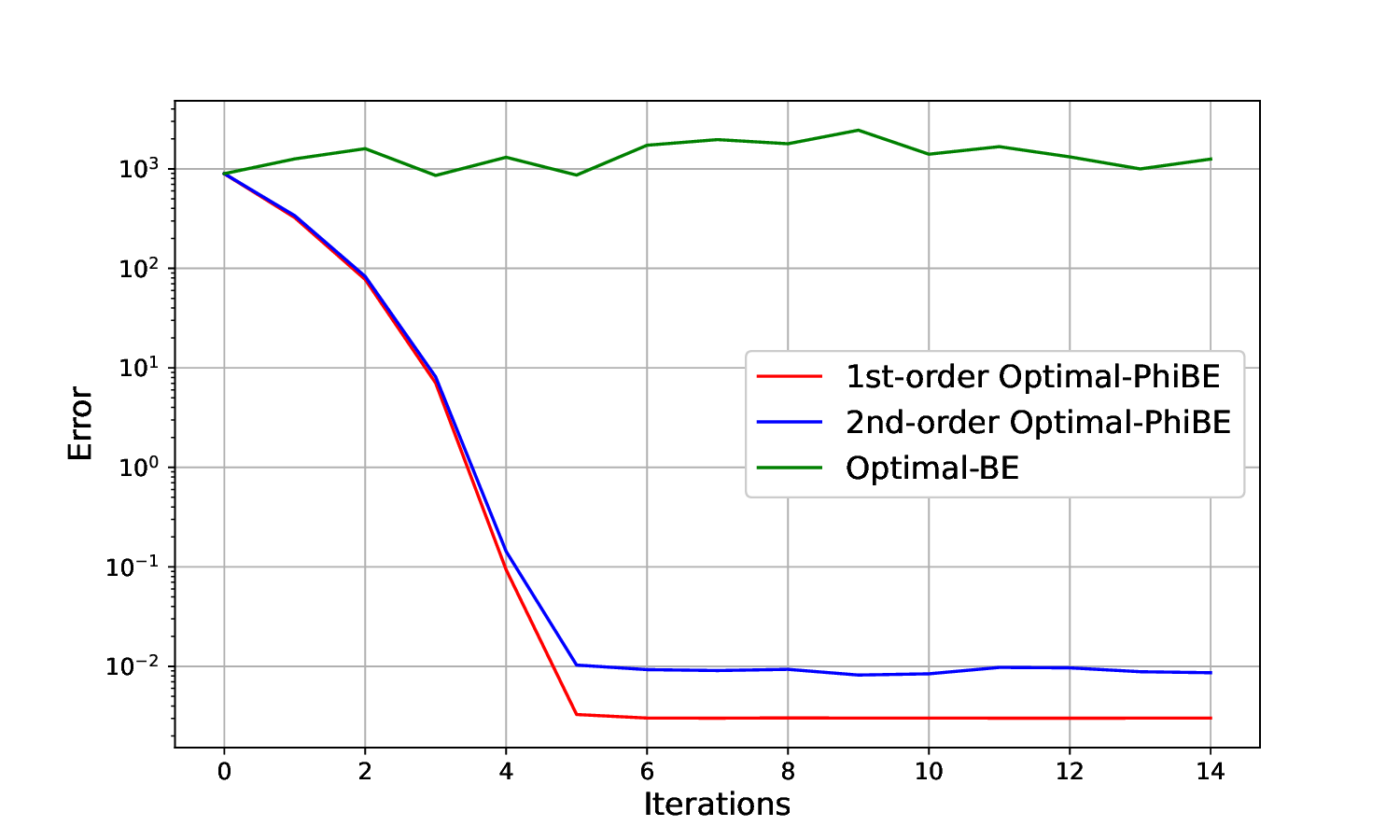}} 
	\subfloat[]{\includegraphics[width=0.23\textwidth]{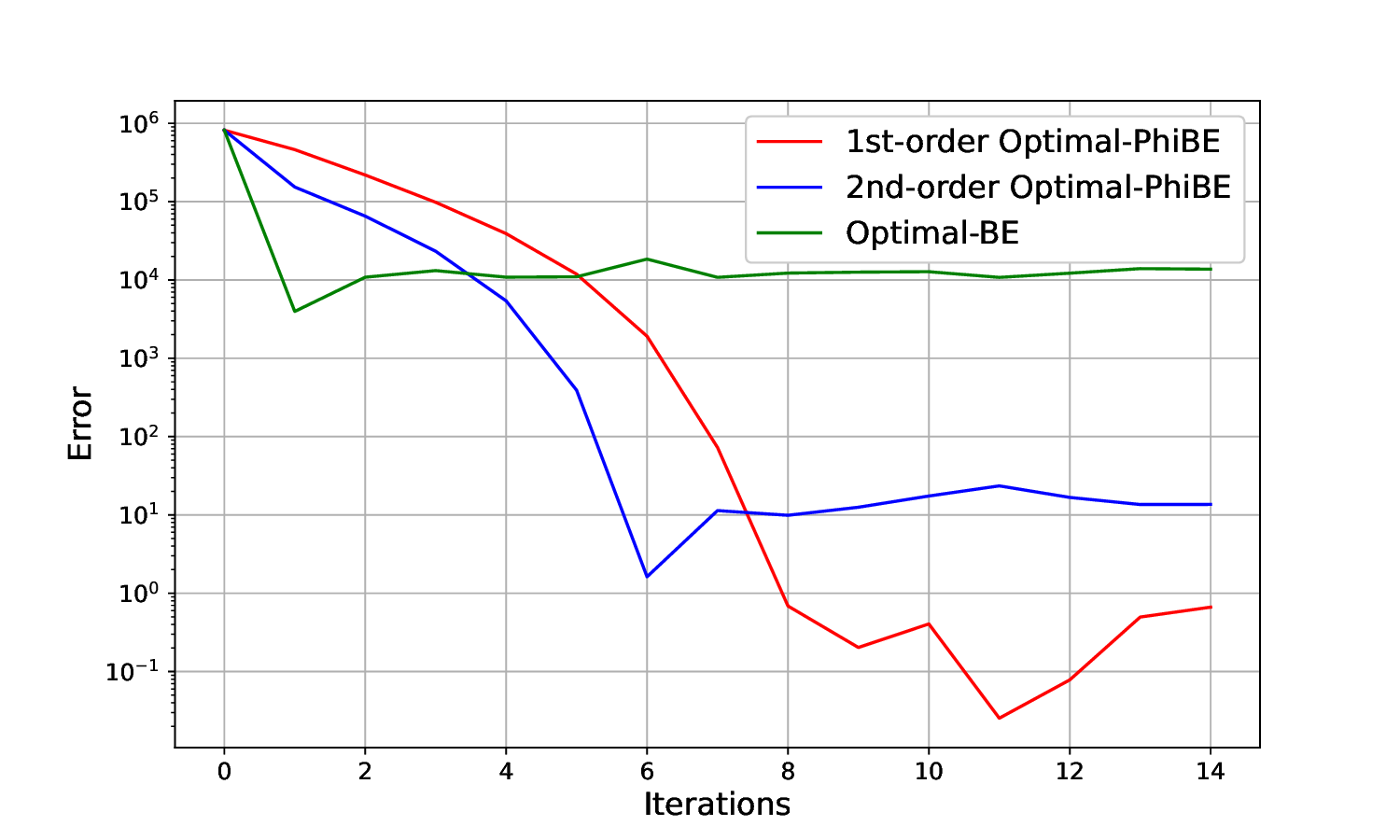}} 
	\subfloat[]{\includegraphics[width=0.23\textwidth]{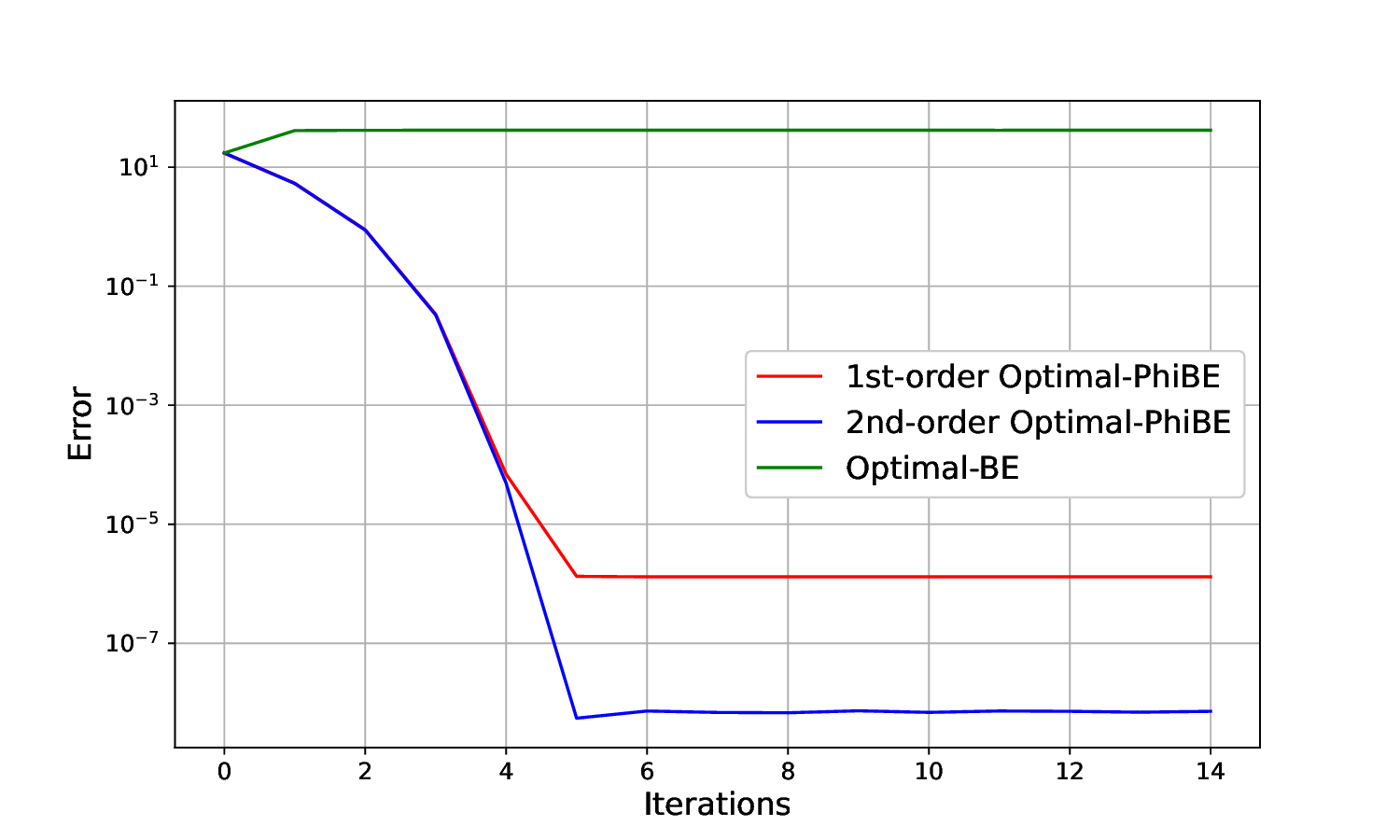}} 
	\subfloat[]{\includegraphics[width=0.23\textwidth]{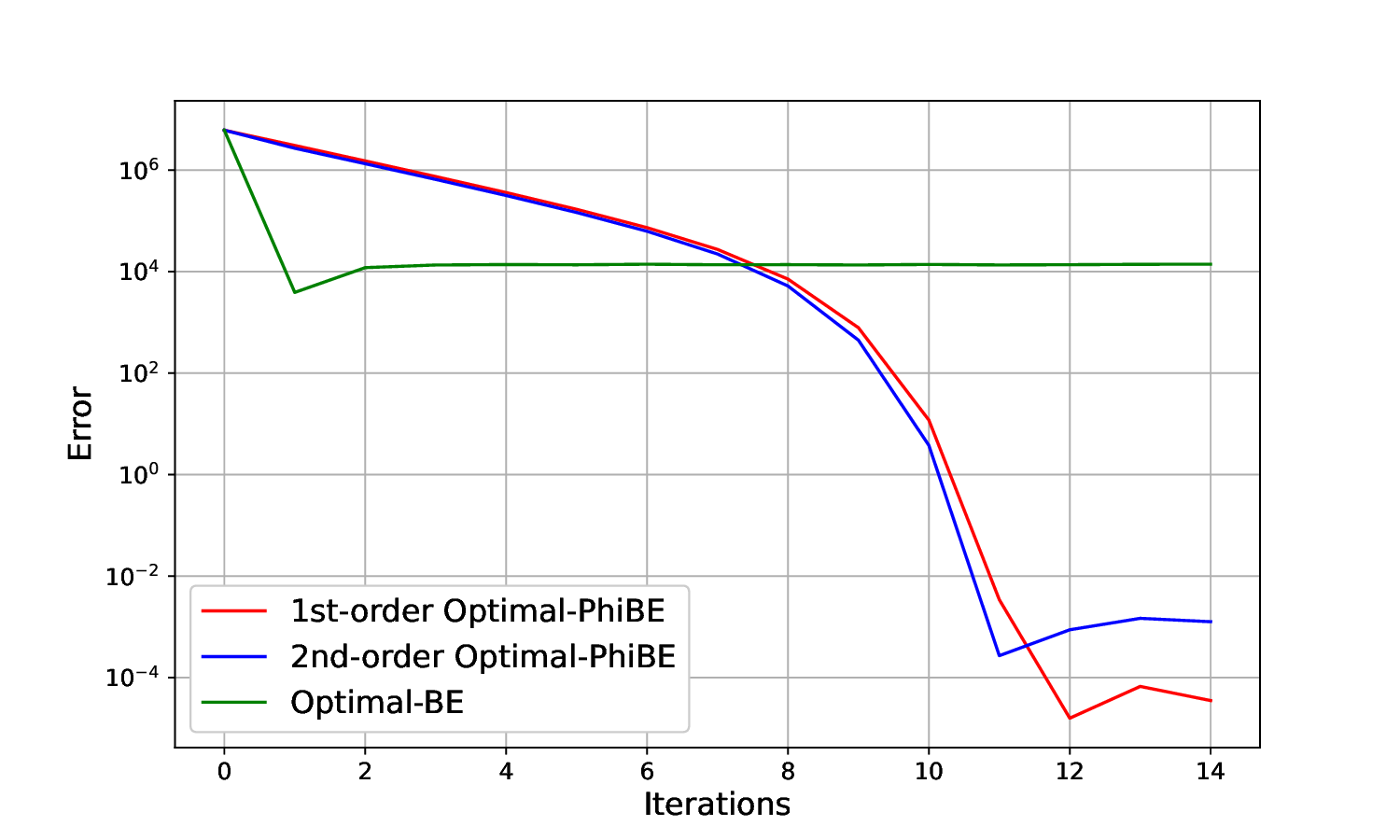}} 
	\caption{Comparison in the one-dimensional stochastic case. (A) Case 1, where $\Delta t$ is large. (B) Case 2, where $|A/B|$ is large. (C) Case 3, where $|Q/R|$ is large. (D) Case 4, where $|A|$ is large.}

	\label{fig: lqr_1d_s}
\end{figure}
\begin{figure}
	\centering
	\subfloat[]{\includegraphics[width=0.23\textwidth]{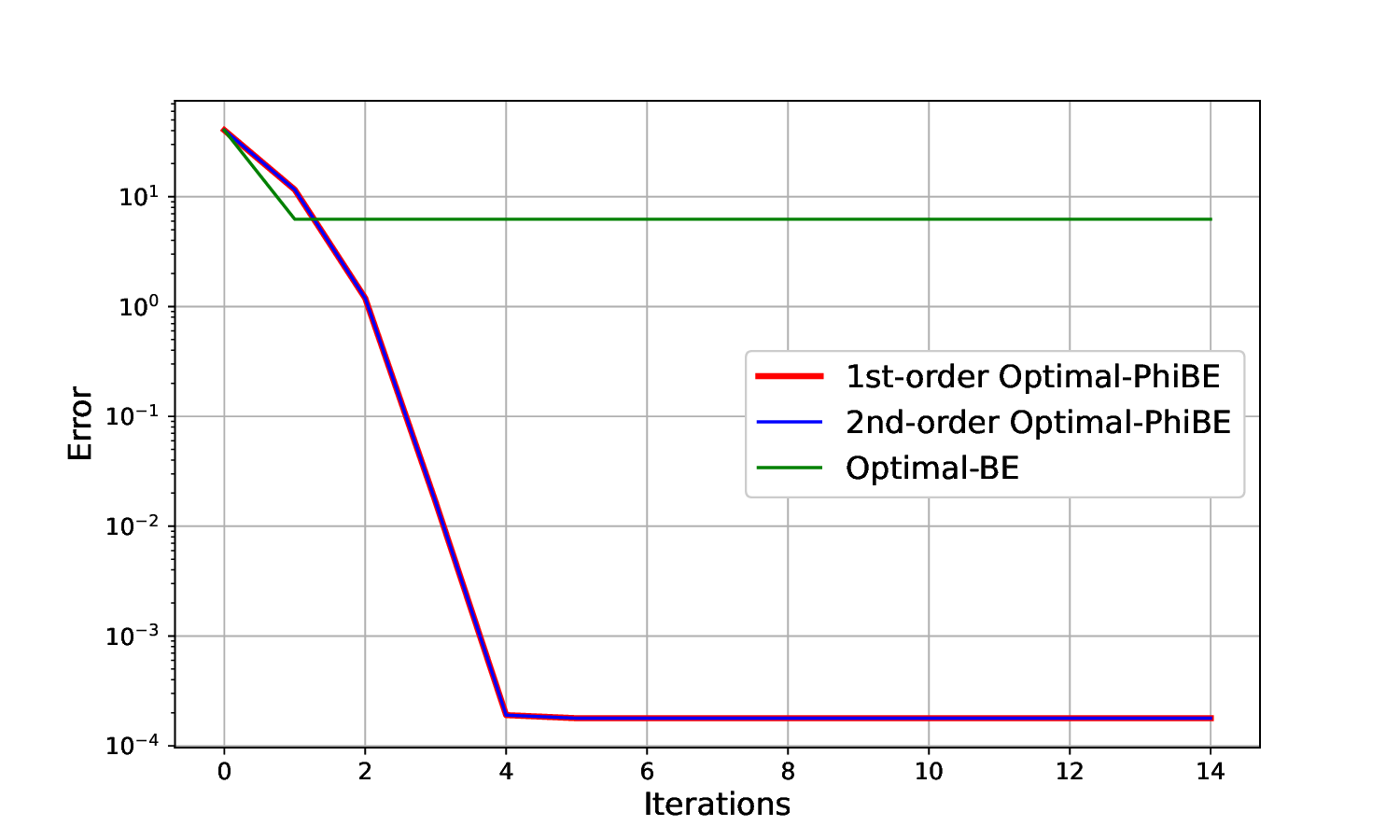}} 
	\subfloat[]{\includegraphics[width=0.23\textwidth]{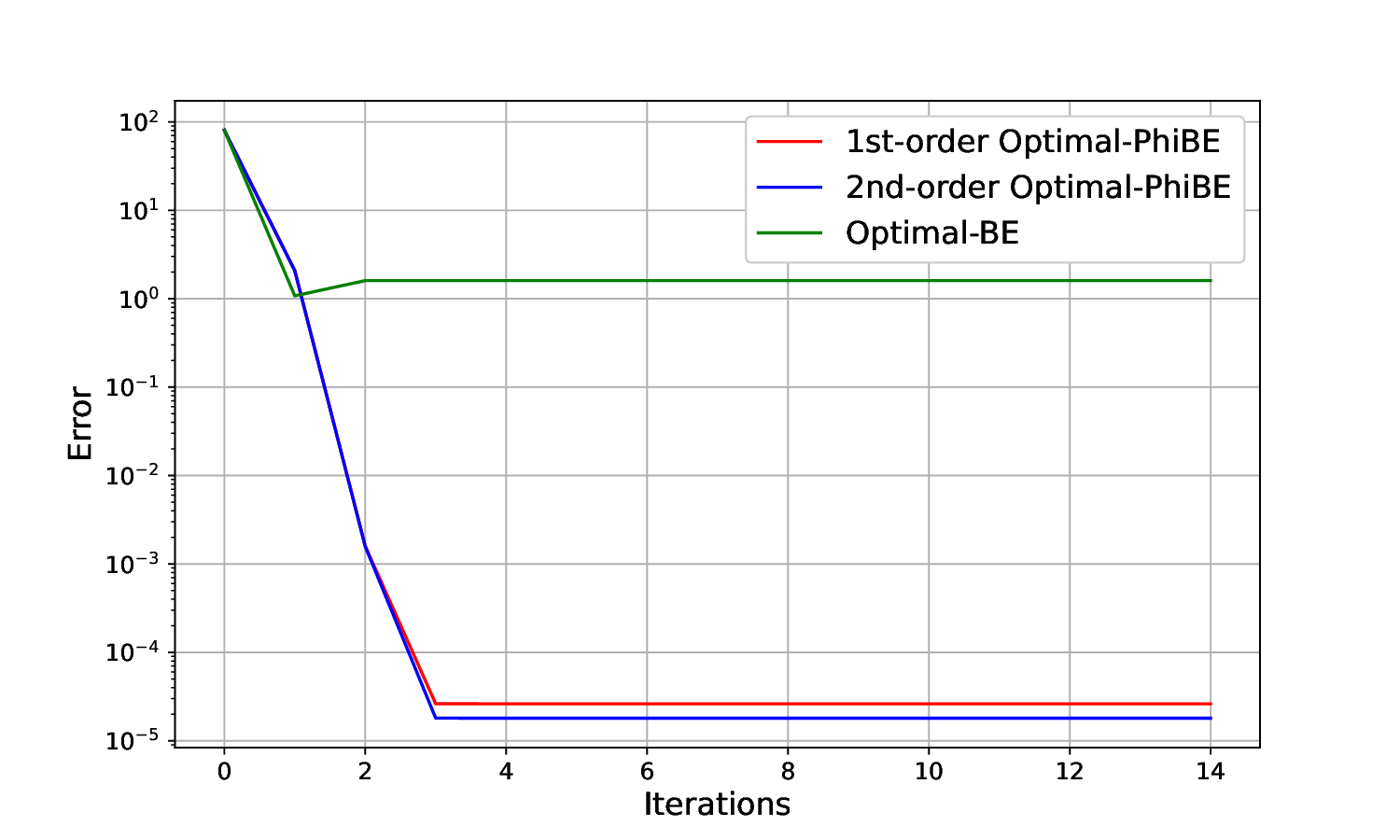}} 
	\subfloat[]{\includegraphics[width=0.23\textwidth]{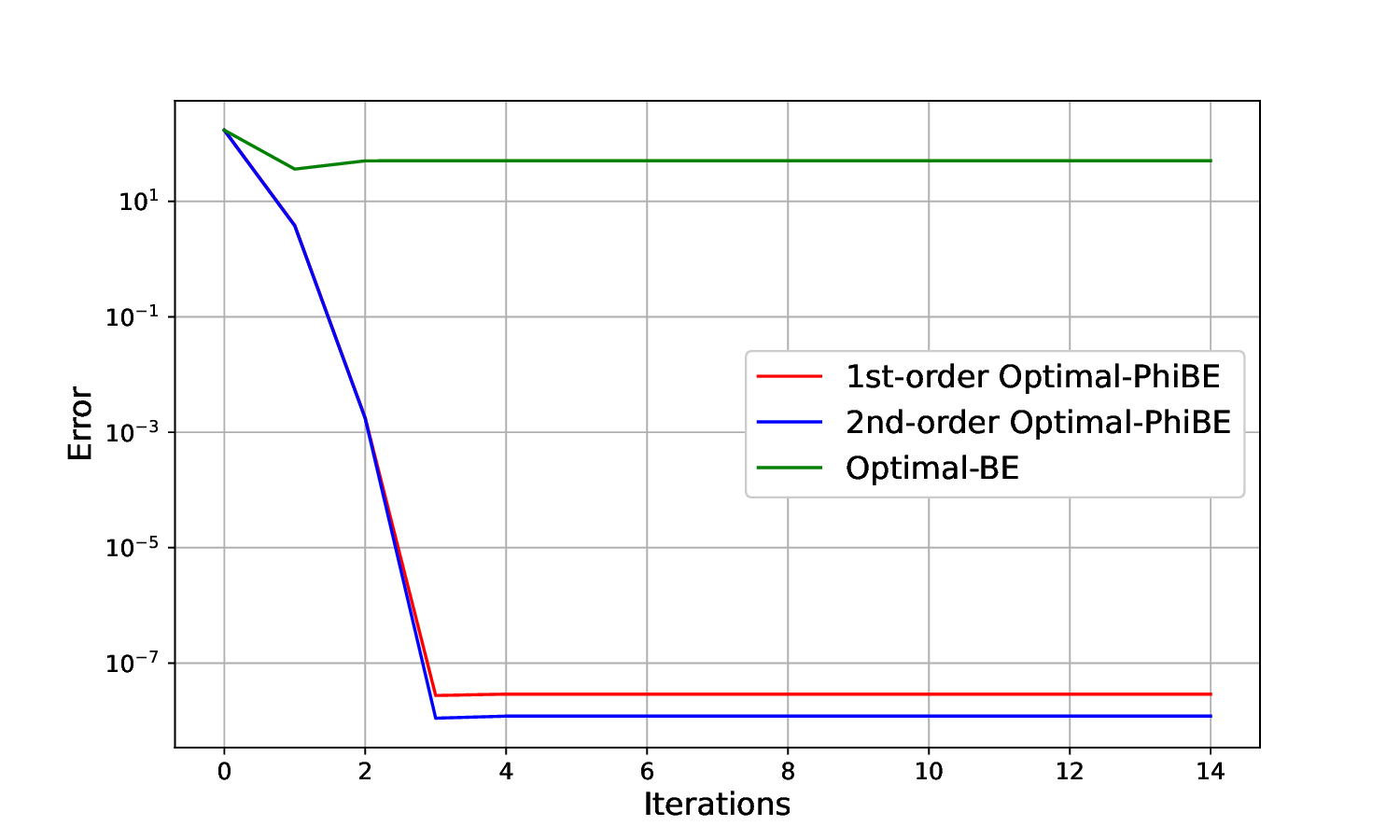}} 
	\subfloat[]{\includegraphics[width=0.23\textwidth]{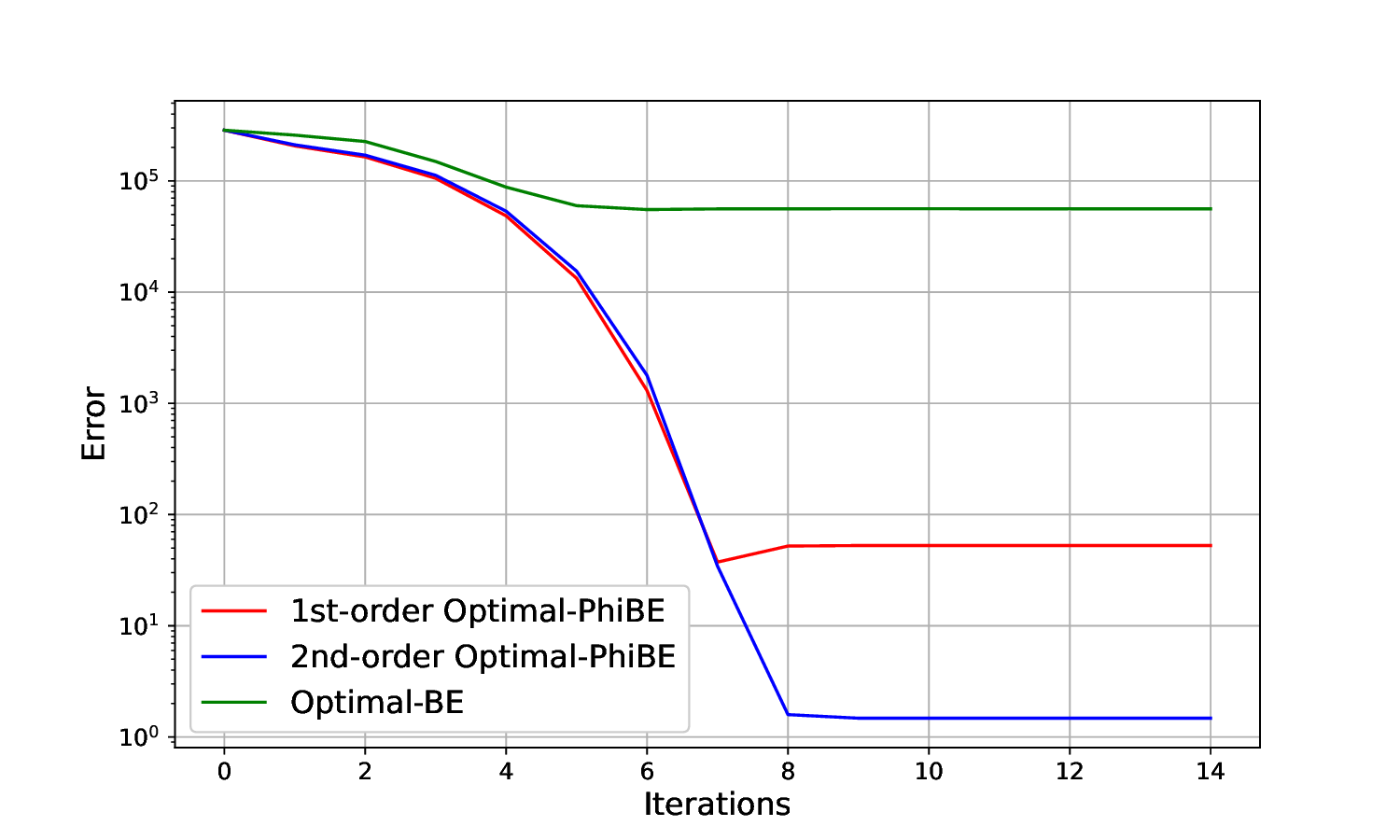}} 
\caption{Comparison in the two-dimensional deterministic case. (A) Case 1, where $\Delta t$ is large. (B) Case 2, where $\|A\|\cdot \|B^{-1}\|$ is large. (C) Case 3, where $\|Q\|\cdot\|R^{-1}\|$ is large. (D) Case 4, where $\|A\|$ is large.}

	\label{fig: lqr_2d_d}
\end{figure}
\begin{figure}
	\centering
	\subfloat[]{\includegraphics[width=0.23\textwidth]{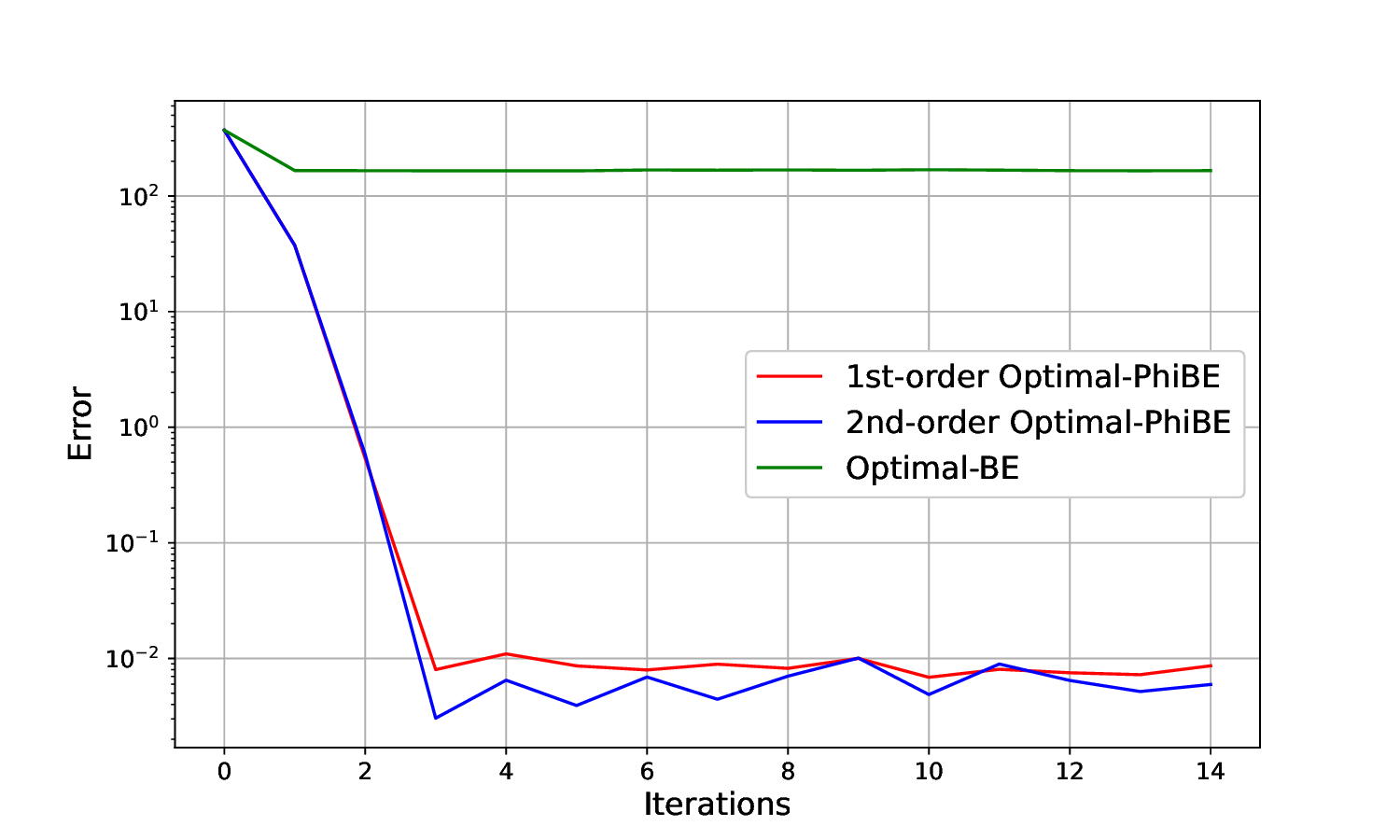}} 
	\subfloat[]{\includegraphics[width=0.23\textwidth]{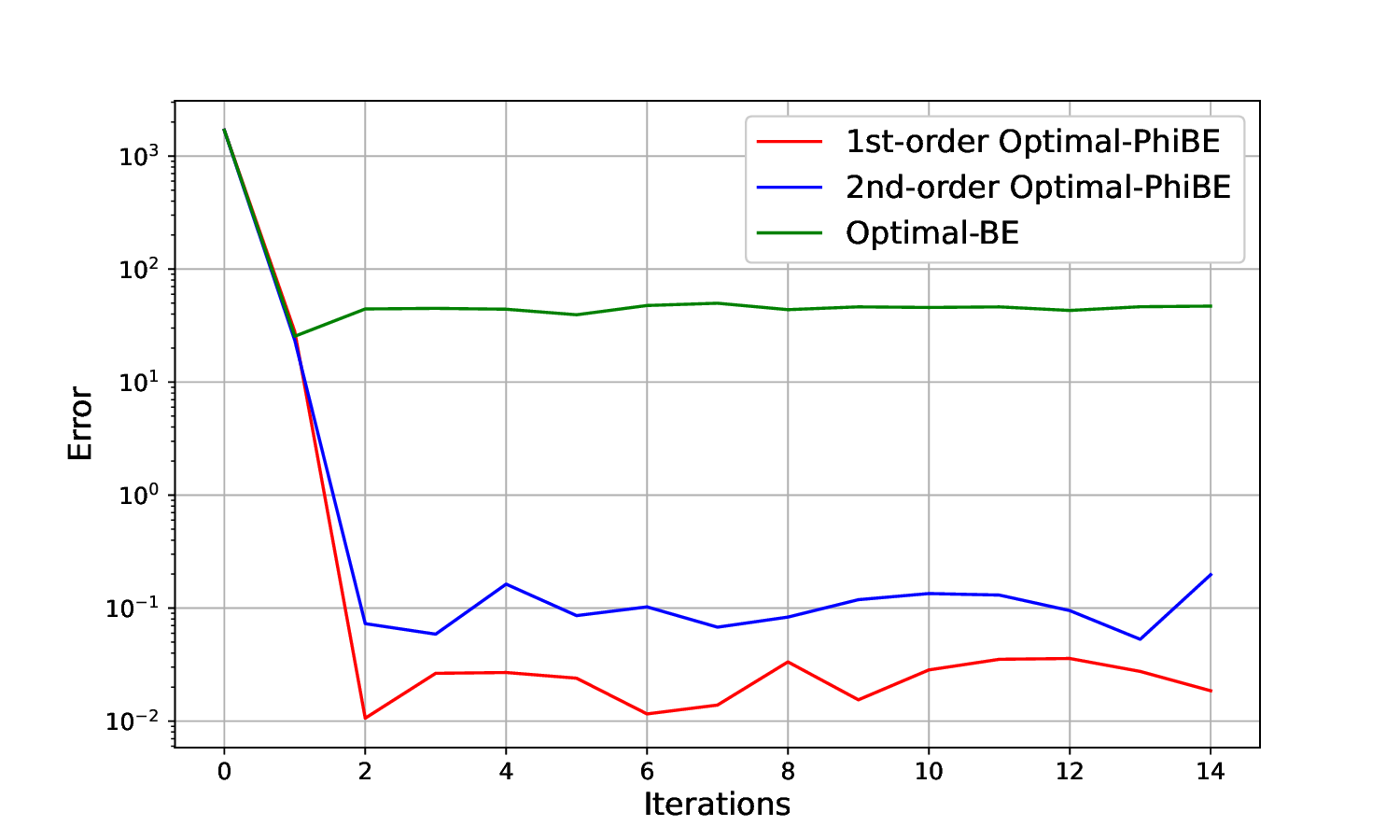}} 
	\subfloat[]{\includegraphics[width=0.23\textwidth]{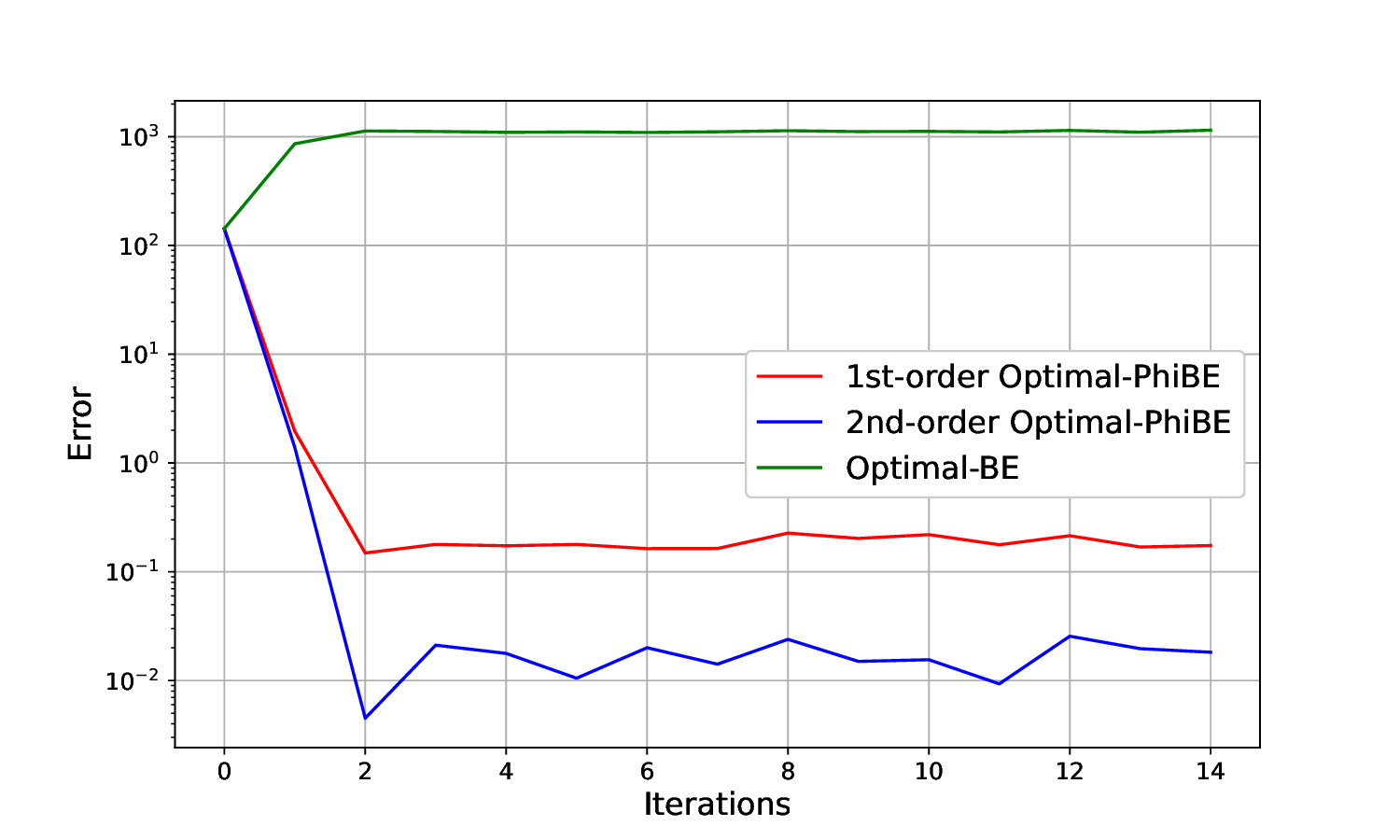}} 
	\subfloat[]{\includegraphics[width=0.23\textwidth]{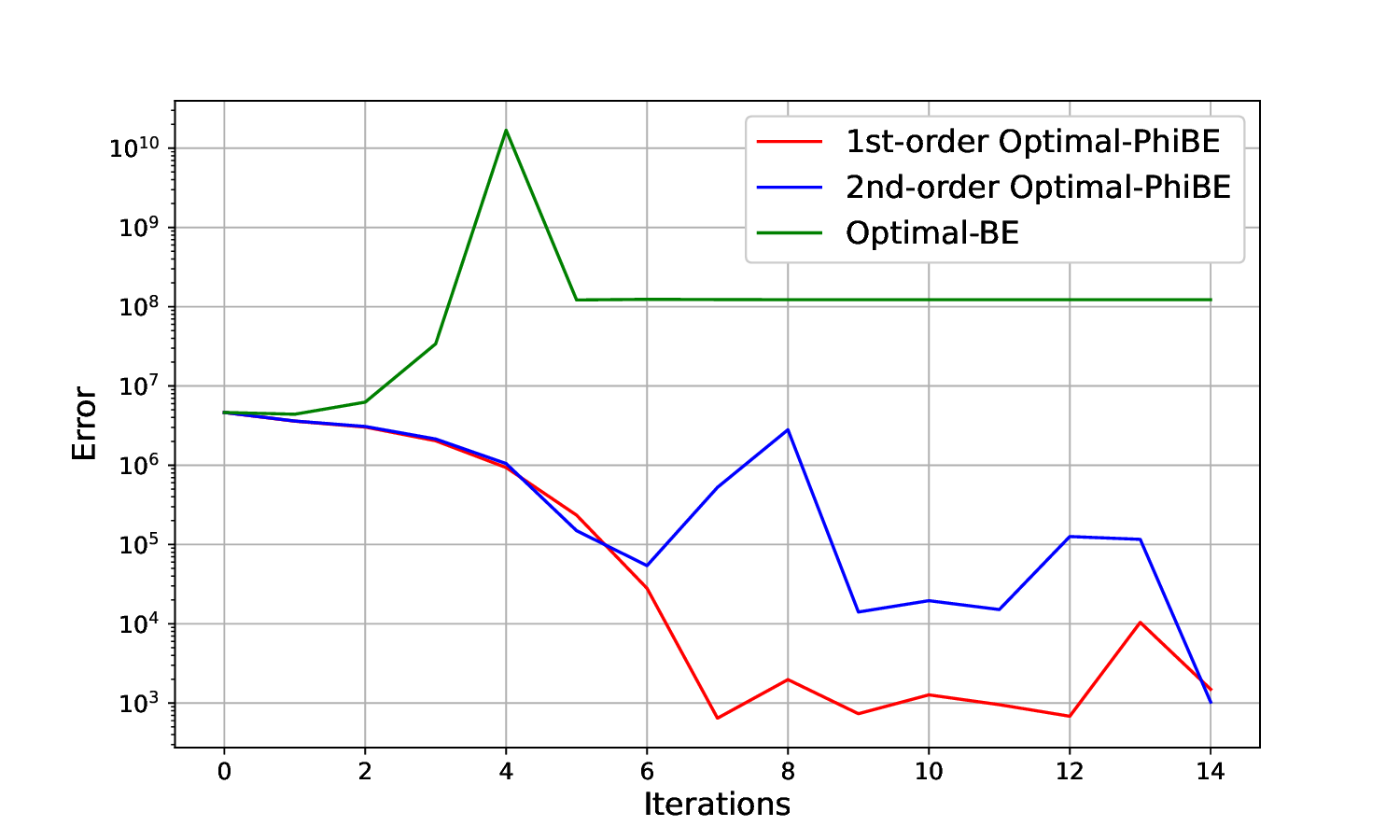}} 
\caption{Comparison in the two-dimensional stochastic case. (A) Case 1, where $\Delta t$ is large. (B) Case 2, where $\|A\|\cdot \|B^{-1}\|$ is large. (C) Case 3, where $\|Q\|\cdot \|R^{-1}\|$ is large. (D) Case 4, where $\|A\|$ is large.}
	\label{fig: lqr_2d_s}
\end{figure}

\subsubsection{Role of $\Delta t$ in deterministic LQR problem}\label{lqr_exp:dt}
In this section, we validate the error order with respect to $\Delta t$, as established in Theorem \ref{thm:lqr-error-1d} and Theorem \ref{thm:lqr-error-multid}. We vary $\Delta t \in \{5 \times 10^{-3}, 10^{-2}, 5 \times 10^{-2}, 10^{-1}, 1, 2.5\}$ and evaluate the performance of PI based on Optimal-PhiBE (first- and second-order) alongside the PI based on Optimal-BE. To minimize noise induced by random data sampling, we use batch sizes significantly larger than the algorithms actually require, for instance, $6 \times 10^6$ data points per iteration in the 1-D case and $6 \times 10^4$ data points in the 2-D case. All three algorithms are configured identically, sharing the same initializations.

For each value of $\Delta t$, we conduct 30 experiments and compute the mean of $\|K - K^*\|$ over these runs, where $K^*$ is the coefficient from the ground truth optimal policy $\pi^*(s) = K^* s$, and $K$ is the corresponding coefficient estimated by the algorithms. The corresponding LQR problem setups are detailed in Appendix \ref{appendix:exp_detail}. The results (log-log graph) are presented in Figure \ref{fig:dt-1d} and \ref{fig:dt-2d}.

\begin{figure}
	\centering
	\subfloat[Time step scaling in 1D case\label{fig:dt-1d}]{
	    \includegraphics[width=0.32\textwidth]{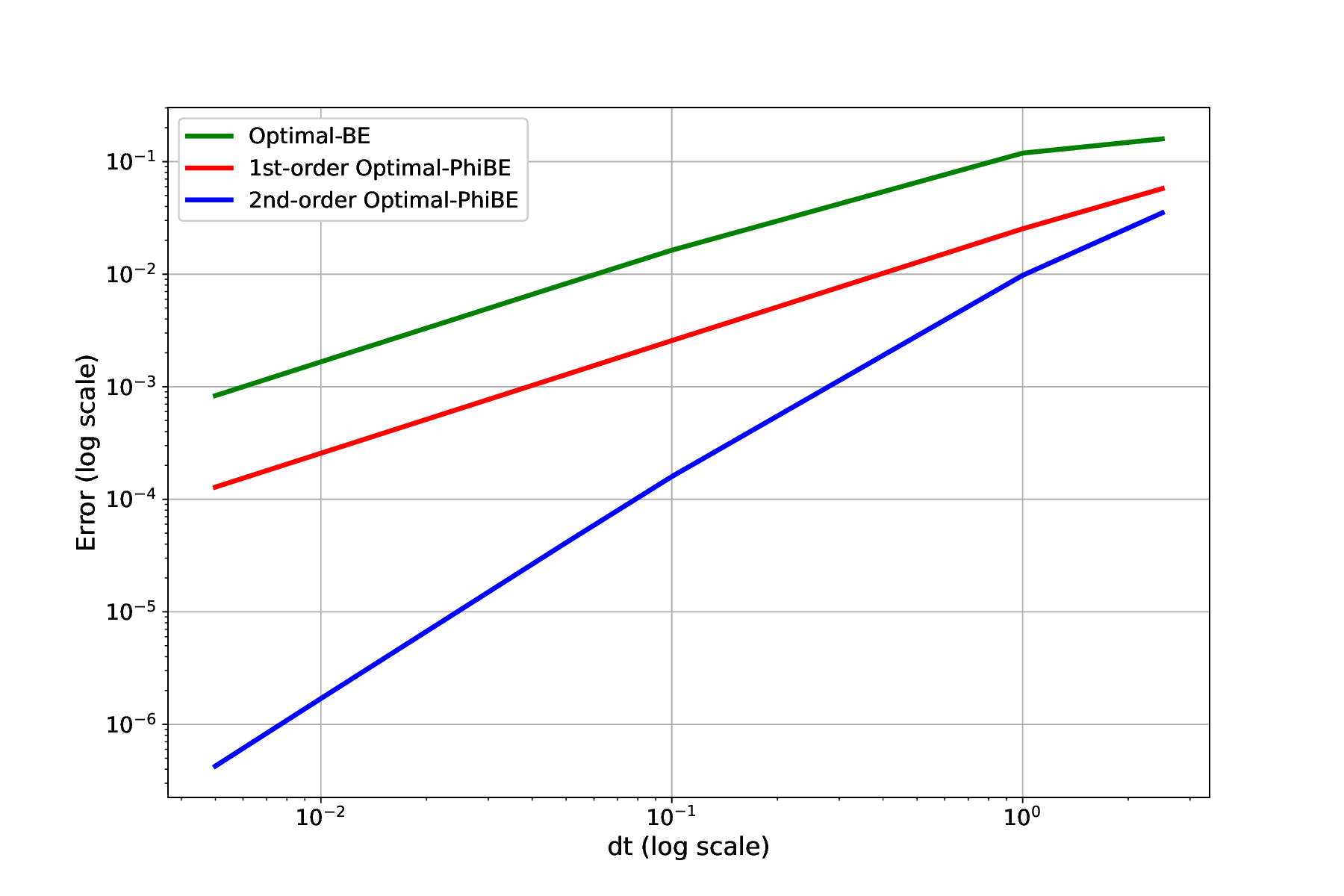}}
	\subfloat[Time step scaling in 2D case\label{fig:dt-2d}]{
	    \includegraphics[width=0.32\textwidth]{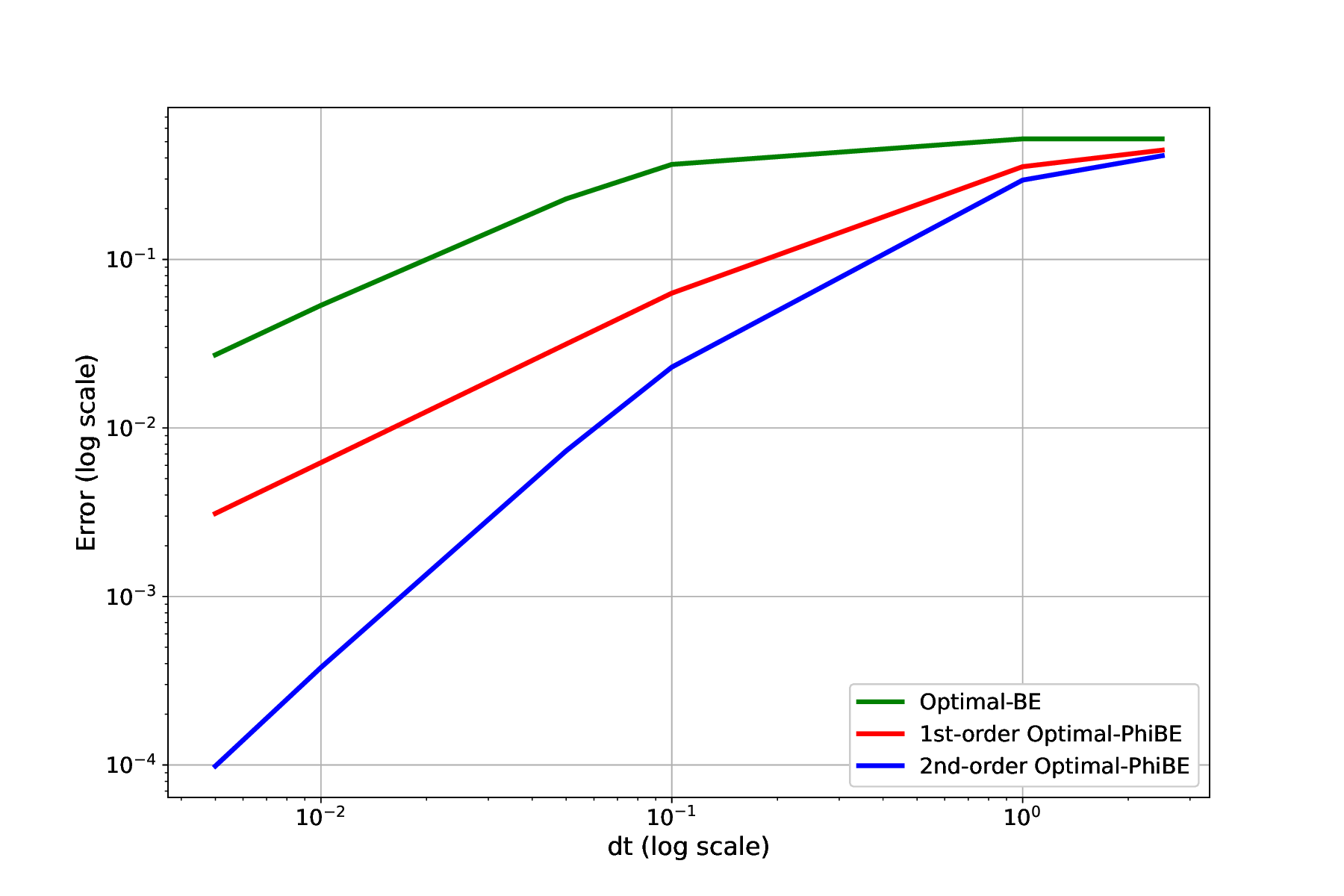}}
    \subfloat[Effect of Batch Size\label{fig: bs}]{
        \includegraphics[width=0.32\textwidth]{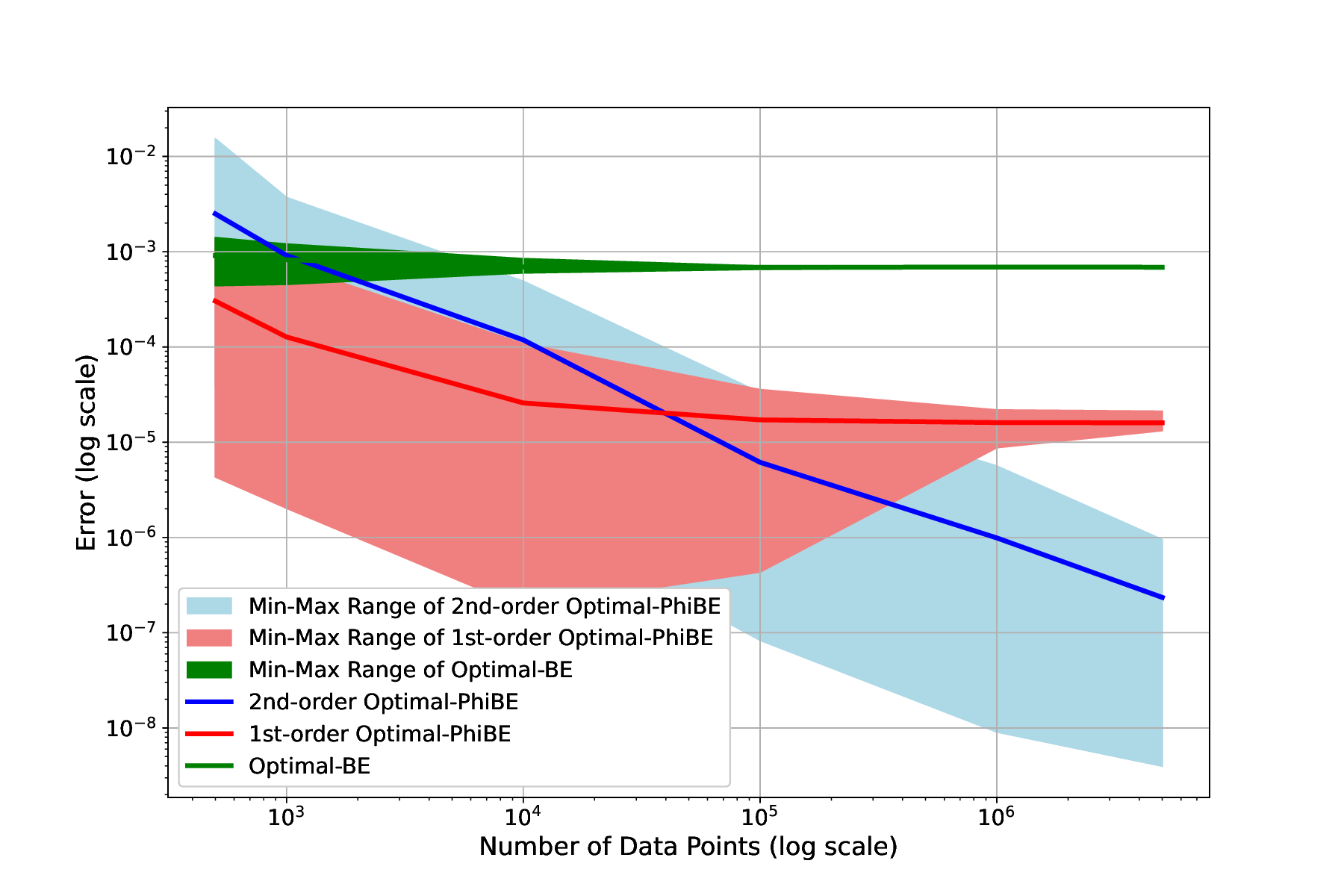}}
	\caption{Time step scaling and effect of batch size}
\end{figure}

It can be observed that both PI based on Optimal-
PhiBE (first order) and the PI based on Optimal-BE algorithms exhibit first-order error, while the PI based on the second-order Optimal-PhiBE  algorithm achieves second-order error. This result validates the theoretical guarantees established in Theorem \ref{thm:lqr-error-1d} and Theorem \ref{thm:lqr-error-multid}, confirming the expected error rates.
\subsubsection{Role of the number of data points}\label{lqr_exp:data_point}
In this section, we investigate how the number of data points affects the error. Let $D$ denote the number of data points used per iteration. We vary $D \in \{5 \times 10^2, 10^3, 10^4, 10^5, 10^6, 5 \times 10^6\}$ and evaluate the performance of PI based on Optimal-PhiBE (first- and second-order) algorithm alongside the PI based on Optimal-BE algorithm. The time step is fixed at $\Delta t = 0.1$, and all three algorithms share the same initializations and configurations. The LQR setup is $A = -1.0, B = 0.5, \sigma =0.1, R = -1, Q = -1, \beta = 3$.

For each value of $D$, data is collected from $\lfloor B/4 \rfloor$ trajectories at time steps $0, \Delta t, 2\Delta t$, and $3\Delta t$. We conduct 30 experiments for each $B$, compute the mean, and record the maximum and minimum of $\|V - V^*\|$ across these runs, measured in $L^2[-3,3]$, where $V^*$ is the value function corresponding to the ground truth optimal policy $\pi^*$, and $V$ is the estimated value function obtained from the algorithms. The results (log-log graph) are presented in Figure \ref{fig: bs}. {PI based on Optimal-PhiBE improves steadily with more data, with the second-order variant benefiting the most. By contrast, PI based on BE shows little improvement. Although second-order Optimal-PhiBE has higher variance, with sufficient data it consistently outperforms both BE-based PI and first-order Optimal-PhiBE.}

\subsection{Merton's Portfolio Optimization Problem}\label{merton_exo}
In this section, we compare the performance of PI based on Optimal-PhiBE (Algorithm~\ref{algo:main}) and PI based on Optimal-BE (Algorithm~\ref{algo:RL} in Appendix~\ref{appen:algo}) on Merton’s Portfolio Optimization Problem. Merton’s portfolio problem \cite{merton} is one of the fundamental models in continuous-time finance, providing a framework for optimal asset allocation using a SDE. An investor allocates wealth between a risk-free asset $E_t$ with interest rate $r$ and a risky asset $F_t$ with return $\mu$ and volatility $\sigma$. Their dynamics are $
dE_t = r E_t\,dt$ and $ dF_t = \mu F_t\,dt + \sigma F_t \,dB_t, $where $B_t$ is standard Brownian motion. The investor chooses a fraction $\pi_t$ of wealth to invest in the risky asset. Borrowing is allowed at rate $r_b > r$, but to ensure well-posed dynamics, $\pi_t$ is constrained to either $[0,1]$ or $(1,\infty)$ for all $t$. The wealth process follows,
\begin{equation*}
d W_t = 
\begin{cases}
    (\pi_t \mu + (1-\pi_t)r) W_t \, dt + \sigma \pi_t W_t \, dB_t, & \text{for } \pi_t \in [0, 1], \\
    \left(\pi_t \mu - (\pi_t - 1)r_b\right) W_t \, dt + \sigma \pi_t W_t \, dB_t, & \text{for } \pi_t > 1.
\end{cases}
\end{equation*}
The goal is to choose $\pi_t$ to maximize the expected discounted utility,
\begin{equation*}  \arg\max_{\pi_t}\mathbb{E}\left[\int_0^\infty e^{-\beta t}U(W_t)\,dt\right],\quad \text{with}\quad U(W)=\frac{W^{1-\gamma}}{1-\gamma}.
\end{equation*}
Here $U$ is a power utility function with $\gamma > 0$ representing the risk aversion of the investor. It turns out that the optimal allocation is a constant depending on the problem
\begin{equation*}
    \pi^*_t = \begin{cases}
    \frac{\mu-r}{\gamma \sigma^2}, & \text{if } \frac{\mu - r}{\gamma \sigma^2}\leq 1, \\
    \max\left\{1,\frac{\mu-r_b}{\gamma \sigma^2}\right\}, & \text{if } \frac{\mu - r}{\gamma \sigma^2}>1.
\end{cases}
\end{equation*}
\begin{figure}
	\centering
	\subfloat[Case 1]{\includegraphics[width=0.33\textwidth]{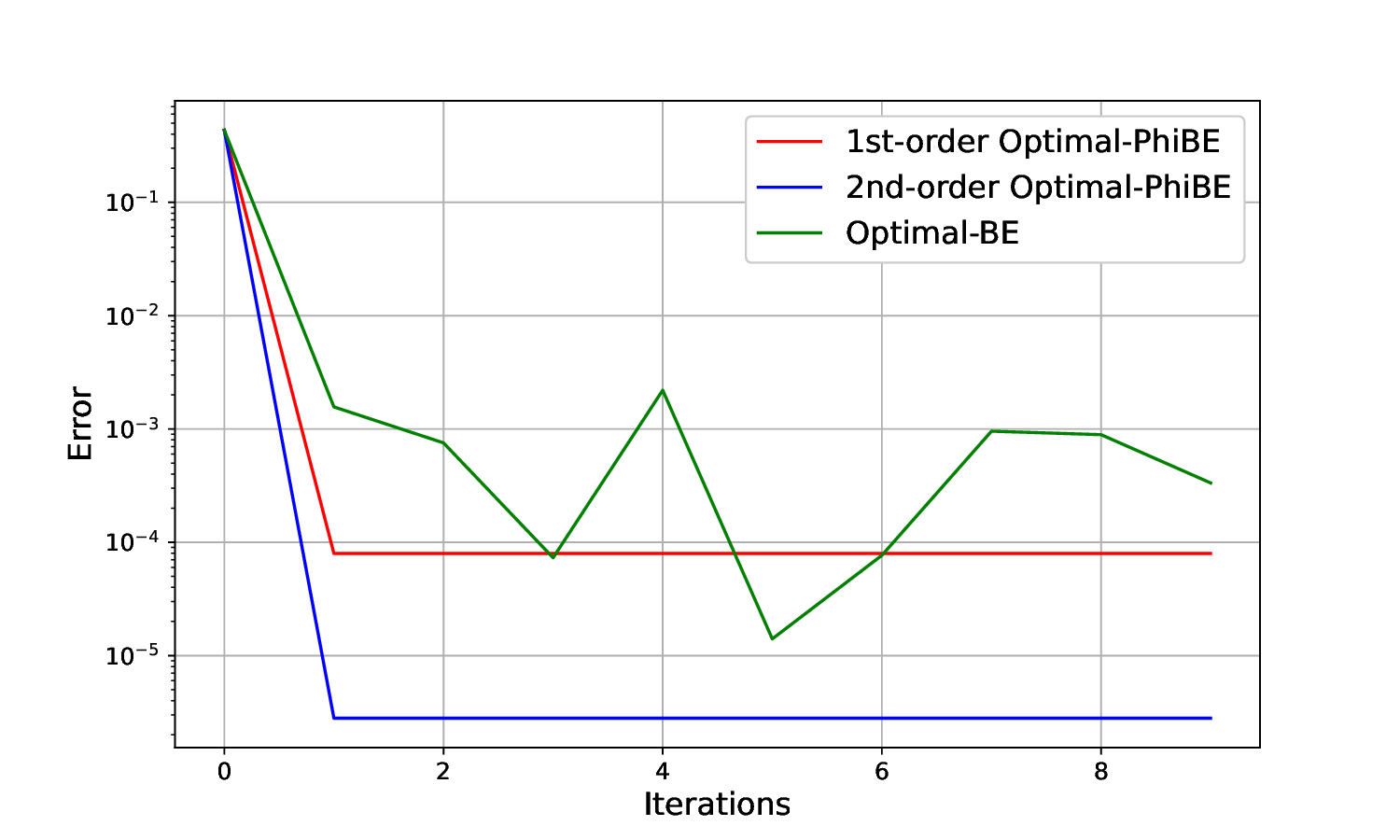}} 
	\subfloat[Case 2]{\includegraphics[width=0.33\textwidth]{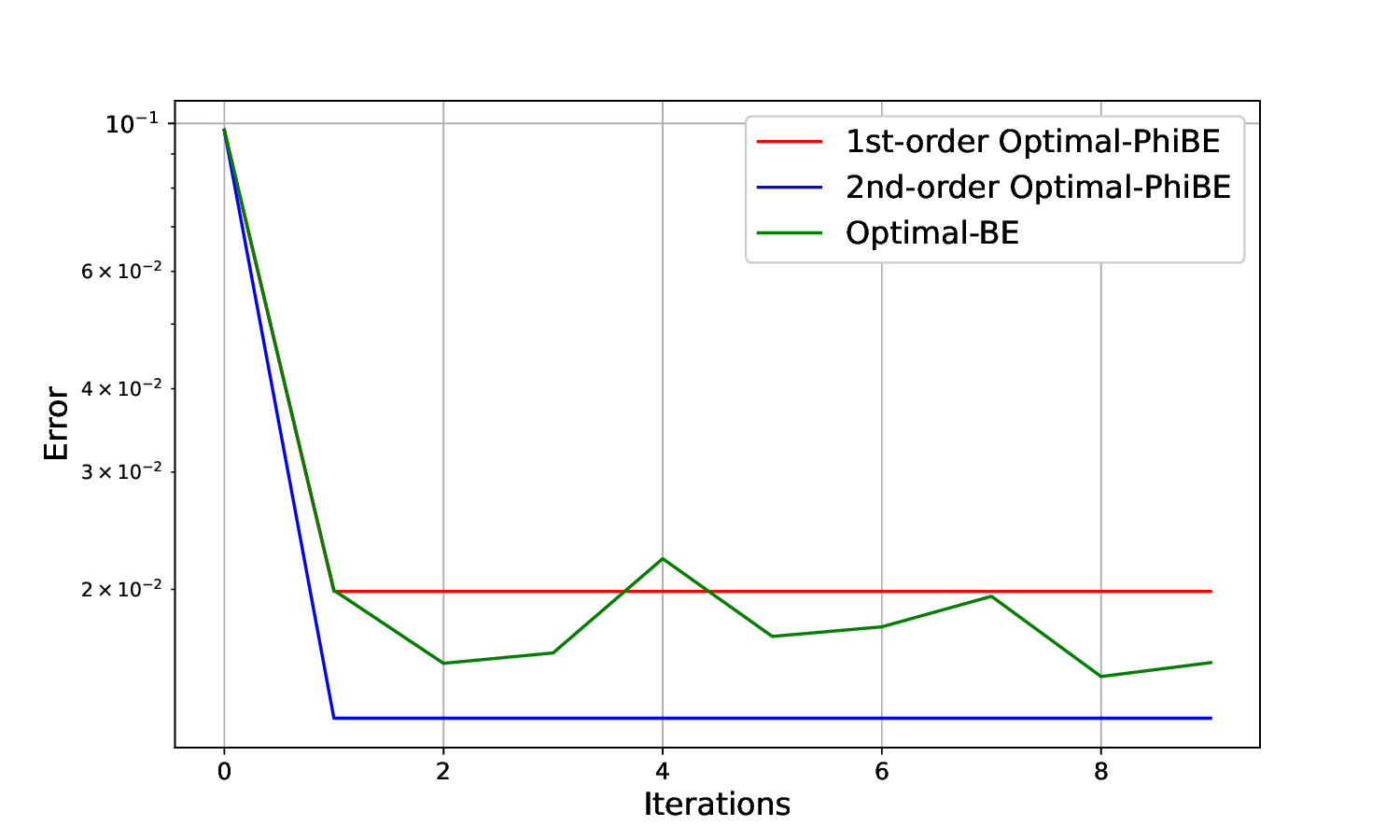}} 
	\subfloat[Case 3]{\includegraphics[width=0.33\textwidth]{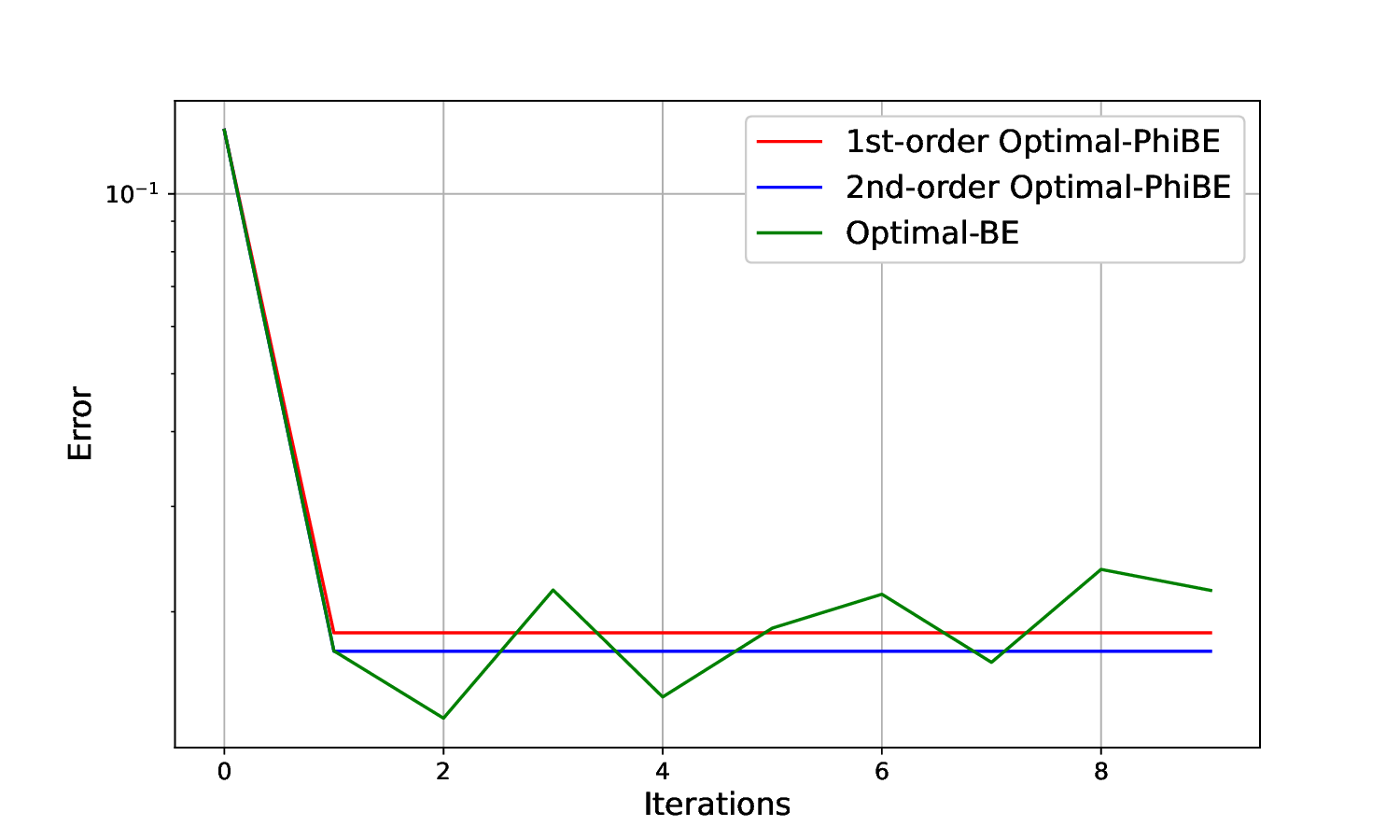}} 
	\caption{Comparison in Merton's Portfolio Optimization Problem}
	\label{fig: merton}
\end{figure}
We evaluate the performance of the algorithms across three different settings. In each case, we fix $\gamma=0.5$, which indicates a risk-tolerant investor. Additionally, we set $\Delta t = 1/12$ to simulate monthly data collection. The identical configurations for both algorithms and specific configurations for the problem are detailed in Appendix \ref{appendix:exp_detail}. Each algorithm is run for 10 iterations, and the $L^2([-3,3])$ distance between the estimated value function of the iterated policy and the ground-truth optimal value function is recorded. The results are presented in Figure \ref{fig: merton}.

We observe that both PI based on Optimal-PhiBE and PI based on Optimal-BE achieve small $L^2$ errors, confirming that both methods can approximate the optimal value function effectively. However, PI based on Optimal-PhiBE consistently achieves lower errors across all cases and exhibits greater numerical stability throughout the iterations.

\section*{Acknowledgements}
Thanks to Quanjun Lang, Fei Lu for insightful discussions.
Y. Zhu is supported by the NSF grants No 2529107. Y. P. Zhang acknowledges support from NSF CAREER grant DMS-2440215, Simons Foundation Travel Support MPS-TSM-00007305, and a start-up grant at Auburn University.

\newpage
\appendix   
\section{Proofs}\label{sec: proof}
\subsection{Proof of Theorem~\ref{thm:Optimal-PhiBE}}\label{proof3_4}
\begin{proof}
    The proof of the theorem is based on the following three Lemmas, which are proved in Section \ref{proof of lemma: diff}, \ref{sec:proof of lemma lipcts} and \ref{sec:proof of stability}, respectively. We will use the following notations: for $F:\bbR^d\times A\to \bbR^{n\times m}$ with $n,m\geq 1$,
\beq\lb{notation}
\|F\|_{\infty}:=\sup_{(s,a)}\|F(s,a)\|_2\quad\text{and}\quad |F(s,a)|:=\|F(s,a)\|_{2}.
\eeq
where $\|\cdot\|_2$ is the spectral norm or $2$-norm.

\begin{lemma}\label{lemma: diff}
        Under Assumption \ref{main ass}, one has $\hatb, \nb_s\hatb$ are uniformly bounded, and 
        \begin{equation}\label{eq:diff_1}
            \ll \hatb_i - b \rl_\infty \leq \hci \ll\mL^i b \rl_\infty \dt^i.
        \end{equation}
        For the stochastic dynamics, when $i\dt \leq 3$, one has $\hatsig, \nb_s\hatsig$ are uniformly bounded, and 
        \begin{equation}\label{eq:diff_2}
            \ll \hatsig_i - \Diff \rl_\infty \leq \hci \l(\ll\mL^i \Sigma \rl_\infty + \ll h_i\rl_\infty + 3\ll\drift\rl_\infty \r)\dt^i,
        \end{equation}
        where $h_i(s)$ is a function defined in \eqref{def of h} that only depends on the derivative $\nb^j\Sig, \nb^j\mu$ with $0\leq j \leq 2i$.
\end{lemma}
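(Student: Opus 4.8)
\textbf{Proof proposal for Lemma~\ref{lemma: diff}.}

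The plan is to analyze each of the three claimed properties --- boundedness of $\hat b_i$ and $\nabla_s\hat b_i$, boundedness of $\hat\Sigma_i$ and $\nabla_s\hat\Sigma_i$, and the two quantitative error bounds --- by Taylor expanding the discrete-time increments $s_{j\Delta t}-s_0$ (conditioned on the constant action $a$) in powers of $\Delta t$ and tracking the remainder. First I would recall Dynkin/It\^o: for a smooth function $g$, $\mathbb{E}[g(s_{t})\mid s_0=s, a] = \sum_{k=0}^{i}\frac{t^k}{k!}(\mathcal{L}^k g)(s,a) + R$, where the remainder $R$ is controlled by $\sup_{0\le\tau\le t}\mathbb{E}|(\mathcal{L}^{i+1}g)(s_\tau,a)|$ times $t^{i+1}/(i+1)!$. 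Applying this with $g(s)=s$ (componentwise) gives $\mathbb{E}[s_{j\Delta t}-s_0\mid s_0=s,a] = \sum_{k=1}^{i}\frac{(j\Delta t)^k}{k!}(\mathcal{L}^k \mathrm{id})(s,a) + O(\Delta t^{i+1})$, and since $\mathcal{L}\,\mathrm{id} = b$ this is $\sum_{k=1}^{i}\frac{(j\Delta t)^k}{k!}(\mathcal{L}^{k-1}b)(s,a)+O(\Delta t^{i+1})$. Multiplying by $\coef{i}_j/\Delta t$ and summing over $j=1,\dots,i$, the defining relations \eqref{def of a} ($\sum_j \coef{i}_j j^k = \mathbf{1}_{k=1}$) are designed precisely so that all terms with $1\le k\le i$ collapse: the $k=1$ term reproduces $b(s,a)$ exactly, and the $k=2,\dots,i$ terms vanish. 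What survives is the $k=i+1$ Taylor term, whose coefficient involves $\sum_j \coef{i}_j j^{i+1}$, plus the remainder; both are $O(\Delta t^i)$ after the $1/\Delta t$ normalization, and the leading surviving term is a constant (depending only on $i$, call it $\hat C_i$) times $\Delta t^i$ times $\mathcal{L}^i b$ evaluated along the trajectory. Bounding the trajectory-averaged $\mathcal{L}^i b$ by $\|\mathcal{L}^i b\|_\infty$ (Assumption~\ref{main ass}(b)) yields \eqref{eq:diff_1}. Boundedness of $\hat b_i$ follows from $\|b\|_\infty<\infty$ plus this bound; boundedness of $\nabla_s\hat b_i$ follows by differentiating the expansion in $s$ --- the derivative falls on $b$ and on $\mathcal{L}^i b$, controlled by $\|\nabla_s b\|_\infty$ and $\|\nabla_s(\mathcal{L}^i b)\|_\infty$ (Assumption~\ref{main ass}(b)) --- where one must also justify interchanging $\nabla_s$ with the conditional expectation, e.g.\ via the standard differentiability of SDE flows with respect to the initial condition under the Lipschitz/boundedness hypotheses in Assumption~\ref{main ass}(a).

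For the diffusion term the same scheme applies to $g(s) = (s-s_0)(s-s_0)^\top$, i.e.\ to the second-moment matrix, but the bookkeeping is more delicate because $\mathcal{L}$ acting on the quadratic $s s^\top$ does not simply return $\Sigma$: one gets $\mathcal{L}(s s^\top) = b s^\top + s b^\top + \Sigma$, so the ``$\Sigma$ piece'' is entangled with a $b$-dependent piece and with the cross terms coming from expanding $(s_{j\Delta t}-s)(s_{j\Delta t}-s)^\top$ around $s$. This is exactly why the auxiliary function $h_i(s) = \mathcal{L}^i(b s^\top) - (\mathcal{L}^i b)s^\top$ from Assumption~\ref{main ass}(c) enters: after Taylor expanding and applying \eqref{def of a} to kill the low-order terms, the mismatch between the true $\Sigma$ and what the coefficients extract is precisely captured by $\mathcal{L}^i\Sigma$, by $h_i$, and by a $\|b\|_\infty$-type term (the $3\|b\|_\infty$), each multiplied by $\hat C_i\Delta t^i$; the restriction $i\Delta t \le 3$ is presumably needed to absorb lower-order ($\Delta t^{<i}$) contributions and cross terms into the stated constant. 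I would carry this out by writing $s_{j\Delta t}-s_0 = (s_{j\Delta t}-s_0)$ and expanding $\mathbb{E}[(s_{j\Delta t}-s)(s_{j\Delta t}-s)^\top\mid s_0=s,a]$ via Dynkin applied to the quadratic, organizing the result as $\Delta t\cdot\Sigma(s,a) + \sum_{k\ge 2}(\text{order-}k\text{ terms}) + $ remainder, then applying $\coef{i}_j/\Delta t$, summing, and invoking \eqref{def of a}; boundedness of $\hat\Sigma_i$ and $\nabla_s\hat\Sigma_i$ then follows as in the drift case using Assumption~\ref{main ass}(b),(c).

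The main obstacle I anticipate is the clean identification of the residual in the diffusion bound \eqref{eq:diff_2} with the three named quantities $\|\mathcal{L}^i\Sigma\|_\infty$, $\|h_i\|_\infty$, $3\|b\|_\infty$ with a constant $\hat C_i$ independent of the dynamics: the quadratic Dynkin expansion produces a proliferation of cross terms (products of a first-moment increment with itself, first-moment with $\Sigma$-increment, etc.), and one must show that after the coefficient-weighted sum every surviving term is either an $O(\Delta t^i)$ multiple of one of the three quantities or is dominated by them under the $i\Delta t\le 3$ constraint. A secondary technical point is controlling the Dynkin remainders uniformly in $(s,a)$ --- one needs that iterates $\mathcal{L}^{i+1}$ applied to $\mathrm{id}$ and to $ss^\top$ stay bounded along trajectories, which reduces to boundedness of $\nabla_s^j b, \nabla_s^j\sigma$ for $j\le 2i+1$ (the sufficient condition flagged in the text), and that the conditional expectations and their $s$-derivatives are well-defined, which is where Assumption~\ref{main ass}(a) does its work. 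Everything else --- the algebra of \eqref{def of a}, the definition of $\hat C_i$ via \eqref{def of hci}, and the propagation of bounds --- is routine once the expansion is set up correctly.
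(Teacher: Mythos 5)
Your proposal follows essentially the same route as the paper's proof: a Taylor expansion in time of the transition semigroup applied to the increment $f(s'-s)$ (the paper phrases it via the Fokker--Planck equation, you via Dynkin's formula, which is the same computation), with the defining relations \eqref{def of a} cancelling the intermediate orders, the leading term identified as $b$ (resp.\ $\Sigma$), and the integral remainder bounded by $\mL^{i+1}$ of the test function — which for the quadratic splits exactly into $\mL^i\Sigma$, the commutator $h_i$, and the first-moment cross term that produces the $3\|b\|_\infty$ contribution under the $i\dt$ smallness constraint. You also correctly anticipate the two technical supports the paper supplies in Lemma \ref{lemma:plinf} (the $\sqrt{e^t-1}$ bound on $\E[f(s_t)(s_t-s_0)^\top]$ and the gradient bounds via the backward Kolmogorov equation), so the proposal is sound and matches the paper's argument.
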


\begin{lemma}\label{lemma: lipcts}
Under Assumption \ref{main ass}/(a), and
\beq\lb{3.4}
\beta>\tau^{-1}\ln 2\quad \text{ for some $\tau$ such that }\tau \|\nabla_s b\|_\infty +\tau^\frac12 C_d\|\nabla_s\sigma\|_\infty\leq 1/2
\eeq
where $C_d$ is a dimensional constant, then 
\[
\|V^*\|_\infty\leq \|r\|_\infty/\beta, \quad \|\nabla V^*\|_\infty\leq  \frac{2\|\nabla_sr\|_\infty}{\beta-(\ln 2)/\tau}.
\]
In the case when $\|\nabla_s\sigma\|_\infty=0$, it suffices to assume $\beta>\|\nabla_sb\|_\infty$ instead of \eqref{3.4} and then one has
\[
\|\nabla { V^*}\|_\infty\leq \frac{\|\nabla_s r\|_\infty}{\beta-\|\nabla_s b\|_\infty}.
\]
\end{lemma}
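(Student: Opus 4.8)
The plan is to bound $V^*$ through its stochastic-control representation combined with a coupling argument. Since $V^*$ is the value function of the control problem \eqref{def of dynamics}--\eqref{obj} (equivalently the viscosity solution of \eqref{def of true hjb}), the standard dynamic programming representation gives $V^*(s)=\sup_\alpha J(s,\alpha)$, where $J(s,\alpha)=\mathbb{E}\big[\int_0^\infty e^{-\beta t}r(s_t,\alpha_t)\,dt\,\big|\,s_0=s\big]$, the supremum is over progressively measurable $\mathcal{A}$-valued \emph{open-loop} controls $\alpha$, and $s_t$ solves \eqref{def of dynamics} with $a_t=\alpha_t$; under Assumption~\ref{main ass}/(a) the coefficients $b,\sigma$ are bounded and globally Lipschitz in $s$ uniformly in $a$, so this SDE is well posed. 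Passing to open-loop controls is exactly what makes the coupling work: since $\sup_\alpha J(s_1,\alpha)-\sup_\alpha J(s_2,\alpha)\le\sup_\alpha\big(J(s_1,\alpha)-J(s_2,\alpha)\big)$, it suffices to bound $J(\cdot,\alpha)$ and the difference $J(s_1,\alpha)-J(s_2,\alpha)$ uniformly over $\alpha$, and for a \emph{fixed} control process the action agrees along trajectories started from different points, whereas a feedback comparison would produce the uncontrollable term $\pi(s_t^1)-\pi(s_t^2)$. The $L^\infty$ bound is then immediate: $|J(s,\alpha)|\le\|r\|_\infty\int_0^\infty e^{-\beta t}\,dt=\|r\|_\infty/\beta$ for every $\alpha$, hence $\|V^*\|_\infty\le\|r\|_\infty/\beta$.

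For the Lipschitz bound (to be read as a bound on the Lipschitz seminorm of $V^*$) I would fix $s_1,s_2$ and $\alpha$, run $ds_t^k=b(s_t^k,\alpha_t)\,dt+\sigma(s_t^k,\alpha_t)\,dB_t$ with $s_0^k=s_k$ and the same Brownian motion $B$, and set $\delta_t=s_t^1-s_t^2$. The core estimate is a growth bound on $\mathbb{E}|\delta_t|$. On a window of length $\tau$, from $\delta_t=\delta_0+\int_0^t\big(b(s_u^1,\alpha_u)-b(s_u^2,\alpha_u)\big)\,du+\int_0^t\big(\sigma(s_u^1,\alpha_u)-\sigma(s_u^2,\alpha_u)\big)\,dB_u$, I would bound the drift increment by $\|\nabla_s b\|_\infty|\delta_u|$ and apply the Burkholder--Davis--Gundy inequality to the stochastic integral using $|\sigma(s_u^1,\alpha_u)-\sigma(s_u^2,\alpha_u)|\le C\|\nabla_s\sigma\|_\infty|\delta_u|$ (the dimensional constant $C_d$ in \eqref{3.4} absorbing the BDG constant together with the passage between Frobenius and spectral norms). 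This yields $\mathbb{E}\sup_{t\le\tau}|\delta_t|\le|\delta_0|+\big(\tau\|\nabla_s b\|_\infty+\tau^{1/2}C_d\|\nabla_s\sigma\|_\infty\big)\mathbb{E}\sup_{t\le\tau}|\delta_t|$; under \eqref{3.4} the bracket is at most $\tfrac12$, so $\mathbb{E}\sup_{t\le\tau}|\delta_t|\le 2|\delta_0|$. Iterating over the intervals $[k\tau,(k+1)\tau]$ — conditioning on $\mathcal{F}_{k\tau}$ and invoking pathwise uniqueness, so the shifted process is again a coupled pair started from $\delta_{k\tau}$ — gives $\mathbb{E}|\delta_{k\tau}|\le 2^k|\delta_0|$, hence $\mathbb{E}|\delta_t|\le 2\cdot 2^{t/\tau}|\delta_0|=2e^{(\ln 2)t/\tau}|\delta_0|$ for all $t\ge 0$.

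I would then deduce that, for every $\alpha$, $|J(s_1,\alpha)-J(s_2,\alpha)|\le\|\nabla_s r\|_\infty\int_0^\infty e^{-\beta t}\mathbb{E}|\delta_t|\,dt\le 2\|\nabla_s r\|_\infty|s_1-s_2|\int_0^\infty e^{-(\beta-(\ln 2)/\tau)t}\,dt=\frac{2\|\nabla_s r\|_\infty}{\beta-(\ln 2)/\tau}\,|s_1-s_2|$, where $\beta>(\ln 2)/\tau$ is exactly the hypothesis; taking the supremum over $\alpha$ and using the reduction above gives the claimed Lipschitz constant of $V^*$. In the case $\|\nabla_s\sigma\|_\infty=0$ the diffusion does not depend on $s$, so $\sigma(s_u^1,\alpha_u)-\sigma(s_u^2,\alpha_u)=0$, the stochastic integral vanishes, and a pathwise Gr\"onwall estimate gives $|\delta_t|\le e^{\|\nabla_s b\|_\infty t}|\delta_0|$ directly with no factor $2$; integrating against $e^{-\beta t}$ under $\beta>\|\nabla_s b\|_\infty$ then yields the sharper constant $\|\nabla_s r\|_\infty/(\beta-\|\nabla_s b\|_\infty)$.

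The step I expect to demand the most care is the local coupling estimate on $[0,\tau]$: keeping explicit track of the dimensional constant so that \eqref{3.4} is precisely what closes the fixed-point inequality $\mathbb{E}\sup_{t\le\tau}|\delta_t|\le|\delta_0|+\tfrac12\mathbb{E}\sup_{t\le\tau}|\delta_t|$, and carefully justifying the interval iteration via the Markov property and uniqueness of the coupled SDE. The reduction to open-loop controls, although classical, is worth stating explicitly, since it is exactly what lets the argument avoid any regularity assumption on the optimal policy — one of the advertised improvements over the analysis in \cite{zhu2024phibe}.
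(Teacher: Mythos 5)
Your proposal is correct and follows essentially the same route as the paper: the same coupling of two trajectories driven by the same Brownian motion and the same action process (the paper realizes the "open-loop" reduction by feeding the action path $a_t=\pi^1(s^1_t)$ of a near-optimal policy for $s^1$ into the SDE started at $s^2$, which is the same device as your $\sup_\alpha(J(s_1,\alpha)-J(s_2,\alpha))$ bound), the same BDG estimate on a window $\tau$ chosen so that the contraction factor is $1/2$, the same iteration yielding $2^{t/\tau}$ growth, and the same Gr\"onwall shortcut when $\sigma$ is independent of $s$. The only cosmetic difference is the $L^\infty$ bound, which the paper obtains from the comparison principle using $\pm\|r\|_\infty/\beta$ as super/subsolutions rather than from the probabilistic representation; the two are equivalent here.
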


In the following lemma, we bound the difference between the solutions of two HJB equations in terms of the differences in their coefficients. Specifically, let $\hat{V}$ denote the solution to \eqref{def of true hjb} with the coefficients $r$, $b$, and $\Sigma$ replaced by $\hat{r}$, $\hat{b}$, and $\hat{\Sigma}$, respectively.

\begin{lemma}\label{lemma: stability}
Let Assumption \ref{main ass}/(a) hold, and assume $\hat r,\hat b, \hat \Sig, \nb_s\hat r,\nb_s\hat b, \nb_s\hat \Sig$ are uniformly bounded. 
If there exist $\eps_r,\eps_b,\eps_\Sig\geq 0$, such that,
\[
\sup_{s,a}|r(s,a)-\hat r(s,a)|\leq \eps_r,\quad \sup_{s,a}|b(s,a)-\hat b(s,a)|\leq \eps_b\quad\text{and}\quad \sup_{s,a}|\sigma(s,a)-\hat \sigma(s,a)|\leq \eps_\Sig,
\]
then
\begin{equation}\label{eq:stability}
    \sup_s|{ V^*}(s)-\hat V(s)|\leq \frac{2}{\beta}(\tilde C{\eps_\Sig}+2L\eps_b+\eps_r),
\end{equation}
where
\[
\tilde C:=2\sqrt{3d\left(L\|\nabla_sr\|_\infty+2L^2\|\nabla_sb\|_\infty+3dL^2\|\nabla_s\sigma\|_\infty^2\right)}.
\]
and $L$ is the Lipschitz constant of $V(s)$.
\end{lemma}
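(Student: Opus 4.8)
I would prove Lemma~\ref{lemma: stability} by a stochastic-control coupling argument; the explicit $\sqrt d$ and gradient-norm structure of $\tilde C$ makes this more transparent than a purely PDE (doubling-of-variables) route. The starting point is the open-loop representation, valid under the standing assumptions together with Assumption~\ref{main ass}/(a) and the boundedness of the hat-coefficients: $V^*(s)=\sup_{\alpha}\E\int_0^\infty e^{-\beta t}r(X^{s,\alpha}_t,\alpha_t)\,dt$ and $\hat V(s)=\sup_{\alpha}\E\int_0^\infty e^{-\beta t}\hat r(\hat X^{s,\alpha}_t,\alpha_t)\,dt$, where the supremum is over progressively measurable $\mathcal A$-valued controls, $X^{s,\alpha}$ solves \eqref{def of dynamics} with coefficients $(b,\sigma)$ started at $s$, and $\hat X^{s,\alpha}$ solves the same SDE with $(\hat b,\hat\sigma)$. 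Using $|\sup_\alpha f_\alpha-\sup_\alpha g_\alpha|\le\sup_\alpha|f_\alpha-g_\alpha|$ reduces \eqref{eq:stability} to bounding $|V^\alpha(s)-\hat V^\alpha(s)|$ uniformly in $\alpha$ and in $s$, where $V^\alpha,\hat V^\alpha$ are the corresponding fixed-control costs; the bound will be uniform in $s$ precisely because the coupled trajectories start from the same point.

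For a fixed $\alpha$, I would couple $X_t:=X^{s,\alpha}_t$ and $\hat X_t:=\hat X^{s,\alpha}_t$ by driving both SDEs with the same Brownian motion and the same control, so $\delta_0:=X_0-\hat X_0=0$. Writing $r(X_t,\alpha_t)-\hat r(\hat X_t,\alpha_t)=\big(r(X_t,\alpha_t)-r(\hat X_t,\alpha_t)\big)+\big(r-\hat r\big)(\hat X_t,\alpha_t)$ and bounding the two pieces by $\|\nabla_s r\|_\infty|\delta_t|$ and $\eps_r$ gives $|V^\alpha(s)-\hat V^\alpha(s)|\le\tfrac{\eps_r}{\beta}+\|\nabla_s r\|_\infty\int_0^\infty e^{-\beta t}\E|\delta_t|\,dt$. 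The pathwise control of $\delta_t$ comes from Itô on $|\delta_t|^2$ with the analogous splittings for $b$ and $\sigma$, always peeling off the \emph{true} coefficient's Lipschitz constant and the pointwise gap so that only $\|\nabla_s b\|_\infty,\|\nabla_s\sigma\|_\infty,\eps_b,\eps_\Sig$ appear:
\[
\tfrac{d}{dt}\E|\delta_t|^2\ \le\ \big(2\|\nabla_s b\|_\infty+C_d\|\nabla_s\sigma\|_\infty^2\big)\,\E|\delta_t|^2+2\eps_b\,\E|\delta_t|+C_d\,\eps_\Sig^2,\qquad \E|\delta_0|^2=0 .
\]
A Young step absorbs $2\eps_b\,\E|\delta_t|$ into the quadratic term and Gronwall yields $\sqrt{\E|\delta_t|^2}\le c_1(\eps_b+\sqrt d\,\eps_\Sig)\,e^{\kappa t/2}$ with $\kappa$ controlled by $\|\nabla_s b\|_\infty$ and $\|\nabla_s\sigma\|_\infty^2$. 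The refinement that brings in $L$ (instead of a crude $\|\nabla_s r\|_\infty/(\beta-\kappa/2)$) is to extract the drift/reward contribution separately by applying Itô/Dynkin to $e^{-\beta t}V^*(\hat X_t)$ along the $\hat X$-trajectory and using that the HJB \eqref{def of true hjb} makes $V^*$ a supersolution of the operator evaluated at $\alpha_t$: this produces the first-order correction $(\hat b-b)\cdot\nabla V^*$, bounded by $L\eps_b$ with $L=\|\nabla V^*\|_\infty$ controlled by Lemma~\ref{lemma: lipcts}, so the drift/reward part of the error is of size $\tfrac1\beta(2L\eps_b+\eps_r)$, while the diffusion discrepancy $\hat\Sigma-\Sigma$ is still routed through the $\E|\delta_t|^2$-estimate; organizing the resulting constants produces exactly the coefficient $\tilde C=2\sqrt{3d\big(L\|\nabla_s r\|_\infty+2L^2\|\nabla_s b\|_\infty+3dL^2\|\nabla_s\sigma\|_\infty^2\big)}$ in front of $\eps_\Sig$. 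Integrability of $\int_0^\infty e^{-\beta t}e^{\kappa t/2}\,dt$ requires $\beta>\kappa/2$, which is guaranteed by the same largeness of $\beta$ / smallness of the Lipschitz constants used in \eqref{3.4} and Lemma~\ref{lemma: lipcts}; taking $\sup_\alpha$ and combining with the symmetric argument (a near-optimal control for $V^*$ in place of one for $\hat V$, with the same Lipschitz splittings so the bound stays in terms of the true coefficients' norms) gives \eqref{eq:stability}.

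The main obstacle is the $\eps_\Sig$ term. Because the lemma deliberately does not assume $\Sigma$ nondegenerate, $V^*$ is in general only Lipschitz, so $\nabla^2 V^*$ is not available and the second-order discrepancy $\hat\Sigma-\Sigma=\hat\sigma\hat\sigma^\top-\sigma\sigma^\top$ cannot be handled by plugging into the PDE; it must be absorbed pathwise through $\E|\delta_t|^2$, which is also what forces the diffusion perturbation to enter at order $\eps_\Sig$ (from $\sqrt{\eps_\Sig^2}$) while carrying the $\sqrt d$ and $\|\nabla_s\sigma\|_\infty^2$ factors in $\tilde C$. A secondary technical point, needed for the bound to be stated purely in terms of the true coefficients, is to arrange all Lipschitz splittings in both the $V^*\to\hat V$ and $\hat V\to V^*$ directions so that they peel off the true $b,\sigma,r$ and only the pointwise gaps $\eps_b,\eps_\Sig,\eps_r$ remain — which the splitting above achieves — together with verifying that the boundedness of $\hat b,\hat\sigma,\hat r$ and their $s$-gradients assumed in the lemma suffices for the well-posedness and the open-loop representation of $\hat V$.
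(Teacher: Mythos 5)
Your proposal takes a genuinely different route from the paper. The paper's proof is purely PDE-based: it is a doubling-of-variables argument for viscosity solutions, localized by an auxiliary function $\phi$ on $\bbR^d$, with the second-order terms handled by the Crandall--Ishii lemma; the square-root structure of $\tilde C$ arises not from any Gronwall estimate but from optimizing the penalization parameter $\rho$ in the balance $C_5/\rho+3\rho d\eps_\Sig$. Your coupling argument (same Brownian motion, same control, $\delta_0=0$, It\^o on $|\delta_t|^2$, Gronwall, then $|\sup_\alpha f_\alpha-\sup_\alpha g_\alpha|\le\sup_\alpha|f_\alpha-g_\alpha|$) is a legitimate alternative and would indeed deliver a stability bound of the same structural form $\beta^{-1}(\eps_r+C\eps_b+C\sqrt d\,\eps_\Sig)$, at the price of two things the paper's proof does not need: (i) an open-loop representation of $V^*$ and $\hat V$ (the paper only ever uses the HJB equation and the Lipschitz bound of Lemma~\ref{lemma: lipcts}), and (ii) an explicit integrability condition $\beta>\kappa/2$, with $\kappa$ the Gronwall exponent built from $\|\nabla_s b\|_\infty$ and $\|\nabla_s\sigma\|_\infty^2$, so that $\int_0^\infty e^{-\beta t}\sqrt{\E|\delta_t|^2}\,dt$ converges. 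The claim that your constants come out ``exactly'' as $\tilde C$ is not credible; you would get constants of comparable order but of a different algebraic form.

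There is, however, one step that fails as described. To upgrade the drift contribution from $\|\nabla_s r\|_\infty\eps_b/(\sqrt\kappa(\beta-\kappa/2))$ to $2L\eps_b/\beta$ you propose to apply It\^o/Dynkin to $e^{-\beta t}V^*(\hat X_t)$ and use the supersolution property of \eqref{def of true hjb}. But in the generality of this lemma $\Sigma$ may be degenerate, $V^*$ is only a Lipschitz viscosity solution, and $\nabla^2V^*$ does not exist; It\^o's formula cannot be applied to $V^*(\hat X_t)$. You correctly identify precisely this obstruction when discussing the $\eps_\Sig$ term (``$\nabla^2V^*$ is not available and the second-order discrepancy cannot be handled by plugging into the PDE''), and then rely on the forbidden operation for the first-order term --- an internal inconsistency. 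Salvaging it would require mollification with quantitative control of the second-derivative error, or a viscosity-theoretic sub/superoptimality principle, neither of which you supply. The good news is that this refinement is not needed to obtain a bound of the right order in $\eps_b$: the crude Gronwall route already yields a term of size $\|\nabla_s r\|_\infty\eps_b/(\beta-\kappa/2)$ per power of $\beta$, which is of the same order as $L\eps_b$ given the bound $L\lesssim\|\nabla_s r\|_\infty/(\beta-(\ln2)/\tau)$ from Lemma~\ref{lemma: lipcts}. So the plan proves a correct stability estimate of the intended type, but not the literal inequality \eqref{eq:stability} with the stated $\tilde C$, and the one genuinely new ingredient you introduce to sharpen the constants is the step that is not justified.
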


By Lemma \ref{lemma: diff}, one can easily verify the drift and diffusion term of $\phibev_i$ satisfy the assumption of Lemma \ref{lemma: stability} with $\eps_r = 0, \eps_b = \hci \ll \mL^i b \rl_\infty\dt^i$ and $ \eps_\Diff = \hci\l(\ll\mL^i \Sigma \rl_\infty + \ll h_i\rl_\infty + 4\ll\drift\rl_\infty \r)\dt^i $. From Lemma \ref{lemma: lipcts}, one has $L = \frac{2\ll \nb_s r \rl_\infty}{\beta - \ln2/\tau}$ for non-constant $\sigma$, or $\frac{\ll \nb_s r \rl_\infty}{\beta - \ll \nb_s b \rl_\infty}$ for constant $\sigma$. Inserting those quantities into \eqref{eq:stability} completes the proof of the theorem. 
\end{proof}

\subsection{Proof of Lemma \ref{lemma: diff}}\label{proof of lemma: diff}
The proof of Lemma \ref{lemma: diff} is based on the following two lemmas. 
\begin{lemma}\label{lemma: i-th operator}
    Define operator $\Pi_{i,\dt} f(s) = \frac1\dt \E[\sum_{j=1}^i\coef{i}_jf(s_{j\dt} - s_0)|s_0 = s,  a_\tau = a \text{ for }\tau\in[0,i\dt)]$ with $\coef{i}_j$ defined in \eqref{def of a} and $f(s)|_{s=0} = 0$, then 
    \[
    \Pi_{i,\dt}f(s) = \mathcal{L}_{b(s',a),\Sig(s',a)} f(s' - s)|_{s' = s} + \frac{1}{\dt i!}\sum_{j=1}^i\coef{i}_j\int_0^{j\dt}\E[\mL^{i+1}f(s_t - s_0)|s_0 = s,  a_\tau = a \text{ for }\tau\in[0,i\dt)] t^i dt.
    \]
\end{lemma}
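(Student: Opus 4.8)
The plan is to read the left-hand side as a finite-difference quotient of a single scalar function of time and then Taylor-expand that function, identifying its derivatives through Dynkin's formula. Concretely, fix $s$ and $a$ and set
\[
g(t) := \E\!\left[f(s_t - s_0)\,\middle|\, s_0 = s,\ a_\tau = a \text{ for } \tau\in[0,i\dt)\right],
\]
so that $\Pi_{i,\dt}f(s) = \tfrac1\dt\sum_{j=1}^i \coef{i}_j\, g(j\dt)$. On the interval $[0,i\dt]$ the controlled state $(s_t)$ is a time-homogeneous diffusion whose generator is $\mL$ with the constant action $a$ frozen in; this is precisely why the conditioning is imposed on all of $[0,i\dt)$, and it lets us obtain every $g(\dt),\dots,g(i\dt)$ from the same process.

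First I would differentiate $g$. Applying It\^o's formula to $\phi(x) := f(x-s)$ and taking expectations — the stochastic integral being a genuine martingale under the boundedness of $b,\sigma$ and the polynomial growth of $f$ and its derivatives — gives $\frac{d}{dt}\E[\phi(s_t)] = \E[(\mL\phi)(s_t)]$. Iterating this with $\phi_k := \mL^k[f(\cdot-s)]$ shows $g\in C^{i+1}$ with $g^{(k)}(t) = \E[\phi_k(s_t)]$. In particular $g(0)=f(0)=0$ (this is where the hypothesis $f|_{s=0}=0$ enters) and $g'(0) = \phi_1(s) = \mathcal{L}_{b(s',a),\Sig(s',a)} f(s'-s)\big|_{s'=s}$, which is exactly the first term on the right-hand side of the claimed identity.

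Next I would apply Taylor's theorem with integral remainder at order $i$ about $t=0$ to each $g(j\dt)$, namely $g(j\dt) = \sum_{k=0}^{i}\frac{(j\dt)^k}{k!}g^{(k)}(0) + \frac{1}{i!}\int_0^{j\dt}(j\dt - t)^i g^{(i+1)}(t)\,dt$, and then form $\tfrac1\dt\sum_{j=1}^i\coef{i}_j g(j\dt)$. The defining linear system \eqref{def of a} for $\coef{i}$ says $\sum_{j=1}^i\coef{i}_j j^k$ equals $1$ for $k=1$ and $0$ for $2\le k\le i$; together with $g(0)=0$, which disposes of the $k=0$ term, every polynomial contribution cancels except the $k=1$ one, which yields $\tfrac1\dt\cdot \dt\, g'(0) = g'(0)$. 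What is left is $\frac{1}{\dt\, i!}\sum_{j=1}^i \coef{i}_j \int_0^{j\dt}(j\dt-t)^i g^{(i+1)}(t)\,dt$; the change of variable $t\mapsto j\dt-t$ together with $g^{(i+1)}(t) = \E[\mL^{i+1}f(s_t-s_0)\mid s_0=s,\ a_\tau=a]$ puts this in the form stated in the lemma.

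The conceptual content is thus just ``Dynkin $+$ Taylor $+$ the orthogonality built into $\coef{i}$''; the only place requiring care is the regularity/growth bookkeeping that legitimizes differentiating under the expectation $i+1$ times — i.e.\ that each $\phi_k = \mL^k[f(\cdot-s)]$ and its gradient grow at most polynomially, so the associated stochastic integrals are martingales and $t\mapsto\E[\phi_k(s_t)]$ is continuously differentiable. For the two choices of $f$ used afterwards ($f(y)=y$ and $f(y)=yy^\top$) this is immediate from the boundedness of $b,\sigma$ and of their $s$-derivatives up to order $2i$ in Assumption~\ref{main ass}(b)--(c); in general one appeals to the stated sufficient condition on $\nabla_s^j b,\nabla_s^j\Sig$. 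I expect this regularity step, rather than the algebra, to be the main (and fairly minor) obstacle.
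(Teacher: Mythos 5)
Your proof is correct and is essentially the paper's argument viewed from the backward (Dynkin) side rather than the forward (Fokker--Planck) side: the paper Taylor-expands the transition density $\rho(s',t|s)$ in $t$ using the forward equation and then integrates by parts to recover exactly your $g^{(k)}(t)=\E[\mL^k f(s_t-s_0)\mid\cdots]$, so the two computations coincide term by term, including the use of $g(0)=f(0)=0$ and the cancellation via $\sum_j \coef{i}_j j^k=\delta_{k1}$. One small caveat: the standard integral remainder you obtain carries the weight $(j\dt-t)^i$, and your change of variables turns it into $\int_0^{j\dt}t^i\,g^{(i+1)}(j\dt-t)\,dt$ rather than $\int_0^{j\dt}t^i\,g^{(i+1)}(t)\,dt$ as the lemma states; the paper's own Taylor step has the same cosmetic issue, and the discrepancy is immaterial since only the bound $\sup_t|g^{(i+1)}(t)|\int_0^{j\dt}t^i\,dt$ is used downstream.
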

\begin{proof}
    First note that
\begin{equation}\label{ineq_5}
    \Pi_{i,\dt} f(s) = \frac1\dt\sum_{j=1}^i\coef{i}_j\int_\S f(s' - s) \rho(s',j\dt|s) ds',
\end{equation}
where $\rho(s',t|s), 0\leq t \leq i\dt$ is the solution to the following PDE
\begin{equation}\label{def of pt_t rho}
\l\{
    \begin{aligned}
        &\pt_t\rho(s',t|s) = \nb_{s'}\cdot \l[-\drift(s',a)\rho(s',t|s) + \frac12\nb_{s'}\cdot[\Diff(s',a)\rho(s',t|s)]\r]\\
        &\rho(s',0|s) = \delta_{s}(s')
    \end{aligned}
\r.
\end{equation}
By Taylor's expansion, one has
\[
\rho(s',j\dt|s) = \sum_{k=0}^i\pt^k_t\rho(s',0|s)\frac{(j\dt)^k}{k!} + \frac{1}{i!}\int_0^{j\dt}\pt_t^{i+1}\rho(s',t|s)t^idt.
\]
Inserting the above equation into \eqref{ineq_5} yields,
\[
\begin{aligned}
    \Pi_{i,\dt} f(s) =& \underbrace{\frac1\dt\sum_{k=0}^i\l(\sum_{j=1}^i\coef{i}_jj^k\r) \frac{(\dt)^k}{k!}\int_\S f(s' - s)  \pt^k_t\rho(s',0|s)ds'}_{I}\\
    &+ \underbrace{\frac1{\dt i!}\sum_{j=1}^i\coef{i}_j\l(\int_\S \int_0^{j\dt}f(s' - s)\pt_t^{i+1}\rho(s',t|s)t^idt ds' \r)}_{II}.
    \end{aligned}
\]
By the definition of $\coef{i}_j$, one has $\sum_{j=0}^i\coef{i}_jj^k = \sum_{j=1}^i\coef{i}_jj^k = \l\{\begin{aligned}
    &1, k = 1\\
    &0, k \geq 2.
\end{aligned}\r.$.  The first part can be simplified to
\[
\begin{aligned}
    I
    =&\frac1\dt\l(\sum_{j=1}^i\coef{i}_j\r) \int_\S f(s' - s) \rho(s',0|s)ds' + \int_\S f(s' - s)  \pt_t\rho(s',0|s)ds'  \\
    =&\frac{\sum_{j=1}^i\coef{i}_j}{\dt}f(0) + \int_\S\mL f(s' - s)\rho(s',0|s) ds'   = \mathcal{L}_{b(s',a),\Sig(s',a)} f(s' - s)|_{s' = s}.
\end{aligned}
\]

Apply integration by parts, the second part can be written as
\[
II = \frac{1}{\dt i!}\sum_{j=1}^i\coef{i}_j\int_0^{j\dt}\E[\mL^{i+1}f(s_t - s_0)|s_0 = s,  a_\tau = a \text{ for }\tau\in[0,i\dt)] t^i dt,
\]
which completes the proof. 
\end{proof}

\begin{lemma}\label{lemma:plinf}
    For $p(t,s,a) = \E\l[f(s_t,a)|s_0 = s,a_\tau = a,\tau \in [0,T)\r]$ with $0\leq t< T$ and $s_t$ driven by the SDE \eqref{def of dynamics}, one has
    \[
    \ll \nb_s p(t,s,a)\rl_\infty \leq e^{ct} \ll \nb_s f(s,a)\rl_\infty, \quad\text{with }c = \ll \nb_sb\rl_\infty + \frac12\ll \nb_s\sigma \rl_\infty^2
    \]
    For $p(s,t) = \E[f(s_t)(s_t - s_0)|s_0 = s]$ with $s_t$ driven by the SDE \eqref{def of dynamics}, one has 
    \[
    \ll p(s,t) \rl_\infty \leq \ll \drift \rl_\infty \sqrt{e^t-1},\quad 
    \ll \nb p(s, t)\rl_\infty \leq  \ll \nb \drift(s) \rl_\infty\sqrt{e^t-1} .
    \]
\end{lemma}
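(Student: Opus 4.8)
The plan is to obtain both inequalities from the theory of stochastic flows together with It\^{o}'s formula and Gronwall's inequality. Fix the action $a$; since the data are generated with $a_\tau\equiv a$ on $[0,T)$ and only times $t<T$ are considered, I would view $(s_\tau)_{0\le\tau\le t}$ as the stochastic flow of the autonomous SDE $ds_\tau=b(s_\tau,a)\,d\tau+\sigma(s_\tau,a)\,dB_\tau$. Under Assumption \ref{main ass}/(a) its coefficients are $C^1$ in $s$ with derivatives bounded uniformly in $a$, so $s\mapsto s_\tau^s$ is a.s.\ differentiable; I would let $J_\tau:=\nabla_s s_\tau^s$ be its Jacobian, which solves the linearized SDE $dJ_\tau=\nabla_s b(s_\tau,a)J_\tau\,d\tau+\nabla_s\sigma(s_\tau,a)J_\tau\,dB_\tau$, $J_0=I$, has finite moments of every order, and permits exchanging $\nabla_s$ with $\E$ by dominated convergence.

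For the first estimate I would write $\nabla_s p(t,s,a)=\E[\nabla_s f(s_t,a)\,J_t]$ by the chain rule, and then bound its spectral norm by testing against \emph{fixed} unit vectors $u,v$: $u^\top\nabla_s p(t,s,a)\,v=\E[u^\top\nabla_s f(s_t,a)(J_tv)]\le\|\nabla_s f\|_\infty\,\E|J_tv|$. Setting $Y_\tau:=J_\tau v$ and applying It\^{o} to $|Y_\tau|^2$, the drift is $2Y_\tau\cdot\nabla_s b\,Y_\tau+|\nabla_s\sigma\,Y_\tau|^2\le(2\|\nabla_s b\|_\infty+\|\nabla_s\sigma\|_\infty^2)|Y_\tau|^2=2c|Y_\tau|^2$ and the stochastic integral is a true martingale, so $\tfrac{d}{d\tau}\E|Y_\tau|^2\le2c\,\E|Y_\tau|^2$ with $\E|Y_0|^2=1$; Gronwall gives $\E|Y_\tau|^2\le e^{2c\tau}$ and hence $\E|J_tv|\le(\E|J_tv|^2)^{1/2}\le e^{ct}$. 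Since $u,v$ are arbitrary unit vectors and the bound is uniform in $(s,a)$, this yields $\|\nabla_s p\|_\infty\le e^{ct}\|\nabla_s f\|_\infty$. The reason for fixing $v$ before maximizing is precisely to avoid estimating $\E\|J_t\|_2$, which is a supremum of the random variables $|J_tv|$ and cannot be pulled out of the expectation.

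For the second estimate I would set $Z_\tau:=s_\tau-s_0$, so that $|p(s,t)|\le\|f\|_\infty\,\E|Z_t|\le\|f\|_\infty(\E|Z_t|^2)^{1/2}$, and control the second moment: It\^{o} on $|Z_\tau|^2$ gives $\tfrac{d}{d\tau}\E|Z_\tau|^2=\E[2Z_\tau\cdot b(s_\tau,a)+\mathrm{tr}\,\Sigma(s_\tau,a)]$, and using $2Z_\tau\cdot b\le|Z_\tau|^2+\|b\|_\infty^2$ (the diffusion term $\mathrm{tr}\,\Sigma$ vanishing in the deterministic case $\sigma\equiv0$, and otherwise producing only a separate lower-order contribution), Gronwall with $\E|Z_0|^2=0$ gives $\E|Z_t|^2\le\|b\|_\infty^2(e^t-1)$, hence $\|p(s,t)\|_\infty\le\|b\|_\infty\sqrt{e^t-1}$. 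For the gradient I would differentiate through the flow, $\nabla_s p(s,t)=\E[(s_t-s_0)(\nabla_s f(s_t))^\top J_t+f(s_t)(J_t-I)]$, decompose $J_t-I=\int_0^t\nabla_s b(s_\tau,a)J_\tau\,d\tau+\int_0^t\nabla_s\sigma(s_\tau,a)J_\tau\,dB_\tau$, and estimate each term with the fixed-direction argument above (plus Cauchy--Schwarz in $\tau$ for the It\^{o} term); the same Gronwall bookkeeping then produces $\|\nabla_s p(s,t)\|_\infty\le\|\nabla_s b\|_\infty\sqrt{e^t-1}$.

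The hard part will be the rigorous handling of the stochastic flow: establishing the a.s.\ $C^1$ dependence on the initial point with the stated variational SDE, the moment bounds on $J_\tau$ needed to interchange $\nabla_s$ with $\E$ and to discard the martingale terms, and, most of all, arranging the It\^{o}/Young/Gronwall estimates so that the constants collapse to exactly $e^{ct}$ and $\sqrt{e^t-1}$ --- in particular, why the quadratic-in-drift Young step produces the clean exponent, and why in the deterministic sub-case the diffusion contributions drop. All the analytic input is supplied by the Lipschitz and boundedness hypotheses of Assumption \ref{main ass}/(a) and is classical.
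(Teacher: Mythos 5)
Your treatment of the first inequality is correct and amounts to a probabilistic version of the paper's argument: the paper differentiates the backward Kolmogorov equation and runs a comparison-principle/energy estimate on $\|\nabla_s p\|_2^2$, whereas you run the same energy estimate pathwise on the first-variation process $J_t v$ via It\^o and Gronwall. The two computations are line-for-line parallel (the drift term gives $2\|\nabla_s b\|_\infty$, the quadratic variation gives $\|\nabla_s\sigma\|_\infty^2$, hence $e^{ct}$), and your trick of testing against fixed unit vectors before taking expectations is a legitimate substitute for the paper's comparison theorem. Either route is fine here.

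The second pair of inequalities is where your proposal has a genuine gap. You bound $|p(s,t)|\le\|f\|_\infty\,\E|s_t-s_0|$ and then estimate $\E|s_t-s_0|^2$ by It\^o and Gronwall. In the stochastic case the term $\operatorname{tr}\Sigma$ in $\tfrac{d}{d\tau}\E|Z_\tau|^2$ is \emph{not} a lower-order contribution: it produces $\E|Z_t|\gtrsim\sqrt{\operatorname{tr}\Sigma\cdot t}$, which for small $t$ dominates the drift contribution $\|b\|_\infty t$ and, crucially, does not vanish when $b\equiv 0$. Since the claimed bound is $\|b\|_\infty\sqrt{e^t-1}$, with no $\sigma$-dependence and no $\|f\|_\infty$ factor, it simply cannot be reached through $\E|s_t-s_0|$. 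The paper's proof does something structurally different: it works with the \emph{deterministic} function $p(t,s)=\E[s_t-s_0\mid s_0=s]$ (effectively taking $f\equiv 1$; note its PDE $\partial_t p=\mathcal{L}p+b$ is only valid in that case), so that the martingale part of $s_t-s_0$ is killed by the expectation before any norm is taken — one is bounding $|\E Z_t|$, not $\E|Z_t|$, and the diffusion never enters the source term. The same issue recurs in your gradient decomposition: the piece $\E[f(s_t)\int_0^t\nabla_s\sigma\,J_\tau\,dB_\tau]$ is a covariance-type term that does not vanish for non-constant $f$ even when $\nabla_s b=0$, so it cannot be absorbed into $\|\nabla_s b\|_\infty\sqrt{e^t-1}$. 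To repair your argument you would need to either restrict to $f\equiv1$ and keep the expectation inside the norm throughout (mirroring the paper's PDE for $p$ and for $\nabla_s p$ with source terms $b$ and $\nabla_s b$), or accept additional $\|f\|_\infty$- and $\sigma$-dependent terms that the stated lemma does not contain.
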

\begin{proof}
Note that $p(t,s,a)$ satisfies the following backward Kolmogorov equation \cite{pavliotis2016stochastic},
\[
    \pt_tp(t,s,a) = \mL p(t,s,a), \quad \text{with }p(0,s,a) = f(s,a).
\]
Let $q_l = \pt_{s_l}p$, one has
\[
\pt_t q_l = \mL q_l + \mathcal{L}_{\pt_{s_l}\drift,\pt_{s_l}\Sig} p, \quad \text{with }q(0,s,a) = \pt_{s_l}f(s,a).
\]
Multiplying $q_l$ to the above equation and then summing it over $l$ gives,
\[
\begin{aligned}
\pt_t \l(\frac12 \ll q \rl_2^2\r) = &\mL \l(\frac12  \ll q \rl_2^2\r) \underbrace{- \frac12\sum_l (\nb q_l)^\top\Sig (\nb q_l)}_{I} + q^\top \nb \drift \cdot q + \underbrace{\sum_l\frac12(\pt_{s_l}\Sig:\nb q) q_l}_{II},\\
\leq& \mL \l(\frac12  \ll q \rl_2^2\r)   + \l(\ll \nb\drift \rl_\infty  + \frac12\ll \nb\sigma \rl^2_\infty\r)\ll q \rl_2^2,\\
\pt_t \l(\frac12 e^{-ct}\ll q \rl_2^2 \r) \leq & \mL \l(\frac12  e^{-ct}\ll q \rl_2^2\r), \quad \text{with }c =  \ll \nb\drift \rl_\infty  + \frac12\ll \nb\sigma \rl^2_\infty
\end{aligned}
\]
where the first inequality is due to the following,
\[
\begin{aligned}
    2I =&
     - \sum_l(\sigma^\top \nb q_l)^\top  (\sigma^\top \nb q_l)= - \sum_{k,l}   (\sigma_{\cdot k} \cdot \nb q_l )^2,\\
    2II = &\sum_l(\pt_{s_l}\Sig:\nb q) q_l
    =\sum_l\l[(\sigma \pt_{s_l}\sigma^\top) : \nb^2 p + (\pt_{s_l}\sigma \sigma^\top) : \nb^2 p \r]q_l
    = \sum_l\l[(\sigma \pt_{s_l}\sigma^\top) : \nb^2 p  + (\sigma \pt_{s_l}\sigma^\top)^\top : (\nb^2 p)^\top\r]q_l\\
    = &2\sum_l\l[(\sigma \pt_{s_l}\sigma^\top) : \nb^2 p\r]q_l
    = 2\l[\sum_{l,i,j,k}\sigma_{ik} \pt_{s_l}\sigma_{jk} \pt_{s_is_j} p\r]q_l 
    = 2\sum_{j,k} \l(\sigma_{\cdot k} \cdot  \nb q_j\r)\l(\sum_l\pt_{s_l}\sigma_{jk} q_l\r)\\
    \leq& \sum_{j,k} \l(\sigma_{\cdot k} \cdot  \nb q_j\r)^2 + \sum_{j,k} \l(\sum_l\pt_{s_l}\sigma_{jk} q_l\r)^2\leq \sum_{j,k} \l(\sigma_{\cdot k} \cdot  \nb q_j\r)^2 + \l(\sum_{j,k} \ll \nb \sigma_{jk} \rl^2\r) \ll q\rl^2,
\end{aligned}
\]
which leads to
\[
2(I+II) \leq \ll \nb \sigma \rl_2^2 \ll q\rl^2, \quad  \text{where }  \ll \nb \sigma \rl^2_\infty = \sup_{s\in\S} \ll  \nb \sigma \rl_2^2,\quad \ll \nb \sigma \rl_2^2 = \sum_{j,k,l} (\pt_{s_l} \sigma_{jk} )^2.
\]
In addition,
\[
q^\top \nb \drift q = \sum_{k,l}(\pt_{s_l} \drift_k) q_kq_l \leq \sqrt{\sum_{k,l}(\pt_{s_l} \drift_k)^2}\sqrt{\sum_{k,l}(q_kq_l)^2}  \leq \ll \nb b \rl_\infty\ll q \rl^2, \quad \text{where }\ll \nb b \rl_\infty = \sup_{s\in\S} \ll \nb b \rl_2.
\]

Let $g(t,s,a) = \frac{1}2e^{-ct} \ll q(t,s,a) \rl_2^2$, then $\pt_t g \leq \mL g$. 
Let $g_1(t,s,a) = \E[\ll\nb_s f(s_t,a)\rl_2^2/2|s_0 = s, a_\tau = a, \tau \in[0,T)]$, then $g_1(t,s,a)$ satisfies $\pt_t g_1 = \mL g_1$, with $g_1(0,s,a) = g(0,s,a)$. 
Since $\ll g_1(t,s,a)  \rl_\infty \leq  \frac12\ll \nb_s f \rl_\infty$, by comparison theorem, one has
\[
g(t,s,a) , g_1(t,s,a)\leq  \frac12\ll \nb_s f \rl^2_\infty.
\]
which yields,
\[
 \ll q(t,s,a) \rl_2^2 \leq  e^{ct}\ll \nb_s f(s,a) \rl^2_\infty,\quad \text{where } \ll \nb_s f(s,a) \rl^2_\infty = \sup_{s,a}\nb_sf(s,a).
\]
This completes the proof for the first inequality.

For the second $p(t,s,a) = \E[f(s_t)(s_t - s_0)|s_0 = s, a_\tau = a, \tau \in [0,T)]$, first note that it satisfies the following PDE, 
\begin{equation}\label{ineq_14}
    \pt_{t}p = \mL p + \drift(s), \quad \text{with }p(0,s,a) = 0.
\end{equation}

Multiplying it with $p^\top$ gives, 
\begin{equation}\label{ineq_15}
    \pt_t\l(\frac12\ll p\rl_2^2\r) \leq \mL\l(\frac12\ll p\rl_2^2\r) + \frac{1}{2} \ll \drift \rl^2_\infty + \frac{1}{2}\ll p\rl_2^2, \quad \text{for }\forall a>0.
\end{equation}
Let $g(t,s,a) = \frac{1}{2}\ll p\rl_2^2e^{-t} + \frac{1}{2}\ll \drift \rl^2_\infty e^{-t}$, then one has $\pt_t g \leq \mL g$, with $g(0,s) = \frac{\ll \drift\rl^2_\infty}{2}$. Similarly, by comparison theorem, one has $\ll g(t,s,a) \rl_\infty \leq  \frac{\ll \drift\rl^2_\infty}{2}$, which implies, 
\[
\ll p(t,\cdot) \rl_\infty \leq \ll \drift \rl_\infty \sqrt{e^{t} -1} .
\]
On the other hand, taking $\nb$ to \eqref{ineq_14} and multiply $q^\top$ to it gives,
\[
\begin{aligned}
\pt_t (\frac12 \ll q \rl_2^2) =& \mL (\frac12  \ll q \rl_2^2) {- \frac12\sum_l (\nb q_l)^\top\Sig (\nb q_l)} + q^\top \nb \drift \cdot q{+ \sum_l\frac12(\pt_{s_l}\Sig:\nb q) q_l} + q\cdot\nb\drift ,\\
\leq& \mL (\frac12  \ll q \rl_2^2) +\l(2\ll \nb\drift \rl_\infty + \ll\nb\sigma\rl_\infty^2 + 1\r)\frac{1}{2}\ll q\rl_2^2   + \frac{1}{2}\ll \nb \drift \rl_\infty^2
\end{aligned}
\]
Let $g(t,s,a) =  \frac{1}{2} \ll q \rl_2^2e^{-ct} + \frac{1}{2} \ll \nb \drift \rl_\infty^2e^{-ct}$ with $c = 2\ll \nb\drift \rl_\infty + \ll\nb\sigma\rl_\infty^2 + 1$, then 
\[
\pt_t g \leq \mL g, \quad \text{with } g(0,s,a) = \frac12\ll \nb \drift \rl_\infty^2.
\]
By the comparison theorem, one has $g(t,s,a) \leq \frac12\ll \nb_s b \rl_\infty^2$, which implies
\[
 \ll q \rl_2 \leq \ll \nb \drift \rl_\infty\sqrt{e^{t} - 1}.
\]
\end{proof}

\vspace{-0.5in}

Now we are ready to prove Lemma \ref{lemma: diff}
\begin{proof}
First note that $\hatb_i(s) = \Pi_{i,\dt} f(s)$ with $f(s) = s$. Therefore, By Lemma \ref{lemma: i-th operator}, one has,
\[
\begin{aligned}
    \hatb_i(s,a) =& \drift(s,a) + \frac{1}{\dt i!}\sum_{j=1}^i\coef{i}_j\int_0^{j\dt}\E[\mL^{i+1}f(s_t - s_0)|s_0 = s,  a_\tau = a, \tau\in[0,i\dt)] t^i dt,
    \end{aligned}
\]
which gives
\[
\begin{aligned}
    \hatb_i(s,a)\leq &\drift(s,a) + \frac{1}{\dt i!}\sum_{j=1}^i\lv \coef{i}_j\rv \ll \mL^{i+1}f(s)\rl_\infty\int_0^{j\dt} t^i dt =&\drift(s,a) + \hci \ll \mL^{i}\drift\rl_\infty \dt^i ,
\end{aligned}
\]
where 
\begin{equation}\label{def of hci}
    \hci = \frac{\sum_{j=1}^i j^{i+1}\lv \coef{i}_j\rv }{ (i+1)!},
\end{equation}
which gives \eqref{eq:diff_1}. The uniform boundedness of $\hatb(s)$ is followed by the above inequality and the uniform boundedness of $\drift(s)$. 

Next, we prove the uniform boundedness of $\nb_s\hatb$. Note that 
\[
\nb_s \hatb_i(s,a) = \nb_s\drift(s,a) +  \frac{1}{\dt i!}\sum_{j=1}^i\coef{i}_j\int_0^{j\dt} \nb_s p(t,s,a) t^i dt.
\]
where
\[
  p(t,s,a)=  \E[\mL^{i+1}f(s_t - s_0)|s_0 = s,  a_\tau = a, \tau\in[0,i\dt)] = \E[\mL^{i}b(s_t,a)|s_0 = s,  a_\tau = a, \tau\in[0,i\dt)] .
\]
By the first inequality of Lemma \ref{lemma:plinf}, one has $\ll \nb_s p(t,s,a) \rl \leq e^{cT}\ll \nb_s\mL^i \drift(s,a) \rl_\infty$ for $T = i\dt$, which is uniformly bounded with Assumption \ref{main ass}/(a), (b). Therefore, the uniform boundedness of $\ll \nb_s \hatb_i(s,a) \rl_\infty$ is followed by the above inequality and the uniform boundedness of  $\ll \nb_s \drift(s,a) \rl_\infty$. 

Next, one notes that $\hatsig_i = \Pi_{i,\dt}f(s)$ with $f(s) = s^\top s$. Therefore, one has
\begin{equation}\label{ineq_4}
    \begin{aligned}
    \hatsig_i(s) 
    = & \Sig(s) \\
    &+ \frac{1}{\dt i!}\sum_{j=1}^i\coef{i}_j\int_0^{j\dt}\E[\mL^i\l(\drift(s_t) (s_t-s_0)^\top + (s_t - s_0)\drift^\top(s_t) + \Sig(s_t)\r)|s_0 = s]t^idt.
\end{aligned}
\end{equation}
Note that 
\begin{equation}\label{def of h}
    \begin{aligned}
 h_i(s_t): =& \mL^i\l(\drift(s_t) (s_t-s_0)^\top + (s_t - s_0)\drift^\top(s_t) )\r) \\
 &-\l[ \mL^{i}\drift(s_t) (s_t-s_0)^\top + (s_t - s_0)(\mL^{i}\drift(s_t))^\top\r]
\end{aligned}
\end{equation}
is a function that only depends on the derivative $\nb^j\Sig, \nb^j\drift$ up to $2i$-th order, which can be bounded under {Assumption \ref{main ass}/(c)}. 
Thus applying the second inequality of Lemma \ref{lemma:plinf} yields
\[
\begin{aligned}
&\lv \E[\mL^i\l(\drift(s_t) (s_t-s_0)^\top + (s_t - s_0)\drift^\top(s_t) + \Sig(s_t)\r)|s_0 = s,  a_\tau = a, \tau\in[0,i\dt)] \rv
\\
\leq& \lv \mL^i \Diff \rv  + \lv h(s) \rv  +  \lv\E\l[ \mL^{i}\drift(s_t) (s_t-s_0)^\top + (s_t - s_0)(\mL^{i}\drift(s_t))^\top |s_0 = s,  a_\tau = a, \tau\in[0,i\dt) \r] \rv\\
\leq& \ll \mL^i \Diff \rl_\infty + \ll h(s)\rl_\infty + 2\ll \drift(s) \rl_\infty \sqrt{e^t-1} .
\end{aligned}
\]
Hence, one has,
\[
\begin{aligned}
&\int_0^{j\dt}\E[\mL^i\l(\drift(s_t,a) (s_t-s_0)^\top + (s_t - s_0)\drift^\top(s_t,a) + \Sig(s_t)\r)|s_0 = s, a_\tau = a, \tau\in[0,i\dt)]t^idt \\
\leq& \frac{1}{i+1}\l(\ll \mL^i \Diff \rl_\infty + \ll h \rl_\infty\r)(j\dt)^{i+1} + 3\ll \drift(s) \rl_\infty \int_{0}^{j\dt} t^{i+1/2} dt, \quad \text{for }j\dt \leq 1\\
\leq&\frac{1}{i+1}\l(\ll h_i \rl_\infty+ \ll \mL^i \Diff \rl_\infty + 3\ll \drift \rl_\infty \r)(j\dt)^{i+1}
\end{aligned}
\]
where $\sqrt{e^{t} - 1} \leq \frac32\sqrt{t}$ for $t\leq 1$ are used in the first inequality, and $\frac{(j\dt)^{i+3/2}}{i+3/2} \leq \frac{(j\dt)^{i+1}}{i+1}$ for $j\dt \leq 1$ is used in the second inequality. 
Plugging the above inequality back to \eqref{ineq_4} implies
\[
\begin{aligned}
    &\ll\hatsig_i(s,a) - \Sig(s,a)\rl_\infty
    \leq \h{C}_i(\ll \mL^i \Diff \rl_\infty + \ll h_i(s)\rl_\infty+3\ll \drift \rl_\infty) \dt^{i},
\end{aligned}
\]
where $h_i(s)$ is defined in \eqref{def of h} that only depends on the derivative $\nb^j\Sig, \nb^j\drift$ up to $2i$-th order.
The uniform boundedness of $\hatsig$ is followed by the above inequality and the uniform boundedness of $\Sig(s,a)$.

To prove the the uniform boundedness of $\nb_s\hatsig_i(s,a)$, one needs to bound
\[
\begin{aligned}
    &\nb\hatsig_i(s,a)  = \nb\Sig(s,a) + \frac{1}{\dt i!}\sum_{j=1}^i\coef{i}_j \int_0^{j\dt}\nb p(t,s,a) t^idt,
\end{aligned}
\]
where \[p(s,t) = \E[\mL^i\Sig(s_t,a) + h(s_t,a)+ \mL^{i}\drift(s_t,a) (s_t-s_0)^\top + (s_t - s_0)(\mL^{i}\drift(s_t,a))^\top |s_0 = s,a_\tau = a, \tau\in[0,i\dt)]\]
with $h(s,a)$ defined in \eqref{def of h}.
Therefore, by the first and third inequalities in Lemma \ref{lemma:plinf},  one has 
\[
\ll \nb p(t,s,a) \rl_\infty \leq e^{cT}\l(\ll \nb_s\mL^i\Sig \rl_\infty + \ll \nb_s h_i \rl_\infty\r) + \ll \nb b \rl_\infty \sqrt{e^T-1}, \quad \text{with } T = i\dt,
\]
which is uniformly bounded by Assumption \ref{main ass}. Therefore, the uniformly boundedness of $\nb\hatsig_i(s,a)$ is followed by the uniformly boundedness of $\nb\Sig(s,a)$ and the above inequality.

\end{proof}

\subsection{Proof of Lemma \ref{lemma: lipcts}} \label{sec:proof of lemma lipcts}
\begin{proof}
Since $\pm \|r\|_\infty/\beta$ is a sub- and a super- solution to \eqref{def of true hjb}, respectively, the first claim follows from the comparison principle. Below, we will often write $\nabla h$ for $\nabla_s h$ where $h$ can be $r,b$ and $\sigma$.

Let $s^1,s^2\in\R^d$, and suppose that $\pi^1$ is such that
\[
V^*(s^1) = \bbE\left[ \int_0^{\infty} e^{-\beta t}r(s^1_t, \pi^1(s^1_t)) dt\right]
\]
with $s^1_t$ satisfying
\[
ds^1_t = b(s^1_t, \pi^1(s^1_t)) dt+\sigma(s^1_t,\pi^1(s^1_t))dB_t, \quad s^1_0 = s^1.
\]
Let $s^2_t$ solve the following SDE
\[
ds^2_t = b(s^2_t, \pi^1(s^1_t)) dt+\sigma(s^2_t,\pi^1(s^1_t))dB_t, \quad s^2_0 = s^2.
\]
Since $\pi^1$ might not be the optimal policy for $V^*(s^2)$, we have
\beq\lb{3.3}
V^*(s^1)-V^*(s^2)\geq \bbE\left[\int_0^{\infty} e^{-\beta t}(r(s^1_t, \pi^1(s^1_t))-r(s^2_t, \pi^1(s^1_t))) dt\right].
\eeq

Below, we will estimate $\bbE[|s^1_t-s^2_t|^2]$. 
It follows from the SDEs that
\beq\lb{3.1}
\begin{aligned}
|s^1_t-s^2_t|&\leq |s^1_0-s^2_0|+\int_0^t |b(s^1_\tau,\pi^1(s^1_\tau) )-b(s^2_\tau,\pi^1(s^1_\tau) )|d\tau  +\left|\int_0^t  (\sigma(s^1_\tau,\pi^1(s^1_\tau) )-\sigma(s^2_\tau,\pi^1(s^1_\tau) ))dB_\tau \right|\\
&\leq  |s^1_0-s^2_0|+\|\nabla b\|_\infty\int_0^t |s^1_\tau-s^2_\tau|d\tau  +\left|\int_0^t  (\sigma(s^1_\tau,\pi^1(s^1_\tau) )-\sigma(s^2_\tau ,\pi^1(s^1_\tau)))dB_\tau \right|
\end{aligned}
\eeq
where we used the initial data and that $b$ is uniformly Lipschitz continuous in $s$.

Let $t_0>0$, and let $t\in [0,t_0]$.
By the Burkholder-Davis-Gundy inequality (see \cite[Chapter IV]{revuz2013continuous}), there exists a dimensional constant $C_d$ such that
\begin{align*}
&\bbE\left[ \sup_{t\in [0,t_0]}\left|\int_0^t  (\sigma(s^1_{\tau},\pi^1(s^1_\tau) )-\sigma(s^2_{\tau},\pi^1(s^1_\tau) ))dB_{\tau} \right|\right]\\
&\qquad\leq C_d\,\bbE \left[\left(\int_0^{t_0} |\sigma(s^1_{\tau},\pi^1(s^1_\tau) )-\sigma(s^2_{\tau},\pi^1(s^1_\tau) )|^2 d\tau\right)^{1/2}\right] \leq C_d\|\nabla\sigma\|_\infty t_0^\frac12  \bbE\left[\sup_{t\in [0,{t_0}]}|s^1_t-s^2_t| \right].
\end{align*}
We obtain from \eqref{3.1} that
\[
\bbE\left[\sup_{t\in [0,{t_0}]}|s^1_t-s^2_t|\right]\leq |s^1_0-s^2_0|+{t_0} \|\nabla b\|_\infty \bbE\left[\sup_{t\in [0,{t_0}]}|s^1_t-s^2_t|\right]+ t_0^\frac12  C_d\|\nabla\sigma\|_\infty \bbE\left[\sup_{t\in [0,{t_0}]}|s^1_t-s^2_t| \right].
\]
Thus, after taking ${t_0}$ to be small such that 
$
{t_0} \|\nabla b\|_\infty +t_0^{1/2} C_d\|\nabla\sigma\|_\infty\leq \frac12,
$
we get
$
\bbE\left[\sup_{t\in [0,{t_0}]}|s^1_t-s^2_t|\right]\leq 2|s^1_0-s^2_0|$.
After iteration, this yields for all $t>0$,
\beq\lb{3.2}
\bbE\left[|s^1_t-s^2_t|\right]\leq 2^{t/{t_0}+1}|s^1_0-s^2_0|.
\eeq

It follows from \eqref{3.3}, \eqref{3.2} and the uniform Lipchitz continuity of $r(s,a)$ in $s$ that
\begin{align*}
V^*(s^1)-V^*(s^2)&\geq -\|\nabla r\|_\infty\int_0^{\infty} e^{-\beta t} \bbE|s^1_t-s^2_t| dt\\
&\geq -2\|\nabla r\|_\infty|s^1_0-s^2_0|\int_0^\infty e^{-\beta t}2^{t/{t_0}}dt\geq -\frac{2\|\nabla r\|_\infty}{\beta-(\ln 2)/{t_0}}|s^1_0-s^2_0|   ,
\end{align*}
which yields the second claim.

For the last claim, let $s^1_t,s^2_t$ be as before.
Since $\sigma$ is independent of $s^1_\tau$ in \eqref{3.1}, we can replace \eqref{3.1} by
\[
|s^1_t-s^2_t|\leq |s^1_0-s^2_0|+\|\nabla b\|_\infty\int_0^t |s^1_\tau -s^2_\tau |d\tau.
\]
By Gr\"onwall's inequality, we get
$
|s^1_t-s^2_t|\leq |s^1_0-s^2_0|e^{\|\nabla b\|_\infty t}.
$
Since $\beta-\|\nabla b\|_\infty>0$, it follows that
\[
V^*(s)-V^*(y)\geq -\int_0^{\infty} e^{-\beta t}\|\nabla r\|_\infty |s^1_0-s^2_0|e^{\|\nabla b\|_\infty t} dt\geq -\frac{\|\nabla r\|_\infty}{\beta-\|\nabla b\|_\infty}|s^1_0-s^2_0|,
\]
which implies the last claim.
\end{proof}

\subsection{Proof of Lemma \ref{lemma: stability}}\label{sec:proof of stability}
\begin{proof}
Let us only prove the upper bound for $V^*(s)-\hat V(s)$; the proof for the other direction is almost identical.
By Lemma \ref{lemma: lipcts}, we have
\beq\lb{2.8}
|V^*(\cdot)|,\,|\hat V(\cdot)|\leq C_1:=\max\{\|r\|_\infty,\|\hat r\|_\infty\}/\beta.
\eeq
Let 
\[
2\delta:=\sup_{s\in \R^d} ({V^*(s)-\hat{V}(s)}),
\]
and apparently $\delta\leq C_1$.
Let $R_1$ be large enough such that
\beq\lb{2.1}
\sup_{s\in B_{R_1}} ({V^*(s)-\hat{V}(s)})\geq \delta.
\eeq

Let $R_2\geq 2R_1$ to be determined. We take a smooth, radially symmetric, and radially non-decreasing function $\phi:\bbR^d\to [0,\infty)$ such that 
\beq\lb{2.2}
\phi(\cdot)\equiv 0\text{ in $B_{R_1}$},\quad \phi(\cdot)= C_1 \text{ outside $B_{R_2}$},
\eeq
and
\beq\lb{2.9}
|\nabla\phi(s)|\leq 4C_1/R_2,\qquad |\nabla^2\phi(s)|\leq C_1/R_2 \quad\text{ for all }s.
\eeq
Such $\phi$ clearly exists when $R_2$ is sufficiently large.

It follows from \eqref{2.8}, \eqref{2.1} and \eqref{2.2} that there exists $s^0\in B_{R_2}$ such that
\beq\lb{3.71}
V^*(s^0)-\hat{V}(s^0)-2\phi(s^0)=\sup_{s\in \bbR^d} \left(V^*(s)-\hat{V}(s)-2\phi(s)\right)=:\delta'\geq {\delta}.
\eeq
Next, for some $\rho>0$ to be determined, there are $s^1,s^2\in B_{R_2}$ such that
\beq\lb{3.7}
\begin{aligned}
 V^*(s^1) - \hat{V}(s^2) &-\phi(s^1)-\phi(s^2)- \rho|s^1 - s^2|^2\\
 & = \sup_{
s,s'\in \bbR^d}
\left(V^*(s) - \hat{V}(s') - \phi(s)-\phi(s')-\rho|s - s'|^2\right)\\
&\geq V^*(s^0)-\hat{V}(s^0)-2\phi(s^0)=\delta'.
\end{aligned}
\eeq
Since $\phi\geq 0$, this implies
\beq\lb{3.12}
V^*(s^1)-\hat{V}(s^2)\geq \delta'.
\eeq

In view of \eqref{2.9}, we have
\[
|\phi(s^1)-\phi(s^2)|\leq 4C_1|s^1-s^2|/R_2.
\]
Using this, Lipschitz continuity of $V^*$, and \eqref{3.7}, we obtain
\begin{align*}
\delta' &\leq  V^*(s^1) - \hat{V}(s^2) -2\phi(s^2)- \rho|s^1 - s^2|^2+4C_1|s^1-s^2|/R_2\\
&\leq V^*(s^2) - \hat{V}(s^2) -2\phi(s^2)+L|s^1-s^2|-{\rho}|s^1-s^2|^2+4C_1|s^1-s^2|/R_2\\
&\leq \delta'+L|s^1-s^2|-{\rho}|s^1-s^2|^2+4C_1|s^1-s^2|/R_2,
\end{align*}
where in the last inequality, we also used \eqref{3.71}. 
This then simplifies to 
\beq\lb{3.8}
|s^1-s^2|\leq (L+4C_1/R_2)/\rho.
\eeq
If only $\hat V$ is known to be Lipschitz, similarly, we have
\begin{align*}
\delta' 
&\leq  V^*(s^1) - \hat V(s^1) -2\phi(s^1)+L|s^1-s^2|-{\rho}|s^1-s^2|^2+4C_1|s^1-s^2|/R_2\\
&\leq \delta'+L|s^1-s^2|-{\rho}|s^1-s^2|^2+4C_1|s^1-s^2|/R_2.
\end{align*}
We comment that in the proof, we only need one of $V^*$ and $\hat V$ to be Lipschitz continuous.

Now we proceed by making use of \eqref{3.7}.
Since $V^*$ and $\hat{V}$ are, respectively, solutions to HJB equation \eqref{def of true hjb} and \eqref{Optimal-PhiBE}, the Crandall-Ishii lemma \cite[Theorem 3.2]{user} yields that there are matrices $X_1$, $X_2\in\calS^d$ satisfying the following:
\beq\lb{3.11}
-(2\rho+|J|)I\leq
\begin{pmatrix}
X_1 & 0\\
0 & -X_2
\end{pmatrix}
\leq J+\frac{1}{2\rho}J^2,
\quad\text{ with }
J:=2\rho\begin{pmatrix}
I & -I\\
-I & I
\end{pmatrix},
\eeq
and
\beq\lb{3.5}
\begin{aligned}
&\beta V^*(s^1) - \max_{a}\l[ r(s^1,a) + b(s^1,a)\cdot p_1+\frac12\tr( \Sigma(s^1,a) (X_1+\nabla^2\phi(s^1)))\r]\leq 0\\
&\qquad\qquad\qquad\leq \beta \hat V(s^2) - \max_{a}\l[\hat r(s^2,a) + \hat b(s^2,a)\cdot p_2+\frac12\tr(\hat \Sigma(s^2,a)(X_2-\nabla^2\phi(s^2)))\r],
\end{aligned}
\eeq
where
\beq\lb{3.9}
p_1:=2\rho (s^1-s^2)+\nabla\phi(s^1),\quad p_2:=2\rho (s^1-s^2)-\nabla\phi(s^2).
\eeq

For $i=1,2$, let $a_i$ be one argmax of
\[
\l[ r(s^i,a) + b(s^i,a)\cdot p_i+\frac12\tr( \Sigma(s^i,a) X_i)\r].
\]
and then by the assumptions,
\begin{align*}
&- \max_{a}\l[\hat r(s^2,a) + \hat b(s^2,a)\cdot p_2+\frac12\tr(\hat \Sigma(s^2,a)X_2)\r]\leq - \l[\hat r(s^2,a_1) + \hat b(s^2,a_1)\cdot p_2+\frac12\tr(\hat \Sigma(s^2,a_1)X_2)\r]\\
&\qquad\qquad \qquad \leq - \l[ r(s^2,a_1) +  b(s^2,a_1)\cdot p_2+\frac12\tr( \hat\Sigma(s^2,a_1)X_2)\r]+\eps_r+\eps_b|p_2|.
\end{align*}
Thus, also using the regularity of $r$ and $b$, 
\beq\lb{3.21}
\begin{aligned}
&\max_{a}\l[ r(s^1,a) + b(s^1,a)\cdot p_1+\frac12\tr( \Sigma(s^1,a) (X_1+\nabla^2\phi(s^1)))\r]\\
&\quad\quad  - \max_{a}\l[\hat r(s^2,a) + \hat b(s^2,a)\cdot p_2+\frac12\tr(\hat \Sigma(s^2,a)(X_2-\nabla^2\phi(s^2)))\r]\\
\leq\, & \l[ r(s^1,a_1) + b(s^1,a_1)\cdot p_1+\frac12\tr( \Sigma(s^1,a_1) (X_1+\nabla^2\phi(s^1)))\r]\\
&\quad\quad  - \l[ r(s^2,a_1) + b(s^2,a_1)\cdot p_2+\frac12\tr( \hat \Sigma(s^2,a_1)(X_2-\nabla^2\phi(s^2)))\r]+\eps_r+\eps_b|p_2|  \\
\leq\, & \|\nabla r\|_\infty|s^1-s^2|+\|\nabla b\|_\infty|p_2||s^1-s^2|+\|b\|_\infty|p_1-p_2|+\frac12\tr( \Sigma(s^1,a_1)X_1) \\
&\quad\quad -\frac12\tr( \hat\Sigma(s^2,a_1)X_2)+\frac12\tr( \Sigma(s^1,a_1) \nabla^2\phi(s^1))+\frac12\tr(\hat \Sigma(s^2,a_2)\nabla^2\phi(s^2))+\eps_r+\eps_b|p_2|.
\end{aligned}
\eeq

Note that $J+\frac1{2\rho}J^2=6\rho J$ and \eqref{3.11} yields $X_1\leq X_2$. 
Similarly as done in \cite[Example 3.6]{user}, 
we multiply \eqref{3.11} by the nonnegative symmetric matrix
\[
\begin{pmatrix}
\sigma(s^1,a_1)\sigma(s^1,a_1)^T & \sigma(s^2,a_1)\sigma(s^1,a_1)^T\\
\sigma(s^1,a_1)\sigma(s^2,a_1)^T & \sigma(s^2,a_1)\sigma(s^2,a_1)^T
\end{pmatrix}
\]
on the left-hand side,
and take traces to obtain
\[
\begin{aligned}
\frac12\tr( \Sigma(s^1,a_1)X_1)&-\frac12\tr(  \Sigma(s^2,a_1)X_2)\leq 3\rho \tr\left[(\sigma(s^1,a_1)- \sigma(s^2,a_1) )(\sigma(s^1,a_1)- \sigma(s^2,a_1) )^T\right]\\
&\leq 3\rho \|\sigma(s^1,a_1)- \sigma(s^2,a_1)\|_F^2\leq 3\rho d \|\nabla\sigma\|_\infty^2|s^1-s^2|^2
\end{aligned}
\]
where $\|\cdot\|_F$ denotes the Frobenius norm of a matrix, and we used that
\[
\sqrt{\tr(AA^T)}= \|A\|_F\leq \sqrt{d}\|A\|_2\quad\text{for any $m\times n$ matrix } A\text{ with $m,n\leq d$}.
\]
Since $|J|=4\rho$,
\eqref{3.11} yields
$|X_2|\leq 6\rho$. 
Thus, by the assumption that $\|\Sigma-\hat\Sigma\|_\infty\leq \eps_\Sigma$, we get
\[
\begin{aligned}
\frac12\tr( \Sigma(s^1,a_1)X_1)&-\frac12\tr(  \hat \Sigma(s^2,a_1)X_2)\leq \frac12\tr( \Sigma(s^1,a_1)X_1)-\frac12\tr(  \Sigma(s^2,a_1)X_2)+\frac{d}2\|\Sigma-\hat\Sigma\|_\infty |X_2| \\
&\leq 3\rho d \|\nabla\sigma\|_\infty^2|s^1-s^2|^2+3\rho d \eps_\Sigma
\end{aligned}
\]

Next, by \eqref{2.9} and the assumptions, there exists $C>0$ such that
\beq\lb{3.23}
\frac12\tr( \Sigma(s^1,a_1) \nabla^2\phi(s^1))+\frac12\tr(\hat \Sigma(s^2,a_2)\nabla^2\phi(s^2))\leq C/R_2.
\eeq
Also using \eqref{3.8} and \eqref{3.9}, we obtain
\beq\lb{3.24}
\begin{aligned}
|p_1|,\,|p_2|&\leq 2(L+4C_1/R_2)+4C_1/R_2\leq 2L+12C_1/R_2,\\
|p_1-p_2|&=|\nabla\phi(s^1)+\nabla\phi(s^2)|\leq 8C_1/R_2.    
\end{aligned}
\eeq

Now, we plugging the above estimates \eqref{2.9}, \eqref{3.8}, \eqref{3.21}--\eqref{3.24} into \eqref{3.5} to get
\[
\begin{aligned}
 \beta(V^*(s^1)-\hat{V}(s^2))&\leq  \|\nabla r\|_\infty |s^1-s^2|+\|\nabla b\|_\infty|s^1-s^2||p_2|+\|b\|_\infty|p_1-p_2|+\eps_r\\
 &\quad +\eps_b|p_2|+3\rho d\eps_\Sigma+3\rho d \|\nabla\sigma\|^2_\infty|s^1-s^2|^2+C/R_2\\
 &\leq  C_2/\rho+(C_3/\rho+\eps_b)(2L+12C_1/R_2)+\eps_r+3\rho d\eps_\Sigma +C_4/\rho+(C+8C_1\|b\|_\infty)/R_2
\end{aligned}
\]
where
\[
C_2:=(L+4C_1/R_2)\|\nabla r\|_\infty,\quad C_3:=(L+4C_1/R_2)\|\nabla b\|_\infty,\quad C_4:=3d(L+4C_1/R_2)^2\|\nabla\sigma\|_\infty^2.
\]
Using \eqref{3.12} and then passing $R_2$ to infinity yield
\[
 \beta\delta'\leq  C_5/\rho+2L\eps_b+\eps_r+3 \rho d\eps_\Sigma.
\]
with 
\[
C_5:={L\|\nabla r\|_\infty+2L^2\|\nabla b\|_\infty+3d L^2\|\nabla\sigma\|_\infty^2}.
\]
We then take $\rho$ to be $\sqrt{C_5/(3d\eps_\Sigma)}$, 
so that
\[
\sup_{s\in\bbR^d}(V^*(s)-\hat V(s))=2\delta\leq 2\delta'\leq  \frac2\beta\left(2\sqrt{3dC_5\eps_\Sigma}+2L\eps_b+\eps_r\right).
\]

\end{proof}

\subsection{Proof of Theorem \ref{thm:Optimal-PhiBE policy}}\label{proof of optimal policy}
\begin{proof}
{The proof is very similar to the one of Lemma \ref{lemma: stability}, except a few estimates due to the different equations. Let us denote $U:=V^{\hat\pi_i^*}$ and  $\hat{V}:=\hat V^*_i$ for simplicity.  In view of Lemma \ref{lemma: stability}, it suffices to estimate the difference
between $U$ and $\hat V$.

As before, we consider a slightly more general case by letting $\hat{V}$ solve \eqref{def of true hjb} with the coefficients $r$, $b$, and $\Sigma$ replaced by $\hat{r}$, $\hat{b}$, and $\hat{\Sigma}$, respectively, and letting  $U$ solve
\beq\lb{4.1'}
\beta U(s) =   r(s,\hat a) + b(s,\hat a)\cdot \nb U(s)+\frac12\tr( \Sigma(s,\hat a) \nabla^2 U(s)),
\eeq
where $\hat a(s)$ is one argmax of
\[
\max_{a}\l[ \hat r(s,a) + \hat b(s,a)\cdot \nb \hat V(s)+\frac12\tr(\hat \Sigma(s,a) \nabla^2\hat V(s))\r].
\]
We assume that $\eps_r,\eps_b$ and $\eps_\Sigma$ are such that
\[
\sup_{s,a}|r(s,a)-\hat r(s,a)|\leq \eps_r,\quad \sup_{s,a}|b(s,a)-\hat b(s,a)|\leq \eps_b\quad\text{and}\quad \sup_{s,a}|\Sigma(s,a)-\hat \Sigma(s,a)|\leq \eps_\Sigma.
\]
We also assume that at least one of $U$ and $\hat V$ are uniformly Lipschitz continuous with Lipschitz constant $L$. It suffices to estimate $|U-\hat V|$.
}

Again we only show the upper bound for $\hat V(s)-U(s)$. 
It follows from Lemma \ref{lemma: lipcts} (the control set for $U$ is a singleton), we have
\beq\lb{4.2}
|\hat V(\cdot)|,\,|U(\cdot)|\leq C_1:=\max\{\|r\|_\infty,\|\hat r\|_\infty\}/\beta.
\eeq
Set 
\[
2\delta:=\sup_{s\in \R^d} (\hat V(s)-U(s))\leq 2C_1.
\]
Let $R_1$ be large enough such that
\beq\lb{4.3}
\sup_{s\in B_{R_1}} (\hat V(s)-U(s))\geq \delta.
\eeq
For $R_2\geq 2R_1$, take $\phi$ the same as in the proof of Lemma \ref{lemma: stability} so that \eqref{2.2} and \eqref{2.9} hold.

Due to \eqref{4.2}, \eqref{4.3} and \eqref{2.2}, there exists $s^0\in B_{R_2}$ such that
\beq\lb{4.4}
\hat V(s^0)-U(s^0)-2\phi(s^0)=\sup_{s\in \bbR^d} \left(\hat V(s)-U(s)-2\phi(s)\right)=:\delta'\geq {\delta}.
\eeq
For any fixed $\rho>0$, there are $s^1,s^2\in B_{R_2}$ such that
\beq\lb{4.5}
\begin{aligned}
 \hat V(s^1) - U(s^2) &-\phi(s^1)-\phi(s^2)- \rho|s^1 - s^2|^2\\
 & = \sup_{
s,s'\in \bbR^d}
\left(\hat V(s) - U(s') - \phi(s)-\phi(s')-\rho|s - s'|^2\right)\\
&\geq \hat V(s^0)-U(s^0)-2\phi(s^0)=\delta'.
\end{aligned}
\eeq
By \eqref{2.9}, we have
\beq\lb{4.6}
|\phi(s^1)-\phi(s^2)|\leq 4C_1|s^1-s^2|/R_2.
\eeq
Similarly as done in the proof of Lemma \ref{sec:proof of stability}, using  \eqref{4.4}, \eqref{4.5} and \eqref{4.6}, we get that if at least one of $\hat  V$ and $U$ is Lipschitz continuous with constant $L$, then
\beq\lb{4.7}
|s^1-s^2|\leq (L+4C_1/R_2)/\rho.
\eeq

Since $\hat V$ and $U$ are, respectively, solutions to \eqref{def of true hjb} (with the coefficients $r$, $b$ and $\Sigma$ replaced by $\hat{r}$, $\hat{b}$ and $\hat{\Sigma}$) and \eqref{4.1'}, the Crandall-Ishii lemma \cite[Theorem 3.2]{user} yields that there are matrices $X_1$, $X_2\in\calS^d$ satisfying the following:
\beq\lb{4.8}
-(2\rho+|J|)I\leq
\begin{pmatrix}
X_1 & 0\\
0 & -X_2
\end{pmatrix}
\leq J+\frac{1}{2\rho}J^2,
\quad\text{ with }
J:=2\rho\begin{pmatrix}
I & -I\\
-I & I
\end{pmatrix},
\eeq
and
\beq\lb{4.10}
\begin{aligned}
&\beta \hat V(s^1) - \l[\hat r(s^1,\hat a) + \hat b(s^1,\hat a)\cdot p_1+\frac12\tr(\hat \Sigma(s^1,\hat a)(X_1+\nabla^2\phi(s^1)))\r]\leq 0\\
&\qquad\qquad\qquad\leq \beta U(s^2) - \l[ r(s^2,\hat a) + b(s^2,\hat a)\cdot p_2+\frac12\tr(\Sigma(s^2,\hat a)(X_2-\nabla^2\phi(s^2)))\r],
\end{aligned}
\eeq
where
\beq\lb{4.9}
p_1:=2\rho (s^1-s^2)+\nabla\phi(s^1),\quad p_2:=2\rho (s^1-s^2)-\nabla\phi(s^2).
\eeq
Then \eqref{3.24} holds the same.
By the assumptions on $r,b,\hat r$ and $\hat b$, and \eqref{3.24}, \eqref{4.7} and \eqref{2.9},
\begin{align*}
&\quad \l[\hat r(s^1,\hat a) + \hat b(s^1,\hat a)\cdot p_2\r]- \l[ r(s^2,\hat a) +  b(s^2,\hat a)\cdot p_1\r]\\
&\leq \l[ r(s^1,\hat a) +  b(s^1,\hat a)\cdot p_1\r]- \l[ r(s^2,\hat a) +  b(s^2,\hat a)\cdot p_2\r]+\eps_r+\eps_b|p_1|\\
&  \leq \eps_r+\eps_b|p_1|+\|\nabla r\|_\infty|s^1-s^2|+\|\nabla b\|_\infty|p_1||s^1-s^2|+\|b\|_\infty|p_1-p_2|\\
& \leq  \eps_r+\eps_b(2L+12C_1/R_2)+ \left(\|\nabla r\|_\infty+\|\nabla  b\|_\infty (2L+12C_1/R_2) \r)(L+4C_1/R_2)/\rho+\|b\|_\infty 8C_1/R_2.
\end{align*}

For the second order terms, as before, 
we multiply \eqref{4.8} by the nonnegative symmetric matrix
\[
\begin{pmatrix}
\sigma(s^1,\hat a)\sigma(s^1,\hat a)^T & \sigma(s^2,\hat a)\sigma(s^1,\hat a)^T\\
\sigma(s^1,\hat a)\sigma(s^2,\hat a)^T & \sigma(s^2,\hat a)\sigma(s^2,\hat a)^T
\end{pmatrix}
\]
and take traces to obtain
\[
\begin{aligned}
\frac12\tr( \Sigma(s^1,\hat a)X_1)&-\frac12\tr(  \Sigma(s^2,\hat a)X_2)\leq 3\rho \tr\left[(\sigma(s^1,\hat a)- \sigma(s^2,\hat a) )(\sigma(s^1,\hat a)- \sigma(s^2,\hat a) )^T\right]\\
&\leq 3\rho \|\sigma(s^1,\hat a)- \sigma(s^2,\hat a)\|_F^2\leq 3\rho d \|\nabla\sigma\|_\infty^2|s^1-s^2|^2\\
&\leq 3\rho d\|\nabla\sigma\|^2_\infty (L+4C_1/R_2)^2/\rho^2.
\end{aligned}
\]
Since $|J|=4\rho$,
\eqref{4.8} yields
$|X_2|\leq 6\rho$. 
By the assumption that $\|\Sigma-\hat\Sigma\|_\infty\leq \eps_\Sigma$, we obtain
\[
\begin{aligned}
\frac12\tr( \Sigma(s^1,\hat a)X_1)&-\frac12\tr(  \hat \Sigma(s^2,\hat a)X_2)\leq \frac12\tr( \Sigma(s^1,\hat a)X_1)-\frac12\tr(  \Sigma(s^2,\hat a)X_2)+\frac{d}2\|\Sigma-\hat\Sigma\|_\infty |X_2| \\
&\leq 3\rho d \|\nabla\sigma\|_\infty^2|s^1-s^2|^2+3\rho d \eps_\Sigma\leq 3 d\|\nabla\sigma\|_\infty^2(L+4C_1/R_2)^2/\rho+3\rho d \eps_\Sigma
\end{aligned}
\]
As before, by \eqref{2.9}, there exists $C>0$ such that
\[
\frac12\tr( \Sigma(s^1,\hat a) \nabla^2\phi(s^1))+\frac12\tr(\hat \Sigma(s^2,a_2)\nabla^2\phi(s^2))\leq C/R_2.
\]

Plugging these estimates into \eqref{4.10}, we obtain
\[
\begin{aligned}
 \beta(\hat V(s^1)-U(s^2))&\leq  \eps_r+\eps_b(2L+12C_1/R_2)+ \left(\|\nabla r\|_\infty+\|\nabla  b\|_\infty (2L+12C_1/R_2) \r)(L+4C_1/R_2)/\rho\\
 &\quad +\|b\|_\infty 8C_1/R_2+3 d\|\nabla\sigma\|_\infty^2(L+4C_1/R_2)^2/\rho+3\rho d \eps_\Sigma+C/R_2.
\end{aligned}
\]
Use \eqref{4.3} and take $R_2\to\infty$ to get from the above that
\[
 \beta \delta'\leq  C_5/\rho+2L\eps_b+\eps_r+3 \rho d\eps_\Sigma.
\]
with 
\[
C_5:={L\|\nabla r\|_\infty+2L^2\|\nabla b\|_\infty+3d L^2\|\nabla\sigma\|_\infty^2}.
\]
Taking $\rho:=\sqrt{C_5/(3d\eps_\Sigma)}$ yields
\[
\sup_{s\in\bbR^d}(\hat V(s)-U(s))=2\delta\leq 2\delta'\leq  \frac2\beta\left(2\sqrt{3dC_5\eps_\Sigma}+2L\eps_b+\eps_r\right).
\]

Similarly, we obtain the same estimate for $\sup_{s\in\bbR^d}U(s)-\hat V(s)$. Combining these bounds and applying Lemma \ref{lemma: stability}, we conclude that
\[
\sup_{s\in\bbR^d}|V^*(s)-U(s)|\leq  \frac4\beta\left(2\sqrt{3dC_5\eps_\Sigma}+2L\eps_b+\eps_r\right).
\]
Finally, the conclusion follows from the argument in the last paragraph of the proof of Theorem \ref{thm:Optimal-PhiBE}.
\end{proof}

\subsection{Proof of Proposition \ref{prop:true}}\label{proof of prop: true}
\begin{proof}
    The standard LQR problem usually sets $\beta = 0$. In our case, we can use the same method to derive the optimal value function and optimal policy for $\beta \neq 0$. First we know that the optimal value function is quadratic in $s$, i.e. $V(s) = s^\top P s + c$ under assumption \ref{ass: lqr} (see Theorem 3.5.3 in \cite{pham2009continuous}), and it satisfies the following HJB
\[
\beta s^\top P s + \beta c = \max_a\l\{s^\top Q s + a^\top R a + 2 (As+Ba)^\top P s \r\} + \sigma^2\text{diag}(P)
\]
When $ R\prec0$, one has
\[
\pi^*(s) = \argmax \l\{s^\top Q s + a^\top R a + 2 (As+Ba)^\top P s \r\} = Ks,\quad \text{where}\quad K = -R^{-1}B^\top P .
\]
Inserting it into the HJB gives
\[
V^*(s) = s^\top P s + \frac{\sigma^2}{\beta}\text{diag}(P),\quad \text{where}\quad  \beta  P  =  Q - P B R^{-1}B^\top P   + A^\top P + PA.
\]
Under assumption \ref{ass: lqr}, there exists a unique negative definite solution to the above Riccati equation. 

The one-dimensional solution \eqref{lqr-true-solu-1d} is obtained by solving \eqref{lqr-true-solu} analytically.
\end{proof}

\subsection{Proof of Theorem \ref{thm: approx-lqr}}\label{proof of thm: approx-lqr}
The proof is based on the following Lemma. 
\begin{lemma}\label{lemma:hjb-equivalent-soc}
When $Q, R$ are negative definition, $(A-\beta/2,B,C,D)$ is mean-square stabilizable, and $(A-\beta/2,Q,C)$ is detectable, the optimal policy to the following stochastic LQR problem
\begin{equation}\label{op-1}
\begin{aligned}
    \t{V}^*(s) = &\max_{a_t = \pi(s_t)} \E\l[\int_0^\infty e^{-\beta t} \l(s_t^\top Q s_t + a_t^\top R a_t\r) dt  | s_0 = s \r]\\
    s.t. \quad &ds_t = (As_t + Ba_t )dt +  \left(Cs_t + D a_t\right)dB_t.\quad \text{with }B_t \text{ a scalar Wiener process}
\end{aligned}
\end{equation}
is $\pi^*(s) = Ks$, where
\begin{equation}\label{eq_1}
    K = - (R + D^\top P D)^{-1}(B^\top P + D^\top P C) s.
\end{equation}
and 
$P$ is the unique negative definite matrix that satisfies
\begin{equation}\label{general-riccati}
    (A-\beta/2)^\top P + P (A-\beta/2) - (P B + C^\top P D) (R + D^\top P D)^{-1} (B^\top P + D^\top P C) + Q + C^\top P C = 0. 
\end{equation}
The optimal policy to the following stochastic LQR problem
\begin{equation}\label{op-2}
\begin{aligned}
    \t{V}^*(s) = &\max_{a_t = \pi(s_t)} \E\l[\int_0^\infty e^{-\beta t} \l(s_t^\top Q s_t + a_t^\top R a_t\r) dt  | s_0 = s \r]\\
    s.t. \quad &ds_t = (As_t + Ba_t )dt +  \sigma dB_t.\quad \text{with }B_t \text{ a scalar Wiener process}
\end{aligned}
\end{equation}
is also $\pi^*(s) = Ks$ with the same $K$ defined as \eqref{eq_1} for $C = D = 0$.
\end{lemma}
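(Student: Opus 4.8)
The plan is to reduce both stochastic control problems to the classical infinite‑horizon stochastic LQR via an exponential rescaling in time, and then to invoke (and verify) the solvability theory for the associated algebraic Riccati equations. \emph{Step 1 (removing the discount).} For \eqref{op-1}, given any admissible $a$, set $y_t := e^{-\beta t/2}s_t$ and $u_t := e^{-\beta t/2}a_t$. Then $e^{-\beta t}\l(s_t^\top Q s_t + a_t^\top R a_t\r) = y_t^\top Q y_t + u_t^\top R u_t$, and It\^o's formula applied to the linear map $s\mapsto e^{-\beta t/2}s$ gives
\[
dy_t = \l[(A-\tfrac{\beta}{2}) y_t + B u_t\r]dt + \l(C y_t + D u_t\r)dB_t .
\]
Thus $a\leftrightarrow u$ is a bijection between the admissible controls of \eqref{op-1} and those of the undiscounted stochastic LQR with data $(A-\beta/2, B, C, D, Q, R)$, and it preserves the objective; moreover $\pi^*(s)=Ks$ is optimal for \eqref{op-1} iff $u^*_t = K y_t$ is optimal for the rescaled problem, since $a^*_t = e^{\beta t/2}u^*_t = K s_t$.

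\emph{Step 2 (solving the undiscounted multiplicative‑noise LQR).} Under the standing hypotheses — $(A-\beta/2,B,C,D)$ mean‑square stabilizable, $(A-\beta/2,Q,C)$ detectable, $Q,R$ negative definite — the classical theory of stochastic LQR with state‑ and control‑dependent noise (see, e.g., \cite{yongzhoubook} and the generalized Riccati equation literature) yields that \eqref{general-riccati} has a unique negative definite solution $P$, which is the stabilizing one. Since $R\prec0$ and $P\prec0$ force $R + D^\top P D\prec0$, the gain $K$ in \eqref{eq_1} is well defined. Optimality of $u^*_t = Ky_t$ is confirmed by a verification argument with the ansatz $W(y)=y^\top P y$: maximizing the Hamiltonian $y^\top Q y + u^\top R u + 2\l[(A-\beta/2)y + Bu\r]^\top P y + (Cy+Du)^\top P(Cy+Du)$ over $u$ produces the maximizer $u=Ky$, and requiring the resulting $y$‑quadratic form to vanish is precisely \eqref{general-riccati}; applying It\^o to $W$ along any admissible trajectory and along the closed loop, together with a transversality estimate showing the boundary term vanishes at infinity, establishes optimality. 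Undoing the rescaling gives $\pi^*(s)=Ks$ for \eqref{op-1}.

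\emph{Step 3 (the additive‑noise problem \eqref{op-2}).} The same rescaling turns \eqref{op-2} into the undiscounted LQR $dy_t = [(A-\beta/2)y_t + Bu_t]dt + e^{-\beta t/2}\sigma\, dB_t$ with additive, time‑dependent noise, where certainty equivalence applies. Taking $W(t,y)=y^\top P y + \phi(t)$, the maximizer in $u$ is $u=-R^{-1}B^\top P y$ independently of $\sigma$, the $y$‑quadratic balance is the $C=D=0$ specialization of \eqref{general-riccati}, i.e. $(A-\beta/2)^\top P + P(A-\beta/2) - PBR^{-1}B^\top P + Q = 0$, and $\phi$ is the unique decaying solution of the induced linear ODE. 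This $P$ and the gain $K=-R^{-1}B^\top P$ agree with Proposition~\ref{prop:true}, so reversing the rescaling yields $\pi^*(s)=Ks$ with the stated $K$.

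\emph{Main obstacle.} The It\^o computations and completions of squares are routine; the real content is the infinite‑horizon verification in Step 2 — showing the candidate feedback is admissible, i.e. that the closed loop $dy_t = (A-\beta/2+BK)y_t\,dt + (C+DK)y_t\,dB_t$ is mean‑square stable, and that $\E[W(y_T)]\to0$ as $T\to\infty$ so that completing the square gives genuine optimality and not merely a one‑sided bound. Both facts hinge on $P$ being the \emph{stabilizing} solution of \eqref{general-riccati}, which is exactly where the mean‑square stabilizability and detectability hypotheses are used; a minor additional point is to check that the rescaling in Step 1 preserves admissibility in both directions.
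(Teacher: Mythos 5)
Your proof is correct in outline but follows a genuinely different route from the paper's. The paper works directly with the discounted HJB equation: it substitutes the quadratic ansatz $V^*(s)=s^\top P s$ (respectively $s^\top P s + e$ for the additive-noise case), maximizes the resulting quadratic in $a$ to read off the gain $K$, and back-substitutes to obtain the Riccati equation \eqref{general-riccati}; existence and uniqueness of the negative definite solution, and genuine optimality of the feedback, are taken from the standing assumptions and the cited LQR literature without further verification. You instead remove the discount by the exponential rescaling $y_t=e^{-\beta t/2}s_t$, $u_t=e^{-\beta t/2}a_t$, reduce to the undiscounted multiplicative-noise LQR with data $(A-\beta/2,B,C,D,Q,R)$, and then invoke the generalized Riccati theory together with an It\^o/completion-of-squares verification argument. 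Your route makes transparent where the shifted matrices $A-\beta/2$ in the hypotheses and in \eqref{general-riccati} come from, and it is the more rigorous path if the verification (closed-loop mean-square stability and the transversality limit $\E[W(y_T)]\to 0$) is actually carried out rather than sketched; but note that the hardest ingredient --- existence and uniqueness of the stabilizing negative definite solution of the generalized Riccati equation under mean-square stabilizability and detectability --- is deferred to the literature in both arguments, so your proof is not more self-contained than the paper's, only more explicit about what needs to be checked. Both derivations agree on the gain \eqref{eq_1} and on the $C=D=0$ specialization for \eqref{op-2}.
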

\begin{remark}    When $\sigma = 0$, then the optimal value function is $V^*(s) = s^\top P s$ with $P$ satisfies \eqref{general-riccati} that exists for $\beta \geq 0$. When $\sigma \neq 0$, then the optimal value function is $V^*(s) = s^\top P s + \frac{\sigma^2}{\beta}\tr(P)$, which means that the optimal value function is well-defined only when $\beta > 0$. \end{remark}
\begin{proof}
The HJB equation for the optimal control problem \eqref{op-1} is the following, 
\[
\beta V^*(s) = \max_a\{ s^\top Q s + a^\top R a  + +(As + Ba)\cdot \nb V^*(s) + \frac12 (Cs + Da)^\top \nb^2V^*(s) (C s + Da) \}.
\]
    Assume that the optimal solution is in terms of 
    \[
    V^*(s) = s^\top P s .
    \]
    Inserting it into the HJB gives,
    \[
    \beta s^\top P s = \max_a\l\{ s^\top Q s + a^\top R a + 2(A s  + Ba )^\top P s + (Cs + Da)^\top P (C s + Da)\r\}
    \]
    Taking gradient w.r.t. $a$ for the RHS gives, 
    \[
    2 R a + 2B^\top P s + 2D^\top P (C s + Da) = 0, \quad \pi^*(a) = - (R + D^\top P D)^{-1}(B^\top P + D^\top P C) s
    \]
    Inserting the above policy to the RHS of the equation gives the Riccati equation \eqref{general-riccati} for $P$.

    The HJB equation for the optimal control problem  \eqref{op-2} is 
\[
\beta V^*(s) = \max_a\{ s^\top Q s + a^\top R a  + \frac12 \sigma^2\Delta V^*(s) \}.
\] 
Assume that the optimal solution is in terms of 
    \[
    V^*(s) = s^\top P s +e.
    \]
    Inserting it into the HJB gives,
    \[
    \beta (s^\top P s + e) = \max_a\l\{ s^\top Q s + a^\top R a + 2(A s  + Ba )^\top P s + \sigma^2 \tr(P)\r\}
    \]
    Taking gradient w.r.t. $a$ for the RHS gives, 
    \[
    2 R a + 2B^\top P s = 0, \quad \pi^*(a) = - R^{-1}B^\top P s
    \]
    Inserting the above policy to the RHS of the equation gives the Riccati equation \eqref{general-riccati} for $P$ and $e = \frac{\sigma^2\tr(P)}{\beta}$
\end{proof}

Now we are ready to proof Theorem \ref{thm: approx-lqr}.
\begin{proof}
{\bf RL approximation. }
For the deterministic case, where $\sigma = 0$, the Optimal-BE can be viewed as a discrete-time infinite horizon LQR problem.
\begin{equation}\label{eq4}
     \t{V}^*(s) = \max_{a} \l[(s^\top \t{Q} s  + a^\top \t{R} a) + \gamma  \t{V}^*(p_\dt(s,a))\r] ,\quad p_\dt(s,a) = \t{A}s + \t{B}a
\end{equation}
where 
\[
\t{A} = \ha{1}\dt + I, \quad \t{B} = \hb{1}\dt.
\]
Therefore, the optimal policy should be a linear policy $\pi^*(s) = \t{K}s$ and optimal value function should be quadratic $\t{V}^*(s) = s^\top \t{P} s$ \cite{bertsekas2012dynamic}. 
Inserting $\t{V}^*(s) = s^\top \t{P} s$ to \eqref{eq4} gives
\begin{equation}\label{eq5}
    s^\top \t{P} s = \max_a\l(s^\top \t{Q} s + a^\top \t{R}   a + \gamma (\t{A}s + \t{B}a)^\top \t{P} (\t{A}s + \t{B}a) \r) 
\end{equation}
Multiplying $\frac1{\gamma \dt}$ on both sides and then subtract  $\frac1\dt s^\top \t{P} s$ on both sides gives, 
\[
\frac{1}{\dt\gamma} s^\top \t{P} s - \frac1\dt s^\top \t{P} s= \max_a\l( s^\top \l(\frac{1}{\dt\gamma}\t{Q}\r) s + a^\top \l(\frac{1}{\dt\gamma}\t{R}\r)   a + \frac1\dt (\t{A}s + \t{B}a)^\top \t{P} (\t{A}s + \t{B}a) - \frac1\dt s^\top \t{P} s\r) ,
\]
Furthermore,
\begin{equation}\label{eq6}
    \begin{aligned}
     &\frac1\dt (\t{A}s + \t{B}a)^\top \t{P} (\t{A}s + \t{B}a) - \frac1\dt s^\top \t{P} s = \frac1\dt (\t{A}s + \t{B}a - s)^\top \t{P} (\t{A}s + \t{B}a - s ) +  \frac2\dt (\t{A}s + \t{B}a - s)^\top \t{P}s \\
     = &( \ha{1}s+ \hb{1}a)^\top \t{P} ( \ha{1}s+ \hb{1}a)\dt +  2( \ha{1}s+ \hb{1}a)^\top \t{P}s ,
\end{aligned}
\end{equation}
where the following equality is used to obtain the above second equality,
\[
\frac1\dt (\t{A}s + \t{B}a - s) = \ha{1}s+ \hb{1}a.
\]
Then one can equivalently write \eqref{eq5} in the following form,
\begin{equation}\label{lqr-rl-hjb}
    \hat{\beta} s^\top \t{P} s= \max_a\l( s^\top \h{Q} s + a^\top \h{R}   a + ( \ha{1}s+ \hb{1}a) \cdot (2 \t{P}s) + ( \ha{1}s+ \hb{1}a)^\top \t{P} ( \ha{1}s+ \hb{1}a)\dt\r),
\end{equation}
with 
\[
\hat{\beta} = \frac{1}{\dt\gamma} - \frac1\dt, \quad \hat{Q} = \frac{1}{\dt\gamma}\t{Q}, \quad \hat{P} = \frac{1}{\dt\gamma}\t{P}.
\]
When $\hat{R}$ is negative definite, then the optimal policy induced by the RHS of  \eqref{lqr-rl-hjb} is 
\[
\h{R}a^* + \hb{1}^\top\t{P}s +\hb{1}^\top\t{P} ( \ha{1}s+ \hb{1}a^*)\dt = 0, \quad \t{\pi}^*(s) = - (\h{R}+ \hb{1}^\top\t{P}\hb{1}\dt)^{-1}\l(\hb{1}^\top\t{P} +\hb{1}^\top\t{P} \ha{1}\dt\r)s.
\]
Inserting it back to \eqref{lqr-rl-hjb} gives the Riccati equation for $\t{P}$, 
\[
\hat{\beta} \t{P} =  - \l(\hb{1}^\top\t{P} +\hb{1}^\top\t{P} \ha{1}\dt\r)^\top(\h{R}+ \hb{1}^\top\t{P}\hb{1}\dt)^{-1}\l(\hb{1}^\top\t{P} +\hb{1}^\top\t{P} \ha{1}\dt\r)  +  \h{Q} + \ha{1}^\top  \t{P}+ \t{P}\ha{1}  + \ha{1}^\top  \t{P} \ha{1}\dt.
\]
Therefore, according to Lemma \ref{lemma:hjb-equivalent-soc}, by replacing $\beta, A, B, C, D, Q, R$ with $\h{\beta}, \ha{1}, \hb{1}, \ha{1}\sqrt{\dt}, \hb{1}\sqrt{\dt}, \h{Q}, \h{R}$,  one comes to the conclusion that the optimal policy derived from \eqref{eq4} is the same as the solution to \eqref{eq:lqr-phibe-soc}. 

For the stochastic case, where $\sigma \neq 0$, one assumes that the optimal solution to the Optimal-BE is in the form of $\h{V}^*(s) = s^\top \t{P}s + b$ with constant $b$. Inserting it into \eqref{rl-lqr} gives,
\[
\begin{aligned}
    &(s^\top \t{P}s + b) = \max_a\l\{s^\top \t{Q} s + a^\top \t{R} a + \gamma \int (s')^\top \t{P} s' p_\dt(s'|s,a)ds' + \gamma b \r\}\\
    &(s^\top \t{P}s + b) = \max_a\l\{s^\top \t{Q}s + a^\top\t{ R} a + \gamma (\t{A}s + \t{B}a)^\top \t{P} (\t{A}s + \t{B}a)\r\} +  \gamma\sigma^2 \tr(\t{P}C_A)\dt  + \gamma b
\end{aligned}
\]
Multiplying $\frac1{\gamma \dt}$ on both sides and then subtract $\frac1\dt s^\top \t{P} s$ on both sides gives, 
\[
\begin{aligned}
&\frac{1}{\gamma\dt}(s^\top \t{P}s + b) = \max_a\l\{s^\top \l(\frac{1}{\gamma\dt}\t{Q}\r) s + a^\top \l(\frac{1}{\gamma\dt}\t{R}\r) a + \frac{1}{\dt}(\t{A}s + \t{B}a)^\top \t{P} (\t{A}s + \t{B}a)\r\} +  \sigma^2 \tr(\t{P}C_A) +  \frac{b}{\dt}\\
    &\h{\beta}(s^\top \t{P}s + b) = \max_a\l\{(s^\top \h{Q} s + a^\top \h{R} a) + \frac{1}{\dt}(\t{A}s + \t{B}a)^\top \t{P} (\t{A}s + \t{B}a) -\frac1\dt s^\top \t{P} s \r\} +\sigma^2 \tr(\t{P}C_A)
\end{aligned}
\]
Based on the same equality as \eqref{eq6}, one has
\[
\begin{aligned}
    &\hat{\beta} s^\top \t{P} s= \max_a\l( s^\top \h{Q} s + a^\top \h{R}   a + ( \ha{1}s+ \hb{1}a) \cdot (2 \t{P}s) + ( \ha{1}s+ \hb{1}a)^\top \t{P} ( \ha{1}s+ \hb{1}a)\dt\r),\\
    & \hat{\beta}b = \sigma^2\tr(\t{P}C_A) , 
\end{aligned}
\]
Note the optimal policy $\t{\pi}^*(s)$ is driven by the RHS of equation for $\t{P}$, which is the same as \eqref{lqr-rl-hjb}. This implies that the Optimal-BE solution \eqref{rl-lqr} is the same for different $\sigma$. Therefore, based on Lemma \ref{lemma:hjb-equivalent-soc}, one completes the proof for Optimal-BE part of the Lemma.

{\bf PhiBE approximation.}
First note that given $s_0 = s$ and $a_\tau = a$ for $\tau\in[0,i\dt)$, one has
\[
\E[s_{j\dt}] = e^{Aj\dt}s + A^{-1}(e^{Aj\dt}-I)Ba.
\] 
By the definition of $\h{b}_i$ in \eqref{def of bsig}, one has
\[
\hat{b}_i(s,a) = \frac1\dt\sum_{j=1}^i \coef{i}_j(e^{Aj\dt} - I)s + \frac1\dt\sum_{j=1}^i \coef{i}_j A^{-1}(e^{Aj\dt}-I)Ba = \ha{i} s + \hb{i} a
\]
where $\ha{i}, \hb{i}$ are defined in \eqref{def of ha hb}. 
This implies that the Optimal-PhiBE solves the following deterministic LQR problem, 
\[
\begin{aligned}
    V_i^*(s) = &\max_{a_t = \pi(s_t)} \E\l[\int_0^\infty e^{-\beta t} \l(s_t^\top Q s_t + a_t^\top R a_t\r) dt  | s_0 = s \r]\\
    s.t. \quad &ds_t = (\hat{A}_is_t + \hat{B}_ia_t )dt
\end{aligned}
\]
According to Lemma \ref{lemma:hjb-equivalent-soc}, the optimal control for the stochastic LQR with $C = D = 0$ and $\sigma \neq 0$ is the same as the deterministic LQR problem.  Therefore, one completes the proof for the second part of the Lemma on the Optimal-PhiBE. 
\end{proof}

\subsection{Proof of Theorem \ref{thm:lqr-error-1d}}\label{proof of thm:lqr-error-1d}
Before the proof of Theorem \ref{thm:lqr-error-1d}, we need a Lemma to characterize the difference between $(\ha{i}, \hb{i})$ and $(A,B)$. Define
\begin{equation}\label{def of ea eb}
    \ea{i} = \ha{i} - A, \quad \eb{i} = \hb{i} - B.
\end{equation}
 we give an upper bound for $\ea{i},\eb{i}$ in the following lemma.
\begin{lemma}\label{lemma: bd for ea eb}
For $\dt$ sufficiently small, s.t. $\exp(\ll A \rl i\dt) \leq C(\ll A \rl i\dt + 1)$, one can bound the spectrum norm of $\ea{i}$ and $\eb{i}$ by
    \[
    \ll \ea{i} \rl \leq \hci\ll A\rl^{i+1}\dt^i+ C\hci\ll A\rl^{2+1}\dt^{i+1}, \quad \ll \eb{i} \rl \leq \hci\ll  A^{-1} \rl\ll A \rl^{i+1}\ll B \rl \dt^{i}+ C\hci\ll  A^{-1} \rl\ll A \rl^{i+2}\ll B \rl \dt^{i+1},
    \]
    where $\hat{C}_{i}$ is the constant defined in (\ref{def of hci}).
\end{lemma}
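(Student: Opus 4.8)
The plan is to Taylor-expand the matrix exponentials in the definition \eqref{def of ha hb} of $\ha{i}$ and exploit the moment-matching identities \eqref{def of a}, which are engineered precisely so that $\sum_{j=1}^i \coef{i}_j j^k$ equals $1$ when $k=1$ and $0$ when $2\le k\le i$; this is what forces all terms of order below $\dt^i$ to cancel.

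First I would write, for each $1\le j\le i$, the expansion $e^{Aj\dt}-I=\sum_{k=1}^{i+1}\frac{A^k(j\dt)^k}{k!}+R_{i+1,j}$, where the remainder satisfies $\ll R_{i+1,j}\rl\le \frac{(\ll A\rl j\dt)^{i+2}}{(i+2)!}e^{\ll A\rl j\dt}$ by the elementary tail bound $\sum_{k\ge m+1}x^k/k!\le \frac{x^{m+1}}{(m+1)!}e^x$ applied to the (absolutely convergent) exponential series. Substituting into $\ha{i}=\frac1\dt\sum_{j=1}^i\coef{i}_j(e^{Aj\dt}-I)$ and summing over $j$, the identity \eqref{def of a} collapses the $k=1$ contribution to $A$ and annihilates $k=2,\dots,i$, leaving
\[
\ea{i}=\ha{i}-A=\frac{A^{i+1}\dt^{i}}{(i+1)!}\Big(\sum_{j=1}^i\coef{i}_j j^{i+1}\Big)+\frac1\dt\sum_{j=1}^i\coef{i}_jR_{i+1,j}.
\]
Taking spectral norms, the first term is at most $\frac{\ll A\rl^{i+1}\dt^i}{(i+1)!}\sum_j|\coef{i}_j|j^{i+1}=\hci\ll A\rl^{i+1}\dt^i$ by the definition \eqref{def of hci}. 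For the remainder I would use $j\le i$ to bound $\sum_j|\coef{i}_j|j^{i+2}\le i\sum_j|\coef{i}_j|j^{i+1}$ together with $e^{\ll A\rl j\dt}\le e^{\ll A\rl i\dt}$, obtaining $\frac1\dt\sum_j|\coef{i}_j|\frac{(\ll A\rl j\dt)^{i+2}}{(i+2)!}e^{\ll A\rl j\dt}\le \tfrac{i}{i+2}\hci\ll A\rl^{i+2}\dt^{i+1}e^{\ll A\rl i\dt}$, and then invoke the standing hypothesis $e^{\ll A\rl i\dt}\le C(\ll A\rl i\dt+1)$ (valid once $\dt$ is small), after which the residual $O(\dt^{i+2})$ piece is absorbed by enlarging $C$ for $\dt$ small. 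This yields the asserted bound for $\ll \ea{i}\rl$.

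Finally, for $\eb{i}$ I would use the algebraic identity $\hb{i}=A^{-1}\ha{i}B=A^{-1}(A+\ea{i})B=B+A^{-1}\ea{i}B$, so that $\eb{i}=A^{-1}\ea{i}B$, and submultiplicativity of the spectral norm gives $\ll\eb{i}\rl\le\ll A^{-1}\rl\,\ll\ea{i}\rl\,\ll B\rl$; substituting the bound just obtained for $\ll\ea{i}\rl$ produces the claimed estimate verbatim. I expect the only real care needed is the bookkeeping of the Taylor remainder: one must expand to order $i+1$ (not just $i$) so that the leading constant comes out to be exactly $\hci$ rather than a generic multiple of it, and one must justify the termwise tail estimate for the matrix-exponential series — both routine once the cancellation structure coming from \eqref{def of a} has been set up.
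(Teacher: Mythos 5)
Your proposal is correct and takes essentially the same route as the paper: a Taylor expansion of $e^{Aj\dt}$ whose low-order terms are annihilated by the moment identities defining $\coef{i}_j$, a tail/remainder bound carrying the factor $e^{\ll A\rl i\dt}$ that is then controlled by the standing smallness hypothesis, and the identity $\eb{i}=A^{-1}\ea{i}B$ for the second estimate. The only (harmless) difference is that you expand to order $i+1$ so the leading $\hci\ll A\rl^{i+1}\dt^i$ term appears exactly, whereas the paper expands to order $i$ and splits the exponential factor afterward.
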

\begin{proof}
    First note that 
    \[
    \h{A}_i =  \frac1\dt\sum_{j=1}^i \coef{i}_j(e^{Aj\dt} - I) =  \frac1\dt\sum_{j=1}^i \coef{i}_j\l(\sum_{k=1}^i \frac{1}{k!}(Aj\dt)^k + R_{ij}\r)
    \]
    where 
    \[
    R_{ij} = e^{Aj\dt} - \sum_{k=0}^i \frac{1}{k!}(Aj\dt)^k = \frac{A^{i+1}(j\dt)^{i+1}}{(i+1)!}e^{A\xi}, \quad \text{for }\xi\in[0,j\dt),\quad 
    \]
    and therefore
    \[
    \ll R_{ij}  \rl \leq \frac{\ll A \rl^{i+1} (j\dt)^{i+1} }{(i+1)!}e^{\ll A\rl j\dt}\leq   \frac{\ll A \rl^{i+1} (j\dt)^{i+1} }{(i+1)!}D_A ,\quad \text{with}\quad D_A = e^{\ll A \rl i\dt}.
    \]
    By the definition of $\coef{i}_j$, one has
    \[
    \h{A}_i  =  \sum_{k=1}^i\frac{1}{k!}A^k\dt^{k-1} \l(\sum_{j=1}^i \coef{i}_j j^k\r)+  \frac1\dt\sum_{j=1}^i \coef{i}_jR_{ij} = A + \frac1\dt\sum_{j=1}^i \coef{i}_jR_{ij},
    \]
    which leads to
    \begin{equation}\label{diff of A Ahat}
        \ll \h{A}_i - A \rl \leq \frac1\dt\sum_{j=1}^i \lv \coef{i}_j \rv \ll R_{ij} \rl \leq   \frac{\sum_{j=1}^i \lv \coef{i}_j \rv j^{i+1}}{(i+1)!}\ll A \rl^{i+1} \dt^i  D_A = D_A\hci\ll A \rl^{i+1} \dt^i ,
    \end{equation}
    where $\hci$ is defined in \eqref{def of hci}.
    Similarly, one has, 
    \[
    \begin{aligned}
        \hat{B}_i = &\frac1\dt\sum_{j=1}^i \coef{i}_j \l(\sum_{k=1}^i \frac{1}{k!}A^{k-1}j^k\dt^k + A^{-1}R_{ij}\r)B \\
        = &\sum_{k=1}^i \frac{1}{k!}A^{k-1}\dt^{k-1}\l(\sum_{j=1}^i \coef{i}_j j^k\r) B + \frac1\dt \sum_{j=1}^i \coef{i}_j A^{-1}R_{ij} B = B + \frac1\dt \sum_{j=1}^i \coef{i}_j A^{-1}R_{ij} B
    \end{aligned}
    \]
    where the last equality is due to the definition of $\coef{i}_j$ in \eqref{def of a}. Therefore, one has
    \begin{equation}\label{diff of B Bhat}
    \begin{aligned}
        \ll \hat{B}_i - B \rl \leq \frac1\dt \sum_{j=1}^i \lv \coef{i}_j \rv \ll  A^{-1} \rl \ll R_{ij}  \rl \ll B \rl  \leq D_A\hat{C}_i\ll  A^{-1} \rl\ll A \rl^{i+1}\ll B \rl \dt^{i}.
    \end{aligned}
    \end{equation}
    When $\dt$ is sufficiently small, s.t. $D_A \leq C(\ll A \rl i\dt + 1)$, one ends up the inequality in the Lemma.

\end{proof}

Now we are ready to prove Theorem \ref{thm:lqr-error-1d}.
\begin{proof}
{\bf RL approximation. } According to Lemma \ref{lemma:hjb-equivalent-soc} and Theorem \ref{thm: approx-lqr}, the optimal policy from the Optimal-BE is $\t{\pi}^*(s) = \t{K}s$, with $\t{K}$ defined as 
\[
    \t{K} = - (R + \hb{1}^\top \t{P} \hb{1}\dt)^{-1}(\hb{1}^\top \t{P} + \hb{1}^\top \t{P} \ha{1}\dt).
\] and $\t{P}$ satisfying
\begin{equation}\label{lqr-rl-solu}
    (\ha{1}-\beta/2)^\top \t{P} + \t{P} (\ha{1}-\beta/2) - (\t{P} \hb{1} + \ha{1}^\top \t{P} \hb{1}\dt) (R + \hb{1}^\top \t{P} \hb{1}\dt)^{-1} (\hb{1}^\top \t{P} + \hb{1}^\top \t{P} \ha{1}\dt) + Q + \ha{1}^\top \t{P} \ha{1}\dt = 0.
\end{equation}
In one-dimensional case, one can equivelently write the above equation as
\[
 -(1+\beta\dt)\hb{1}^2\t{P}^2 + ((2\ha{1}-\beta)R + Q\hb{1}^2\dt + \ha{1}^2R\dt)\t{P}  + QR = 0. 
\]
Since the coefficient for $\t{P}^2$ is negative and the constant is positive, therefore there always exists two solutions and only one of them is negative. 
By replacing $\t{P}$ by 
\[
\t{P} = -(\hb{1} + \hb{1}^2\t{K}\dt+ \ha{1}\hb{1}  \dt)^{-1}R\t{K}  
\]
in \eqref{lqr-rl-solu}, one can rewrite the equation in terms of $\t{K}$,
\[
- (1 + \ha{1} \dt)\t{K}^2 +\l[ -\frac{2\ha{1}}{\hb{1}}+\frac{\beta}{\hb{1}} + \frac{Q}{R}\hb{1}\dt - \frac{\ha{1}^2}{\hb{1}}\dt\r] \t{K}  + \l[\frac{Q}{R} + \frac{Q}{R}\ha{1} \dt\r] = 0
\]
Since $\frac{\ha{1}}{\hb{1}} = \frac{A}{B}$,  one can equivalently write the above equation as
\[
- (B + \e_2)\t{K}^2 +\l[ -2A+\beta  + \e_1\r] \t{K}  + \l[\frac{QB}{R} + \e_0\r] = 0
\]
with
\[
\e_2 = B\ha{1} \dt, \quad \e_1 = \beta\l(\frac{B}{\hb{1}} - 1\r) + \frac{Q}{R}\hb{1}B\dt - \frac{\ha{1}^2B}{\hb{1}}\dt, \quad \e_0 = \frac{QB}{R}\ha{1} \dt.
\]
Comparing it to the optimal policy from the true dynamics
\[
- B K^2 +\l(-2A +\beta \r)K + \frac{QB}{R} = 0,
\]
one can write the difference $|K - \t{K}|$ in terms of $a = -B, b = -2A + \beta, c = QB/R$ and $\e_i, i = 1,2,3$,
\[
\begin{aligned}
    &|K - \t{K}| = \lv \frac{-b + \sqrt{b^2 - 4ac} }{2a} - \frac{-(b+\e_1) + \sqrt{(b+\e_1)^2 - 4(a+\e_2)(c+\e_0)} }{2(a+\e_2)}  \rv\\
    =& \frac1{2|a|}\lv\l(-1+\frac{b}{\sqrt{b^2-4ac}}\r)\e_1 - \frac{2a}{\sqrt{b^2-4ac}}\e_0+\l( -\frac{2c}{\sqrt{b^2-4ac}}  + \frac{-b + \sqrt{b^2-4ac}}{a}\r)\e_2 \rv + O\l(\sum_j\e_j^2\r)\\
    \lesssim & \frac1{|a|}\lv |\e_1| +  \frac{|a|}{\sqrt{|ac|}}|\e_0|+\l( \frac{|c|}{\sqrt{|ac|}} + \frac{|b|+\sqrt{|ac|}}{|a|}\r)|\e_2| \rv + O\l(\sum_j\e_j^2\r)
        \lesssim & \frac1{|B|}\l(|Q/RB^2-A^2| + |AB|\sqrt{Q/R}+ |A - \beta/2| |A|\r)\dt + O(\dt^2)
    \lesssim  \l[|B|\l(\sqrt{\frac{Q}{R}} + \frac{|A|}{|B|}\r)^2+ |A - \beta/2| \frac{|A|}{|B|}\r]\dt + O(\dt^2).
\end{aligned}
\]
Here, the first equality is obtained by taking Taylor expansion of $\t{K}$ around $\e_j = 0$. The first inequality uses the fact that $|b|, 2\sqrt{|ac|}\leq \sqrt{b^2 - 4ac} \lesssim |b| + \sqrt{|ac|}$ since $ac-\tfrac{-B^2 Q}{R}<0$. The second inequality is obtained by the fact that $\ha{1} = A + O(\dt), \hb{1} =B + O(\dt)$.

{\bf PhiBE Approximation. }
Since PhiBE can also be viewed as an LQR with approxiamted dynamics $\ha{i}, \hb{i}$. Therefore, according to Proposition \ref{prop:true}, the optimal control is $\pihatstar_i(s) = \hk{i}s$ with
\begin{equation}\label{def of hk}
    \hk{i} =\frac{\beta/2}{\h{B}_i} -\frac{\hat{A}_i}{\h{B}_i} + \sqrt{\l(\frac{\beta/2}{\h{B}_i} - \frac{\h{A}_i}{\h{B}_i}\r)^2 + \frac{Q}{R}}.
\end{equation}
By the definition of $\ha{i}, \hb{i}$ in \eqref{def of ha hb}, one has
\[
\frac{\hat{A}_i}{\h{B}_i} = \frac{A}{B}.
\]
Inserting the above equality into \eqref{def of hk} and setting $\beta = 0$, one has,
\[
\h{K}_i =-\frac{A}{B} + \sqrt{\l(- \frac{A}{B}\r)^2 + \frac{Q}{R}} = K,
\]
which gives the equality in \eqref{phibe-beta-0}.

For the case where $\beta > 0$, by letting 
\[
\hb{i} = B+\eb{i}, \quad D = \frac{A}{B},
\]
one has
\begin{equation}\label{ineq_9}
    \hk{i} =\frac{\beta/2}{B+\eb{i}} - D + \sqrt{\l(\frac{\beta/2}{B+\eb{i}} - D\r)^2 + \frac{Q}{R}}.
\end{equation}
By Taylor expansion and the the fact that $\epsilon^{B}_{i}=O(\Delta t ^{i})$ from Lemma \ref{lemma: bd for ea eb},
\[
\frac{\beta/2}{B+\eb{i}} = \frac{\beta/2}{B}+ \frac{\beta/2}{B^2}\eb{i} + O(\dt^{2i}),\quad \sqrt{(a+c)^2 + d} = \sqrt{  a^2+d     } +\frac{a}{\sqrt{a^2}+d}c + O(c^2).
\]
Then, one can equivalently write \eqref{ineq_9} as
\[
    \hk{i} =\frac{\beta/2}{B}  - D + \sqrt{\l(\frac{\beta/2}{B} - D\r)^2 + \frac{Q}{R}} -\frac{\beta/2}{B^2}\eb{i} + \frac{|\frac{\beta/2}{B} - D|}{\sqrt{\l(\frac{\beta/2}{B} - D\r)^2 + \frac{Q}{R}}} \frac{\beta/2}{B^2}\eb{i} + O(\dt^{2i}),
\]
which implies
\[
|\hk{i} - K| \leq \frac{\beta/2}{B^2}|\eb{i}|\l(1 + \frac{|\frac{\beta/2}{B} - D|}{\sqrt{\l(\frac{\beta/2}{B} - D\r)^2 + \frac{Q}{R}}}\r)+ O(\dt^{2i}) \leq \frac{\beta}{B^2}|\eb{i}|+ O(\dt^{2i}).
\]
By the bound given in Lemma \ref{lemma: bd for ea eb}, one has
\[
|\hk{i} - K| \leq  \frac{\beta \hci}{|B|}|A|^i \dt^i+ O(\dt^{i+1}).
\]
Note that the optimal solution $\h{P}_i$ satisfies
\[
- B^2P^2   + (2A - \beta)R P  + QR  = 0
\]
with negative coefficient for the quadratic term and positive constant, so the well-posedness can also be guaranteed.
\end{proof}

\subsection{Proof of Lemma \ref{phibe-wellposedness}}\label{proof of lemma phibe-wellposedness}
\begin{proof}
We first prove the case where $\beta = 0$. 
    Note that one can equivalently write $\ha{i}, \hb{i}$ in terms of
    \[
    \ha{i} = A + \sum_{i=2}^\infty a_iA^i, \quad \hb{i} = B + \sum_{i=2}^\infty a_iA^{i-1}B.
    \]
    If $\lam$ is the eigenvalue of $A$, then 
    \begin{equation}\label{eq lam}
        \h{\lam} = \lam + \sum_{i=2}^\infty a_i\lam^i
    \end{equation}
    is the eigenvalue of $\ha{i}$. In order to prove $(\ha{i},\hb{i})$ is stablizable, it is equivalent to prove that for Re$(\h{\lam})<0$, 
    \[
    \text{rank}[\ha{i} -\h{\lam}, \hb{i} ] = d
    \]
    This is equivalent to prove that the span of the column of $\hb{i} $ covers the null space of $\ha{i} -\h{\lam}$. Note that the null space of $A - \lam I$ is the same as the null space of $\ha{i} -\h{\lam}$, so we only need to prove that the column of $\hb{i} $ covers the null space of $A - \lam I$.

    By the definition of $\ha{i}, \hb{i}$, one has,
    \begin{equation}\label{ineq_16}
        \ll \hat{\lam} - \lam \rl \leq O(\dt^i)
    \end{equation}
    which imples that if Re$(\hat{\lam}) < 0$, then Re$(\lam) < 0$ for sufficiently small $\dt$. Since $(A, B)$ is stablizable, which implies that the span of the column of $B$ covers the null space of $A - \lam$ for Re$(\lam) < 0$, i.e., for $\forall v$ that satisfying $(A-\lam)v = 0$, there exists a constant vector $c\in\R^d$, such that
    \[
    v= B c. 
    \]
    This leads to
    \[
    \hb{i} c  =Bc + \sum_{i=2}^\infty a_iA^{i-1}Bc = v + \sum_{i=2}^\infty a_iA^{i-1}v = \frac{v}\lam \h{\lam} ,
    \]
    which implies that any vector $v$ in the null space of $A - \lam$, there exists a constant vector $\frac{\lam}{\h{\lam}}c$ s.t.
    \[
    \hb{i} \l[\frac{\lam}{\h{\lam}}c\r] = v
    \]
    which completes the proof for the first part. 

    To prove $(\ha{i}, Q)$ is detectable, one needs to prove that if the eigenvector $v$ of $\ha{i}$ such that $Qv = 0$, then the corresponding eigenvalue $\h{\lam}$ of $\ha{i}$ needs to have negative real part. Since $A$ and $\ha{i}$ has the same eigenvector, and because $(A, Q)$ is  detectable, which implies that for this eigenvector $v$, the corresponding eigenvalue $\lam$ of $A$ has negative eigenvalue. Since the difference between $\hat{\lam}$ and $\lam$ are small according to \eqref{ineq_16} when $\dt$ is sufficiently small, which implies that $\h{\lam}$ also have negative real part.

The above arguements all hold when $\beta \neq 0$, which completest the proof for this Lemma. 
\end{proof}

\subsection{Proof of Theorem \ref{thm:lqr-error-multid}}\label{proof of thm:lqr-error-multid}
\begin{proof}
Another equivalent condition that choosing the correct solution to the Riccati equaiton is the unique $P$ such that all the real part of the eigenvalues of 
\begin{equation}\label{lqr-true-cond}
A - B R^{-1}B^\top P - \beta/2
\end{equation}
are negative.

First one notes that by setting $M = PB$ and using the definition of $P$ in Proposition \ref{prop:true}, one can rewrite the definition of $K$ in terms of $M$
\begin{equation}\label{def of K}
    K = -R^{-1}M, \quad \text{with}\quad \beta M B^{-1} = Q - M R^{-1} M^\top + D^\top M^\top + MD, \quad D = B^{-1}A.
\end{equation}
and the condition \eqref{lqr-true-cond} for $P$ can be rewritten as \begin{equation}\label{ineq_11}     \text{the eigenvalue of } B(D-R^{-1}M^\top)-\frac\beta2 \text{ are all negative}. \end{equation}
Since PhiBE can also be viewed as an LQR with approximated dynamics $\ha{i}, \hb{i}$ and based on the fact that 
\[
\h{D} = \hb{i}^{-1}\ha{i} = {B}^{-1}A = D.
\]
where $\ha{i}, \hb{i}$ are defined in \eqref{def of ha hb}, the optimal control under PhiBE approximation is $\pihatstar_i(s) = \hk{i}s$ with
\begin{equation}\label{def of hk-2}
    \hk{i} = -R^{-1}\hm, \quad \text{with}\quad \beta  \hm B^{-1} = Q - \hm R^{-1} \hm^\top + D^\top \hm^\top + \hm D, \quad D = \hb{i}^{-1}\ha{i} = B^{-1}A. 
\end{equation}
and the condition for $\hm$ is \begin{equation}\label{ineq_12}    \text{the eigenvalue of } \hb{i}(D-R^{-1}\hm^\top)-\frac\beta2 \text{ are all negative}. \end{equation}
By the wellposedness assumption in Lemma \ref{phibe-wellposedness}, there exists a unique solution $M$ and $\hm$.

For $\beta = 0$, we only need to prove that the unique solution $K$ to \eqref{def of K} - \eqref{ineq_11} also satisfies \eqref{def of hk-2} - \eqref{ineq_12} for sufficiently small $\dt$. Note that by setting $\beta = 0$ for both \eqref{def of K} and \eqref{def of hk-2}, both $M$ and $\hat{M}$ satisfy the same quadratic equation
\begin{equation}\label{def of M}
    M R^{-1} M^\top  -  D^\top M^\top - MD - Q =0. 
\end{equation}
The only difference between $K$ and $\hk{i}$ is the condition that one requires
\begin{equation}\label{phibe-cond-beta-0}
    \text{the eigenvalue of } C = B(D - R^{-1} M^\top), \quad C_i = \hb{i}(D-R^{-1}\hm^\top)\text{ are all negative}.
\end{equation}
By the definition of $\hb{i}$ and Lemma \ref{lemma: bd for ea eb}, one has
\[
\ll \lam(C) - \lam(C_i)\rl \leq \kappa(C)\ll C - C_i \rl \leq \kappa(C)\ll \eb{i}(D-R^{-1}M^\top) \rl \leq \kappa(C) \hci\ll A\rl^i\ll B\rl\ll(D-R^{-1}M^\top) \rl O(\dt^i).
\]
Since all the real part of the eigenvalues of $C$ are negative, which implies that $\kappa(C)<\infty$, and combine with the fact that $\ll M\rl <\infty$, which implies that as long as $\dt$ is sufficiently small, then all the eigenvalues of $C_i$ is sufficiently close to $C$. Therefore, all the eigenvalues of $C_i$ are all negative. This means that $K$ satisfies \eqref{def of hk-2} - \eqref{ineq_12}, and it completes the proof for $\beta = 0$.

Now for $\beta \neq 0$, using the fact that 
\[
MB^{-1} = (MB^{-1})^\top = P, \quad \hm\hb{i}^{-1} = (\hm\hb{i}^{-1})^\top = \h{P}_i
\]
one can rewrite the two equations for $M, \hm$ by
\[
\begin{aligned}
    &- M R^{-1} M^\top + (D+\frac\beta{2}B^{-1})^\top M^\top + M(D+\frac{1}{2}B^{-1}) + Q = 0, \quad\\
    &- \hm R^{-1} \hm^\top + (D+\frac\beta{2}\hb{i}^{-1})^\top \hm^\top + \hm(D+\frac{1}{2}\hb{i}^{-1}) + Q = 0
\end{aligned}
\]
There are many perturbation theorem that provides the error bound for $\ll M - \hm \rl$, here we adopt the bound provided in [Theorem 3.1 of \cite{sun1998perturbation}], one can bound 
\[
\ll M - \hm \rl \lesssim p\beta \ll B^{-1} - \hb{i}^{-1} \rl + O\l(\ll B^{-1} - \hb{i}^{-1} \rl^2\r)
\]
where $p$ is a constant depends on $D, B, R, M$ and can be bounded by
\begin{equation}\label{def of p}
    p \leq  \ll M \rl \ll T^{-1}\rl
\end{equation}
with 
\[
T = I_d \otimes [(D+\frac\beta2 B^{-1}) - R^{-1} M]^\top + [(D+\frac\beta2 B^{-1}) - R^{-1} M]^\top \otimes  I_d .
\]
Now we bound $\ll B^{-1} - \hb{i}^{-1} \rl$. By Taylor expansion, one has
\[
\hb{i}^{-1} = (B+\eb{i})^{-1} = B^{-1}\sum_{k=0}^\infty (-B^{-1}\eb{i})^k
\]
which leads to
\[
\begin{aligned}
    &\ll B^{-1} - \hb{i}^{-1} \rl = \ll B^{-1} \rl \ll I - \sum_{k=0}^\infty (-B^{-1}\eb{i})^k \rl \leq \ll B^{-1} \rl \l[ \ll B^{-1} \rl \ll \eb{i} \rl + O(\ll \eb{i} \rl^2) \r] \\
    =& \hci \ll A^{-1}\rl\ll A \rl^{i+1} \ll B \rl \ll B^{-1} \rl^2 \dt^i + O(\dt^{i+1})
\end{aligned}
\]

\end{proof}
\section{Algorithms}\label{appen:algo}
\begin{algorithm}
    \caption{\textsc{PhiBE\_Policy\_Evaluation}($\Delta t,\beta, B, \Phi$, $i$) - i-th order Policy evaluation}
    \label{algo:policy_eval}
    \begin{algorithmic}[1]

        \State \textbf{Input:} discrete time step $\Delta t$, discount coefficient $\beta$, discrete-time trajectory data $B=\{(s^l_{j\Delta t}, r^{l}_{j\Delta t})_{j=0}^{m}\}_{l=1}^{I}$ generated by applying corresponding policy $\pi$, finite bases $\Phi(s)=(\phi_1(s),...,\phi_n(s))^{\top}$ and the order of the method $i$.
        \State \textbf{Output:} Value function $\hat{V}^{\pi}$.

        \State Compute
        $$A_i = \sum_{l=1}^{I}\sum_{j=0}^{m-i}\Phi(s^l_{j\Delta t})\Big[\beta \Phi(s^l_{j\Delta t})-\hat{b}_i(s^l_{j\Delta t})\cdot \nabla \Phi(s^l_{j\Delta t})-\frac{1}{2}\hat{\Sigma}_{i}(s^l_{j\Delta t}):\nabla^2\Phi(s^l_{j\Delta t})\Big]^{\top},$$
        where
        $$\hat{b}_i(s^l_{j\Delta t})=\sum_{k=1}^i \coef{i}_k (s^{l}_{(j+k) \Delta t} - s_{j \Delta t})\ \text{and\ } \hat{\Sigma}_{i}(s^l_{j\Delta t})=\frac{1}{\Delta t} \sum_{k=1}^i \coef{i}_k (s^l_{(j+k) \Delta t} - s_{j \Delta t})(s^l_{(j+k) \Delta t} - s_{j \Delta t})^\top$$
        with $a_k^i$ defined in (\ref{def of A b}).
        \State Compute
         $$b_i=\sum_{l=1}^{I}\sum_{j=0}^{m-i} r^{l}_{j\Delta t}\Phi(s^{l}_{j\Delta t}).$$
        \State Compute 
        $$\theta = A_i^{-1}b_i.$$
        \Return $\hat{V}^{\pi}(s)=\theta^{\top}\Phi(s)$.
    
    \end{algorithmic}
\end{algorithm}
\begin{algorithm}
\caption{\textsc{q\_Gradient\_descent\_phibe}($\Delta t,\beta, B, \Psi$, $\hat{V}^{\pi}, \omega_0, \alpha, i$) - i-th order Gradient descent for $\hat{q}^{\pi}$}
\label{algo:GD}
    \begin{algorithmic}[1]

        \State \textbf{Input:} discrete time step $\Delta t$, discount coefficient $\beta$, discrete-time trajectory data $B=\{(s^l_{j\Delta t}, a^l_{j\Delta t}, r^{l}_{j\Delta t})_{j=0}^{m}\}_{l=1}^{I}$, finite bases $\Psi(s, a)=(\psi_1(s, a),...,\psi_n(s, a))^{\top}$, approximated value function $\hat{V}^{\pi}$, initial coefficient with respect to the finite bases $\omega_0$, step size of the gradient descent $\alpha$, and the order of the method $i$.
        \State \textbf{Output:} Continuous q-function for policy $\pi$.

        \State Initialize $\omega = \omega_0$.

        \While{\textbf{not} \textsc{$\mathit{Stopping\ Criterion\ Satisfied}$}}  
            \State Compute $\nabla \hat{V}^{\pi}(s)$, $\nabla^{2}\hat{V}^{\pi}(s)$, and $q^{\pi}_{\omega}(s, a) = \omega^{\top}\Psi(s, a)$ as functions using their representation under the bases. 
            \State Compute approximate gradient
            \[
            \hat{F}(\omega) = \frac{1}{|B|} \sum_{l=1}^{I} \sum_{j=0}^{m-i} \Big[r^{l}_{j\Delta t} + \hat{b}_i(s^{l}_{j\Delta t}, a^{l}_{j\Delta t}) \cdot \nabla \hat{V}^{\pi}(s^{l}_{j\Delta t}) + \tfrac{1}{2}\hat{\Sigma}_i(s^{l}_{j\Delta t}, a^{l}_{j\Delta t}) : \nabla^2 \hat{V}^{\pi}(s^{l}_{j\Delta t}) - q_{\omega}^{\pi}(s^{l}_{j\Delta t}, a^{l}_{j\Delta t})\Big] \left(-\Psi(s^{l}_{j\Delta t}, a^{l}_{j\Delta t})\right),
            \]
            where
        $$\hat{b}_i(s^l_{j\Delta t}, a^l_{j\Delta t})=\sum_{k=1}^i \coef{i}_k (s^{l}_{(j+k) \Delta t} - s_{j \Delta t})\ \text{and\ } \hat{\Sigma}_{r}(s^l_{j\Delta t}, a^l_{j\Delta t})=\frac{1}{\Delta t} \sum_{k=1}^i \coef{i}_k(s^l_{(j+k) \Delta t} - s_{j \Delta t})(s^l_{(j+k) \Delta t} - s_{j \Delta t})^\top$$
        with $\coef{i}_k$ defined in (\ref{def of A b}).

            \State Update the coefficient vector:
            \[
            \omega \leftarrow \omega - \alpha \hat{F}(\omega).
            \]
        \EndWhile\\
        \Return $\hat{q}^{\pi}(s, a)=\omega^{\top}\Psi(s, a)$.
    
    \end{algorithmic}
\end{algorithm}
\begin{algorithm}
    \caption{\textsc{Optimal\_BE\_Policy\_Evaluation}($\Delta t,\beta, B, \Phi$) - Optimal\_BE Policy evaluation method}
    \label{algo:policy_eval_RL}
    \begin{algorithmic}[1]

        \State \textbf{Input:} discrete time step $\Delta t$, discount coefficient $\beta$, discrete-time trajectory data $B=\{(s^l_{j\Delta t}, r^{l}_{j\Delta t})_{j=0}^{m}\}_{l=1}^{I}$ generated by applying corresponding policy $\pi$ and finite bases $\Phi(s)=(\phi_1(s),...,\phi_n(s))^{\top}$.
        \State \textbf{Output:} Value function $\hat{V}^{\pi}$.

        \State Compute
        $$A = \sum_{l=1}^{I}\sum_{j=0}^{m-1}\Phi(s^l_{j\Delta t})\Big[\Phi(s^l_{j\Delta t}) - e^{-\beta \Delta t}\Phi(s^l_{(j+1)\Delta t})\Big]^{\top}.$$
        \State Compute
         $$b=\sum_{l=1}^{I}\sum_{j=0}^{m-1} r^{l}_{j\Delta t}\Phi(s^{l}_{j\Delta t})\cdot \Delta t.$$
        \State Compute 
        $$\theta = A^{-1}b.$$
        \Return $\hat{V}^{\pi}(s)=\theta^{\top}\Phi(s)$.
    
    \end{algorithmic}
\end{algorithm}
\begin{algorithm}    \caption{\textsc{Optimal\_BE\_Q\_evaluation}($\Delta t,\beta, B, \Psi$) - Optimal\_BE method for $\hat{Q}^{\pi}$}
\label{algo:Q_eval_RL}
    \begin{algorithmic}[1]

        \State \textbf{Input:} discrete time step $\Delta t$, discount coefficient $\beta$, and discrete-time trajectory data $B=\{(s^l_{j\Delta t}, a^l_{j\Delta t}, r^{l}_{j\Delta t})_{j=0}^{m}\}_{l=1}^{I}$ generated by applying random actions at the first discrete time step then following policy $\pi$, finite bases $\Psi(s, a)=(\psi_1(s, a),...,\psi_n(s, a))^{\top}$.
        \State \textbf{Output:} Continuous Q-function for policy $\pi$.

        \State Initialize $w = w_0$.

        \State Compute
        $$A = \sum_{l=1}^{I}\sum_{j=0}^{m-1}\Psi(s^l_{j\Delta t}, a^{l}_{j\Delta t})\Big[\Psi(s^l_{j\Delta t}, a^{l}_{j\Delta t}) - e^{-\beta \Delta t}\Psi(s^l_{(j+1)\Delta t}, a^{l}_{j\Delta t})\Big]^{\top}.$$
        \State Compute
         $$b=\sum_{l=1}^{I}\sum_{j=0}^{m-1} r^{l}_{j\Delta t}\Psi(s^{l}_{j\Delta t},a^{l}_{j\Delta t} )\cdot \Delta t.$$
        \State Compute 
        $$\theta = A^{-1}b.$$
        \Return $\hat{Q}^{\pi}(s, a)=\theta^{\top}\Psi(s, a)$.
    
    \end{algorithmic}
\end{algorithm}
\begin{algorithm}
    \caption{\textsc{Optimal\_BE}($\Delta t,\beta, \Phi, \Psi, \pi_0$) - Optimal\_BE algorithm for finding the optimal policy}\label{algo:RL}

    \begin{algorithmic}[1]

        \State \textbf{Input:} discrete time step $\Delta t$, discount coefficient $\beta$, finite bases for policy evaluation $\Phi(s)=(\phi_1(s),...,\phi_{n'}(s))^{\top}$, finite bases for Q-approximation $\Psi(s, a)=(\psi_1(s, a),...,\psi_n(s, a))^{\top}$, initial policy $\pi_0$.
        \State \textbf{Output:} Optimal policy $\pi^{*}(s)$, optimal value function $V^{\pi^{*}}(s).$

        \State Initialize $\pi(s) = \pi_0(s).$

        \While{\textbf{not} \textsc{$\mathit{Stopping\ Criterion\ Satisfied}$}}  

            \State Generate data $B=\{(s^l_{j\Delta t}, a^l_{j\Delta t}, r^{l}_{j\Delta t})_{j=0}^{m}\}_{l=1}^{I}$ by applying random actions at the first discrete time step and then following policy $\pi$.
            
            \State Call Algorithm \ref{algo:Q_eval_RL} to obtain
            \[
            \hat{Q}^{\pi}(s, a) = \text{\textsc{Optimal\_BE\_Q\_evaluation}($\Delta t,\beta, B, \Psi$)}.
            \]

            \State Update the optimal policy:
            \[
            \pi(s) \leftarrow \argmax_{a} \, \hat{Q}^{\pi}(s, a).
            \]
        \EndWhile
        \State Generate data $B=\{(s^l_{j\Delta t}, r^{l}_{j\Delta t})_{j=0}^{m}\}_{l=1}^{I}$ by applying policy $\pi$.
        \State Call Algorithm \ref{algo:policy_eval_RL} to obtain
        \[
            \hat{V}^{\pi}(s) = \text{\textsc{Optimal\_BE\_Policy\_Evaluation}($\Delta t,\beta, B, \Phi$)}.
            \]\\

        \Return $\pi^{*}(s)=\pi(s)$, $V^{\pi^{*}}(s)=\hat{V}^{\pi}(s).$ 
    
    \end{algorithmic}
\end{algorithm}
\section{Experimental Details}\label{appendix:exp_detail}
\subsection{Problem Parameters for LQR in Subsection \ref{lqr_exp:main}}
For the one-dimensional deterministic case (Figure \ref{fig: lqr_1d_d}), we consider the following four examples:
\begin{itemize}
\item (Case 1) $A=1$, $B=1$, $R=-1$, $Q=-1$, $\sigma=0$, $\beta=0$, and $\Delta t = 2$.
\item (Case 2) $A=1$, $B=0.1$, $R=-1$, $Q=-1$, $\sigma=0$, $\beta=0$, and $\Delta t = 1$.
\item (Case 3) $A=1$, $B=1$, $R=-0.01$, $Q=-100$, $\sigma=0$, $\beta=0$, and $\Delta t = 0.1$.
\item (Case 4) $A=100$, $B=1$, $R=-1$, $Q=-1$, $\sigma=0$, $\beta=0$, and $\Delta t = 0.01$.
\end{itemize}

For the one-dimensional stochastic case (Figure \ref{fig: lqr_1d_s}), we consider the same $A, B, R, Q, \dt$ as the one-dimensional deterministic case, but with different $\beta = 0.01, \sigma = 1$.

And in two-dimensional deterministic case (Figure \ref{fig: lqr_2d_d}), we consider the following four examples:
\begin{itemize}
\item (Case 1) $A=\begin{pmatrix}
    -9.375& -3.125\\
    -3.125&-9.375
\end{pmatrix}$, $B=\begin{pmatrix}
    10& 1\\
    1&10.1
\end{pmatrix}$, $R=\begin{pmatrix}
    -12& -3\\
    -3&-8
\end{pmatrix}$, $Q=\begin{pmatrix}
    -10&- 2\\
    -2&-10.4
\end{pmatrix}$, $\sigma=0$, $\beta=0$, and $\Delta t = 2$.
\item (Case 2) $A=\begin{pmatrix}
    -1.875& -0.625\\
    -0.625&-1.875
\end{pmatrix}$, $B=\begin{pmatrix}
    0.6& 0.2\\
    0.2&0.6
\end{pmatrix}$, $R=\begin{pmatrix}
    -12&- 3\\
    -3&-8
\end{pmatrix}$, $Q=\begin{pmatrix}
    -10&- 2\\
   - 2&-10.4
\end{pmatrix}$, $\sigma=0$, $\beta=0$, and $\Delta t = 1$.
\item (Case 3) $A=\begin{pmatrix}
    -0.9375& -0.3125\\
    -0.3125&-0.9375
\end{pmatrix}$, $B=\begin{pmatrix}
    1& 0.1\\
    0.1&1.01
\end{pmatrix}$, $R=\begin{pmatrix}
   - 9& -8.5\\
   - 8.5&-9
\end{pmatrix}$, $Q=\begin{pmatrix}
   - 106.6667& 93.3333\\
    93.3333&-106.6667
\end{pmatrix}$, $\sigma=0$, $\beta=0$, and $\Delta t = 0.1$.
\item (Case 4) $A=\begin{pmatrix}
    10.6667& -9.3333\\
    -9.3333&10.6667
\end{pmatrix}$, $B=\begin{pmatrix}
    0.9& 0.85\\
    0.85&0.88
\end{pmatrix}$, $R=\begin{pmatrix}
    -12& -3\\
    -3&-8
\end{pmatrix}$, $Q=\begin{pmatrix}
    -10&- 2\\
   - 2&-10.4
\end{pmatrix}$, $\sigma=0$, $\beta=0$, and $\Delta t = 0.01$.
\end{itemize}

Finally in two-dimensional stochastic case (Figure \ref{fig: lqr_2d_s}), we consider the same $A, B, R, Q, \dt$ as the two-dimensional deterministic case, but with different $\beta = 0.01, \sigma = 1$. 
\subsection{Ground Truth Optimal Policies and Value Functions for LQR in Subsection \ref{lqr_exp:main}}
\subsubsection{One-dimensional Deterministic Case}
\begin{itemize}
\item (Case 1) Optimal policy: $\pi(s) = -2.4142 s$; optimal value function: $V^*(s) = -2.4142 s^2$.
\item (Case 2) Optimal policy: $\pi(s) = -20.050 s$; optimal value function: $V^*(s) = -200.50 s^2$.
\item (Case 3) Optimal policy: $\pi(s) = -101.00 s$; optimal value function: $V^*(s) = -1.0100 s^2$.
\item (Case 4) Optimal policy: $\pi(s) = -200.0050 s$; optimal value function: $V^*(s) = -200.0050 s^2$.
\end{itemize}
\subsubsection{One-dimensional Stochastic Case}
\begin{itemize}
\item (Case 1) Optimal policy: $\pi(s) = -2.4057 s$; optimal value function: $V^*(s) = -240.57-2.4057 s^2$.
\item (Case 2) Optimal policy: $\pi(s) = -19.95012 s$; optimal value function: $V^*(s) = -19950.12-199.5012 s^2$.
\item (Case 3) Optimal policy: $\pi(s) = -101.00 s$; optimal value function: $V^*(s) = -101.0000-1.0100 s^2$.
\item (Case 4) Optimal policy: $\pi(s) = -199.9950 s$; optimal value function: $V^*(s) = -19999.50-199.9950 s^2$.
\end{itemize}
\subsubsection{Two-dimensional Deterministic Case}
\begin{itemize}
\item (Case 1) Optimal policy: $\pi(s) = \begin{pmatrix}
    -0.3994& 0.1253\\
    0.1163&-0.5850
\end{pmatrix} s$; optimal value function: $V^*(s) = -0.4462s_1^2-0.4279 s_2^2 +0.0353s_1s_2$.
\item (Case 2) Optimal policy: $\pi(s) = \begin{pmatrix}
    -0.1335& 0.0258\\
    0.0115&-0.2110
\end{pmatrix} s$; optimal value function: $V^*(s) = -2.7454s_1^2-2.8184 s_2^2 +0.8011s_1s_2$.
\item (Case 3) Optimal policy: $\pi(s) = \begin{pmatrix}
    -7.1405& 7.1100\\
    7.1091&-7.1458
\end{pmatrix} s$; optimal value function: $V^*(s) = -4.2047s_1^2-4.2022 s_2^2 +7.3419s_1s_2$.
\item (Case 4) Optimal policy: $\pi(s) = \begin{pmatrix}
    -559.7795& 558.8276\\
    409.0873&-411.9288
\end{pmatrix} s$; optimal value function: $V^*(s) = -98860.0412s_1^2-99273.6599s_2^2 +198104.5098s_1s_2$.
\end{itemize}
\subsubsection{Two-dimensional Stochastic Case}
\begin{itemize}
\item (Case 1) Optimal policy: $\pi(s) = \begin{pmatrix}
    -0.3993& 0.1253\\
    0.1162&-0.5848
\end{pmatrix} s$; optimal value function: $V^*(s) = -87.381-0.44603s_1^2-0.42778 s_2^2 +0.035272s_1s_2$.
\item (Case 2) Optimal policy: $\pi(s) = \begin{pmatrix}
    -0.1330& 0.0256\\
    0.0112&-0.2104
\end{pmatrix} s$; optimal value function: $V^*(s) = -554.7285-2.7371s_1^2-2.8102 s_2^2 +0.7937s_1s_2$.
\item (Case 3) Optimal policy: $\pi(s) = \begin{pmatrix}
    -7.1387& 7.1081\\
    7.1072&-7.1439
\end{pmatrix} s$; optimal value function: $V^*(s) = -840.4699-4.2036s_1^2-4.2011 s_2^2 +7.3400s_1s_2$.
\item (Case 4) Optimal policy: $\pi(s) = \begin{pmatrix}
    -559.6524& 558.7028\\
    408.9503&-411.7854
\end{pmatrix} s$; optimal value function: $V^*(s) =-19807892.6661-98833.0991s_1^2-99245.8275s_2^2 +198049.8014s_1s_2$.
\end{itemize}
\subsection{Algorithm Settings for LQR in Subsection \ref{lqr_exp:main}}
\subsubsection{One-dimensional Deterministic Case}
When collecting trajectory data, entries of actions and initial states are sampled uniformly from the same interval. In each iteration, we collect 96 data points, obtained from 16 trajectories, where data is recorded at times $0, \Delta t, 2\Delta t, 3\Delta t, 4\Delta t$, and $5\Delta t$ along each trajectory. Additionally, we employ the same basis functions, where the basis for $V$ is $\{s^2\}$ and the basis for $Q$ is $\{a^2, sa, s^2\}$.
\subsubsection{One-dimensional Stochastic Case}
When collecting trajectory data, entries of actions and initial states are sampled uniformly from the same interval. In each iteration, we collect 9996 data points, obtained from 1666 trajectories, where data is recorded at times $0, \Delta t, 2\Delta t$, $3\Delta t$, $4\Delta t$ and $5\Delta t$ along each trajectory. Additionally, we employ the same basis functions, where the basis for $V$ is $\{1, s^2\}$ and the basis for $Q$ is $\{1, a^2, sa, s^2\}$.
\subsubsection{Two-dimensional Deterministic Case}
When collecting trajectory data, entries of actions and initial states are sampled uniformly from the same interval. In each iteration, we collect 100 data points, obtained from 25 trajectories, where data is recorded at times $0, \Delta t, 2\Delta t$, and $3\Delta t$ along each trajectory. Additionally, we employ the same basis functions, where the basis for $V$ is $\{s_1^2, s_2^2, s_1s_2\}$ and the basis for $Q$ is the union of $\{s_1s_2. s_1^2, s_2^2\}$, $\{s_1a_1, s_1a_2, s_2a_1, s_2a_2\}$ and $\{a_1a_2, a_1^2, a_2^2\}$.
\subsubsection{Two-dimensional Stochastic Case}
When collecting trajectory data, entries of actions and initial states are sampled uniformly from the same interval. In each iteration, we collect $6\times 10^4$ data points, obtained from $10^4$ trajectories, where data is recorded at times $0, \Delta t, 2\Delta t, 3\Delta t, 4\Delta t$, and $5\Delta t$ along each trajectory. Additionally, we employ the same basis functions, where the basis for $V$ is $\{1, s_1^2, s_2^2, s_1s_2\}$ and the basis for $Q$ is the union of $\{1\}$, $\{s_1s_2. s_1^2, s_2^2\}$, $\{s_1a_1, s_1a_2, s_2a_1, s_2a_2\}$ and $\{a_1a_2, a_1^2, a_2^2\}$.
\subsection{Problems Parameters for LQR in Subsection \ref{lqr_exp:dt}}
\begin{itemize}
    \item (1-dimension) $A = -1.0, B = 0.5, \sigma = 0, R =- 1, Q = -1, \beta = 1$.
    \item (2-dimension) $A=\begin{pmatrix}
    -9.375& -3.125\\
    -3.125&-9.375
\end{pmatrix}$, $B=\begin{pmatrix}
    10& 1\\
    1&10.1
\end{pmatrix}$, $\sigma=0$, $R=\begin{pmatrix}
    -12& -3\\
    -3&-8
\end{pmatrix}$, $Q=\begin{pmatrix}
    -10&- 2\\
   - 2&-10.4
\end{pmatrix}$, $\beta=10$.
\end{itemize} 
\subsection{Problem Parameters for Merton's Problem in Subsection \ref{merton_exo}}
\begin{itemize}
    \item (Case 1: high risk premium) $r = 0.02, r_b = 0.05, \mu = 0.08, \sigma = 0.2, \beta=0.2$.
    \item (Case 2: moderate returns) $r = 0.02, r_b = 0.05, \mu = 0.06, \sigma = 0.3, \beta=0.15$.
    \item (Case 3: borderline leverage) $r = 0.02, r_b = 0.05, \mu = 0.07, \sigma = 0.3, \beta=0.2$.
\end{itemize}
\subsection{Ground Truth Optimal Policies and Value Functions for Merton's Problem in Subsection \ref{merton_exo}}
The ground truth optimal policies and corresponding value functions for each case are as follows:
\begin{itemize}
    \item (Case 1) Optimal policy: $\pi^*_t = 1.5$; optimal value function: $V^*(s) = 12.2137 \sqrt{s}$.
    \item (Case 2) Optimal policy: $\pi^*_t = 0.8888$; optimal value function: $V^*(s) = 15.2542 \sqrt{s}$.
    \item (Case 3) Optimal policy: $\pi^*_t = 1.0$; optimal value function: $V^*(s) = 11.3475 \sqrt{s}$.
\end{itemize}
\subsection{Algorithm Settings for Merton's Problem in Subsection \ref{merton_exo}}
In each iteration, we collect $10^7$ data points from $\lfloor{10^7 / 6}\rfloor$ trajectories. Data is recorded at the times $0, \Delta t, 1 \Delta t, 2 \Delta t, 3 \Delta t, 4 \Delta t$, and $5 \Delta t$, effectively gathering data over the span of half a year. The basis for the value function $V$ is $\{\sqrt{s}\}$, while the basis for $Q$ is $\{\sqrt{s}, \sqrt{s} a, \sqrt{s} a^2\}$.

\newpage

\bibliographystyle{abbrv}
\bibliography{bib}

\begin{thebibliography}{10}

\bibitem{amos2018differentiable}
B.~Amos, I.~Jimenez, J.~Sacks, B.~Boots, and J.~Z. Kolter.
\newblock Differentiable mpc for end-to-end planning and control.
\newblock {\em Advances in neural information processing systems}, 31, 2018.

\bibitem{anderson2007optimal}
B.~D. Anderson and J.~B. Moore.
\newblock {\em Optimal control: linear quadratic methods}.
\newblock Courier Corporation, 2007.

\bibitem{aastrom2013computer}
K.~J. Astr{\"o}m and B.~Wittenmark.
\newblock {\em Computer-controlled systems: theory and design}.
\newblock Courier Corporation, 2013.

\bibitem{bailo2018optimal}
R.~Bailo, M.~Bongini, J.~A. Carrillo, and D.~Kalise.
\newblock Optimal consensus control of the cucker-smale model.
\newblock {\em IFAC-PapersOnLine}, 51(13):1--6, 2018.

\bibitem{1994Baird}
L.~Baird.
\newblock Reinforcement learning in continuous time: advantage updating.
\newblock In {\em Proceedings of 1994 IEEE International Conference on Neural
  Networks (ICNN'94)}, volume~4, pages 2448--2453 vol.4, 1994.

\bibitem{baird1994reinforcement}
L.~C. Baird.
\newblock Reinforcement learning in continuous time: Advantage updating.
\newblock In {\em Proceedings of 1994 IEEE International Conference on Neural
  Networks (ICNN'94)}, volume~4, pages 2448--2453. IEEE, 1994.

\bibitem{basei2022logarithmic}
M.~Basei, X.~Guo, A.~Hu, and Y.~Zhang.
\newblock Logarithmic regret for episodic continuous-time linear-quadratic
  reinforcement learning over a finite-time horizon.
\newblock {\em Journal of Machine Learning Research}, 23(178):1--34, 2022.

\bibitem{battelino2019clinical}
T.~Battelino, T.~Danne, R.~M. Bergenstal, S.~A. Amiel, R.~Beck, T.~Biester,
  E.~Bosi, B.~A. Buckingham, W.~T. Cefalu, K.~L. Close, et~al.
\newblock Clinical targets for continuous glucose monitoring data
  interpretation: recommendations from the international consensus on time in
  range.
\newblock {\em Diabetes care}, 42(8):1593--1603, 2019.

\bibitem{bayraktar2023approximate}
E.~Bayraktar and A.~D. Kara.
\newblock Approximate q learning for controlled diffusion processes and its
  near optimality.
\newblock {\em SIAM Journal on Mathematics of Data Science}, 5(3):615--638,
  2023.

\bibitem{bertsekas2012dynamic}
D.~Bertsekas.
\newblock {\em Dynamic programming and optimal control: Volume I}, volume~4.
\newblock Athena scientific, 2012.

\bibitem{bertsekas2011approximate}
D.~P. Bertsekas.
\newblock Approximate policy iteration: A survey and some new methods.
\newblock {\em Journal of Control Theory and Applications}, 9(3):310--335,
  2011.

\bibitem{bryson2018applied}
A.~E. Bryson.
\newblock {\em Applied optimal control: optimization, estimation and control}.
\newblock Routledge, 2018.

\bibitem{cai2020provably}
Q.~Cai, Z.~Yang, C.~Jin, and Z.~Wang.
\newblock Provably efficient exploration in policy optimization.
\newblock In {\em International Conference on Machine Learning}, pages
  1283--1294. PMLR, 2020.

\bibitem{cartea2018algorithmic}
A.~Cartea, S.~Jaimungal, and J.~Ricci.
\newblock Algorithmic trading, stochastic control, and mutually exciting
  processes.
\newblock {\em SIAM review}, 60(3):673--703, 2018.

\bibitem{cobelli2009diabetes}
C.~Cobelli, C.~Dalla~Man, G.~Sparacino, L.~Magni, G.~De~Nicolao, and B.~P.
  Kovatchev.
\newblock Diabetes: models, signals, and control.
\newblock {\em IEEE reviews in biomedical engineering}, 2:54--96, 2009.

\bibitem{user}
M.~G. Crandall, H.~Ishii, and P.-L. Lions.
\newblock User's guide to viscosity solutions of second order partial
  differential equations.
\newblock {\em Bull. Amer. Math. Soc. (N.S.)}, 27(1):1--67, 1992.

\bibitem{de2024idiosyncrasy}
K.~De~Asis and R.~S. Sutton.
\newblock An idiosyncrasy of time-discretization in reinforcement learning.
\newblock {\em arXiv preprint arXiv:2406.14951}, 2024.

\bibitem{degrave2022magnetic}
J.~Degrave, F.~Felici, J.~Buchli, M.~Neunert, B.~Tracey, F.~Carpanese,
  T.~Ewalds, R.~Hafner, A.~Abdolmaleki, D.~de~Las~Casas, et~al.
\newblock Magnetic control of tokamak plasmas through deep reinforcement
  learning.
\newblock {\em Nature}, 602(7897):414--419, 2022.

\bibitem{doya2000reinforcement}
K.~Doya.
\newblock Reinforcement learning in continuous time and space.
\newblock {\em Neural computation}, 12(1):219--245, 2000.

\bibitem{emerson2023offline}
H.~Emerson, M.~Guy, and R.~McConville.
\newblock Offline reinforcement learning for safer blood glucose control in
  people with type 1 diabetes.
\newblock {\em Journal of Biomedical Informatics}, 142:104376, 2023.

\bibitem{FlemingRishel1975}
W.~H. Fleming and R.~W. Rishel.
\newblock {\em Deterministic and Stochastic Optimal Control}.
\newblock Springer-Verlag, 1975.

\bibitem{FlemingSoner2006}
W.~H. Fleming and H.~M. Soner.
\newblock {\em Controlled Markov Processes and Viscosity Solutions}.
\newblock Springer, 2nd edition, 2006.

\bibitem{frikha2023actor}
N.~Frikha, M.~Germain, M.~Lauri{\`e}re, H.~Pham, and X.~Song.
\newblock Actor-critic learning for mean-field control in continuous time.
\newblock {\em arXiv preprint arXiv:2303.06993}, 2023.

\bibitem{gao2024reinforcement}
X.~Gao, L.~Li, and X.~Y. Zhou.
\newblock Reinforcement learning for jump-diffusions, with financial
  applications.
\newblock {\em arXiv preprint arXiv:2405.16449}, 2024.

\bibitem{graber2016linear}
P.~J. Graber.
\newblock Linear quadratic mean field type control and mean field games with
  common noise, with application to production of an exhaustible resource.
\newblock {\em Applied Mathematics \& Optimization}, 74(3):459--486, 2016.

\bibitem{gu2021mean}
H.~Gu, X.~Guo, X.~Wei, and R.~Xu.
\newblock Mean-field controls with q-learning for cooperative marl: convergence
  and complexity analysis.
\newblock {\em SIAM Journal on Mathematics of Data Science}, 3(4):1168--1196,
  2021.

\bibitem{gu2023dynamic}
H.~Gu, X.~Guo, X.~Wei, and R.~Xu.
\newblock Dynamic programming principles for mean-field controls with learning.
\newblock {\em Operations Research}, 71(4):1040--1054, 2023.

\bibitem{gu2025mean}
H.~Gu, X.~Guo, X.~Wei, and R.~Xu.
\newblock Mean-field multiagent reinforcement learning: A decentralized network
  approach.
\newblock {\em Mathematics of Operations Research}, 50(1):506--536, 2025.

\bibitem{guo2023general}
X.~Guo, A.~Hu, R.~Xu, and J.~Zhang.
\newblock A general framework for learning mean-field games.
\newblock {\em Mathematics of Operations Research}, 48(2):656--686, 2023.

\bibitem{guo2023reinforcement}
X.~Guo, A.~Hu, and Y.~Zhang.
\newblock Reinforcement learning for linear-convex models with jumps via
  stability analysis of feedback controls.
\newblock {\em SIAM Journal on Control and Optimization}, 61(2):755--787, 2023.

\bibitem{guo2025policy}
X.~Guo, H.~V. Tran, and Y.~P. Zhang.
\newblock Policy iteration for nonconvex viscous hamilton--jacobi equations.
\newblock {\em arXiv preprint arXiv:2503.02159}, 2025.

\bibitem{Guo22}
X.~Guo, R.~Xu, and T.~Zariphopoulou.
\newblock Entropy regularization for mean field games with learning.
\newblock {\em Mathematics of Operations research}, 47(4):3239--3260, 2022.

\bibitem{howard1960dynamic}
R.~A. Howard.
\newblock Dynamic programming and markov processes.
\newblock 1960.

\bibitem{jakobsen2019improved}
E.~R. Jakobsen, A.~Picarelli, and C.~Reisinger.
\newblock Improved order 1/4 convergence for piecewise constant policy
  approximation of stochastic control problems.
\newblock {\em Electronic Communications in Probability}, 24(2019), 2019.

\bibitem{Jia2021}
J.~Jia, W.~E, and Z.~Li.
\newblock Policy optimization for markovian jump processes: A path integral
  control approach.
\newblock {\em Journal of Machine Learning Research (JMLR)}, 22:1--50, 2021.

\bibitem{jia2021policy}
Y.~Jia and X.~Y. Zhou.
\newblock Policy evaluation and temporal-difference learning in continuous time
  and space: A martingale approach.
\newblock {\em The Journal of Machine Learning Research}, 23(1):6918--6972,
  2022.

\bibitem{jia2022policy}
Y.~Jia and X.~Y. Zhou.
\newblock Policy gradient and actor-critic learning in continuous time and
  space: Theory and algorithms.
\newblock {\em The Journal of Machine Learning Research}, 23(1):12603--12652,
  2022.

\bibitem{jia2023q}
Y.~Jia and X.~Y. Zhou.
\newblock q-learning in continuous time.
\newblock {\em Journal of Machine Learning Research}, 24(161):1--61, 2023.

\bibitem{jin2020provably}
C.~Jin, Z.~Yang, Z.~Wang, and M.~I. Jordan.
\newblock Provably efficient reinforcement learning with linear function
  approximation.
\newblock In {\em Conference on learning theory}, pages 2137--2143. PMLR, 2020.

\bibitem{karimi2023dynamic}
A.~Karimi, J.~Jin, J.~Luo, A.~R. Mahmood, M.~Jagersand, and S.~Tosatto.
\newblock Dynamic decision frequency with continuous options.
\newblock In {\em 2023 IEEE/RSJ International Conference on Intelligent Robots
  and Systems (IROS)}, pages 7545--7552. IEEE, 2023.

\bibitem{kerimkulov2023fisher}
B.~Kerimkulov, J.-M. Leahy, D.~Siska, L.~Szpruch, and Y.~Zhang.
\newblock A fisher-rao gradient flow for entropy-regularised markov decision
  processes in polish spaces.
\newblock {\em arXiv preprint arXiv:2310.02951}, 2023.

\bibitem{kerimkulov2020exponential}
B.~Kerimkulov, D.~\v{S}i\v{s}ka, and L.~Szpruch.
\newblock Exponential convergence and stability of {H}oward's policy
  improvement algorithm for controlled diffusions.
\newblock {\em SIAM J. Control Optim.}, 58(3):1314--1340, 2020.

\bibitem{Kober2013}
J.~Kober, J.~A. Bagnell, and J.~Peters.
\newblock Reinforcement learning in robotics: A survey.
\newblock {\em The International Journal of Robotics Research},
  32(11):1238--1274, 2013.

\bibitem{konda1999actor}
V.~Konda and J.~Tsitsiklis.
\newblock Actor-critic algorithms.
\newblock {\em Advances in neural information processing systems}, 12, 1999.

\bibitem{kushner1990numerical}
H.~J. Kushner.
\newblock Numerical methods for stochastic control problems in continuous time.
\newblock {\em SIAM Journal on Control and Optimization}, 28(5):999--1048,
  1990.

\bibitem{lee2021}
J.~Lee and R.~S. Sutton.
\newblock Policy iterations for reinforcement learning problems in continuous
  time and space—fundamental theory and methods.
\newblock {\em Automatica}, 126:109421, 2021.

\bibitem{merton}
R.~C. Merton.
\newblock Lifetime portfolio selection under uncertainty: The continuous-time
  case.
\newblock {\em The Review of Economics and Statistics}, 51(3):247--257, 1969.

\bibitem{merton1975optimum}
R.~C. Merton.
\newblock Optimum consumption and portfolio rules in a continuous-time model.
\newblock In {\em Stochastic optimization models in finance}, pages 621--661.
  Elsevier, 1975.

\bibitem{Mnih2015}
V.~Mnih, K.~Kavukcuoglu, D.~Silver, A.~A. Rusu, J.~Veness, M.~G. Bellemare,
  A.~Graves, M.~Riedmiller, A.~K. Fidjeland, G.~Ostrovski, S.~Petersen,
  C.~Beattie, A.~Sadik, I.~Antonoglou, H.~King, D.~Kumaran, D.~Wierstra,
  S.~Legg, and D.~Hassabis.
\newblock Human-level control through deep reinforcement learning.
\newblock {\em Nature}, 518(7540):529--533, 2015.

\bibitem{modares2014integral}
H.~Modares, F.~L. Lewis, and M.-B. Naghibi-Sistani.
\newblock Integral reinforcement learning and experience replay for adaptive
  optimal control of partially-unknown constrained-input continuous-time
  systems.
\newblock {\em Automatica}, 50(1):193--202, 2014.

\bibitem{Moody2001}
J.~Moody and M.~Saffell.
\newblock Learning to trade via direct reinforcement.
\newblock {\em IEEE Transactions on Neural Networks}, 12(4):875--889, 2001.

\bibitem{mou2024bellman}
W.~Mou and Y.~Zhu.
\newblock On bellman equations for continuous-time policy evaluation i:
  discretization and approximation.
\newblock {\em arXiv preprint arXiv:2407.05966}, 2024.

\bibitem{munos2006policy}
R.~Munos.
\newblock Policy gradient in continuous time.
\newblock {\em Journal of Machine Learning Research}, 7:771--791, 2006.

\bibitem{munos2008finite}
R.~Munos and C.~Szepesvári.
\newblock Finite-time bounds for fitted value iteration.
\newblock {\em Journal of Machine Learning Research}, 9:815--857, 2008.

\bibitem{park2021time}
S.~Park, J.~Kim, and G.~Kim.
\newblock Time discretization-invariant safe action repetition for policy
  gradient methods.
\newblock {\em Advances in Neural Information Processing Systems}, 34:267--279,
  2021.

\bibitem{pavliotis2016stochastic}
G.~A. Pavliotis.
\newblock {\em Stochastic processes and applications}.
\newblock Springer, 2016.

\bibitem{pham2009continuous}
H.~Pham.
\newblock {\em Continuous-time stochastic control and optimization with
  financial applications}, volume~61.
\newblock Springer Science \& Business Media, 2009.

\bibitem{pradhan2025discrete}
S.~Pradhan and S.~Yuksel.
\newblock Discrete-time approximations of controlled diffusions with infinite
  horizon discounted and average cost.
\newblock {\em arXiv preprint arXiv:2502.05596}, 2025.

\bibitem{puterman1979}
M.~L. Puterman and S.~L. Brumelle.
\newblock On the convergence of policy iteration in stationary dynamic
  programming.
\newblock {\em Mathematics of Operations Research}, 4(1):60--69, 1979.

\bibitem{revuz2013continuous}
D.~Revuz and M.~Yor.
\newblock {\em Continuous martingales and Brownian motion}, volume 293.
\newblock Springer Science \& Business Media, 2013.

\bibitem{romero2024actor}
A.~Romero, Y.~Song, and D.~Scaramuzza.
\newblock Actor-critic model predictive control.
\newblock In {\em 2024 IEEE International Conference on Robotics and Automation
  (ICRA)}, pages 14777--14784. IEEE, 2024.

\bibitem{santos2004}
M.~S. Santos and J.~Rust.
\newblock Convergence properties of policy iteration.
\newblock {\em SIAM Journal on Control and Optimization}, 42(6):2094--2115,
  2004.

\bibitem{schulman2015trust}
J.~Schulman, S.~Levine, P.~Abbeel, M.~Jordan, and P.~Moritz.
\newblock Trust region policy optimization.
\newblock In {\em International conference on machine learning}, pages
  1889--1897. PMLR, 2015.

\bibitem{schulman2017proximal}
J.~Schulman, F.~Wolski, P.~Dhariwal, A.~Radford, and O.~Klimov.
\newblock Proximal policy optimization algorithms.
\newblock {\em arXiv preprint arXiv:1707.06347}, 2017.

\bibitem{sciarretta2004optimal}
A.~Sciarretta, M.~Back, and L.~Guzzella.
\newblock Optimal control of parallel hybrid electric vehicles.
\newblock {\em IEEE Transactions on control systems technology},
  12(3):352--363, 2004.

\bibitem{sethi2024entropy}
D.~Sethi, D.~{\v{S}}i{\v{s}}ka, and Y.~Zhang.
\newblock Entropy annealing for policy mirror descent in continuous time and
  space.
\newblock {\em arXiv preprint arXiv:2405.20250}, 2024.

\bibitem{siciliano1999robot}
B.~Siciliano and L.~Villani.
\newblock {\em Robot force control}.
\newblock Springer Science \& Business Media, 1999.

\bibitem{Silver2016}
D.~Silver, A.~Huang, C.~J. Maddison, A.~Guez, L.~Sifre, G.~V.~D. Driessche,
  J.~Schrittwieser, I.~Antonoglou, V.~Panneershelvam, M.~Lanctot, S.~Dieleman,
  D.~Grewe, J.~Nham, N.~Kalchbrenner, I.~Sutskever, T.~Lillicrap, M.~Leach,
  K.~Kavukcuoglu, T.~Graepel, and D.~Hassabis.
\newblock Mastering the game of go with deep neural networks and tree search.
\newblock {\em Nature}, 529(7587):484--489, 2016.

\bibitem{sun1998perturbation}
J.-G. Sun.
\newblock Perturbation theory for algebraic riccati equations.
\newblock {\em SIAM Journal on Matrix Analysis and Applications}, 19(1):39--65,
  1998.

\bibitem{sutton2018reinforcement}
R.~S. Sutton and A.~G. Barto.
\newblock {\em Reinforcement learning: An introduction}.
\newblock MIT press, 2018.

\bibitem{szpruch2021exploration}
L.~Szpruch, T.~Treetanthiploet, and Y.~Zhang.
\newblock Exploration-exploitation trade-off for continuous-time episodic
  reinforcement learning with linear-convex models.
\newblock {\em arXiv preprint arXiv:2112.10264}, 2021.

\bibitem{szpruch2024optimal}
L.~Szpruch, T.~Treetanthiploet, and Y.~Zhang.
\newblock Optimal scheduling of entropy regularizer for continuous-time
  linear-quadratic reinforcement learning.
\newblock {\em SIAM Journal on Control and Optimization}, 62(1):135--166, 2024.

\bibitem{tallec2019making}
C.~Tallec, L.~Blier, and Y.~Ollivier.
\newblock Making deep q-learning methods robust to time discretization.
\newblock In {\em International Conference on Machine Learning}, pages
  6096--6104. PMLR, 2019.

\bibitem{tang2023policy}
W.~Tang, H.~V. Tran, and Y.~P. Zhang.
\newblock Policy iteration for the deterministic control problems—a viscosity
  approach.
\newblock {\em SIAM Journal on Control and Optimization}, 63(1):375--401, 2025.

\bibitem{TZZ}
W.~Tang, Y.~P. Zhang, and X.~Y. Zhou.
\newblock Exploratory hjb equations and their convergence.
\newblock {\em SIAM Journal on Control and Optimization}, 60(6):3191--3216,
  2022.

\bibitem{TWZ}
H.~V. Tran, Z.~Wang, and Y.~P. Zhang.
\newblock Policy iteration for exploratory hamilton--jacobi--bellman equations.
\newblock {\em Applied Mathematics \& Optimization}, 91(2):50, 2025.

\bibitem{wang2019exploration}
H.~Wang, T.~Zariphopoulou, and X.~Y. Zhou.
\newblock Reinforcement learning in continuous time and space: {A} stochastic
  control approach.
\newblock {\em J. Mach. Learn. Res.}, 21:198:1--198:34, 2020.

\bibitem{wang2020reinforcement}
H.~Wang, T.~Zariphopoulou, and X.~Y. Zhou.
\newblock Reinforcement learning in continuous time and space: A stochastic
  control approach.
\newblock {\em The Journal of Machine Learning Research}, 21(1):8145--8178,
  2020.

\bibitem{WZZ20}
H.~Wang, T.~Zariphopoulou, and X.~Y. Zhou.
\newblock Reinforcement learning in continuous time and space: A stochastic
  control approach.
\newblock {\em J. Mach. Learn. Res.}, 21:1--34, 2020.

\bibitem{wang2020continuous}
H.~Wang and X.~Y. Zhou.
\newblock Continuous-time mean--variance portfolio selection: A reinforcement
  learning framework.
\newblock {\em Mathematical Finance}, 30(4):1273--1308, 2020.

\bibitem{watkins1992q}
C.~J. Watkins and P.~Dayan.
\newblock Q-learning.
\newblock {\em Machine learning}, 8:279--292, 1992.

\bibitem{whittle1981risk}
P.~Whittle.
\newblock Risk-sensitive linear/quadratic/gaussian control.
\newblock {\em Advances in Applied Probability}, 13(4):764--777, 1981.

\bibitem{yang2019sample}
L.~Yang and M.~Wang.
\newblock Sample-optimal parametric q-learning using linearly additive
  features.
\newblock In {\em International conference on machine learning}, pages
  6995--7004. PMLR, 2019.

\bibitem{yildiz2021continuous}
C.~Yildiz, M.~Heinonen, and H.~L{\"a}hdesm{\"a}ki.
\newblock Continuous-time model-based reinforcement learning.
\newblock In {\em International Conference on Machine Learning}, pages
  12009--12018. PMLR, 2021.

\bibitem{yongzhoubook}
J.~Yong and X.~Y. Zhou.
\newblock {\em Stochastic controls: {Hamiltonian} systems and {HJB} equations}.
\newblock Number~43 in Applications of mathematics. Springer Science \&
  Business Media, New York, 1999.

\bibitem{zhang2021convergence}
J.~Zhang, C.~Ni, C.~Szepesvari, M.~Wang, et~al.
\newblock On the convergence and sample efficiency of variance-reduced policy
  gradient method.
\newblock {\em Advances in Neural Information Processing Systems},
  34:2228--2240, 2021.

\bibitem{zhao2025score}
H.~Zhao, H.~Chen, J.~Zhang, D.~D. Yao, and W.~Tang.
\newblock Score as action: Fine-tuning diffusion generative models by
  continuous-time reinforcement learning.
\newblock {\em arXiv preprint arXiv:2502.01819}, 2025.

\bibitem{zhao2023policy}
H.~Zhao, W.~Tang, and D.~Yao.
\newblock Policy optimization for continuous reinforcement learning.
\newblock {\em Advances in Neural Information Processing Systems},
  36:13637--13663, 2023.

\bibitem{zhu2024phibe}
Y.~Zhu.
\newblock Phibe: A pde-based bellman equation for continuous time policy
  evaluation.
\newblock {\em arXiv preprint arXiv:2405.12535}, 2024.

\bibitem{Ziegler2019}
D.~M. Ziegler, N.~Stiennon, J.~Wu, T.~Brown, A.~Radford, D.~Amodei, and P.~F.
  Christiano.
\newblock Fine-tuning language models from human preferences.
\newblock {\em arXiv preprint arXiv:1909.08593}, 2019.

\end{thebibliography}

\end{document}